\providecommand{\U}[1]{\protect\rule{.1in}{.1in}}
\providecommand{\U}[1]{\protect\rule{.1in}{.1in}}
\providecommand{\U}[1]{\protect\rule{.1in}{.1in}}
\newtheorem{theorem}{Theorem}[section]
\newtheorem{corollary}[theorem]{Corollary}
\newtheorem{definition}[theorem]{Definition}
\newtheorem{example}[theorem]{Example}
\newtheorem{lemma}[theorem]{Lemma}
\newtheorem{notation}[theorem]{Notation}
\newtheorem{proposition}[theorem]{Proposition}
\newtheorem{remark}[theorem]{Remark}
\newenvironment{proof}[1][Proof]{\noindent\textbf{#1.} }{\ \rule{0.5em}{0.5em}}
\renewcommand{\thefootnote}{\fnsymbol{footnote}}
\numberwithin{equation}{section}
\begin{document}

\title{$L^{p}$--Variational Solutions of Multivalued Backward Stochastic Differential Equations}
\author{Lucian Maticiuc$^{a}$, Aurel R\u{a}\c{s}canu$^{b}$\bigskip\\{\small $^{a}$ Faculty of Mathematics, \textquotedblleft Alexandru Ioan
Cuza\textquotedblright\ University,}\\{\small Carol I Blvd., no. 11, 700506, Ia\c{s}i, Romania}\bigskip\\{\small $^{b}$ \textquotedblleft Octav Mayer\textquotedblright\ Institute of
Mathematics of the Romanian Academy,}\\{\small Carol I Blvd., no. 8, 700506, Ia\c{s}i, Romania}}
\date{}
\maketitle

\begin{abstract}
Our aim is to study the existence and uniqueness of the $L^{p}$--variational
solution, with $p>1,$ of the following multivalued backward stochastic
differential equation with $p$--integrable data:
\[
\left\{
\begin{array}
[c]{l}%
-dY_{t}+\partial_{y}\Psi(t,Y_{t})dQ_{t}\ni H(t,Y_{t},Z_{t})dQ_{t}-Z_{t}%
dB_{t},\;0\leq t<\tau,\\[0.2cm]%
Y_{\tau}=\eta,
\end{array}
\right.
\]
where $\tau$ is a stopping time, $Q$ is a progresivelly measurable increasing
continuous stochastic process and $\partial_{y}\Psi$ is the subdifferential of
the convex lower semicontinuous function $y\mapsto\Psi(t,y).$

In the framework of \cite{ma-ra/15} (the case $p\geq2$), the strong solution
found it there is the unique variational solution, via the uniqueness property
proved in the present article.

\end{abstract}

AMS Classification subjects: 60H10, 60F25, 47J20, 49J40.\medskip

Keywords: Backward stochastic differential equations; Subdifferential
ope\-rators; Stochastic variational inequalities; $p$--integrable data

\footnotetext{{\scriptsize E--mail addresses: \texttt{lucian.maticiuc@uaic.ro}
(Lucian Maticiuc), \texttt{aurel.rascanu@uaic.ro} (Aurel R\u{a}\c{s}canu)}}

\renewcommand{\thefootnote}{\arabic{footnote}}

\section{Introduction}

\hspace{\parindent}The study of the standard backward stochastic differential
equations (BSDEs) was initiated by E. Pardoux and S. Peng in \cite{pa-pe/90}.
The authors have proved the existence and the uniqueness of the solution for
the BSDE on fixed time interval, under the assumption of Lipschitz continuity
of the generator $F$ with respect to $y$ and $z$ and square integrability of
$\eta$ and $F\left(  t,0,0\right)  $. The case of BSDEs on random time
interval have been treated by R.W.R. Darling and E. Pardoux in \cite{da-pa/97}%
, where it is obtained, as application, the existence of a continuous
viscosity solution to the elliptic partial differential equations (PDEs) with
Dirichlet boundary conditions. The more general case of reflected BSDEs was
considered for the first time by N. El Karoui et al. in \cite{ka-ka-pa/97}.

In the present paper, we prove the existence and uniqueness of a new type of
solution, called $L^{p}$--variational solution, in the case $p>1,$ of the
genera\-li\-zed backward stochastic variational inequality (BSVI for short)
with $p$--integrable data:%
\begin{equation}
\left\{
\begin{array}
[c]{r}%
\displaystyle Y_{t}+{\int\nolimits_{t\wedge\tau}^{\tau}}dK_{s}=\eta
+{\int\nolimits_{t\wedge\tau}^{\tau}}\left[  F\left(  s,Y_{s},Z_{s}\right)
ds+G\left(  s,Y_{s}\right)  dA_{s}\right]  -{\int\nolimits_{t\wedge\tau}%
^{\tau}}Z_{s}dB_{s}\,,\quad t\geq0,\medskip\\
\multicolumn{1}{l}{dK_{t}\in\partial\varphi\left(  Y_{t}\right)
dt+\partial\psi\left(  Y_{t}\right)  dA_{t}\,,\quad\text{on }\mathbb{R}_{+},}%
\end{array}
\right.  \label{GBSVI 1}%
\end{equation}
where $\partial\varphi$ and $\partial\psi$ are the subdifferentials of two
proper convex lower semicontinuous (l.s.c. for short) functions $\varphi$ and
$\psi$ and $\left\{  A_{t}:t\geq0\right\}  $ is a progressively measurable
increasing continuous stochastic process.

We prove the uniqueness property of the solution on a random time interval
$\left[  0,\tau\right]  ;$ the existence is obtained also on a random time
interval, but first in the case of a deterministic time interval, i.e.
$\tau=T>0,$ and it is made using the Moreau--Yosida regularization of
$\varphi$ and $\psi$ and a mollifier approximation of the generators $F$ and
$G.\medskip$

In fact, we will define and prove the existence and uniqueness of the $L^{p}%
$--variational solution for an equivalent form of (\ref{GBSVI 1}):%
\begin{equation}
\left\{
\begin{array}
[c]{r}%
\displaystyle Y_{t}+{\int\nolimits_{t\wedge\tau}^{\tau}}dK_{s}=\eta
+{\int\nolimits_{t\wedge\tau}^{\tau}}H\left(  s,Y_{s},Z_{s}\right)
dQ_{s}-{\int\nolimits_{t\wedge\tau}^{\tau}}Z_{s}dB_{s}\,,\;t\geq0\medskip\\
\multicolumn{1}{l}{dK_{t}\in\partial_{y}\Psi\left(  t,Y_{t}\right)
dQ_{t}\,,\;\text{on }\mathbb{R}_{+},}%
\end{array}
\,\right.  \label{GBSVI 2}%
\end{equation}
with $Q,$ $H$ and $\Psi$ adequately defined.

The second condition in (\ref{GBSVI 1}) says, among others, that the first
component $Y$ of the solution is forced to stay in the set $\mathrm{Dom}%
\left(  \partial\varphi\right)  \cap\mathrm{Dom}\left(  \partial\psi\right)
.$ The role of $K$ is to act in the evolution of the process $Y$ and also to
keep $Y$ in these domains.$\medskip$

We emphasize that, unlike the case $p\geq2,$ in the case $1\leq p<2$ it is not
possible to obtain the boundedness of the term $\mathbb{E}\Big(%
{\displaystyle\int_{0}^{T}}
e^{2V_{r}}|\nabla\Psi_{\varepsilon}(r,Y_{r}^{\varepsilon})|^{2}dQ_{r}%
\Big)^{p/2},$ which is essential in order to obtain a strong solution, where
$\Psi_{\varepsilon}$ is the Moreau-Yosida's regularization of $\Psi.$
Therefore we propose a generalization of the strong solution and we give the
definition of the solution using an inequality (instead of a equality)
involving only the function $\Psi$ and not the subdifferential operator
$\partial\Psi.$ However, under this kind of definition, we were able to prove
the uniqueness property (even if the solution $\left(  Y,Z\right)  $ satisfies
an inequality).$\medskip$

We mention that the presence of the process $A$ is justified by the possible
applications of equation (\ref{GBSVI 1}) in proving probabilistic proofs for
the existence of a solution of PDEs with Neumann boundary conditions on a
domain from $\mathbb{R}^{m}.$ The stochastic approach of the existence problem
for multivalued parabolic PDEs, was considered by L. Maticiuc and A.
R\u{a}\c{s}canu in \cite{ma-ra/10} and \cite{ma-ra/16}. We emphasize that if
the obstacles are fixed, the reflected BSDEs becomes a particular case of the
BSVI of type (\ref{GBSVI 1}), by taking $\varphi$ as convex indicator of the
interval defined by obstacles. In this case the solution of the BSVI belongs
to the domain of the multivalued operator $\partial\varphi$ and it is
reflected at the boundary of this domain.$\medskip$

The standard work on BSVI in the finite dimensional case is that of E. Pardoux
and A. R\u{a}\c{s}canu \cite{pa-ra/98}, where it is proved the existence and
uniqueness of the solution $\left(  Y,Z,K\right)  $ for the BSVI
(\ref{GBSVI 1}) with $A\equiv0$, under the following assumptions on $F$:
continuity with respect to $y$, monotonicity with respect to $y$ (in the sense
that $\langle y^{\prime}-y,F(t,y^{\prime},z)-F(t,y,z)\rangle\leq
\alpha|y^{\prime}-y|^{2}$), Lipschitzianity with respect to $z$ and a
sublinear growth for $F\left(  t,y,0\right)  $. Moreover, it was shown that,
unlike the forward case, the process $K$ is absolute continuous with respect
to $dt$. In \cite{pa-ra/99} the same authors extend these results to the
Hilbert spaces framework.

We mention that assumptions of Lipschitz continuity of the generator $F$ with
respect to $y$ and $z$ and the square integrability of the final condition and
$F\left(  t,0,0\right)  $ (as in articles El Karoui et al. \cite{ka-ka-pa/97}
and E. Pardoux and S. Peng \cite{pa-pe/90}) are sometimes too strong for
applications (see, e.g., D. Duffie and L. Epstein \cite{du-ep/92} and El
Karoui et al. \cite{ka-pe-qu/97} for the applications in mathematical finance
and P. Briand et al. \cite{br-ca/00} and A. Rozkosz and L. S\l omi\'{n}ski
\cite{ro-sl/12b} for the applications to PDEs). A possibility is to weaken the
integrability conditions imposed on $\eta$ and $F$ or to weaken the assumption
which concerns the Lipschitz continuity of the generators. In P. Briand and R.
Carmona \cite{br-ca/00} or E. Pardoux \cite{pa/99} it is considered the case
where the generators are Lipschitz continuous with respect to $z$, continuous
with respect to $y$ and satisfy a monotonicity condition and a growth
condition of the type $\left\vert F\left(  t,y,z\right)  \right\vert
\leq\left\vert F\left(  t,0,z\right)  \right\vert +\phi\left(  \left\vert
y\right\vert \right)  ,$ where $\phi$ is a polynomial or even an arbitrary
positive increasing continuous function.

We recall that the previous assumption was used in \cite{pa/99} in order to
prove the existence of a solution in $L^{2}$. This result was generalized by
P. Briand et al. in \cite{br-de-hu-pa/03}, where it is proved the existence
and uniqueness of $L^{p}$ solutions, with $p\in\lbrack1,2],$ for BSDEs
considered with a random terminal time $T:$ in the case $p\in(1,2],$ if
$\eta\in L^{p}$, $\displaystyle\int_{0}^{T}\left\vert F\left(  s,0,0\right)
\right\vert ds\in L^{p}$ and $\displaystyle\int_{0}^{T}\sup
\nolimits_{\left\vert y\right\vert \leq r}\left\vert F\left(  s,y,0\right)
-F\left(  s,0,0\right)  \right\vert ds\in L^{1},$ for any $r>0,$ and if $F$ is
Lipschitz continuous with respect to $z$, continuous with respect to $y$ and
satisfy a monotonicity condition, then there exists a unique $L^{p}$ solution.
In the case $p=1$ similar result is proved if $T$ is a fixed deterministic
terminal time and under additional assumptions.

We also note that the study of the reflected BSDEs was the subject, e.g., of
the papers: J.P. Lepeltier et al. \cite{le-ma-xu/05} (in the case of the
general growth condition with respect to $y$ and for $p=2$), S. Hamad\`{e}ne
and A. Popier \cite{ha-po/12} (in the case of Lipschitz continuity with
respect to $y$ the and for $p\in\left(  1,2\right)  $). Studies made, roughly
speaking, under the assumptions of \cite{br-de-hu-pa/03} are, e.g.: A. Aman
\cite{am/09} (in the case of a generalized reflected BSDE and for $p\in\left(
1,2\right)  \,$), A. Rozkosz and L. S\l omi\'{n}ski \cite{ro-sl/12a} (for
$p\in\left[  1,2\right]  $) and T. Klimsiak \cite{kl/13} (in the case of BSDE
with two irregular reflecting barriers and for $p\in\left[  1,2\right]
$).\medskip

Our paper generalizes the existence and uniqueness results from
\cite{pa-ra/98} by considering the $L^{p}$ solutions in the case $p\in\left(
1,2\right)  ,$ the Lebesgue--Stieltsjes integral terms, and by assuming a
weaker boundedness condition for the generator $F$ (instead of the sublinear
growth):%
\begin{equation}
\mathbb{E}\Big(\int_{0}^{T}F_{\rho}^{\#}(s)ds\Big)^{p}<\infty,\quad\text{where
}F_{\rho}^{\#}\left(  t\right)
\xlongequal{\hspace{-4pt}{\rm def}\hspace{-4pt}}\sup\nolimits_{\left\vert
y\right\vert \leq\rho}\left\vert F(t,y,0)\right\vert . \label{local bound 2}%
\end{equation}
We remark that article \cite{ma-ra/15} concerns the same type of backward
equation as in our study (and under the similar assumptions), but considered
in the infinite dimensional framework and in the case $p\geq2.\medskip$

In addition, it is worth pointing out that in the case $p\geq2,$ if we are in
the framework of \cite{ma-ra/15}, our variational solution is a strong one
since we have proved the uniqueness property of the variational
solution.$\medskip$

More precisely, Theorem \ref{t2exist} generalizes the results from
\cite{ma-ra/10} for $p=2$ and Theorem \ref{t3-random} ge\-ne\-ra\-li\-zes
(except the Hilbert spaces framework) the results from \cite{ma-ra/15} for
$p\geq2.$ We emphasize that the assumptions on $F$ and $G$ are weaker to those
adopted in \cite{ma-ra/15}; also the form of our hypothesis is more simplified
(and therefore more easily to be verified) with respect to those from
\cite{ma-ra/15}.$\medskip$

In this paper we use the following notation: $(\Omega,\mathcal{F},\mathbb{P})$
is a complete probability space, the set $\mathcal{N}=\{A\in\mathcal{F}%
:\mathbb{P}\left(  A\right)  =0\},$ $\left\{  \mathcal{F}_{t}\right\}
_{t\geq0}$ is a right continuous and complete filtration generated by a
standard $k$--dimensional Brownian motion $\left(  B_{t}\right)  _{t\geq
0}\,.\medskip$

$S_{m}^{p}\left[  0,T\right]  $ is the space of (equivalent classes of)
continuous progressively measurable stochastic processes (p.m.s.p. for short)
$X:\Omega\times\left[  0,T\right]  \rightarrow\mathbb{R}^{m}$ such that
$\mathbb{E}\sup_{t\in\left[  0,T\right]  }\left\vert X_{t}\right\vert
^{p}<\infty,$ if $p>0.$ The notation $S_{m}^{p}$ is the space of (equivalent
classes of) continuous p.m.s.p. $X:\Omega\times\lbrack0,\infty)\rightarrow
\mathbb{R}^{m}$ such that, for all $T>0,$ the restriction $X\big|_{\left[
0,T\right]  }$ belongs to $S_{m}^{p}\left[  0,T\right]  .$

$\Lambda_{m}^{p}\left(  0,T\right)  $ is the space of p.m.s.p. $X:\Omega
\times\left(  0,T\right)  \rightarrow\mathbb{R}^{m}$ such that such that
$\int_{0}^{T}\left\vert X_{t}\right\vert ^{2}dt<\infty$, $\mathbb{P}$--a.s. if
$p=0$ and $\displaystyle\mathbb{E}\bigg(\int_{0}^{T}\left\vert X_{t}%
\right\vert ^{2}dt\bigg)^{p/2}<\infty$, if $p>0.$ The notation $\Lambda
_{m}^{p}$ is the space of p.m.s.p. $X:\Omega\times(0,\infty)\rightarrow
\mathbb{R}^{m}$ such that, for all $T>0,$ the restriction $X\big|_{\left(
0,T\right)  }$ belongs to $\Lambda_{m}^{p}\left(  0,T\right)  .$\medskip

The article is organized as follows: next section is dedicated to the
presentation of the assumptions needed in our study. In the third section we
present a intuitive introduction and the definition of the notion of $L^{p}%
$--variational solution. The next section deals with proof of the uniqueness
and continuity properties. The fifth section is devoted to the proof of the
existence of our type of solution both in the case of a deterministic and
random time interval. The Appendix contains, mainly following \cite{pa-ra/14},
some results useful throughout the paper.

\section{Assumptions and definitions\label{assumptions}}

\hspace{\parindent}At the beginning of this subsection we introduce the
assumptions about equation (\ref{GBSVI 1}).

We consider throughout this paper that $p>1.$

\begin{itemize}
\item[\textrm{(A}$_{1}$\textrm{)}] The random variable $\tau:\Omega
\rightarrow\left[  0,\infty\right]  $\ is a stopping time;

\item[\textrm{(A}$_{2}$\textrm{)}] The random variable $\eta:\Omega
\rightarrow\mathbb{R}^{m}$\ is $\mathcal{F}_{\tau}$--measurable such that
$\mathbb{E}\left\vert \eta\right\vert ^{p}<\infty$\ and $\left(  \xi
,\zeta\right)  \in S_{m}^{p}\times\Lambda_{m\times k}^{p}\left(
0,\mathbb{\infty}\right)  $\ is the unique pair associated to $\eta$\ given by
the martingale representation formula (see \cite[Corollary 2.44]{pa-ra/14})%
\begin{equation}
\left\{
\begin{array}
[c]{l}%
\xi_{t}=\eta-\displaystyle{\int_{t}^{\infty}}\zeta_{s}dB_{s}\,,\quad
t\geq0,\;\mathbb{P}\text{--a.s.,}\smallskip\\
\xi_{t}=\mathbb{E}^{\mathcal{F}_{t}}\eta\quad\text{and}\quad\zeta
_{t}=_{\left[  0,\tau\right]  }\left(  t\right)  \zeta_{t}%
\end{array}
\right.  \label{mart repr th 1}%
\end{equation}
(or equivalently, $\xi_{t}=\eta-\displaystyle\int_{t\wedge\tau}^{\tau}%
\zeta_{s}dB_{s}\,,\;t\geq0,\;\mathbb{P}$--a.s.);

\item[\textrm{(A}$_{3}$\textrm{)}] The process $\left\{  A_{t}:t\geq0\right\}
$\ is a\ increasing and continuous p.m.s.p. such that $A_{0}=0$ and%
\begin{equation}
\mathbb{E}\left(  e^{\alpha A_{T}}\right)  <\infty,\quad\text{for any }%
\alpha,T>0; \label{exponential_moment}%
\end{equation}

\item[\textrm{(A}$_{4}$\textrm{)}] $\varphi,\psi:\mathbb{R}^{m}\rightarrow
\left[  0,+\infty\right]  $\ are proper l.s.c. functions, $\partial\varphi$
and $\partial\psi$\ denote their subdifferentials and we suppose that
$0\in\partial\varphi\left(  0\right)  \cap\partial\psi\left(  0\right)  $\ (or
equivalently $0=\varphi\left(  0\right)  \leq\varphi\left(  y\right)  $\ and
$0=\psi\left(  0\right)  \leq\psi\left(  y\right)  $\ for all $y\in
\mathbb{R}^{m}\,$);

In addition, we suppose that%
\[
\varphi\left(  \eta\right)  +\psi\left(  \eta\right)  <\infty,\quad
\mathbb{P}\text{--a.s.;}%
\]

\item[\textrm{(A}$_{5}$\textrm{)}] The functions $F:\Omega\times\mathbb{R}%
_{+}\times\mathbb{R}^{m}\times\mathbb{R}^{m\times k}\rightarrow\mathbb{R}^{m}%
$\ and $G:\Omega\times\mathbb{R}_{+}\times\mathbb{R}^{m}\rightarrow
\mathbb{R}^{m}$\ are such that $F\left(  \cdot,\cdot,y,z\right)  $, $G\left(
\cdot,\cdot,y\right)  $\ are p.m.s.p., for all $\left(  y,z\right)
\in\mathbb{R}^{m}\times\mathbb{R}^{m\times k}$, $F\left(  \omega,t,\cdot
,\cdot\right)  $, $G\left(  \omega,t,\cdot\right)  $ are continuous functions,
$d\mathbb{P}\otimes dt$-a.e. and, $\mathbb{P}$--a.s.,%
\begin{equation}
\int_{0}^{T}F_{\rho}^{\#}\left(  s\right)  ds+\int_{0}^{T}G_{\rho}^{\#}\left(
s\right)  dA_{s}<\infty,\quad\text{for all }\rho,T\geq0,
\label{F, G assumpt 1}%
\end{equation}
where%
\begin{equation}
F_{\rho}^{\#}\left(  \omega,s\right)
\xlongequal{\hspace{-4pt}{\rm def}\hspace{-4pt}}\sup\nolimits_{\left\vert
y\right\vert \leq\rho}\left\vert F\left(  \omega,s,y,0\right)  \right\vert
,\quad G_{\rho}^{\#}\left(  \omega,s\right)
\xlongequal{\hspace{-4pt}{\rm def}\hspace{-4pt}}\sup\nolimits_{\left\vert
y\right\vert \leq\rho}\left\vert G\left(  \omega,s,y\right)  \right\vert \,;
\label{def F sharp}%
\end{equation}

\item[\textrm{(A}$_{6}$\textrm{)}] Let%
\begin{equation}
n_{p}\xlongequal{\hspace{-4pt}{\rm def}\hspace{-4pt}}\left(  p-1\right)
\wedge1\quad\text{and}\quad\lambda\in\left(  0,1\right)  . \label{defnp}%
\end{equation}
Assume there exist three p.m.s.p. $\mu,\nu:\Omega\times\mathbb{R}%
_{+}\rightarrow\mathbb{R},$ $\ell:\Omega\times\mathbb{R}_{+}\rightarrow
\mathbb{R}_{+}\,,$ such that%
\begin{equation}
\mathbb{E}\exp\left(  p\int_{0}^{T}\left(  \mu_{s}^{+}+\frac{1}{2n_{p}\lambda
}\,\ell_{s}^{2}\right)  ds+p\int_{0}^{T}\nu_{s}^{+}dA_{s}\right)
<\infty,\quad\text{for all }\ T>0, \label{ip-mnl}%
\end{equation}
and for all $t\geq0$, $y,y^{\prime}\in\mathbb{R}^{m}$, $z,z^{\prime}%
\in\mathbb{R}^{m\times k},$ $\mathbb{P}$--a.s.%
\begin{equation}%
\begin{array}
[c]{l}%
\left\langle y^{\prime}-y,F(t,y^{\prime},z)-F(t,y,z)\right\rangle \leq\mu
_{t}\,\left\vert y^{\prime}-y\right\vert ^{2},\medskip\\
\left\langle y^{\prime}-y,G(t,y^{\prime})-G(t,y)\right\rangle \leq\nu
_{t}\,\left\vert y^{\prime}-y\right\vert ^{2},\medskip\\
\left\vert F(t,y,z^{\prime})-F(t,y,z)\right\vert \leq\ell_{t}\,\left\vert
z^{\prime}-z\right\vert .
\end{array}
\label{F, G assumpt 2}%
\end{equation}

\end{itemize}

\begin{remark}
Assumption (\ref{ip-mnl}) is necessary for some estimates throughout the
proofs. In Remark \ref{A6_example} here below we give a consistent example for
$\mu,\nu$ and $\ell.$
\end{remark}

We define%
\[
Q_{t}=t+A_{t}\,,
\]
and let $\left\{  \alpha_{t}:t\geq0\right\}  $\ be the real positive p.m.s.p.
such that $\alpha\in\left[  0,1\right]  $\ and%
\[
dt=\alpha_{t}dQ_{t}\quad\text{and}\quad dA_{t}=\left(  1-\alpha_{t}\right)
dQ_{t}\,.
\]
Let us introduce the functions%
\begin{equation}%
\begin{array}
[c]{l}%
\displaystyle H\left(  t,y,z\right)
\xlongequal{\hspace{-4pt}{\rm def}\hspace{-4pt}}\mathbf{1}_{\left[
0,\tau\right]  }\left(  t\right)  \left[  \alpha_{t}F\left(  t,y,z\right)
+\left(  1-\alpha_{t}\right)  G\left(  t,y\right)  \right]  ,\medskip\\
\displaystyle\Psi\left(  t,y\right)
\xlongequal{\hspace{-4pt}{\rm def}\hspace{-4pt}}\mathbf{1}_{\left[
0,\tau\right]  }\left(  t\right)  \left[  \alpha_{t}\varphi\left(  y\right)
+\left(  1-\alpha_{t}\right)  \psi\left(  y\right)  \right]  .
\end{array}
\label{def Phi}%
\end{equation}
Obviously, from (\ref{F, G assumpt 2}) we see that%
\begin{equation}%
\begin{array}
[c]{l}%
\left\langle y^{\prime}-y,H(t,y^{\prime},z)-H(t,y,z)\right\rangle
\leq\mathbf{1}_{\left[  0,\tau\right]  }\left(  t\right)  \left[  \mu
_{t}\alpha_{t}+\nu_{t}\left(  1-\alpha_{t}\right)  \right]  \left\vert
y^{\prime}-y\right\vert ^{2},\medskip\\
\left\vert H(t,y,z^{\prime})-H(t,y,z)\right\vert \leq\mathbf{1}_{\left[
0,\tau\right]  }\left(  t\right)  \,\alpha_{t}\,\ell_{t}\,\left\vert
z^{\prime}-z\right\vert .
\end{array}
\label{F, G assumpt 3}%
\end{equation}
Here and subsequently%
\begin{equation}
V_{t}\xlongequal{\hspace{-4pt}{\rm def}\hspace{-4pt}}%
{\displaystyle\int_{0}^{t}}
\mathbf{1}_{\left[  0,\tau\right]  }\left(  r\right)  \left(  \mu_{r}%
+\dfrac{1}{2n_{p}\lambda}\,\ell_{r}^{2}\right)  dr+%
{\displaystyle\int_{0}^{t}}
\mathbf{1}_{\left[  0,\tau\right]  }\left(  r\right)  \nu_{r}dA_{r}
\label{defV_1}%
\end{equation}
and%
\begin{equation}
V_{t}^{\left(  +\right)  }\xlongequal{\hspace{-4pt}{\rm def}\hspace{-4pt}}%
{\displaystyle\int_{0}^{t}}
\mathbf{1}_{\left[  0,\tau\right]  }\left(  r\right)  \left(  \mu_{r}%
^{+}+\dfrac{1}{2n_{p}\lambda}\,\ell_{r}^{2}\right)  dr+%
{\displaystyle\int_{0}^{t}}
\mathbf{1}_{\left[  0,\tau\right]  }\left(  r\right)  \nu_{r}^{+}dA_{r}\,.
\label{defV_3}%
\end{equation}
By assumption (\ref{ip-mnl}) we clearly have, for all $\ T>0,$%
\begin{equation}%
\begin{array}
[c]{l}%
\displaystyle\mathbb{E}\exp\left(  pV_{T}\right)  \leq\mathbb{E}\left(
\sup\nolimits_{r\in\left[  0,T\right]  }{e^{pV_{r}}}\right)  \leq
\mathbb{E}\exp\big(pV_{T}^{\left(  +\right)  }\big)\medskip\\
\displaystyle\leq\mathbb{E}\exp\bigg(p\int_{0}^{T}\left(  \mu_{s}^{+}+\frac
{1}{2n_{p}\lambda}\,\ell_{s}^{2}\right)  ds+p\int_{0}^{T}\nu_{s}^{+}%
\,dA_{s}\bigg)<\infty.
\end{array}
\label{exp-VT}%
\end{equation}

\begin{remark}
\label{A6_example}Usually, the monotonicity coefficients $\mu_{t},\nu_{t}$ and
the Lipschitz coefficient $\ell_{t}$ are considered deterministic constants.
But in many concrete cases the coefficients are stochastic processes (may
depend on $\omega$ and $t$); for instance, if we take, as a simple example, in
one dimensional case, a BSDE with the generator%
\[
F\left(  t,y,z\right)  =\frac{\tilde{a}\,B_{t}\left\vert B_{t}\right\vert
^{a}}{t^{b}}\left(  y-f_{1}\left(  B_{t}y\right)  \right)  +\frac{\tilde
{c}\left\vert B_{t}\right\vert ^{\left(  c+1\right)  /2}}{t^{d/2}}%
\,f_{2}\left(  z\right)  ,
\]
where $\tilde{a},\tilde{c}>0,$ $0\leq b,d<1,$ $0<a\leq1,$ $-1<c\leq1$ are some
suitable constants and $f_{1},f_{2}$ are two derivable and nondecreasing functions.

In this case we obtain the monotonicity coefficient function $\mu_{t}%
=\frac{\tilde{a}\,B_{t}\left\vert B_{t}\right\vert ^{a}}{t^{b}}$ and the
Lipschitz coefficient function $\ell_{t}=\frac{\tilde{c}\left\vert
B_{t}\right\vert ^{\left(  a+1\right)  /2}}{t^{b/2}}\,.\medskip$

Our assumption $\left(  \mathrm{A}_{6}\right)  $ is satisfied if
$\displaystyle\frac{p\tilde{a}}{1-b}\leq\frac{1}{2}\,T^{b-2}$ and
$\displaystyle\frac{p\tilde{c}}{1-d}\,\frac{1}{n_{p}\lambda}\leq T^{d-2},$
since%
\[
\mathbb{E}\left(  \exp\left(  \tilde{a}\sup\nolimits_{t\in\left[  0,T\right]
}\left\vert B_{t}\right\vert ^{a+1}\right)  \right)  <\infty\quad
\text{iff}\quad\tilde{a}<\frac{1}{2T}%
\]
(for the details see, for instance, \cite[Theorem 4.1]{do-gr-le/96}).
\end{remark}

\begin{definition}
The notation $dK_{t}\in\partial_{y}\Psi\left(  t,Y_{t}\right)  dQ_{t}$ means
that $K$ is $\mathbb{R}^{m}$--valued locally bounded variation stochastic
process, $Q$ is a real increasing stochastic process, $Y$ is $\mathbb{R}^{m}%
$-valued continuous stochastic process such that $\int_{0}^{T}\Psi\left(
t,Y_{t}\right)  dQ_{t}<\infty$, a.s. for all $T\geq0$ and, $\mathbb{P}$--a.s.,
for any $0\leq t\leq s$%
\[%
{\displaystyle\int_{t}^{s}}
\left\langle y\left(  r\right)  -Y_{r},dK_{r}\right\rangle +%
{\displaystyle\int_{t}^{s}}
\Psi\left(  r,Y_{r}\right)  dQ_{r}\leq%
{\displaystyle\int_{t}^{s}}
\Psi\left(  r,y\left(  r\right)  \right)  dQ_{r},\quad\text{for any }y\in
C\left(  \mathbb{R}_{+};\mathbb{R}^{m}\right)  .
\]

\end{definition}

\begin{remark}
The condition $0\in\partial\varphi\left(  0\right)  \cap\partial\psi\left(
0\right)  $ does not restrict the generality of the problem, since from
$Dom\left(  \partial\varphi\right)  \cap Dom\left(  \partial\psi\right)
\neq\emptyset$ it follows that there exists $u_{0}\in Dom\left(
\partial\varphi\right)  \cap Dom\left(  \partial\psi\right)  $ and $\hat
{u}_{01}\in\partial\varphi\left(  u_{0}\right)  $, $\hat{u}_{02}\in
\partial\psi\left(  u_{0}\right)  $. In this case equation (\ref{GBSVI 1}) is
equivalent to%
\[
\left\{
\begin{array}
[c]{r}%
\hat{Y}_{t}+\displaystyle{\int_{t\wedge\tau}^{\tau}}d\hat{K}_{s}=\eta
+{\int_{t\wedge\tau}^{\tau}}\big[\hat{F}(s,\hat{Y}_{s},\hat{Z}_{s})ds+\hat
{G}(s,\hat{Y}_{s})dA_{s}\big]-{\int_{t\wedge\tau}^{\tau}}\hat{Z}_{s}%
dB_{s},\text{\ a.s.,}\smallskip\\
\multicolumn{1}{l}{d\hat{K}_{t}\in\partial\hat{\varphi}(\hat{Y}_{t}%
)dt+\partial\hat{\psi}(\hat{Y}_{t})dA_{t},\;\text{for all }t\geq0,}%
\end{array}
\,\right.
\]
where%
\[
\hat{Y}_{t}:=Y_{t}-u_{0}\,,\quad\hat{Z}_{t}:=Z_{t}\,,\quad\hat{\eta}%
:=\eta-u_{0}%
\]
and%
\[%
\begin{array}
[c]{l}%
\hat{F}\left(  s,y,z\right)  =F\left(  t,y+u_{0},z\right)  -\hat{u}%
_{01}\,,\quad\hat{G}\left(  s,y,z\right)  =G\left(  t,y+u_{0}\right)  -\hat
{u}_{02}\,,\medskip\\
\hat{\varphi}\left(  y\right)  =\varphi\left(  y+u_{0}\right)  -\left\langle
\hat{u}_{01},y\right\rangle -\varphi\left(  u_{0}\right)  \,,\quad\hat{\psi
}\left(  y\right)  =\psi\left(  y+u_{0}\right)  -\left\langle \hat{u}%
_{02},y\right\rangle \,-\psi\left(  u_{0}\right)  ,\medskip\\
\partial\hat{\varphi}\left(  y\right)  =\partial\varphi\left(  y+u_{0}\right)
-\hat{u}_{01}\,,\quad\partial\hat{\psi}\left(  y\right)  =\partial\psi\left(
y+u_{0}\right)  -\hat{u}_{02}\medskip\\
\text{and}\medskip\\
d\hat{K}_{t}=dK_{t}-\hat{u}_{01}dt-\hat{u}_{02}dA_{t}\,.
\end{array}
\]

\end{remark}

Let $\varepsilon>0$ and the Moreau--Yosida regularization of $\varphi:$%
\begin{equation}
\varphi_{\varepsilon}\left(  y\right)
\xlongequal{\hspace{-4pt}{\rm def}\hspace{-4pt}}\inf\big\{\frac{1}%
{2\varepsilon}\left\vert y-v\right\vert ^{2}+\varphi\left(  v\right)
:v\in\mathbb{R}^{m}\big\}, \label{fi-MY}%
\end{equation}
which is a $C^{1}$--convex function.

The gradient $\nabla\varphi_{\varepsilon}(x)=\partial\varphi_{\varepsilon
}\left(  x\right)  \in\partial\varphi\left(  J_{\varepsilon}\left(  x\right)
\right)  ,$ where $J_{\varepsilon}\left(  x\right)
\xlongequal{\hspace{-4pt}{\rm def}\hspace{-4pt}}x-\varepsilon\nabla
\varphi_{\varepsilon}(x)$ and the next ine\-qua\-li\-ties are satisfied%
\begin{equation}%
\begin{array}
[c]{ll}%
\left(  a\right)  & \left\vert J_{\varepsilon}\left(  x\right)
-J_{\varepsilon}\left(  y\right)  \right\vert \leq\left\vert x-y\right\vert
,\medskip\\
\left(  b\right)  & \left\vert \nabla\varphi_{\varepsilon}\left(  x\right)
-\nabla\varphi_{\varepsilon}\left(  y\right)  \right\vert \leq\dfrac
{1}{\varepsilon}\,\left\vert x-y\right\vert ,\medskip\\
\left(  c\right)  & \varphi_{\varepsilon}\left(  y\right)  =\dfrac{\left\vert
y-J_{\varepsilon}\left(  y\right)  \right\vert ^{2}}{2\varepsilon}%
+\varphi\left(  J_{\varepsilon}\left(  y\right)  \right)
\end{array}
\label{fi-lip}%
\end{equation}
and%
\begin{equation}
-\left\langle u-v,\nabla\varphi_{\varepsilon}\left(  u\right)  -\nabla
\varphi_{\delta}\left(  v\right)  \right\rangle \leq(\varepsilon
+\delta)\left\langle \nabla\varphi_{\varepsilon}(u),\nabla\varphi_{\delta
}(v)\right\rangle \leq\dfrac{\varepsilon+\delta}{2}\Big[|\nabla\varphi
_{\varepsilon}(u)|^{2}+|\nabla\varphi_{\delta}(v)|^{2}\Big] \label{fi-Cauchy}%
\end{equation}
(for other useful inequalities see, e.g., \cite[inequalities $\left(
2.8\right)  $]{ma-ra/15}).

Since $0\in\partial\varphi\left(  0\right)  $ we deduce that%
\begin{equation}%
\begin{array}
[c]{l}%
0=\varphi\left(  0\right)  \leq\varphi\left(  J_{\varepsilon}\left(  u\right)
\right)  \leq\varphi_{\varepsilon}\left(  u\right)  \leq\varphi\left(
u\right)  ,\quad\text{for any }u\in\mathbb{R}^{m},\medskip\\
J_{\varepsilon}\left(  0\right)  =0,\quad\nabla\varphi_{\varepsilon
}(0)=0,\quad\text{and }\varphi_{\varepsilon}\left(  0\right)  =0.
\end{array}
\label{minimum point}%
\end{equation}
We introduce the compatibility conditions between $\varphi,\psi$ and $F,G$.

\begin{itemize}
\item[\textrm{(A}$_{7}$\textrm{)}] For all $\varepsilon>0$, $t\geq0$,
$y\in\mathbb{R}^{m}$, $z\in\mathbb{R}^{m\times k}$%
\begin{equation}%
\begin{array}
[c]{rl}%
\left(  i\right)  & \left\langle \nabla\varphi_{\varepsilon}\left(  y\right)
,\nabla\psi_{\varepsilon}\left(  y\right)  \right\rangle \geq0,\medskip\\
\left(  ii\right)  & \left\langle \nabla\varphi_{\varepsilon}\left(  y\right)
,G\left(  t,y\right)  \right\rangle \leq\left\vert \nabla\psi_{\varepsilon
}\left(  y\right)  \right\vert \left\vert G\left(  t,y\right)  \right\vert
,\quad\mathbb{P}\text{--a.s.,}\medskip\\
\left(  iii\right)  & \left\langle \nabla\psi_{\varepsilon}\left(  y\right)
,F\left(  t,y,z\right)  \right\rangle \leq\left\vert \nabla\varphi
_{\varepsilon}\left(  y\right)  \right\vert \left\vert F\left(  t,y,z\right)
\right\vert ,\quad\mathbb{P}\text{--a.s..}%
\end{array}
\label{compAssumpt}%
\end{equation}

\end{itemize}

\begin{example}
$\quad$

\noindent$\left(  a\right)  $ If $\varphi=\psi$ then the compatibility
assumptions (\ref{compAssumpt}) are clearly satisfied.$\medskip$

\noindent$\left(  b\right)  $ Let $m=1$. Since $\nabla\varphi_{\varepsilon}$
and $\nabla\psi_{\varepsilon}$ are increasing monotone functions on
$\mathbb{R}$, we see that, if $y\cdot G\left(  t,y\right)  \leq0$ and $y\cdot
F\left(  t,y,z\right)  \leq0$, for all $t,y,z,$ then the compatibility
assumptions (\ref{compAssumpt}) are satisfied.$\medskip$

\noindent$\left(  b\right)  $ Let $m=1$. If $\varphi,\psi:\mathbb{R}%
\rightarrow(-\infty,+\infty]$ are the convexity indicator functions
$\varphi\left(  y\right)  =\left\{
\begin{array}
[c]{rl}%
0, & \text{if\ }y\in\left[  a,b\right]  ,\smallskip\\
+\infty, & \text{if\ }y\notin\left[  a,b\right]  ,
\end{array}
\right.  $ and $\psi\left(  y\right)  =\left\{
\begin{array}
[c]{rl}%
0, & \text{if\ }y\in\left[  c,d\right]  ,\smallskip\\
+\infty, & \text{if\ }y\notin\left[  c,d\right]  ,
\end{array}
\right.  $ where $-\infty\leq a\leq b\leq\infty$ and $-\infty\leq c\leq
d\leq\infty$ are such that $0\in\left[  a,b\right]  \cap\left[  c,d\right]  $
(see (A$_{6}$)), then $\nabla\varphi_{\varepsilon}\left(  y\right)  =\dfrac
{1}{\varepsilon}[\left(  y-b\right)  ^{+}-\left(  a-y\right)  ^{+}]$, and
$\nabla\psi_{\varepsilon}\left(  y\right)  =\dfrac{1}{\varepsilon}[\left(
y-d\right)  ^{+}-\left(  c-y\right)  ^{+}].\medskip$

Assumption (A$_{7}-i$) is clearly fulfilled; the remaining compatibility
assumptions are satisfies if, for example, $G\left(  t,y\right)  \geq0$, for
$y\leq a$,$\quad G\left(  t,y\right)  \leq0$, for $y\geq b$, and,
respectively, $F\left(  t,y,z\right)  \geq0$, for $y\leq c$,$\quad F\left(
t,y,z\right)  \leq0$, for $y\geq d.$
\end{example}

\section{Intuitive introduction of $L^{p}$--variational
solutions\label{intuitive_intr}}

\hspace{\parindent}For $a\geq0,$ let us define the space $\mathcal{V}_{m}^{a}$
of the $m$-dimensional local continuous semimartingales $M$ such that for all
$T>0,$%
\begin{equation}
\mathbb{E}\left(  \sup\nolimits_{r\in\left[  0,T\right]  }{e^{aV_{r}}%
}\left\vert M_{r}\right\vert ^{a}\right)  <\infty,\text{\quad if }a>1
\label{def_M_2}%
\end{equation}
and given by%
\begin{equation}%
\begin{array}
[c]{l}%
\displaystyle M_{t}=\gamma-%
{\displaystyle\int_{0}^{t}}
N_{r}dQ_{r}+%
{\displaystyle\int_{0}^{t}}
R_{r}dB_{r}\quad\text{or equivalently}\medskip\\
\displaystyle M_{t}=M_{T}+\int_{t}^{T}N_{r}dQ_{r}-\int_{t}^{T}R_{r}%
dB_{r}\,,\quad M_{0}=\gamma,
\end{array}
\label{def_M}%
\end{equation}
where $\gamma\in\mathbb{R}^{m}$and $N:\Omega\times\mathbb{R}_{+}%
\rightarrow\mathbb{R}^{m},$ $R:\Omega\times\mathbb{R}_{+}\rightarrow
\mathbb{R}^{m\times k}$ are p.m.s.p. such that for all $T>0:$%
\begin{equation}
\mathbb{E}\left(
{\displaystyle\int_{0}^{T}}
{e^{V_{r}}}\left\vert N_{r}\right\vert dQ_{r}\right)  ^{a}+\mathbb{E}\left(
{\displaystyle\int_{0}^{T}}
{e^{2V_{r}}}\left\vert R_{r}\right\vert ^{2}dr\right)  ^{a/2}<\infty
,\quad\text{if }a>0 \label{def_M_3}%
\end{equation}
and
\[%
{\displaystyle\int_{0}^{T}}
\left\vert N_{r}\right\vert dQ_{r}+%
{\displaystyle\int_{0}^{T}}
\left\vert R_{r}\right\vert ^{2}dr<\infty,\;\mathbb{P}\text{--a.s.}%
,\quad\text{if }a=0.
\]
For a intuitive introduction let $\left(  Y,Z,U\right)  $ be a strong a
solution of (\ref{GBSVI 1}) or (\ref{GBSVI 2}) that is $Y,Z,$ and $U$ are
p.m.s.p., $Y$ has continuous trajectories,
\[%
{\displaystyle\int_{0}^{T}}
\left\vert Z_{r}\right\vert ^{2}dr+%
{\displaystyle\int_{0}^{T}}
\left\vert U_{r}\right\vert ^{2}dr<\infty,\quad\mathbb{P}-a.s.,\quad\text{for
all }T\geq0,
\]
and the following equation is satisfied, for all $T\geq0,$%
\[
\left\{
\begin{array}
[c]{r}%
\displaystyle Y_{t}+{\int_{t}^{T}}dK_{r}=Y_{T}+{\int_{t}^{T}}H\left(
r,Y_{r},Z_{r}\right)  dQ_{r}-{\int_{t}^{T}}Z_{r}dB_{r}\,,\quad\mathbb{P}%
\text{--a.s.,}\quad\text{for all }t\in\left[  0,T\right]  ,\medskip\\
\multicolumn{1}{l}{\displaystyle dK_{r}=U_{r}dQ_{r}\in\partial_{y}\Psi\left(
r,Y_{r}\right)  dQ_{r}\,,}%
\end{array}
\,\right.
\]
and%
\[
e^{V_{t}}\left\vert Y_{t}-\xi_{t}\right\vert +\int_{t}^{\infty}e^{2V_{r}%
}\left\vert Z_{r}-\zeta_{r}\right\vert ^{2}%
dr\xrightarrow[]{\;\;\;\;\mathbb{P}\;\;\;\;}0,\quad\text{as }t\rightarrow
\infty.
\]
For $\delta\in(0,1]$ we define%
\begin{equation}
\delta_{q}\xlongequal{\hspace{-4pt}{\rm def}\hspace{-4pt}}\delta
\,\mathbf{1}_{[1,2)}\left(  q\right)  =\left\{
\begin{array}
[c]{ll}%
\delta, & \text{if }1\leq q<2,\\
0, & \text{otherwise.}%
\end{array}
\right.  \label{defDelta}%
\end{equation}
Let $q\in\left[  1,2\right]  ,$ $n_{q}%
\xlongequal{\hspace{-4pt}{\rm def}\hspace{-4pt}}\left(  q-1\right)
\wedge1=q-1$, $M\in\mathcal{V}_{m}^{0}$ of the form (\ref{def_M}) and%
\begin{equation}
\Gamma_{t}\xlongequal{\hspace{-4pt}{\rm def}\hspace{-4pt}}\Big(\left\vert
M_{t}-Y_{t}\right\vert ^{2}+\delta_{q}\Big)^{1/2}. \label{def_gamma}%
\end{equation}
By It\^{o}'s formula applied to $\left(  \Gamma_{t}\right)  ^{q}$ we deduce,
using inequality (\ref{ito4}) from Remark \ref{r1-ito}, that, for all $0\leq
t\leq s$ and for all $\delta\in(0,1],$%
\begin{equation}%
\begin{array}
[c]{l}%
\left(  \Gamma_{t}\right)  ^{q}+\dfrac{q}{2}%
{\displaystyle\int_{t}^{s}}
{\left(  \Gamma_{r}\right)  ^{q-4}}\left(  n_{q}\left\vert M_{r}%
-Y_{r}\right\vert ^{2}+\delta_{q}\right)  \left\vert R_{r}-Z_{r}\right\vert
^{2}dr-q%
{\displaystyle\int_{t}^{s}}
{\left(  \Gamma_{r}\right)  ^{q-2}}\left\langle M_{r}-Y_{r},U_{r}%
\,dQ_{r}\right\rangle \medskip\\
\leq\left(  \Gamma_{s}\right)  ^{q}+q%
{\displaystyle\int_{t}^{s}}
{\left(  \Gamma_{r}\right)  ^{q-2}}\langle M_{r}-Y_{r},N_{r}-H\left(
r,Y_{r},Z_{r}\right)  \rangle dQ_{r}\medskip\\
\quad-q%
{\displaystyle\int_{t}^{s}}
{\left(  \Gamma_{r}\right)  ^{q-2}}\,\langle M_{r}-Y_{r},\left(  R_{r}%
-Z_{r}\right)  dB_{r}\rangle,
\end{array}
\label{def1a}%
\end{equation}
where $U_{t}dQ_{t}\in\partial_{y}\Psi\left(  t,Y_{t}\right)  dQ_{t}$ .

Using the subdifferential inequality%
\[
\left\langle M_{r}-Y_{r},U_{t}dQ_{t}\right\rangle +{\Psi}\left(
r,Y_{r}\right)  dQ_{r}\leq{\Psi}\left(  r,M_{r}\right)  dQ_{r}%
\]
we get, from (\ref{def1a}),%
\begin{equation}%
\begin{array}
[c]{l}%
\left(  \Gamma_{t}\right)  ^{q}+\dfrac{q}{2}%
{\displaystyle\int_{t}^{s}}
{\left(  \Gamma_{r}\right)  ^{q-4}}\left(  n_{q}\left\vert M_{r}%
-Y_{r}\right\vert ^{2}+\delta_{q}\right)  \left\vert R_{r}-Z_{r}\right\vert
^{2}dr+{q%
{\displaystyle\int_{t}^{s}}
\left(  \Gamma_{r}\right)  ^{q-2}\Psi}\left(  r,Y_{r}\right)  dQ_{r}\medskip\\
\leq\left(  \Gamma_{s}\right)  ^{q}+{q%
{\displaystyle\int_{t}^{s}}
\left(  \Gamma_{r}\right)  ^{q-2}\Psi}\left(  r,M_{r}\right)  dQ_{r}+q%
{\displaystyle\int_{t}^{s}}
{\left(  \Gamma_{r}\right)  ^{q-2}}\langle M_{r}-Y_{r},N_{r}-H\left(
r,Y_{r},Z_{r}\right)  \rangle dQ_{r}\medskip\\
\quad-q%
{\displaystyle\int_{t}^{s}}
{\left(  \Gamma_{r}\right)  ^{q-2}}\,\langle M_{r}-Y_{r},\left(  R_{r}%
-Z_{r}\right)  dB_{r}\rangle.
\end{array}
\label{def1b}%
\end{equation}

\begin{remark}
\label{r-nq}Let $\delta>0$, $p>1$ and $q\in\left\{  2,p\wedge2\right\}  .$ We
have%
\[
n_{p}\xlongequal{\hspace{-4pt}{\rm def}\hspace{-4pt}}\left(  p-1\right)
\wedge1\leq q-1=\left(  q-1\right)  \wedge
1\xlongequal{\hspace{-4pt}{\rm def}\hspace{-4pt}}n_{q}\leq1
\]
and moreover

\begin{itemize}
\item if $q=p\wedge2,$ then $n_{q}=q-1=\left(  p-1\right)  \wedge1=n_{p}$ and
$\delta_{q}=\delta\,\mathbf{1}_{p<2}\,;$

\item if $q=2,$ then $n_{p}\leq1=n_{q}$ and $\delta_{q}=0.$\medskip
\end{itemize}
\end{remark}

\subsection{Definition and preliminary estimates}

\hspace{\parindent}Following the approach used for the forward stochastic
variational inequalities from article \cite{ra/81}, we propose, starting from
(\ref{def1b}) and using $n_{q}{\left(  \Gamma_{r}\right)  ^{q-2}}\leq{\left(
\Gamma_{r}\right)  ^{q-4}}\left(  n_{q}\left\vert M_{r}-Y_{r}\right\vert
^{2}+\delta_{q}\right)  $, the next variational formulation for a solution of
the multivalued BSDE (\ref{GBSVI 2}).\medskip

\begin{definition}
\label{definition_weak solution}We say that $\left(  Y_{t},Z_{t}\right)
_{t\geq0}$ is a $L^{p}$--variational solution of (\ref{GBSVI 2}) if:

\begin{itemize}
\item $Y:\Omega\times\mathbb{R}_{+}\rightarrow\mathbb{R}^{m}$ and
$Z:\Omega\times\mathbb{R}_{+}\rightarrow\mathbb{R}^{m\times k}$ are two
p.m.s.p., $Y$ has continuous trajectories satisfying%
\begin{equation}
\mathbb{E}\left(  \sup\nolimits_{r\in\left[  0,\tau\right]  }{e^{pV_{r}}%
}\left\vert Y_{r}\right\vert ^{p}\right)  <\infty\label{def0-1}%
\end{equation}
and%
\begin{equation}
\mathbb{E}\left(
{\displaystyle\int_{0}^{\tau}}
{e^{2V_{r}}}\left\vert Z_{r}\right\vert ^{2}dr\right)  ^{p/2}+\mathbb{E}%
\left(
{\displaystyle\int_{0}^{\tau}}
e^{2V_{r}}{\Psi}\left(  r,Y_{r}\right)  dQ_{r}\right)  ^{p/2}<\infty,
\label{def0-2}%
\end{equation}
where $V$ is defined by (\ref{defV_1});

\item $\left(  Y_{t},Z_{t}\right)  =\left(  \xi_{t},\zeta_{t}\right)  =\left(
\eta,0\right)  ,$ for $t>\tau$ and%
\begin{equation}
e^{pV_{T}}\left\vert Y_{T}-\xi_{T}\right\vert ^{p}+\Big(\int_{T}^{\infty
}e^{2V_{s}}\left\vert Z_{s}-\zeta_{s}\right\vert ^{2}ds\Big)^{p/2}%
\xrightarrow[T\rightarrow\infty]{\;\;\;\;\mathbb{P}\;\;\;\;}0; \label{def2}%
\end{equation}

\item let $\Gamma_{t}$ be defined by (\ref{def_gamma}), i.e. $\Gamma
_{t}=\big(\left\vert M_{t}-Y_{t}\right\vert ^{2}+\delta_{q}\big)^{1/2};$ then,
for every $q\in\{2,p\wedge2\}$ and $\delta\in(0,1],$ it holds%
\begin{equation}%
\begin{array}
[c]{l}%
\displaystyle\left(  \Gamma_{t}\right)  ^{q}+\dfrac{q\left(  q-1\right)  }{2}%
{\displaystyle\int_{t}^{s}}
{\left(  \Gamma_{r}\right)  ^{q-2}}\left\vert R_{r}-Z_{r}\right\vert ^{2}dr+{q%
{\displaystyle\int_{t}^{s}}
\left(  \Gamma_{r}\right)  ^{q-2}\Psi}\left(  r,Y_{r}\right)  dQ_{r}\medskip\\
\displaystyle\leq\left(  \Gamma_{s}\right)  ^{q}+{q%
{\displaystyle\int_{t}^{s}}
\left(  \Gamma_{r}\right)  ^{q-2}\Psi}\left(  r,M_{r}\right)  dQ_{r}+q%
{\displaystyle\int_{t}^{s}}
{\left(  \Gamma_{r}\right)  ^{q-2}}\langle M_{r}-Y_{r},N_{r}-H\left(
r,Y_{r},Z_{r}\right)  \rangle dQ_{r}\medskip\\
\displaystyle\quad-q%
{\displaystyle\int_{t}^{s}}
{\left(  \Gamma_{r}\right)  ^{q-2}}\,\langle M_{r}-Y_{r},\left(  R_{r}%
-Z_{r}\right)  dB_{r}\rangle,
\end{array}
\label{def1}%
\end{equation}
for any $0\leq t\leq s<\infty$ and $M\in\mathcal{V}_{m}^{0}$ of the form
(\ref{def_M}), i.e. $M_{t}=M_{T}+\int_{t}^{T}N_{r}dQ_{r}-\int_{t}^{T}%
R_{r}dB_{r}\,.$
\end{itemize}
\end{definition}

\begin{remark}
For $q=2$ inequality (\ref{def1}) becomes%
\begin{equation}%
\begin{array}
[c]{l}%
\left\vert M_{t}-Y_{t}\right\vert ^{2}+{%
{\displaystyle\int_{t}^{s}}
}\left\vert R_{r}-Z_{r}\right\vert ^{2}dr+{2%
{\displaystyle\int_{t}^{s}}
\Psi}\left(  r,Y_{r}\right)  dQ_{r}\medskip\\
\leq\left\vert M_{s}-Y_{s}\right\vert ^{2}{+2%
{\displaystyle\int_{t}^{s}}
\Psi}\left(  r,M_{r}\right)  dQ_{r}+{2%
{\displaystyle\int_{t}^{s}}
}\langle M_{r}-Y_{r},N_{r}-H\left(  r,Y_{r},Z_{r}\right)  \rangle
dQ_{r}\medskip\\
\quad-2%
{\displaystyle\int_{t}^{s}}
\,\langle M_{r}-Y_{r},\left(  R_{r}-Z_{r}\right)  dB_{r}\rangle,\quad
\mathbb{P}\text{--a.s.,}%
\end{array}
\label{def1-a}%
\end{equation}
which is exactly the definition of the variational solution in the case
$p\geq2$ from article \cite{ma-ra/15} (since, in this case, $q\in
\{2,p\wedge2\}$ means $q=2\,$).
\end{remark}

\begin{remark}
As we see above, in the case $p\geq2,$ inequality (\ref{def1}) from Definition
(\ref{definition_weak solution}) should be satisfied only for $q=2.$

But in the case $1<p<2,$ we ask inequality (\ref{def1}) to be satisfied for
$q=p$ and also for $q=2.$ This is due to the fact that, without inequality
(\ref{def1}) accomplished for $q=2,$ we are not able to obtain the estimates
for the term $\mathbb{E}\Big(%
{\displaystyle\int_{0}^{T}}
{e^{2V_{r}}}\left\vert Z_{r}\right\vert ^{2}dr\Big)^{p/2}$ and therefore the
fact that $t\mapsto%
{\displaystyle\int_{0}^{t}}
{e^{pV_{r}}}\left(  \Gamma_{r}\right)  ^{p-2}\langle Y_{r},Z_{r}dB_{r}\rangle$
is a martingale (see Proposition \ref{prop2-exp-exp}).
\end{remark}

\begin{proposition}
\label{p1-exp}Let $\left\{  L_{t}:t\geq0\right\}  $ be arbitrary continuous
bounded variation p.m.s.p.. Then inequality (\ref{def1}) from Definition
\ref{definition_weak solution} is equivalent to%
\begin{equation}%
\begin{array}
[c]{l}%
\displaystyle e^{qL_{t}}\left(  \Gamma_{t}\right)  ^{q}+q%
{\displaystyle\int_{t}^{s}}
e^{qL_{r}}\left(  \Gamma_{r}\right)  ^{q}dL_{r}+\dfrac{q}{2}n_{q}%
{\displaystyle\int_{t}^{s}}
{e^{qL_{r}}\left(  \Gamma_{r}\right)  ^{q-2}}\left\vert R_{r}-Z_{r}\right\vert
^{2}dr\medskip\\
\displaystyle\quad+{q%
{\displaystyle\int_{t}^{s}}
e^{qL_{r}}\left(  \Gamma_{r}\right)  ^{q-2}\Psi}\left(  r,Y_{r}\right)
dQ_{r}\medskip\\
\displaystyle\leq e^{qL_{s}}\left(  \Gamma_{s}\right)  ^{q}+{q%
{\displaystyle\int_{t}^{s}}
e^{qL_{r}}\left(  \Gamma_{r}\right)  ^{q-2}\Psi}\left(  r,M_{r}\right)
dQ_{r}\medskip\\
\displaystyle\quad+{q%
{\displaystyle\int_{t}^{s}}
e^{qL_{r}}}\left(  \Gamma_{r}\right)  ^{q-2}\langle M_{r}-Y_{r},N_{r}-H\left(
r,Y_{r},Z_{r}\right)  \rangle dQ_{r}\medskip\\
\displaystyle\quad-q{%
{\displaystyle\int_{t}^{s}}
e^{qL_{r}}}\left(  \Gamma_{r}\right)  ^{q-2}\langle M_{r}-Y_{r},\left(
R_{r}-Z_{r}\right)  dB_{r}\rangle,
\end{array}
\label{def1-b}%
\end{equation}
for any $q\in\{2,p\wedge2\},$ $\delta\in(0,1],$ $0\leq t\leq s<\infty,$ and
$M\in\mathcal{V}_{m}^{0}$ of the form (\ref{def_M}).
\end{proposition}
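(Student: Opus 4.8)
The plan is to read both inequalities as the statement that one and the same continuous process has nonnegative increments, the only difference being the strictly positive multiplicative weight $e^{qL_{r}}$; the equivalence will then follow from the product rule together with the positivity of $e^{qL_{r}}$.

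First I would collect the two sides of (\ref{def1}) into a single process. Writing $X_{r}:=\left(\Gamma_{r}\right)^{q}$, let $\mathcal{M}_{t}:=q\int_{0}^{t}\left(\Gamma_{r}\right)^{q-2}\langle M_{r}-Y_{r},\left(R_{r}-Z_{r}\right)dB_{r}\rangle$ be the local martingale carrying the $dB$--term, and let $\tilde{A}$ be the continuous finite--variation process accumulating the remaining integrands of (\ref{def1}), namely $q\left(\Gamma_{r}\right)^{q-2}\Psi\left(r,M_{r}\right)dQ_{r}$, $q\left(\Gamma_{r}\right)^{q-2}\langle M_{r}-Y_{r},N_{r}-H(r,Y_{r},Z_{r})\rangle dQ_{r}$, $-\tfrac{q\left(q-1\right)}{2}\left(\Gamma_{r}\right)^{q-2}\left\vert R_{r}-Z_{r}\right\vert^{2}dr$ and $-q\left(\Gamma_{r}\right)^{q-2}\Psi\left(r,Y_{r}\right)dQ_{r}$ (recall $\tfrac{q\left(q-1\right)}{2}=\tfrac{q}{2}n_{q}$ by Remark \ref{r-nq}). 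Put $\Xi_{t}:=X_{t}+\tilde{A}_{t}-\mathcal{M}_{t}$. A direct rearrangement shows that (\ref{def1}) is exactly the assertion $\Xi_{t}\leq\Xi_{s}$ for all $0\leq t\leq s$; that is, (\ref{def1}) holds iff $\Xi$ is nondecreasing. Being nondecreasing, $\Xi$ has finite variation, so $X=\Xi-\tilde{A}+\mathcal{M}$ is a continuous semimartingale and $d\Xi_{r}\geq0$ as a pathwise measure.

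Next I would produce the weight. Since $L$ is continuous and of bounded variation, so is $e^{qL}$, with $d\left(e^{qL_{r}}\right)=qe^{qL_{r}}dL_{r}$ and $\left[e^{qL},X\right]\equiv0$; hence the product rule gives the It\^{o}--correction--free identity $\int_{t}^{s}e^{qL_{r}}dX_{r}=e^{qL_{s}}X_{s}-e^{qL_{t}}X_{t}-q\int_{t}^{s}e^{qL_{r}}X_{r}dL_{r}$. Multiplying $d\Xi=dX+d\tilde{A}-d\mathcal{M}$ by $e^{qL_{r}}$, integrating from $t$ to $s$, and substituting this identity for the boundary term, one checks termwise that $\int_{t}^{s}e^{qL_{r}}d\Xi_{r}$ is precisely the difference of the right-- and left--hand sides of (\ref{def1-b}); here the weighted martingale integral $\int_{t}^{s}e^{qL_{r}}d\mathcal{M}_{r}$ reproduces exactly the $dB$--term of (\ref{def1-b}), while differentiating the boundary factor $e^{qL}X$ produces the new term $q\int_{t}^{s}e^{qL_{r}}\left(\Gamma_{r}\right)^{q}dL_{r}$. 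Consequently (\ref{def1-b}) is equivalent to $\int_{t}^{s}e^{qL_{r}}d\Xi_{r}\geq0$ for all $0\leq t\leq s$.

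It then remains to combine the two reformulations. Because $e^{qL_{r}}>0$, nonnegativity of the measure $d\Xi_{r}$ is equivalent to nonnegativity of $e^{qL_{r}}d\Xi_{r}$: from (\ref{def1}) I multiply the nonnegative measure $d\Xi$ by $e^{qL}$ to get (\ref{def1-b}), and conversely I divide by $e^{qL}$ (equivalently, rerun the computation with the weight $e^{-qL}$, which is again continuous of bounded variation) to recover (\ref{def1}). The main point requiring care --- and the only genuine obstacle --- is the measure--theoretic reading: one must legitimately pass from \textquotedblleft the inequality holds $\mathbb{P}$--a.s.\ for every $0\leq t\leq s$\textquotedblright\ to \textquotedblleft $d\Xi$ is a nonnegative measure.\textquotedblright\ This uses the continuity in $\left(t,s\right)$ of all terms (so that a single exceptional null set suffices, by extending from rational endpoints) and the structural fact that a nondecreasing continuous semimartingale has vanishing martingale part, which is exactly what guarantees that $X=\left(\Gamma\right)^{q}$ is a semimartingale and hence that the product--rule manipulation above is justified to begin with.
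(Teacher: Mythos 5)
Your proof is correct and takes essentially the same route as the paper: both recast (\ref{def1}) as the statement that the process $\Xi=\left(\Gamma\right)^{q}+\tilde{A}-\mathcal{M}$ (the paper's $\left(\Gamma\right)^{q}-\Lambda$) is nondecreasing, apply the correction-free product rule with the continuous finite-variation weight $e^{qL}$ (and $e^{-qL}$ for the converse), and use the strict positivity of the exponential to transfer nonnegativity of increments in both directions. Your measure-theoretic framing ($d\Xi\geq0$ iff $e^{qL_{r}}d\Xi_{r}\geq0$) is exactly what the paper's two displayed computations with $\Lambda$ and $\tilde{\Lambda}$ implement concretely, so the difference is purely presentational.
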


\begin{proof}
Let $T>0$ be arbitrary and $0\leq t\leq s\leq T.$ Let $M\in\mathcal{V}_{m}%
^{0}$ of the form (\ref{def_M}) be such that
\[%
{\displaystyle\int_{0}^{T}}
{\Psi}\left(  r,M_{r}\right)  dQ_{r}<\infty,\quad\mathbb{P}\text{--a.s..}%
\]
\noindent(\ref{def1})\ $\Longrightarrow\;$(\ref{def1-b}):

We remark that the stochastic process%
\begin{align*}
\Lambda_{t}  &  \xlongequal{\hspace{-4pt}{\rm def}\hspace{-4pt}}\dfrac
{q\left(  q-1\right)  }{2}\int_{0}^{t}{\left(  \Gamma_{r}\right)  ^{q-2}%
}\left\vert R_{r}-Z_{r}\right\vert ^{2}dr+{q\int_{0}^{t}\left(  \Gamma
_{r}\right)  ^{q-2}\Psi}\left(  r,Y_{r}\right)  dQ_{r}\\
&  \quad-{q\int_{0}^{t}\left(  \Gamma_{r}\right)  ^{q-2}\Psi}\left(
r,M_{r}\right)  dQ_{r}-q\int_{0}^{t}{\left(  \Gamma_{r}\right)  ^{q-2}}\langle
M_{r}-Y_{r},N_{r}-H\left(  r,Y_{r},Z_{r}\right)  \rangle dQ_{r}\\
&  \quad+q\int_{0}^{t}{\left(  \Gamma_{r}\right)  ^{q-2}}\,\langle M_{r}%
-Y_{r},\left(  R_{r}-Z_{r}\right)  dB_{r}\rangle
\end{align*}
is a locally semimartingale, and from (\ref{def1}) it follows that%
\[
t\mapsto\left(  \Gamma_{t}\right)  ^{q}-\Lambda_{t}%
\]
is a continuous nondecreasing stochastic process.

Therefore, $\Gamma^{q}=\left[  \Gamma^{q}-\Lambda\right]  +\Lambda$ is a
locally semimartingale and, for all $0\leq t\leq s\leq T,$%
\begin{align*}
e^{qL_{s}}\left(  \Gamma_{s}\right)  ^{q}-e^{qL_{t}}\left(  \Gamma_{t}\right)
^{q}  &  =%
{\displaystyle\int_{t}^{s}}
d\left[  e^{qL_{r}}\left(  \Gamma_{r}\right)  ^{q}\right] \\
&  =q%
{\displaystyle\int_{t}^{s}}
e^{qL_{r}}\left(  \Gamma_{r}\right)  ^{q}dL_{r}+%
{\displaystyle\int_{t}^{s}}
e^{qL_{r}}d\left[  \left(  \Gamma_{r}\right)  ^{q}-\Lambda_{r}\right]  +%
{\displaystyle\int_{t}^{s}}
e^{qL_{r}}d\Lambda_{r}\\
&  \geq q%
{\displaystyle\int_{t}^{s}}
e^{qL_{r}}\left(  \Gamma_{r}\right)  ^{q}dL_{r}+%
{\displaystyle\int_{t}^{s}}
e^{qL_{r}}d\Lambda_{r}\,.
\end{align*}
which clearly yields (\ref{def1-b}).$\medskip$

The implication (\ref{def1-b})\ $\Longrightarrow\;$(\ref{def1}) is proved in
the same manner. Let%
\begin{align*}
\tilde{\Lambda}_{t}  &  =q\int_{0}^{t}e^{qL_{r}}\left(  \Gamma_{r}\right)
^{q}dL_{r}+\dfrac{q\left(  q-1\right)  }{2}\int_{0}^{t}e^{qL_{r}}{\left(
\Gamma_{r}\right)  ^{q-2}}\left\vert R_{r}-Z_{r}\right\vert ^{2}dr+{q\int
_{0}^{t}e^{qL_{r}}\left(  \Gamma_{r}\right)  ^{q-2}\Psi}\left(  r,Y_{r}%
\right)  dQ_{r}\\
&  \quad-{q\int_{0}^{t}e^{qL_{r}}\left(  \Gamma_{r}\right)  ^{q-2}\Psi}\left(
r,M_{r}\right)  dQ_{r}-q\int_{0}^{t}e^{qL_{r}}{\left(  \Gamma_{r}\right)
^{q-2}}\langle M_{r}-Y_{r},N_{r}-H\left(  r,Y_{r},Z_{r}\right)  \rangle
dQ_{r}\\
&  \quad+q\int_{0}^{t}e^{qL_{r}}{\left(  \Gamma_{r}\right)  ^{q-2}}\,\langle
M_{r}-Y_{r},\left(  R_{r}-Z_{r}\right)  dB_{r}\rangle.
\end{align*}
Then $t\mapsto\left(  e^{L_{t}}\Gamma_{t}\right)  ^{q}-\tilde{\Lambda}_{t}$ is
a continuous nondecreasing stochastic process and%
\begin{align*}
\left(  \Gamma_{s}\right)  ^{q}-\left(  \Gamma_{t}\right)  ^{q}  &  =%
{\displaystyle\int_{t}^{s}}
d\left[  e^{-qL_{r}}\left(  e^{L_{r}}\Gamma_{r}\right)  ^{q}\right] \\
&  =-q%
{\displaystyle\int_{t}^{s}}
e^{-qL_{r}}\left(  e^{L_{r}}\Gamma_{r}\right)  ^{q}dL_{r}+%
{\displaystyle\int_{t}^{s}}
e^{-qL_{r}}d\left[  \left(  e^{L_{r}}\Gamma_{r}\right)  ^{q}-\tilde{\Lambda
}_{r}\right]  +%
{\displaystyle\int_{t}^{s}}
e^{-qL_{r}}d\tilde{\Lambda}_{r}\\
&  \geq-q%
{\displaystyle\int_{t}^{s}}
\left(  \Gamma_{r}\right)  ^{q}dL_{r}+%
{\displaystyle\int_{t}^{s}}
e^{-qL_{r}}d\tilde{\Lambda}_{r}\,.
\end{align*}
\hfill
\end{proof}

\begin{remark}
In the following results we will often use the continuous bounded variation
p.m.s.p. $\left\{  V_{t}:t\geq0\right\}  $ given by (\ref{defV_1}) in the
place of $\left\{  L_{t}:t\geq0\right\}  .$
\end{remark}

\begin{proposition}
\label{prop2-exp-exp}Let $\left(  Y_{t},Z_{t}\right)  _{t\geq0}$ a $L^{p}%
$--variational solution in the sense of Definition
\ref{definition_weak solution} and $q=p\wedge2$, and $M\in\mathcal{V}_{m}^{q}$
of the form (\ref{def_M}). Then
\[
t\mapsto%
{\displaystyle\int_{0}^{t}}
{e^{qV_{r}}}\left(  \Gamma_{r}\right)  ^{q-2}\langle M_{r}-Y_{r},\left(
R_{r}-Z_{r}\right)  dB_{r}\rangle
\]
is a continuous martingale.

If moreover
\[
\mathbb{E}\left(
{\displaystyle\int_{0}^{T}}
{e^{V_{r}}}\left\vert H\left(  r,Y_{r},Z_{r}\right)  \right\vert
dQ_{r}\right)  ^{p\wedge2}<\infty,
\]
then, for all $T\geq0,$ $M\in\mathcal{V}_{m}^{q}$ of the form (\ref{def_M}),
and for all stopping times $0\leq\sigma\leq\theta\leq T:$%
\begin{equation}%
\begin{array}
[c]{l}%
\displaystyle e^{qV_{\sigma}}\left(  \Gamma_{\sigma}\right)  ^{q}%
+q\,\mathbb{E}^{\mathcal{F}_{\sigma}}%
{\displaystyle\int_{\sigma}^{\theta}}
e^{qV_{r}}\left(  \Gamma_{r}\right)  ^{q}dV_{r}+\dfrac{q\left(  q-1\right)
}{2}\,\mathbb{E}^{\mathcal{F}_{\sigma}}%
{\displaystyle\int_{\sigma}^{\theta}}
e^{qV_{r}}{\left(  \Gamma_{r}\right)  ^{q-2}}\left\vert R_{r}-Z_{r}\right\vert
^{2}dr\medskip\\
\displaystyle\quad+{q\,\mathbb{E}^{\mathcal{F}_{\sigma}}%
{\displaystyle\int_{\sigma}^{\theta}}
e^{pV_{r}}\left(  \Gamma_{r}\right)  ^{q-2}\Psi}\left(  r,Y_{r}\right)
dQ_{r}\medskip\\
\displaystyle\leq\mathbb{E}^{\mathcal{F}_{\sigma}}e^{qV_{\theta}}{\left(
\Gamma_{\theta}\right)  ^{q}}+{q\,\mathbb{E}^{\mathcal{F}_{\sigma}}%
{\displaystyle\int_{\sigma}^{\theta}}
e^{qV_{r}}\left(  \Gamma_{r}\right)  ^{q-2}\Psi}\left(  r,M_{r}\right)
dQ_{r}\medskip\\
\displaystyle\quad+{q\,\mathbb{E}^{\mathcal{F}_{\sigma}}%
{\displaystyle\int_{\sigma}^{\theta}}
e^{qV_{r}}\,\left(  \Gamma_{r}\right)  ^{q-2}}\langle M_{r}-Y_{r}%
,N_{r}-H\left(  r,Y_{r},Z_{r}\right)  \rangle dQ_{r},\quad\mathbb{P}%
\text{--a.s..}%
\end{array}
\label{def1d}%
\end{equation}

\end{proposition}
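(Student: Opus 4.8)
The plan is to prove the martingale claim first, by a Burkholder--Davis--Gundy estimate, and then to obtain the conditional--expectation inequality (\ref{def1d}) by applying Proposition \ref{p1-exp} with $L=V$ and passing from deterministic times to the stopping times $\sigma\le\theta$.

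For the martingale property, denote by $I_t$ the stochastic integral in the statement. Using $(\Gamma_r)^{q-2}\left\vert M_r-Y_r\right\vert\le(\Gamma_r)^{q-1}$ (which holds since $\left\vert M_r-Y_r\right\vert\le\Gamma_r$) and the factorisation $e^{2qV_r}(\Gamma_r)^{2(q-1)}=(e^{V_r}\Gamma_r)^{2(q-1)}e^{2V_r}$, its quadratic variation is dominated by
\[
\langle I\rangle_T\le\int_0^T e^{2qV_r}(\Gamma_r)^{2(q-2)}\left\vert M_r-Y_r\right\vert^2\left\vert R_r-Z_r\right\vert^2\,dr\le\Big(\sup_{r\in[0,T]}e^{V_r}\Gamma_r\Big)^{2(q-1)}\int_0^T e^{2V_r}\left\vert R_r-Z_r\right\vert^2\,dr.
\]
Taking square roots and applying Hölder's inequality with the conjugate exponents $q/(q-1)$ and $q$ yields
\[
\mathbb{E}\,\langle I\rangle_T^{1/2}\le\Big(\mathbb{E}\sup_{r\in[0,T]}(e^{V_r}\Gamma_r)^q\Big)^{(q-1)/q}\Big(\mathbb{E}\Big(\int_0^T e^{2V_r}\left\vert R_r-Z_r\right\vert^2\,dr\Big)^{q/2}\Big)^{1/q}.
\]
The first factor is finite because $\Gamma_r\le\left\vert M_r\right\vert+\left\vert Y_r\right\vert+1$, so it suffices to combine (\ref{def_M_2}) for $M$, (\ref{def0-1}) for $Y$ on $[0,\tau]$ (with $Y=\xi$ beyond $\tau$ and $\eta\in L^p$), and (\ref{exp-VT}); the second factor is finite by (\ref{def_M_3}) for $R$ together with (\ref{def0-2}) for $Z$, recalling $q=p\wedge2\le p$. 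Hence $\mathbb{E}\,\langle I\rangle_T^{1/2}<\infty$ for every $T>0$, so $I$ is a genuine continuous martingale.

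Next I apply Proposition \ref{p1-exp} with $L_t=V_t$; note that $\tfrac{q}{2}n_q=\tfrac{q(q-1)}{2}$ since $n_q=q-1$ for $q\in[1,2]$, so (\ref{def1-b}) is available. From the proof of that proposition the process $e^{qV_t}(\Gamma_t)^q-\tilde\Lambda_t$ is continuous and nondecreasing, where $\tilde\Lambda_t$ splits as $\tilde\Lambda_t=\mathcal{M}_t+\mathrm{FV}_t$ with $\mathcal{M}_t:=q\int_0^t e^{qV_r}(\Gamma_r)^{q-2}\langle M_r-Y_r,(R_r-Z_r)dB_r\rangle$ the martingale just shown to be genuine, and $\mathrm{FV}_t$ the finite--variation part collecting the $dV_r$, the $\left\vert R_r-Z_r\right\vert^2dr$, the $\Psi(r,Y_r)$, the $\Psi(r,M_r)$ and the $\langle M_r-Y_r,N_r-H\rangle$ integrals. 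Evaluating the monotonicity between the two bounded stopping times $\sigma\le\theta\le T$ gives, almost surely,
\[
e^{qV_\sigma}(\Gamma_\sigma)^q\le e^{qV_\theta}(\Gamma_\theta)^q-(\mathcal{M}_\theta-\mathcal{M}_\sigma)-(\mathrm{FV}_\theta-\mathrm{FV}_\sigma).
\]

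It remains to take $\mathbb{E}^{\mathcal{F}_\sigma}$ and rearrange. The extra hypothesis $\mathbb{E}\big(\int_0^T e^{V_r}\left\vert H(r,Y_r,Z_r)\right\vert dQ_r\big)^{p\wedge2}<\infty$, together with $M\in\mathcal{V}_m^q$ (which controls the $N_r$ contribution through (\ref{def_M_3})) and the moment bounds from the first part, makes every integral in $\mathrm{FV}$ and the terminal term $e^{qV_\theta}(\Gamma_\theta)^q$ integrable: for the $H$ and $N$ terms one bounds $e^{qV_r}(\Gamma_r)^{q-2}\left\vert M_r-Y_r\right\vert\le(e^{V_r}\Gamma_r)^{q-1}e^{V_r}$ and uses the same Hölder split as above, for the $\Psi(r,M_r)$ term one restricts to test processes $M$ with $\int_0^T\Psi(r,M_r)\,dQ_r<\infty$ (the remaining case being trivial) and uses that $(\Gamma_r)^{q-2}$ is bounded, and for the terminal term one uses $\mathbb{E}\sup_r e^{qV_r}(\Gamma_r)^q<\infty$. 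Because $\mathcal{M}$ is a true martingale and $\sigma\le\theta\le T$ are bounded, optional sampling gives $\mathbb{E}^{\mathcal{F}_\sigma}(\mathcal{M}_\theta-\mathcal{M}_\sigma)=0$, so the stochastic--integral contribution drops out; moving the negative $\Psi(r,M_r)$ and $\langle M_r-Y_r,N_r-H\rangle$ terms to the right-hand side then produces exactly (\ref{def1d}). The main obstacle is the integrability bookkeeping underlying the first part: it is precisely the martingale property of $\mathcal{M}$ that legitimises the clean optional--sampling step, while everything else is rearrangement within the already-established monotone decomposition.
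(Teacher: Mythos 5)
Your proposal is correct and follows essentially the same route as the paper: the martingale property via the bound $\left(\Gamma_r\right)^{q-2}\left\vert M_r-Y_r\right\vert\leq\left(\Gamma_r\right)^{q-1}$ and H\"older's inequality with exponents $q/(q-1)$ and $q$ against (\ref{def_M_2}), (\ref{def_M_3}), (\ref{def0-1}), (\ref{def0-2}) and (\ref{exp-VT}), then the same H\"older split for the drift terms, and finally Proposition \ref{p1-exp} with $L=V$ plus optional sampling to obtain (\ref{def1d}). Your added detail on the monotone decomposition and the integrability bookkeeping only makes explicit what the paper compresses into ``inequality (\ref{def1-b}) with $L=V$ yields (\ref{def1d})''.
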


\begin{proof}
We have%
\[%
\begin{array}
[c]{l}%
\displaystyle\mathbb{E}\left[
{\displaystyle\int_{0}^{T}}
{e^{2qV_{r}}}\left(  \Gamma_{r}\right)  ^{2q-4}\,\left\vert M_{r}%
-Y_{r}\right\vert ^{2}\left\vert R_{r}-Z_{r}\right\vert ^{2}dr\right]
^{1/2}\leq\mathbb{E}\left[
{\displaystyle\int_{0}^{T}}
{e^{2qV_{r}}}\left(  \Gamma_{r}\right)  ^{2q-2}\,\left\vert R_{r}%
-Z_{r}\right\vert ^{2}dr\right]  ^{1/2}\medskip\\
\displaystyle\leq\mathbb{E}\left[  \sup\nolimits_{r\in\left[  0,T\right]
}\Big({e^{\left(  q-1\right)  V_{r}}}\big(\left\vert M_{r}-Y_{r}\right\vert
^{2}+\delta_{q}\big)^{\left(  q-1\right)  /2}\Big)\cdot\left(
{\displaystyle\int_{0}^{T}}
{e^{2V_{r}}}\left\vert R_{r}-Z_{r}\right\vert ^{2}dr\right)  ^{1/2}\right]
\medskip\\
\displaystyle\leq\left[  \mathbb{E}\left(  \sup\nolimits_{r\in\left[
0,T\right]  }{e^{qV_{r}}}\left(  \left\vert M_{r}-Y_{r}\right\vert ^{2}%
+\delta_{q}\right)  ^{q/2}\right)  \right]  ^{\left(  q-1\right)  /q}\left[
\mathbb{E}\left(
{\displaystyle\int_{0}^{T}}
{e^{2V_{r}}}\left\vert R_{r}-Z_{r}\right\vert ^{2}dr\right)  ^{q/2}\right]
^{1/q}\medskip\\
\displaystyle<\infty,
\end{array}
\]
since, from (\ref{exp-VT}),%
\[
\left(  \delta_{q}\right)  ^{q/2}\,\mathbb{E}\left(  \sup\nolimits_{r\in
\left[  0,T\right]  }{e^{qV_{r}}}\right)  <\infty,\quad\text{for all }T>0
\]
and inequalities (\ref{def_M_2}),(\ref{def_M_3}), (\ref{def0-1}) and
(\ref{def0-2}) hold.

Consequently, the stochastic integral $t\mapsto%
{\displaystyle\int_{0}^{t}}
{e^{qV_{r}}}\left(  \Gamma_{r}\right)  ^{q-2}\langle M_{r}-Y_{r},\left(
R_{r}-Z_{r}\right)  dB_{r}\rangle$ is a continuous martingale.

We also have%
\begin{align*}
&  \mathbb{E}%
{\displaystyle\int_{0}^{T}}
e^{qV_{r}}\left(  \Gamma_{r}\right)  ^{q-2}\left\vert \langle M_{r}%
-Y_{r},N_{r}-H\left(  r,Y_{r},Z_{r}\right)  \rangle\right\vert dQ_{r}\\
&  \leq\mathbb{E}%
{\displaystyle\int_{0}^{T}}
e^{qV_{r}}\left(  \Gamma_{r}\right)  ^{q-1}\left[  \left\vert N_{r}\right\vert
+\left\vert H\left(  r,Y_{r},Z_{r}\right)  \right\vert \right]  dQ_{r}\\
&  \leq\mathbb{E}\left[  \sup\nolimits_{r\in\left[  0,T\right]  }%
\Big({e^{\left(  q-1\right)  V_{r}}}\left(  \left\vert M_{r}-Y_{r}\right\vert
^{2}+\delta_{q}\right)  ^{\left(  q-1\right)  /2}\Big)\cdot\left(
{\displaystyle\int_{0}^{T}}
{e^{V_{r}}}\left[  \left\vert N_{r}\right\vert +\left\vert H\left(
r,Y_{r},Z_{r}\right)  \right\vert \right]  dQ_{r}\right)  \right] \\
&  \leq\left[  \mathbb{E}\left(  \sup\nolimits_{r\in\left[  0,T\right]
}{e^{qV_{r}}}\left(  \left\vert M_{r}-Y_{r}\right\vert ^{2}+\delta_{q}\right)
^{q/2}\right)  \right]  ^{\left(  q-1\right)  /q}\left[  \mathbb{E}\left(
{\displaystyle\int_{0}^{T}}
{e^{V_{r}}}\left[  \left\vert N_{r}\right\vert +\left\vert H\left(
r,Y_{r},Z_{r}\right)  \right\vert \right]  dQ_{r}dr\right)  ^{q}\right]
^{1/q}\\
&  <\infty.
\end{align*}
Hence, using inequality (\ref{def1-b}) with $L=V,$ inequality (\ref{def1d})
follows.\hfill
\end{proof}

\begin{remark}
\label{s-w}From Section \ref{intuitive_intr} (see the proof of inequality
(\ref{def1b})) we see that a strong solution $\left(  Y,Z\right)  \in
S_{m}^{0}\times\Lambda_{m\times k}^{0}$ of BSDE (\ref{GBSVI 2}), such that
(\ref{def0-1}), (\ref{def0-2}) and (\ref{def2}) are satisfied, is also an
$L^{p}$--variational solution. Conversely, we have:
\end{remark}

\begin{corollary}
\label{c1-weak sol to strong sol}If $\left(  Y,Z\right)  $ is an $L^{p}%
$--variational solution of BSDE (\ref{GBSVI 2}) with $\varphi=\psi=0$, $V$ is
a continuous nondecreasing process and
\[
\mathbb{E}\left(
{\displaystyle\int_{0}^{T}}
{e^{V_{r}}}\left\vert H\left(  r,Y_{r},Z_{r}\right)  \right\vert
dQ_{r}\right)  ^{p\wedge2}<\infty,\quad\text{for all }T>0,
\]
then $\left(  Y,Z\right)  $ is a strong solution of BSDE (\ref{GBSVI 2}).
\end{corollary}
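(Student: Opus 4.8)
The plan is to exploit that $\varphi=\psi=0$ forces $\Psi\equiv0$, so the subdifferential term vanishes and the variational inequality (\ref{def1}) must already encode the genuine equation. Since we cannot simply insert $M=Y$ into (\ref{def1}) (we do not yet know $Y$ is a semimartingale of the form (\ref{def_M})), I would first \emph{build} a comparison process: freeze the generator along the given solution by setting $N_r:=H(r,Y_r,Z_r)$ and define $(\bar Y,\bar Z)$ as the unique solution of the linear backward equation
\[
\bar Y_t=\eta+\int_{t\wedge\tau}^{\tau}N_r\,dQ_r-\int_{t\wedge\tau}^{\tau}\bar Z_r\,dB_r,\qquad t\ge0,
\]
obtained from the martingale representation applied to $\eta+\int_{0}^{\tau}N_r\,dQ_r$. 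The hypothesis $\mathbb{E}\big(\int_0^T e^{V_r}|H(r,Y_r,Z_r)|\,dQ_r\big)^{p\wedge2}<\infty$ together with the a priori estimates of the Appendix (following \cite{pa-ra/14}) guarantees that $\bar Y$ admits the representation (\ref{def_M}) with the deterministic initial value $\bar Y_0$ (recall $\mathcal F_0$ is trivial), that (\ref{def_M_2})--(\ref{def_M_3}) hold, and hence that $\bar Y\in\mathcal V_m^{q}$ with $q=p\wedge2$.

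Next I would plug $M=\bar Y$ (so $N_r=H(r,Y_r,Z_r)$ and $R_r=\bar Z_r$) into the conditional inequality (\ref{def1d}) of Proposition \ref{prop2-exp-exp}, which applies precisely under the extra integrability hypothesis. Because $N_r-H(r,Y_r,Z_r)\equiv0$ and $\Psi\equiv0$, the entire right-hand drift disappears, and discarding the nonnegative $dV_r$ and $|\bar Z-Z|^2$ terms on the left leaves
\[
e^{qV_\sigma}\big(\Gamma_\sigma\big)^{q}\le\mathbb{E}^{\mathcal F_\sigma}\big[e^{qV_\theta}\big(\Gamma_\theta\big)^{q}\big],\qquad \Gamma_r=\big(|\bar Y_r-Y_r|^2+\delta_q\big)^{1/2},
\]
for all stopping times $\sigma\le\theta$. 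Letting $\theta\to\infty$ is the crucial passage: for $\theta\ge\tau$ both $Y_\theta$ and $\bar Y_\theta$ equal $\eta$ and $V$ is frozen, while on $\{\tau=\infty\}$ I would use $\bar Y_\theta-\xi_\theta\to0$ (since $\theta\wedge\tau\to\tau$) combined with the convergence (\ref{def2}) for $Y$ to obtain $\bar Y_\theta-Y_\theta\to0$; a uniform integrability argument based on $\bar Y\in\mathcal V_m^q$, (\ref{def0-1}) and (\ref{def0-2}) then gives $\mathbb{E}^{\mathcal F_\sigma}[e^{qV_\theta}(\Gamma_\theta)^q]\to\delta_q^{q/2}\,\mathbb{E}^{\mathcal F_\sigma}e^{qV_\tau}$.

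For $p\ge2$ one has $q=2$, $\delta_q=0$, and the limit is $0$, so $|\bar Y_\sigma-Y_\sigma|=0$ at once. For $1<p<2$ one has $q=p$, $\delta_q=\delta\in(0,1]$, giving $e^{pV_\sigma}(|\bar Y_\sigma-Y_\sigma|^2+\delta)^{p/2}\le\delta^{p/2}\,\mathbb{E}^{\mathcal F_\sigma}e^{pV_\tau}$, and letting $\delta\downarrow0$ forces $Y_\sigma=\bar Y_\sigma$. Taking $\sigma=t$ yields $Y\equiv\bar Y$. To then identify $Z$, I would return to (\ref{def1d}) with $\sigma=0$ \emph{before} discarding the quadratic term; once $Y=\bar Y$ we have $\Gamma_r\equiv\delta^{1/2}$, so the surviving inequality reads $\frac{q(q-1)}2\,\delta^{(q-2)/2}\mathbb{E}\int_0^\theta e^{qV_r}|\bar Z_r-Z_r|^2\,dr\le\delta^{q/2}\big(\mathbb{E}e^{qV_\theta}-1\big)$, and dividing by $\delta^{(q-2)/2}$ and sending $\delta\downarrow0$ gives $Z=\bar Z$. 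Hence $(Y,Z)=(\bar Y,\bar Z)$ solves (\ref{GBSVI 2}) in the strong sense (with $K\equiv0$).

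The step I expect to be the main obstacle is the limit $\theta\to\infty$ under the conditional expectation: one must simultaneously exploit the stabilization of the data past $\tau$, the mode-of-convergence statement (\ref{def2}), and enough integrability to pass the limit inside $\mathbb{E}^{\mathcal F_\sigma}$ rather than merely invoking Fatou. This is exactly where the auxiliary hypothesis on $H$, the membership $\bar Y\in\mathcal V_m^q$, and the defining bounds (\ref{def0-1})--(\ref{def0-2}) are needed, and it is the reason the definition requires (\ref{def1}) also for $q=2$ in the subquadratic range.
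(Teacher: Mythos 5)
There is a genuine gap, and it sits exactly where you yourself flagged the main obstacle: your comparison process cannot even be constructed under the stated hypothesis. You anchor $(\bar Y,\bar Z)$ at the pair $(\tau,\eta)$, applying martingale representation to $\eta+\int_0^\tau H(r,Y_r,Z_r)\,dQ_r$. But the corollary assumes only $\mathbb{E}\big(\int_0^T e^{V_r}|H(r,Y_r,Z_r)|\,dQ_r\big)^{p\wedge2}<\infty$ for \emph{deterministic finite} $T$; since $\tau$ is an arbitrary stopping time with values in $[0,\infty]$, the random variable $\int_0^\tau |H(r,Y_r,Z_r)|\,dQ_r$ is not dominated by any of these quantities, need not be finite, let alone $q$-integrable, so neither the representation nor the membership $\bar Y\in\mathcal{V}_m^{q}$ can be established. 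The same lack of global control dooms the passage $\theta\to\infty$: you would need integrability of $e^{qV_\tau}$ and uniform integrability of $e^{qV_\theta}(\Gamma_\theta)^q$, and nothing in the assumptions (which, unlike Theorem \ref{t3-random}, contain no condition of the type (\ref{t3-ip1}) or (\ref{t3-ip4})) provides this.

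The paper avoids both problems by anchoring at $(T,Y_T)$ instead of $(\tau,\eta)$: by \cite[Theorem 2.42, Corollary 2.45]{pa-ra/14} one solves, on the deterministic interval $[0,T]$, the BSDE $M_t=Y_T+\int_t^T H(r,Y_r,Z_r)\,dQ_r-\int_t^T R_r\,dB_r$, whose solvability requires precisely the assumed finite-horizon integrability and yields $(M,R)\in S_m^q[0,T]\times\Lambda_{m\times k}^q(0,T)$ with $q=p\wedge2$. Then $M_T=Y_T$ exactly, so $\Gamma_T=\delta_q^{1/2}$, and the conditional inequality (\ref{def1d}) of Proposition \ref{prop2-exp-exp} (the drift vanishes, $\Psi=0$, $V$ nondecreasing) gives at once
\[
e^{qV_t}\left(\Gamma_t\right)^q+\tfrac{q}{2}\,n_q\,\mathbb{E}^{\mathcal{F}_{t}}\int_t^T e^{qV_r}\,\frac{\left\vert R_r-Z_r\right\vert^2}{\big(\left\vert M_r-Y_r\right\vert^2+1\big)^{(2-q)/2}}\,dr\;\le\;\left(\delta_q\right)^{q/2}\,\mathbb{E}^{\mathcal{F}_{t}}e^{qV_T},
\]
and letting $\delta\downarrow0$ identifies both $M=Y$ and $R=Z$ on $[0,T]$ in one stroke --- no limit in $\theta$, no $\delta^{1-q/2}$ rescaling. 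Since the strong-solution notion of Section \ref{intuitive_intr} only requires the equation on each $[0,T]$ with terminal datum $Y_T$, the asymptotic condition being already (\ref{def2}) of the variational definition, this finishes the proof. Your remaining steps (vanishing drift, discarding the $dV_r$ term, the Fatou/$\delta\downarrow0$ argument) do mirror the paper's computation; the fatal flaw is solely the choice of anchor point.
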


\begin{proof}
By \cite[Theorem 2.42, Corollary 2.45]{pa-ra/14} there exists a unique
$\left(  M,R\right)  \in S_{m}^{q}\left[  0,T\right]  \times\Lambda_{m\times
k}^{q}\left(  0,T\right)  $ such that%
\[
M_{t}=Y_{T}+%
{\displaystyle\int_{t}^{T}}
H\left(  r,Y_{r},Z_{r}\right)  dQ_{r}-%
{\displaystyle\int_{t}^{T}}
R_{r}dB_{r}%
\]
and%
\[
\mathbb{E}\sup\nolimits_{r\in\left[  0,T\right]  }{e^{qV_{r}}}\left\vert
M_{r}\right\vert ^{q}+\mathbb{E}\left(
{\displaystyle\int_{0}^{T}}
e^{2V_{r}}\left\vert R_{r}\right\vert ^{2}dr\right)  ^{\frac{q}{2}}\leq
C_{q}\,\mathbb{E}\left[  e^{qV_{T}}\left\vert Y_{T}\right\vert ^{q}+\left(
{\displaystyle\int_{0}^{T}}
{e^{V_{r}}}\left\vert H\left(  r,Y_{r},Z_{r}\right)  \right\vert
dQ_{r}\right)  ^{q}\right]  .
\]
with $q=p\wedge2.$ With this $M$ inequality (\ref{def1-b}) becomes (since
${\Psi=0}$) $\mathbb{P}$--a.s.%
\[%
\begin{array}
[c]{l}%
e^{qV_{t}}\left(  \Gamma_{t}\right)  ^{q}+q\,%
{\displaystyle\int_{t}^{s}}
e^{qV_{r}}\left(  \Gamma_{r}\right)  ^{q}dV_{r}+\dfrac{q}{2}\,n_{q}\,%
{\displaystyle\int_{t}^{s}}
{e^{qV_{r}}\left(  \Gamma_{r}\right)  ^{q-2}}\left\vert R_{r}-Z_{r}\right\vert
^{2}dr\medskip\\
\leq e^{qV_{s}}\left(  \Gamma_{s}\right)  ^{q}-q\,{%
{\displaystyle\int_{t}^{s}}
e^{qV_{r}}}\left(  \Gamma_{r}\right)  ^{q-2}\langle M_{r}-Y_{r},\left(
R_{r}-Z_{r}\right)  dB_{r}\rangle,
\end{array}
\]
for any $\delta\in(0,1]$ and $0\leq t\leq s<\infty.$

By Proposition \ref{prop2-exp-exp}, the stochastic integral is a martingale
and therefore we obtain, using the last inequality with $s=T,$ for all $0\leq
t\leq T,$ $\mathbb{P}$--a.s.%
\begin{equation}
e^{qV_{t}}\left(  \Gamma_{t}\right)  ^{q}+\dfrac{q}{2}\,n_{q}\,\mathbb{E}%
^{\mathcal{F}_{t}}{%
{\displaystyle\int_{t}^{T}}
}e^{qV_{r}}\,\frac{1}{\big(\left\vert M_{r}-Y_{r}\right\vert ^{2}%
+1\big)^{\left(  2-q\right)  /2}}\,\left\vert R_{r}-Z_{r}\right\vert
^{2}dr\leq\left(  \delta_{q}\right)  ^{q/2}\,\mathbb{E}^{\mathcal{F}_{t}%
}e^{qV_{T}}, \label{w-to-s}%
\end{equation}
since $M_{T}=Y_{T}$ and $V$ is nondecreasing.

Passing to limit as $\delta\rightarrow0_{+}$ we obtain, by Fatou's Lemma, for
all $0\leq t\leq T,$ $\mathbb{P}$--a.s.%
\[
e^{qV_{t}}\left\vert M_{t}-Y_{t}\right\vert ^{q}+\dfrac{q\left(  q-1\right)
}{2}\,\mathbb{E}^{\mathcal{F}_{t}}{%
{\displaystyle\int_{t}^{T}}
}e^{qV_{r}}\,\frac{1}{\big(\left\vert M_{r}-Y_{r}\right\vert ^{2}%
+1\big)^{\left(  2-q\right)  /2}}\,\left\vert R_{r}-Z_{r}\right\vert
^{2}dr=0,
\]
which clearly yields $\left(  M,R\right)  =\left(  Y,Z\right)  $ in $S_{m}%
^{0}\left[  0,T\right]  \times\Lambda_{m\times k}^{q}\left(  0,T\right)  ,$
hence%
\[
Y_{t}=Y_{T}+%
{\displaystyle\int_{t}^{T}}
H\left(  r,Y_{r},Z_{r}\right)  dQ_{r}-%
{\displaystyle\int_{t}^{T}}
Z_{r}dB_{r}\,.
\]
\hfill
\end{proof}

\begin{proposition}
\label{p-estim}Let $M\in\mathcal{V}_{m}^{0}$ of the form (\ref{def_M}). Let
$Y:\Omega\times\mathbb{R}_{+}\rightarrow\mathbb{R}^{m}$ and $Z:\Omega
\times\mathbb{R}_{+}\rightarrow\mathbb{R}^{m\times k}$ be two p.m.s.p. such
that $Y$ has continuous trajectories and $\mathbb{P}$--a.s.,%
\[%
\begin{array}
[c]{rl}%
\left(  i\right)  &
{\displaystyle\int_{0}^{T}}
{e^{2V_{r}}}\left\vert R_{r}-Z_{r}\right\vert ^{2}dr+%
{\displaystyle\int_{0}^{T}}
e^{2V_{r}}{\Psi}\left(  r,Y_{r}\right)  dQ_{r}<\infty,\quad\text{for all
}T>0,\medskip\\
\left(  ii\right)  & {\Psi}\left(  r,M_{r}\right)  \leq\mathbf{1}_{q\geq
2}{\Psi}\left(  r,M_{r}\right)  ,\medskip\\
\left(  iii\right)  & \left\langle M_{r}-Y_{r},N_{r}\right\rangle dQ_{r}%
\leq\left\vert M_{r}-Y_{r}\right\vert dL_{r}\,,\quad\text{a.e. }r\in\left[
0,T\right]  ,
\end{array}
\]
with $L$ an increasing and continuous p.m.s.p. with $L_{0}=0.\medskip$

\noindent\textbf{I.} If inequality (\ref{def1}) holds for $q=2,$ then, for all
$a>0$ and for any stopping times $0\leq\sigma\leq\theta<\infty,$%
\begin{equation}%
\begin{array}
[c]{l}%
\mathbb{E}^{\mathcal{F}_{\sigma}}\bigg(%
{\displaystyle\int_{\sigma}^{\theta}}
{e^{2V_{r}}}\left\vert R_{r}-Z_{r}\right\vert ^{2}dr\bigg)^{a/2}%
+\mathbb{E}^{\mathcal{F}_{\sigma}}\bigg(%
{\displaystyle\int_{\sigma}^{\theta}}
e^{2V_{r}}{\Psi}\left(  r,Y_{r}\right)  dQ_{r}\bigg)^{a/2}\medskip\\
\leq C_{a,\lambda}\,\mathbb{E}^{\mathcal{F}_{\sigma}}\bigg[\sup\nolimits_{r\in
\left[  \sigma,\theta\right]  }e^{aV_{r}}\left\vert M_{r}-Y_{r}\right\vert
^{a}+\bigg(%
{\displaystyle\int_{\sigma}^{\theta}}
e^{V_{r}}{\Psi}\left(  r,M_{r}\right)  dQ_{r}\bigg)^{a/2}\medskip\\
\quad+\bigg(%
{\displaystyle\int_{\sigma}^{\theta}}
e^{V_{r}}\left\vert M_{r}-Y_{r}\right\vert \left[  dL_{r}+\left\vert H\left(
r,M_{r},R_{r}\right)  \right\vert dQ_{r}\right]  \bigg)^{a/2}\bigg]\medskip\\
\leq2C_{a,\lambda}\,\mathbb{E}^{\mathcal{F}_{\sigma}}\bigg[\sup\nolimits_{r\in
\left[  \sigma,\theta\right]  }e^{aV_{r}}\left\vert M_{r}-Y_{r}\right\vert
^{a}+\bigg(%
{\displaystyle\int_{\sigma}^{\theta}}
e^{V_{r}}{\Psi}\left(  r,M_{r}\right)  dQ_{r}\bigg)^{a/2}\medskip\\
\quad+\bigg(%
{\displaystyle\int_{\sigma}^{\theta}}
e^{V_{r}}\left[  dL_{r}+\left\vert H\left(  r,M_{r},R_{r}\right)  \right\vert
dQ_{r}\right]  \bigg)^{a}\bigg],\quad\mathbb{P}\text{--a.s..}%
\end{array}
\label{def-11}%
\end{equation}
In particular, for $\gamma=0,N=0,R=0$ (hence $M=0$) and $L=0,$ it follows%
\begin{equation}%
\begin{array}
[c]{l}%
\mathbb{E}^{\mathcal{F}_{\sigma}}\bigg(%
{\displaystyle\int_{\sigma}^{\theta}}
{e^{2V_{r}}}\left\vert Z_{r}\right\vert ^{2}dr\bigg)^{a/2}+\mathbb{E}%
^{\mathcal{F}_{\sigma}}\bigg(%
{\displaystyle\int_{\sigma}^{\theta}}
e^{2V_{r}}{\Psi}\left(  r,Y_{r}\right)  dQ_{r}\bigg)^{a/2}\medskip\\
\leq C_{a,\lambda}\,\mathbb{E}^{\mathcal{F}_{\sigma}}\left[  \sup
\nolimits_{r\in\left[  \sigma,\theta\right]  }e^{aV_{r}}\left\vert
Y_{r}\right\vert ^{a}+\bigg(%
{\displaystyle\int_{\sigma}^{\theta}}
e^{V_{r}}\left\vert Y_{r}\right\vert \left\vert H\left(  r,0,0\right)
\right\vert dQ_{r}\bigg)^{a/2}\right]  \medskip\\
\leq2C_{a,\lambda}\,\mathbb{E}^{\mathcal{F}_{\sigma}}\left[  \mathbb{E}%
^{\mathcal{F}_{\sigma}}\sup\nolimits_{r\in\left[  \sigma,\theta\right]
}e^{aV_{r}}\left\vert Y_{r}\right\vert ^{a}+\bigg(%
{\displaystyle\int_{\sigma}^{\theta}}
e^{V_{r}}\left\vert H\left(  r,0,0\right)  \right\vert dQ_{r}\bigg)^{a}%
\right]  ,\quad\mathbb{P}\text{--a.s..}%
\end{array}
\label{def-11aa}%
\end{equation}
\noindent\textbf{II.} If inequality (\ref{def1}) holds and for some fixed
stopping times $0\leq\sigma\leq\theta<\infty,$ $1<q\leq a$
\begin{equation}
\mathbb{E}\left(  \sup\nolimits_{r\in\left[  \sigma,\theta\right]  }%
{e^{aV_{r}}}\left\vert M_{r}-Y_{r}\right\vert ^{a}\right)  <\infty,
\label{def-11a}%
\end{equation}
then%
\begin{equation}%
\begin{array}
[c]{l}%
\mathbb{E}^{\mathcal{F}_{\sigma}}\sup\nolimits_{r\in\left[  \sigma
,\theta\right]  }e^{aV_{r}}\left\vert M_{r}-Y_{r}\right\vert ^{a}\\
\leq C_{\lambda,q,a}\,\mathbb{E}^{\mathcal{F}_{\sigma}}\bigg[e^{aV_{\theta}%
}\left\vert M_{\theta}-Y_{\theta}\right\vert ^{a}+\bigg(%
{\displaystyle\int_{\sigma}^{\theta}}
e^{V_{r}}\left\vert M_{r}-Y_{r}\right\vert ^{q-2}{\Large \,}\mathbf{1}%
_{q\geq2}\,{\Psi}\left(  r,M_{r}\right)  dQ_{r}\bigg)^{a/q}\medskip\\
\quad+\bigg(%
{\displaystyle\int_{\sigma}^{\theta}}
{e^{qV_{r}}}\left\vert M_{r}-Y_{r}\right\vert ^{q-1}\left[  dL_{r}+\left\vert
H\left(  r,M_{r},R_{r}\right)  \right\vert dQ_{r}\right]  \bigg)^{a/q}%
\bigg],\quad\mathbb{P}\text{--a.s.}%
\end{array}
\label{def-11b}%
\end{equation}
and%
\begin{equation}%
\begin{array}
[c]{l}%
\mathbb{E}^{\mathcal{F}_{\sigma}}\Big(\sup\nolimits_{r\in\left[  \sigma
,\theta\right]  }e^{aV_{r}}\left\vert M_{r}-Y_{r}\right\vert ^{a}%
\Big)+\mathbb{E}^{\mathcal{F}_{\sigma}}\bigg(%
{\displaystyle\int_{\sigma}^{\theta}}
{e^{qV_{r}}\left\vert M_{r}-Y_{r}\right\vert ^{q-2}}{\Large \,}\mathbf{1}%
_{M_{r}\neq Y_{r}}\,\left\vert R_{r}-Z_{r}\right\vert ^{2}dr\bigg)^{a/q}%
\medskip\\
\quad+\mathbb{E}^{\mathcal{F}_{\sigma}}\bigg(%
{\displaystyle\int_{\sigma}^{\theta}}
{e^{qV_{r}}\left\vert M_{r}-Y_{r}\right\vert ^{q-2}{\Large \,}\mathbf{1}%
_{M_{r}\neq Y_{r}}\,\Psi}\left(  r,Y_{r}\right)  dQ_{r}\bigg)^{a/q}\medskip\\
\leq C_{\lambda,q,a}\,\mathbb{E}^{\mathcal{F}_{\sigma}}\bigg[e^{aV_{\theta}%
}\left\vert M_{\theta}-Y_{\theta}\right\vert ^{a}+\bigg(%
{\displaystyle\int_{\sigma}^{\theta}}
e^{V_{r}}\,\mathbf{1}_{q\geq2}\,{\Psi}\left(  r,M_{r}\right)  dQ_{r}%
\bigg)^{a/2}\medskip\\
\quad+\bigg(%
{\displaystyle\int_{\sigma}^{\theta}}
e^{V_{r}}\left[  dL_{r}+\left\vert H\left(  r,M_{r},R_{r}\right)  \right\vert
dQ_{r}\right]  \bigg)^{a}\bigg],\quad\mathbb{P}\text{-a.s..}%
\end{array}
\label{def-11c}%
\end{equation}
In particular, for $\gamma=0,N=0,R=0$ (hence $M=0$) and $L=0,$ it follows%
\begin{equation}
\mathbb{E}^{\mathcal{F}_{\sigma}}\sup\nolimits_{r\in\left[  \sigma
,\theta\right]  }e^{aV_{r}}\left\vert Y_{r}\right\vert ^{a}\leq C_{\lambda
,q,a}\,\mathbb{E}^{\mathcal{F}_{\sigma}}\left[  e^{aV_{\theta}}\left\vert
Y_{\theta}\right\vert ^{a}+\bigg(%
{\displaystyle\int_{\sigma}^{\theta}}
{e^{qV_{r}}}\left\vert Y_{r}\right\vert ^{q-1}\left\vert H\left(
r,0,0\right)  \right\vert dQ_{r}\bigg)^{a/q}\right]  \label{def-11cc}%
\end{equation}
and%
\begin{equation}%
\begin{array}
[c]{l}%
\displaystyle\mathbb{E}^{\mathcal{F}_{\sigma}}\Big(\sup\nolimits_{r\in\left[
\sigma,\theta\right]  }e^{aV_{r}}\left\vert Y_{r}\right\vert ^{a}%
\Big)+\mathbb{E}^{\mathcal{F}_{\sigma}}\bigg(%
{\displaystyle\int_{\sigma}^{\theta}}
{e^{qV_{r}}\left\vert Y_{r}\right\vert ^{q-2}}{\Large \,}\mathbf{1}_{M_{r}\neq
Y_{r}}\,\left\vert Z_{r}\right\vert ^{2}dr\bigg)^{a/q}\medskip\\
\displaystyle\quad+\mathbb{E}^{\mathcal{F}_{\sigma}}\bigg(%
{\displaystyle\int_{\sigma}^{\theta}}
{e^{qV_{r}}\left\vert Y_{r}\right\vert ^{q-2}{\Large \,}\mathbf{1}_{M_{r}\neq
Y_{r}}\,\Psi}\left(  r,Y_{r}\right)  dQ_{r}\bigg)^{a/q}\medskip\\
\displaystyle\leq C_{\lambda,q,a}\,\mathbb{E}^{\mathcal{F}_{\sigma}}\left[
e^{aV_{\theta}}\left\vert Y_{\theta}\right\vert ^{a}+\bigg(%
{\displaystyle\int_{\sigma}^{\theta}}
e^{V_{r}}\left\vert H\left(  r,0,0\right)  \right\vert dQ_{r}\bigg)^{a}%
\right]  .
\end{array}
\label{def-11ccc}%
\end{equation}

\end{proposition}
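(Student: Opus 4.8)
The plan is to derive every estimate from the single weighted inequality (\ref{def1-b}) of Proposition \ref{p1-exp}, taken with $L=V$, so that the weight is $e^{qV_r}$. The point of this weight is that, by the definition (\ref{defV_1}) of $V$ and by Remark \ref{r-nq} (which gives $n_p\le n_q$, hence $\tfrac{1}{2n_p\lambda}\ell_r^2\ge\tfrac{1}{2n_q\lambda}\ell_r^2$), the monotonicity and Lipschitz contributions of the generator are exactly absorbed into $dV_r$. Concretely, the first step is to process the cross term $\langle M_r-Y_r,N_r-H(r,Y_r,Z_r)\rangle$. Inserting $H(r,M_r,R_r)$ and splitting into a monotonicity part, a Lipschitz part and a remainder, then applying (\ref{F, G assumpt 3}) together with Young's inequality calibrated by $\lambda$, one bounds the cross term, after multiplication by $qe^{qV_r}(\Gamma_r)^{q-2}$ and use of $(\Gamma_r)^{q-2}|M_r-Y_r|^2\le(\Gamma_r)^q$ and $|M_r-Y_r|\le\Gamma_r$, by $qe^{qV_r}(\Gamma_r)^q\,dV_r+\tfrac{q}{2}n_q\lambda\,e^{qV_r}(\Gamma_r)^{q-2}|R_r-Z_r|^2\,dr+qe^{qV_r}(\Gamma_r)^{q-1}[dL_r+|H(r,M_r,R_r)|dQ_r]$, where condition (iii) handles the $N_r$ part. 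The first summand cancels the $dV_r$ term in (\ref{def1-b}), while the second is absorbed into the good term $\tfrac{q}{2}n_q\int e^{qV_r}(\Gamma_r)^{q-2}|R_r-Z_r|^2dr$, leaving the fraction $(1-\lambda)>0$ of it on the left.

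After this reduction one is left with a clean semimartingale inequality on $[\sigma,\theta]$ whose right-hand side consists only of the boundary term $e^{qV_\theta}(\Gamma_\theta)^q$, the data integrals involving $\Psi(r,M_r)$ (which, by hypothesis (ii), survives only when $q\ge2$ and vanishes otherwise) and $dL_r+|H(r,M_r,R_r)|dQ_r$, and the stochastic integral $-q\int e^{qV_r}(\Gamma_r)^{q-2}\langle M_r-Y_r,(R_r-Z_r)dB_r\rangle$. For Part I ($q=2$, $n_q=1$, $\delta_q=0$, so $\Gamma_r=|M_r-Y_r|$) I would discard the nonnegative leading term, raise the inequality to the power $a/2$, take $\mathbb{E}^{\mathcal{F}_\sigma}$, and estimate the martingale by the conditional Burkholder--Davis--Gundy inequality: its bracket is dominated by $\sup_r(e^{2V_r}|M_r-Y_r|^2)\int_\sigma^\theta e^{2V_r}|R_r-Z_r|^2dr$, producing a product which, by Young's inequality, splits into a small multiple of the left-hand $|R-Z|^2$ integral (absorbed) and a multiple of the supremum term $\sup_r e^{aV_r}|M_r-Y_r|^a$ on the right. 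The same step factors one power of the weight into that supremum, which is why the data integrals in (\ref{def-11}) carry $e^{V_r}$ rather than $e^{2V_r}$; passing from the first to the second, looser form of (\ref{def-11}) is one further Young inequality, and the specialization $\gamma=N=R=L=0$ yields (\ref{def-11aa}).

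For Part II ($q=p\wedge2$, possibly $<2$) the aim is instead the supremum estimate (\ref{def-11b})--(\ref{def-11c}), and the role of the standing hypothesis (\ref{def-11a}) is precisely to guarantee, exactly as in Proposition \ref{prop2-exp-exp}, that the stochastic integral is a genuine (not merely local) martingale on $[\sigma,\theta]$, so that conditional BDG applies. I would keep the leading term, move the supremum of the martingale to the left, and use BDG to bound $\mathbb{E}^{\mathcal{F}_\sigma}\sup_r e^{aV_r}|M_r-Y_r|^a$ by the boundary term plus the data integrals raised to the power $a/q$ (legitimate since $q\le a$). The singular factor $(\Gamma_r)^{q-2}$ is controlled through $(\Gamma_r)^{q-2}|M_r-Y_r|^2\le(\Gamma_r)^q$, and the indicator $\mathbf{1}_{M_r\ne Y_r}$ keeps the corresponding integrals finite; one then lets $\delta\to0$ and invokes Fatou's lemma to pass from $\Gamma$ to $|M-Y|$. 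Estimate (\ref{def-11c}) is obtained by retaining, rather than discarding, the good $|R-Z|^2$ and $\Psi(Y)$ integrals and feeding the supremum bound (\ref{def-11b}) into the Young splitting of the data terms $\int(\Gamma_r)^{q-1}[dL_r+|H|dQ_r]$; the cases (\ref{def-11cc}) and (\ref{def-11ccc}) follow again by setting $M=0$.

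The step I expect to be the main obstacle is the weight-and-power bookkeeping in Part II: because $q<2$ makes $(\Gamma_r)^{q-2}$ blow up as $M_r\to Y_r$, one must work at fixed $\delta>0$, carefully carry the indicators $\mathbf{1}_{M_r\ne Y_r}$, and only pass to the limit at the end, all while calibrating the single parameter $\lambda$ together with the BDG and Young constants so that both the Lipschitz contribution and the martingale bound are absorbed and the final constants depend only on $(\lambda,q,a)$. Verifying that the conditional BDG inequality is genuinely applicable, i.e.\ the true-martingale property under (\ref{def-11a}), is the other delicate point.
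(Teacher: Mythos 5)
Your proposal is correct and follows essentially the same route as the paper: the identical monotonicity/Lipschitz/Young treatment of the cross term calibrated by $\lambda$ (the paper does this in the unweighted inequality (\ref{def1}) and then applies a stochastic Gronwall lemma, which is equivalent to your starting from (\ref{def1-b}) with $L=V$), followed by the localization--BDG--Young estimates that the paper delegates to Appendix Propositions \ref{an-prop-dz} and \ref{an-prop-ydz} and Remark \ref{an-prop-ydz_2}, and the Fatou passage $\delta\rightarrow 0_{+}$. The only minor deviations are bookkeeping: the paper keeps the exact identity $\left\vert M_{r}-Y_{r}\right\vert ^{2}dV_{r}=\left(  \Gamma_{r}\right)  ^{2}dV_{r}-\delta_{q}\,dV_{r}$ rather than your inequality $\left(  \Gamma_{r}\right)  ^{q-2}\left\vert M_{r}-Y_{r}\right\vert ^{2}dV_{r}\leq\left(  \Gamma_{r}\right)  ^{q}dV_{r}$ (which would need $dV_{r}\geq0$, not guaranteed since $\mu$ may be signed, though the leftover term is $O(\delta_{q}^{q/2})$ and harmless), and it performs the limit $\delta\rightarrow0_{+}$ before, rather than after, invoking the appendix estimates --- both immaterial.
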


\begin{proof}
Using the monotonicity of $H$ we have%
\[%
\begin{array}
[c]{l}%
\displaystyle\left\langle M_{r}-Y_{r},-H\left(  r,Y_{r},Z_{r}\right)
dQ_{r}\right\rangle \medskip\\
\displaystyle=\left\langle M_{r}-Y_{r},-H\left(  r,M_{r},R_{r}\right)
dQ_{r}\right\rangle +\left\langle M_{r}-Y_{r},H\left(  r,M_{r},R_{r}\right)
-H\left(  r,Y_{r},Z_{r}\right)  dQ_{r}\right\rangle \medskip\\
\displaystyle\leq|M_{r}-Y_{r}|\left\vert H\left(  r,M_{r},R_{r}\right)
\right\vert dQ_{r}+|M_{r}-Y_{r}|^{2}dV_{r}+\dfrac{n_{p}\lambda}{2}\,\left\vert
R_{r}-Z_{r}\right\vert ^{2}ds\medskip\\
\displaystyle=|M_{r}-Y_{r}|\left\vert H\left(  r,M_{r},R_{r}\right)
\right\vert dQ_{r}+\left(  \Gamma_{r}\right)  ^{2}dV_{r}-\delta_{q}%
\,dV_{r}+\dfrac{n_{p}\lambda}{2}\,\left\vert R_{r}-Z_{r}\right\vert ^{2}ds.
\end{array}
\]
If we suppose that (\ref{def1}) is satisfied, then%
\[%
\begin{array}
[c]{l}%
\displaystyle\left(  \Gamma_{t}\right)  ^{q}+\dfrac{q\left(  n_{q}%
-n_{p}\lambda\right)  }{2}%
{\displaystyle\int_{t}^{s}}
{\left(  \Gamma_{r}\right)  ^{q-2}}\left\vert R_{r}-Z_{r}\right\vert
^{2}dr+q\delta_{q}%
{\displaystyle\int_{t}^{s}}
\left(  \Gamma_{r}\right)  ^{q-2}dV_{r}+{q%
{\displaystyle\int_{t}^{s}}
\left(  \Gamma_{r}\right)  ^{q-2}\Psi}\left(  r,Y_{r}\right)  dQ_{r}\medskip\\
\displaystyle\leq\left(  \Gamma_{s}\right)  ^{q}+q%
{\displaystyle\int_{t}^{s}}
\left(  \Gamma_{r}\right)  ^{q}dV_{r}+{q%
{\displaystyle\int_{t}^{s}}
\left(  \Gamma_{r}\right)  ^{q-2}\mathbf{1}_{q\geq2}{\Psi}\left(
r,M_{r}\right)  }dQ_{r}\medskip\\
\displaystyle\quad+q%
{\displaystyle\int_{t}^{s}}
\left(  \Gamma_{r}\right)  ^{q-2}\left\vert M_{r}-Y_{r}\right\vert \left[
dL_{r}+\left\vert H\left(  r,M_{r},R_{r}\right)  \right\vert dQ_{r}\right]  -q%
{\displaystyle\int_{t}^{s}}
\left(  \Gamma_{r}\right)  ^{q-2}\,\langle M_{r}-Y_{r},\left(  R_{r}%
-Z_{r}\right)  dB_{r}\rangle.
\end{array}
\]
for all $0\leq t\leq s<\infty.$

Since $q\in\left\{  2,p\wedge2\right\}  ,$ we have (see Remark \ref{r-nq})%
\[
n_{q}-n_{p}\lambda\geq n_{q}\left(  1-\lambda\right)  =\left(  q-1\right)
\left(  1-\lambda\right)
\]
and therefore we deduce, using a Gronwall's type stochastic inequality for the
previous inequality (see, for instance, \cite[Lemma 12]{ma-ra/07}),%
\begin{equation}%
\begin{array}
[c]{l}%
\displaystyle e^{qV_{t}}\left(  \Gamma_{t}\right)  ^{q}+\dfrac{q}{2}%
\,n_{q}\left(  1-\lambda\right)
{\displaystyle\int_{t}^{s}}
e^{qV_{r}}\left(  \Gamma_{r}\right)  {^{q-2}}\left\vert R_{r}-Z_{r}\right\vert
^{2}dr+q\,\delta_{q}%
{\displaystyle\int_{t}^{s}}
e^{qV_{r}}\left(  \Gamma_{r}\right)  ^{q-2}dV_{r}\medskip\\
\displaystyle\quad+{q%
{\displaystyle\int_{t}^{s}}
e^{qV_{r}}\left(  \Gamma_{r}\right)  ^{q-2}\Psi}\left(  r,Y_{r}\right)
dQ_{r}\medskip\\
\displaystyle\leq e^{qV_{s}}\left(  \Gamma_{s}\right)  ^{q}+{q%
{\displaystyle\int_{t}^{s}}
e^{qV_{r}}\left(  \Gamma_{r}\right)  ^{q-2}\,\mathbf{1}_{q\geq2}\,{\Psi
}\left(  r,M_{r}\right)  }dQ_{r}\medskip\\
\displaystyle\quad+q%
{\displaystyle\int_{t}^{s}}
e^{qV_{r}}\left(  \Gamma_{r}\right)  ^{q-2}\left\vert M_{r}-Y_{r}\right\vert
\left[  dL_{r}+\left\vert H\left(  r,M_{r},R_{r}\right)  \right\vert
dQ_{r}\right]  \medskip\\
\displaystyle\quad-q%
{\displaystyle\int_{t}^{s}}
e^{qV_{r}}\left(  \Gamma_{r}\right)  ^{q-2}\,\langle M_{r}-Y_{r},\left(
R_{r}-Z_{r}\right)  dB_{r}\rangle.
\end{array}
\label{def1-e}%
\end{equation}
\noindent\textbf{I.} Writing (\ref{def1-e}) for $q=2$ we get%
\[%
\begin{array}
[c]{l}%
\displaystyle e^{2V_{t}}\left\vert M_{t}-Y_{t}\right\vert ^{2}+\left(
1-\lambda\right)  {%
{\displaystyle\int_{t}^{s}}
}e^{2V_{r}}\left\vert R_{r}-Z_{r}\right\vert ^{2}dr+{2%
{\displaystyle\int_{t}^{s}}
}e^{2V_{r}}{\Psi}\left(  r,Y_{r}\right)  dQ_{r}\medskip\\
\displaystyle\leq e^{2V_{s}}\left\vert M_{s}-Y_{s}\right\vert ^{2}+{2%
{\displaystyle\int_{t}^{s}}
e^{qV_{r}}{\Psi}\left(  r,M_{r}\right)  }dQ_{r}+{2%
{\displaystyle\int_{t}^{s}}
}e^{2V_{r}}\left\vert M_{r}-Y_{r}\right\vert \left[  dL_{r}+\left\vert
H\left(  r,M_{r},R_{r}\right)  \right\vert dQ_{r}\right]  \medskip\\
\displaystyle\quad-2%
{\displaystyle\int_{t}^{s}}
\,e^{2V_{r}}\langle M_{r}-Y_{r},\left(  R_{r}-Z_{r}\right)  dB_{r}%
\rangle,\;\mathbb{P}\text{--a.s.,}%
\end{array}
\]
for all $0\leq t\leq s<\infty,$ which yields (\ref{def-11}), by Proposition
\ref{an-prop-dz} from the Appendix.$\bigskip$

\noindent\textbf{II.} Using Fatou's Lemma, Lebesgue dominated convergence
theorem and the continuity in pro\-ba\-bi\-lity of the stochastic integral we
clearly deduce from (\ref{def1-e}), as $\delta\rightarrow0_{+}$, that:%
\begin{equation}%
\begin{array}
[c]{l}%
\displaystyle e^{qV_{t}}\left\vert M_{t}-Y_{t}\right\vert ^{q}+\dfrac{q}%
{2}\,\left(  q-1\right)  \left(  1-\lambda\right)  {\int\nolimits_{t}^{s}%
}e^{qV_{r}}\left\vert M_{r}-Y_{r}\right\vert ^{q-2}{\Large \,}\mathbf{1}%
_{M_{r}\neq Y_{r}}\,\left\vert R_{r}-Z_{r}\right\vert ^{2}dr\medskip\\
\displaystyle\quad+{q\int\nolimits_{t}^{s}e^{qV_{r}}\left\vert M_{r}%
-Y_{r}\right\vert ^{q-2}{\Large \,}\mathbf{1}_{M_{r}\neq Y_{r}}\,\Psi}\left(
r,Y_{r}\right)  dQ_{r}\medskip\\
\displaystyle\leq e^{qV_{s}}\left\vert M_{s}-Y_{s}\right\vert ^{q}%
+{q\int\nolimits_{t}^{s}e^{qV_{r}}\left\vert M_{r}-Y_{r}\right\vert
^{q-2}{\Large \,}\mathbf{1}_{q\geq2}\,{\Psi}\left(  r,M_{r}\right)  }%
dQ_{r}\medskip\\
\displaystyle\quad+q{\int\nolimits_{t}^{s}}e^{qV_{r}}\left\vert M_{r}%
-Y_{r}\right\vert ^{q-1}\left[  dL_{r}+\left\vert H\left(  r,M_{r}%
,R_{r}\right)  \right\vert dQ_{r}\right]  \medskip\\
\displaystyle\quad-q{\int\nolimits_{t}^{s}}e^{qV_{r}}\left\vert M_{r}%
-Y_{r}\right\vert ^{q-2}\langle M_{r}-Y_{r},\left(  R_{r}-Z_{r}\right)
dB_{r}\rangle,
\end{array}
\label{def1-f}%
\end{equation}
since $\Gamma_{r}\xlongrightarrow[\;\;\delta\rightarrow0_{+}\;\;]{}\left\vert
M_{r}-Y_{r}\right\vert {\Large \,}\mathbf{1}_{M_{r}\neq Y_{r}}\,,$
$\mathbb{P}$--a.s., for all $r\geq0.\medskip$

Using Proposition \ref{an-prop-ydz} and Remark \ref{an-prop-ydz_2} we get
(\ref{def-11b}) and (\ref{def-11c}).\hfill
\end{proof}

\section{Uniqueness and Continuity of $L^{p}$--variational solutions}

\begin{theorem}
[Continuity]\label{uniq}We suppose that assumptions $(\mathrm{A}%
_{1}-\mathrm{A}_{6})$ are satisfied. Let $(\hat{Y},\hat{Z}),(\tilde{Y}%
,\tilde{Z})$ be two $L^{p}-$variational solutions of (\ref{GBSVI 2})
corresponding to $(\hat{\eta},\hat{H})$ and $(\tilde{\eta},\tilde{H})$
respectively (it is sufficient to consider Definition
\ref{definition_weak solution} only for $q=p\wedge2\,$), where $\hat{H}$ and
$\tilde{H}$ have the same coefficients $\mu,\nu,\ell.$

Then, for any stopping time $0\leq\sigma\leq\tau,$ it holds, $\mathbb{P}%
$--a.s.,%
\begin{equation}%
\begin{array}
[c]{l}%
e^{qV_{\sigma}}|\hat{Y}_{\sigma}-\tilde{Y}_{\sigma}|^{q}+c_{q,\lambda
}\,{\mathbb{E}}^{\mathcal{F}_{\sigma}}{%
{\displaystyle\int_{\sigma}^{\tau}}
}e^{qV_{r}}|\hat{Y}_{r}-\tilde{Y}_{r}|^{q-2}{\Large \,}\mathbf{1}_{\hat{Y}%
_{r}\neq\tilde{Y}_{r}}\,|\hat{Z}_{r}-\tilde{Z}_{r}|^{2}dr\medskip\\
\leq{\mathbb{E}}^{\mathcal{F}_{\sigma}}e^{qV_{\tau}}|\hat{\eta}-\tilde{\eta
}|^{q}+q\,{\mathbb{E}}^{\mathcal{F}_{\sigma}}{%
{\displaystyle\int_{\sigma}^{\tau}}
}e^{qV_{r}}|\hat{Y}_{r}-\tilde{Y}_{r}|^{q-1}\big|\hat{H}(r,\hat{Y}_{r},\hat
{Z}_{r})-\tilde{H}(r,\hat{Y}_{r},\hat{Z}_{r})\big|dQ_{r}\medskip\\
\leq\mathbb{E}^{\mathcal{F}_{\sigma}}e^{qV_{\tau}}|\hat{\eta}-\tilde{\eta
}|^{q}\medskip\\
\quad+C_{q,\lambda}\,\big[{\mathbb{E}}^{\mathcal{F}_{\sigma}}\left(
\Lambda_{\sigma,\tau}\right)  \big]^{\left(  q-1\right)  /q}\cdot\left[
\mathbb{E}^{\mathcal{F}_{\sigma}}\left(  {%
{\displaystyle\int_{\sigma}^{\tau}}
}e^{V_{r}}\big|\hat{H}(r,\hat{Y}_{r},\hat{Z}_{r})-\tilde{H}(r,\hat{Y}_{r}%
,\hat{Z}_{r})\big|dQ_{r}\right)  ^{q}\right]  ^{1/q},
\end{array}
\label{cont-1}%
\end{equation}
where%
\begin{equation}
\Lambda_{\sigma,\tau}=e^{qV_{\tau}}|\hat{\eta}|^{q}+e^{qV_{\tau}}|\tilde{\eta
}|^{q}+\Big(%
{\displaystyle\int_{\sigma}^{\tau}}
e^{V_{r}}\big|\hat{H}\left(  r,0,0\right)  \big|dQ_{r}\Big)^{q}+\Big(%
{\displaystyle\int_{\sigma}^{\tau}}
e^{V_{r}}\big|\tilde{H}\left(  r,0,0\right)  \big|dQ_{r}\Big)^{q}
\label{def_M_4}%
\end{equation}
and $c_{q,\lambda},C_{q,\lambda}>0.\medskip$

Moreover, for all $0<\alpha<1,$%
\begin{equation}%
\begin{array}
[c]{l}%
\displaystyle\mathbb{E}\sup\nolimits_{t\in\left[  0,\tau\right]  }e^{\alpha
qV_{t}}|\hat{Y}_{t}-\tilde{Y}_{t}|^{\alpha q}+\left(  q-1\right)
\mathbb{E}\bigg(%
{\displaystyle\int_{0}^{\tau}}
\frac{1}{\big(e^{V_{r}}|\hat{Y}_{r}-\tilde{Y}_{r}|+1\big)^{2-q}}\,e^{2V_{r}%
}|\hat{Z}_{r}-\tilde{Z}_{r}|^{2}dr\bigg)^{\alpha}\medskip\\
\displaystyle\leq\mathbb{E}\sup\nolimits_{t\in\left[  0,\tau\right]
}e^{\alpha qV_{t}}|\hat{Y}_{t}-\tilde{Y}_{t}|^{\alpha q}+\left(  q-1\right)
\mathbb{E}\bigg(%
{\displaystyle\int_{0}^{\tau}}
e^{qV_{r}}|\hat{Y}_{r}-\tilde{Y}_{r}|^{q-2}\,|\hat{Z}_{r}-\tilde{Z}_{r}%
|^{2}dr\bigg)^{\alpha}\medskip\\
\displaystyle\leq C_{\alpha,q,\lambda}\,\big(\mathbb{E}e^{qV_{\tau}}|\hat
{\eta}-\tilde{\eta}|^{q}\big)^{\alpha}\medskip\\
\displaystyle\quad+C_{\alpha,q,\lambda}\,\big[{\mathbb{E}}\left(
\Lambda_{\sigma,\tau}\right)  \big]^{\alpha\left(  q-1\right)  /q}\cdot\left(
{\mathbb{E}}\bigg(%
{\displaystyle\int_{0}^{\tau}}
e^{V_{r}}\big|\hat{H}(r,\hat{Y}_{r},\hat{Z}_{r})-\tilde{H}(r,\hat{Y}_{r}%
,\hat{Z}_{r})\big|dQ_{r}\bigg)^{q}\right)  ^{\alpha/q},
\end{array}
\label{cont-2}%
\end{equation}
where $\Lambda_{\sigma,\tau}$ is given by (\ref{def_M_4}) and $C_{\alpha
,q,\lambda}>0.$
\end{theorem}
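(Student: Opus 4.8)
The plan is to carry out the comparison at the exponent $q=p\wedge 2$ permitted by the hypothesis (so $n_q=q-1$), by using \emph{each} solution as the test process $M$ in the variational inequality of the \emph{other}. Concretely, I would take the conditional form (\ref{def1d}) of Proposition~\ref{prop2-exp-exp} (equivalently (\ref{def1-b}) with $L=V$) written for $(\hat Y,\hat Z)$ with $M=\tilde Y$, and the same inequality written for $(\tilde Y,\tilde Z)$ with $M=\hat Y$. In both, the weight is the common $\Gamma_r=(|\hat Y_r-\tilde Y_r|^2+\delta_q)^{1/2}$, so the two inequalities may be added. The crucial feature is that the terms $q\int e^{qV_r}\Gamma_r^{q-2}\Psi(r,\hat Y_r)\,dQ_r$ and $q\int e^{qV_r}\Gamma_r^{q-2}\Psi(r,\tilde Y_r)\,dQ_r$ each occur once on each side and cancel identically, while the bounded--variation (reflection) parts combine into $q\int e^{qV_r}\Gamma_r^{q-2}\langle \hat Y_r-\tilde Y_r,\,d\hat K_r-d\tilde K_r\rangle$, which is $\ge 0$ by monotonicity of the subdifferential $\partial_y\Psi$ and may therefore be discarded. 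What survives is a backward inequality for $\Gamma^q$ whose only source is the drift cross term $q\int e^{qV_r}\Gamma_r^{q-2}\langle \hat Y_r-\tilde Y_r,\,\hat H(r,\hat Y_r,\hat Z_r)-\tilde H(r,\tilde Y_r,\tilde Z_r)\rangle\,dQ_r$.

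I would then split $\hat H(r,\hat Y_r,\hat Z_r)-\tilde H(r,\tilde Y_r,\tilde Z_r)$ as $\big[\tilde H(r,\hat Y_r,\hat Z_r)-\tilde H(r,\tilde Y_r,\tilde Z_r)\big]$ plus the genuine generator gap $\big[\hat H(r,\hat Y_r,\hat Z_r)-\tilde H(r,\hat Y_r,\hat Z_r)\big]$. Since $\hat H,\tilde H$ share the coefficients $\mu,\nu,\ell$, the first bracket is controlled by (\ref{F, G assumpt 3}): its $y$--monotone part contributes at most $\Gamma^{q-2}|\hat Y_r-\tilde Y_r|^2\,dV_r\le\Gamma^q\,dV_r$, absorbed by the $q\int e^{qV_r}\Gamma^q\,dV_r$ already on the left, and its $z$--Lipschitz part is dominated by Young's inequality, the summand $\tfrac{1}{2n_p\lambda}\ell_r^2$ built into $V$ and the slack $n_q-n_p\lambda\ge(q-1)(1-\lambda)>0$ of Remark~\ref{r-nq} absorbing the resulting $\Gamma^{q-2}|\hat Z_r-\tilde Z_r|^2\,dr$ into the left--hand quadratic term; the generator gap is bounded by $\Gamma^{q-1}\big|\hat H(r,\hat Y_r,\hat Z_r)-\tilde H(r,\hat Y_r,\hat Z_r)\big|\,dQ_r$. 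This places the inequality in exactly the shape treated in the proof of Proposition~\ref{p-estim}; applying the Appendix estimates (Propositions~\ref{an-prop-dz} and~\ref{an-prop-ydz}), then letting $\delta\to0_+$ (so $\Gamma_r\to|\hat Y_r-\tilde Y_r|\,\mathbf 1_{\hat Y_r\neq\tilde Y_r}$) and $\theta\uparrow\tau$, yields the first inequality of (\ref{cont-1}).

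For the second inequality of (\ref{cont-1}) I would factor $e^{qV_r}|\hat Y_r-\tilde Y_r|^{q-1}=(e^{V_r}|\hat Y_r-\tilde Y_r|)^{q-1}e^{V_r}$, pull $\sup_r(e^{V_r}|\hat Y_r-\tilde Y_r|)^{q-1}$ out of the integral, and apply the conditional Hölder inequality with exponents $\tfrac{q}{q-1}$ and $q$; the factor $\big[\mathbb E^{\mathcal F_\sigma}\sup_r e^{qV_r}|\hat Y_r-\tilde Y_r|^q\big]^{(q-1)/q}$ is then dominated by $\big[\mathbb E^{\mathcal F_\sigma}\Lambda_{\sigma,\tau}\big]^{(q-1)/q}$ by invoking the a priori bound (\ref{def-11cc})--(\ref{def-11ccc}) of Proposition~\ref{p-estim} (case $M=0$) for $\hat Y$ and for $\tilde Y$ separately and using the triangle inequality, with $\Lambda_{\sigma,\tau}$ as in (\ref{def_M_4}). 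Inequality (\ref{cont-2}) follows by the same computation run with $a=\alpha q$, $0<\alpha<1$, but now \emph{retaining} the supremum term and invoking the Burkholder--Davis--Gundy/Gronwall mechanism of Proposition~\ref{p-estim}, Part~II (through Proposition~\ref{an-prop-ydz} and Remark~\ref{an-prop-ydz_2}) under unconditional expectation; the fractional power $\alpha<1$ is precisely what lets the sup--estimate close, and it is then combined with the Hölder bound above.

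The step I expect to be the main obstacle is the very first one: legitimately using the second variational solution $\tilde Y$ as a test process, even though its bounded--variation part $\tilde K$ may be singular with respect to $dQ$, so that $\tilde Y$ need not belong to $\mathcal V_m^0$ in the form (\ref{def_M}) required by Definition~\ref{definition_weak solution}. This cannot be a literal substitution; it has to be justified either by approximating each solution by admissible semimartingale tests $M^n$ (with $\Psi(r,M^n_r)\to\Psi(r,\tilde Y_r)$ and the drifts converging to $\tilde H\,dQ-d\tilde K$) and passing to the limit, or equivalently by working with the two integrated subdifferential inequalities simultaneously and exploiting the monotonicity of $\partial_y\Psi$ to cancel the $\Psi$--terms and sign the $d\hat K-d\tilde K$ term, robustly in the singular part. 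This is the analytic heart of the argument and is the device inherited from the forward stochastic variational inequalities of \cite{ra/81}.
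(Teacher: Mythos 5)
You have correctly located the difficulty, but the two remedies you offer for it do not work, and this is a genuine gap. In this paper an $L^{p}$--variational solution is only a pair $(Y,Z)$ satisfying the inequality (\ref{def1}) against test semimartingales $M$ of the form (\ref{def_M}); it carries no bounded--variation process $K$ whatsoever (the whole point of the notion, as explained in the introduction, is that for $1<p<2$ one cannot construct such a $K$). Consequently your second remedy, \textquotedblleft working with the two integrated subdifferential inequalities simultaneously and exploiting the monotonicity of $\partial_{y}\Psi$ to sign the $d\hat{K}-d\tilde{K}$ term,\textquotedblright\ is circular: there are no processes $\hat{K},\tilde{K}$ and no subdifferential inclusions attached to $(\hat{Y},\hat{Z})$ and $(\tilde{Y},\tilde{Z})$. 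Your first remedy has the same flaw --- you ask the drifts of the approximating semimartingales $M^{n}$ to converge to \textquotedblleft$\tilde{H}\,dQ-d\tilde{K}$,\textquotedblright\ again presupposing $\tilde{K}$; and if $\tilde{Y}$ is not a semimartingale, the drifts of any approximating sequence necessarily diverge, so one needs a sign structure to neutralize them, which cross--testing (each solution as test process for the other) does not provide.

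The paper's proof supplies exactly the missing device, and it is different from cross--testing. Both inequalities (\ref{def1d}), for $(\hat{Y},\hat{Z})$ and for $(\tilde{Y},\tilde{Z})$, are tested against \emph{one and the same} process: the smoothed midpoint
\[
M_{t}^{\varepsilon}=\frac{1}{Q_{\varepsilon}}\,\mathbb{E}^{\mathcal{F}_{t}}\int_{t\vee\varepsilon}^{\infty}e^{-\left(  Q_{r}-Q_{t\vee\varepsilon}\right)  /Q_{\varepsilon}}\,Y_{r}\,dQ_{r}\,,\quad\text{where }Y=\tfrac{1}{2}\big(\hat{Y}+\tilde{Y}\big),
\]
which by Proposition \ref{p1-approx} is an admissible semimartingale with drift $N^{\varepsilon}=\frac{1}{Q_{\varepsilon}}\,\mathbf{1}_{[\varepsilon,\infty)}\left(  Y-M^{\varepsilon}\right)  $. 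This drift blows up as $\varepsilon\rightarrow0$, but precisely because $Y$ is the midpoint, the weighted pairing $(\hat{\Gamma}_{r}^{\varepsilon})^{q-2}\langle M_{r}^{\varepsilon}-\hat{Y}_{r},N_{r}^{\varepsilon}\rangle+(\tilde{\Gamma}_{r}^{\varepsilon})^{q-2}\langle M_{r}^{\varepsilon}-\tilde{Y}_{r},N_{r}^{\varepsilon}\rangle$ is nonpositive (inequality (\ref{uniq1c})) and can simply be discarded. The $\Psi$--terms are then cancelled not by monotonicity of a subdifferential but by \emph{convexity} of $\Psi$: after $\varepsilon\rightarrow0$ the test terms become $2\Psi\left(  r,Y_{r}\right)  =2\Psi\big(r,\tfrac{1}{2}\hat{Y}_{r}+\tfrac{1}{2}\tilde{Y}_{r}\big)\leq\Psi(r,\hat{Y}_{r})+\Psi(r,\tilde{Y}_{r})$, which is absorbed by the left--hand side. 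From that point on your outline --- the monotonicity/Lipschitz splitting of $\hat{H}-\tilde{H}$, absorption via $n_{q}-n_{p}\lambda\geq\left(  q-1\right)  \left(  1-\lambda\right)  $, the limits $\delta\rightarrow0_{+}$ and the localization, H\"{o}lder's inequality together with (\ref{def-11ccc}) to produce $\Lambda_{\sigma,\tau}$ in (\ref{cont-1}), and Proposition \ref{an-prop-yd} for (\ref{cont-2}) --- coincides with the paper's argument and is sound; but without the midpoint--smoothing device the proof never gets started.
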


\begin{proof}
Let $M\in\mathcal{V}_{m}^{q}$ of the form (\ref{def_M}) and, following
(\ref{def_gamma}), we define%
\[
\hat{\Gamma}_{t}=\big(|M_{t}-\hat{Y}_{t}|^{2}+\delta_{q}\big)^{1/2}%
\quad\text{and}\quad\tilde{\Gamma}_{t}=\big(|M_{t}-\tilde{Y}_{t}|^{2}%
+\delta_{q}\big)^{1/2}.
\]
Let $T>0$ and the stopping times $0\leq\sigma\leq\theta\leq T\wedge\tau$ such
that
\[
\mathbb{E}\left(
{\displaystyle\int_{\sigma}^{\theta}}
{e^{V_{r}}}\left[  |\hat{H}(r,\hat{Y}_{r},\hat{Z}_{r})|+|\tilde{H}(r,\tilde
{Y}_{r},\tilde{Z}_{r})|\right]  dQ_{r}\right)  ^{q}<\infty
\]
From (\ref{def1d}) we deduce that%
\begin{equation}%
\begin{array}
[c]{l}%
\big[e^{qV_{\sigma}}(\hat{\Gamma}_{\sigma})^{q}+e^{qV_{\sigma}}(\tilde{\Gamma
}_{\sigma})^{q}\big]+q\,\mathbb{E}^{\mathcal{F}_{\sigma}}%
{\displaystyle\int_{\sigma}^{\theta}}
e^{qV_{r}}\big[(\hat{\Gamma}_{r})^{q}+(\tilde{\Gamma}_{r})^{q}\big]dV_{r}\\
\quad+\dfrac{q}{2}\,n_{q}\,\mathbb{E}^{\mathcal{F}_{\sigma}}{%
{\displaystyle\int_{\sigma}^{\theta}}
}e^{qV_{r}}\left[  (\hat{\Gamma}_{r})^{q-2}\big|R_{r}-\hat{Z}_{r}%
\big|^{2}+(\tilde{\Gamma}_{r})^{q-2}\big|R_{r}-\tilde{Z}_{r}\big|^{2}\right]
dr\medskip\\
\quad+{q\,\mathbb{E}^{\mathcal{F}_{\sigma}}%
{\displaystyle\int_{\sigma}^{\theta}}
}e^{qV_{r}}\big[(\hat{\Gamma}_{r})^{q-2}{\Psi}\big(r,\hat{Y}_{r}%
\big)+(\tilde{\Gamma}_{r})^{q-2}{\Psi}\big(r,\tilde{Y}_{r}\big)\big]dQ_{r}%
\medskip\\
\leq\mathbb{E}^{\mathcal{F}_{\sigma}}\big[e^{qV_{\theta}}(\hat{\Gamma}%
_{\theta})^{q}+e^{qV_{\theta}}(\tilde{\Gamma}_{\theta})^{q}%
\big]+{q\,\mathbb{E}^{\mathcal{F}_{\sigma}}%
{\displaystyle\int_{\sigma}^{\theta}}
}e^{qV_{r}}\big[(\hat{\Gamma}_{r})^{q-2}+(\tilde{\Gamma}_{r})^{q-2}\big]{\Psi
}\left(  r,M_{r}\right)  dQ_{r}\medskip\\
\quad+{q\,\mathbb{E}^{\mathcal{F}_{\sigma}}%
{\displaystyle\int_{\sigma}^{\theta}}
}e^{qV_{r}}(\hat{\Gamma}_{r})^{q-2}\langle M_{r}-\hat{Y}_{r},N_{r}-\hat
{H}(r,\hat{Y}_{r},\hat{Z}_{r})\rangle dQ_{r}\\
\quad+{q\,\mathbb{E}^{\mathcal{F}_{\sigma}}%
{\displaystyle\int_{\sigma}^{\theta}}
}e^{qV_{r}}(\tilde{\Gamma}_{r})^{q-2}\langle M_{r}-\tilde{Y}_{r},N_{r}%
-\tilde{H}(r,\tilde{Y}_{r},\tilde{Z}_{r})\rangle dQ_{r}\,,\quad\mathbb{P}%
\text{--a.s..}%
\end{array}
\label{uniq1}%
\end{equation}
Let%
\[
Y_{r}\xlongequal{\hspace{-4pt}{\rm def}\hspace{-4pt}}\dfrac{1}{2}\big(\hat
{Y}_{r}+\tilde{Y}_{r}\big).
\]
We have, for all $\beta>0,$ by Young's inequality,%
\begin{align*}
|M-\hat{Y}|^{2}  &  \leq\frac{1+\beta}{\beta}\,|M-Y|^{2}+\frac{1+\beta}%
{4}\,|\hat{Y}-\tilde{Y}|^{2},\\
|M-\tilde{Y}|^{2}  &  \leq\frac{1+\beta}{\beta}\,|M-Y|^{2}+\frac{1+\beta}%
{4}\,|\hat{Y}-\tilde{Y}|^{2}%
\end{align*}
and therefore%
\[%
\begin{array}
[c]{l}%
\displaystyle(\hat{\Gamma}_{r})^{q-2}\big|R_{r}-\hat{Z}_{r}\big|^{2}%
+(\tilde{\Gamma}_{r})^{q-2}\big|R_{r}-\tilde{Z}_{r}\big|^{2}\medskip\\
\displaystyle=\big(|M-\hat{Y}|^{2}+\delta_{q}\big)^{\left(  q-2\right)
/2}{\Large \,}\big|R-\hat{Z}\big|^{2}+\big(|M-\tilde{Y}|^{2}+\delta
_{q}\big)^{\left(  q-2\right)  /2}{\Large \,}\big|R-\tilde{Z}\big|^{2}%
\medskip\\
\displaystyle\geq\left[  \dfrac{1+\beta}{\beta}\,|M-Y|^{2}+\dfrac{1+\beta}%
{4}\,|\hat{Y}-\tilde{Y}|^{2}+\delta_{q}\right]  ^{\left(  q-2\right)
/2}\left[  \big|R-\hat{Z}\big|^{2}+\big|R-\tilde{Z}\big|^{2}\right]  ,
\end{array}
\]
since $1<q\leq2.$

Hence%
\begin{equation}%
\begin{array}
[c]{l}%
(\hat{\Gamma}_{r})^{q-2}\big|R_{r}-\hat{Z}_{r}\big|^{2}+(\tilde{\Gamma}%
_{r})^{q-2}\big|R_{r}-\tilde{Z}_{r}\big|^{2}\medskip\\
\geq\dfrac{1}{2}\left[  \dfrac{1+\beta}{\beta}\,|M-Y|^{2}+\dfrac{1+\beta}%
{4}\,|\hat{Y}-\tilde{Y}|^{2}+\delta_{q}\right]  ^{\left(  q-2\right)
/2}\big|\hat{Z}-\tilde{Z}\big|^{2}\medskip
\end{array}
\label{uniq1b}%
\end{equation}
Let $0<\varepsilon\leq1$ and%
\[
M_{t}^{\varepsilon}=\dfrac{1}{Q_{\varepsilon}}\,\mathbb{E}^{\mathcal{F}_{t}}%
{\displaystyle\int_{t\vee\varepsilon}^{\infty}}
e^{-\frac{Q_{r}-Q_{t\vee\varepsilon}}{Q_{\varepsilon}}}\,Y_{r}\,dQ_{r}\,,\quad
t\geq0.
\]
Then, by Proposition \ref{p1-approx}, $\left(  M^{\varepsilon},R^{\varepsilon
}\right)  \in S_{m}^{p}\times\Lambda_{m\times k}^{p}$ is the unique solution
of the BSDE:
\[
\left\{
\begin{array}
[c]{l}%
\displaystyle M_{t}^{\varepsilon}=M_{T}^{\varepsilon}+\frac{1}{Q_{\varepsilon
}}\,{\int_{t}^{T}}\mathbf{1}_{[\varepsilon,\infty)}\left(  r\right)  \left(
Y_{r}-M_{r}^{\varepsilon}\right)  dQ_{r}-{%
{\displaystyle\int_{t}^{T}}
}R_{r}^{\varepsilon}\,dB_{r}\,,\quad\text{for any }T>0,\quad t\in\left[
0,T\right]  ,\medskip\\
\displaystyle\lim\nolimits_{T\rightarrow\infty}\mathbb{E}\left\vert
M_{T}^{\varepsilon}-\xi_{T}\right\vert ^{p}=0
\end{array}
\right.
\]
and%
\[%
\begin{array}
[c]{ll}%
\left(  a\right)  & \left\vert M_{t}^{\varepsilon}\right\vert \leq
\mathbb{E}^{\mathcal{F}_{t}}\sup\nolimits_{r\geq0}\left\vert Y_{r}\right\vert
,\quad\mathbb{P}\text{--a.s., for all }t\geq0,\medskip\\
\left(  b\right)  & \lim\nolimits_{\varepsilon\rightarrow0}M_{t}^{\varepsilon
}=Y_{t}\,,\quad\mathbb{P}\text{--a.s.},\;\text{for all }t\geq0,\medskip\\
\left(  c\right)  & \lim\nolimits_{\varepsilon\rightarrow0}\mathbb{E}%
\sup\nolimits_{t\in\left[  0,T\right]  }\left\vert M_{t}^{\varepsilon}%
-Y_{t}\right\vert ^{p}=0,\quad\text{for all }T>0.
\end{array}
\]
We will replace now in (\ref{uniq1}) $M$ by $M^{\varepsilon},$ $R$ by
$R^{\varepsilon}$ and $N$ by $N^{\varepsilon}=\frac{1}{Q_{\varepsilon}%
}\,\mathbf{1}_{[\varepsilon,\infty)}\left(  r\right)  \left(  Y_{r}%
-M_{r}^{\varepsilon}\right)  .$

We see first that%
\[%
\begin{array}
[c]{l}%
\langle M_{r}^{\varepsilon}-\hat{Y}_{r},N_{r}^{\varepsilon}\rangle=\langle
M_{r}^{\varepsilon}-\hat{Y}_{r},\dfrac{1}{Q_{\varepsilon}}\left(  Y_{r}%
-M_{r}^{\varepsilon}\right)  \rangle=\dfrac{1}{2Q_{\varepsilon}}\,\langle
M_{r}^{\varepsilon}-\hat{Y}_{r},(\hat{Y}_{r}-M_{r}^{\varepsilon})+(\tilde
{Y}_{r}-M_{r}^{\varepsilon})\rangle\medskip\\
\leq\dfrac{1}{2Q_{\varepsilon}}\,\left[  -|M_{r}^{\varepsilon}-\hat{Y}%
_{r}|^{2}+|M_{r}^{\varepsilon}-\hat{Y}_{r}||M_{r}^{\varepsilon}-\tilde{Y}%
_{r}|\right]  =\dfrac{1}{2Q_{\varepsilon}}\,\left[  |M_{r}^{\varepsilon
}-\tilde{Y}_{r}|-|M_{r}^{\varepsilon}-\hat{Y}_{r}|\right]  |M_{r}%
^{\varepsilon}-\hat{Y}_{r}|
\end{array}
\]
and similarly%
\[
\langle M_{r}^{\varepsilon}-\tilde{Y}_{r},N_{r}^{\varepsilon}\rangle\leq
\frac{1}{2Q_{\varepsilon}}\,\left[  |M_{r}^{\varepsilon}-\hat{Y}_{r}%
|-|M_{r}^{\varepsilon}-\tilde{Y}_{r}|\right]  |M_{r}^{\varepsilon}-\tilde
{Y}_{r}|.
\]
Therefore%
\begin{equation}%
\begin{array}
[c]{l}%
(\hat{\Gamma}_{r}^{\varepsilon})^{q-2}\langle M_{r}^{\varepsilon}-\hat{Y}%
_{r},N_{r}^{\varepsilon}\rangle+(\tilde{\Gamma}_{r}^{\varepsilon}%
)^{q-2}\langle M_{r}^{\varepsilon}-\tilde{Y}_{r},N_{r}^{\varepsilon}%
\rangle\medskip\\
=\big(|M_{r}^{\varepsilon}-\hat{Y}_{r}|^{2}+\delta_{q}\big)^{\left(
q-2\right)  /2}\langle M_{r}^{\varepsilon}-\hat{Y}_{r},N_{r}^{\varepsilon
}\rangle+\big(|M_{r}^{\varepsilon}-\tilde{Y}_{r}|^{2}+\delta_{q}\big)^{\left(
q-2\right)  /2}\langle M_{r}^{\varepsilon}-\tilde{Y}_{r},N_{r}^{\varepsilon
}\rangle\medskip\\
\leq\dfrac{-1}{2Q_{\varepsilon}}\,\left[  \big(|M_{r}^{\varepsilon}-\hat
{Y}_{r}|^{2}+\delta_{q}\big)^{\left(  q-2\right)  /2}|M_{r}^{\varepsilon}%
-\hat{Y}_{r}|-\big(|M_{r}^{\varepsilon}-\tilde{Y}_{r}|^{2}+\delta
_{q}\big)^{\left(  q-2\right)  /2}|M_{r}^{\varepsilon}-\tilde{Y}_{r}|\right]
\medskip\\
\quad\cdot\left[  |M_{r}^{\varepsilon}-\hat{Y}_{r}|-|M_{r}^{\varepsilon
}-\tilde{Y}_{r}|\right]  \leq0,
\end{array}
\label{uniq1c}%
\end{equation}
since we have, for all $a,b\geq0$, $\delta\geq0$ and $\beta\geq-1/2,$%
\[
\left[  \left(  a^{2}+\delta\right)  ^{\beta}a-\left(  b^{2}+\delta\right)
^{\beta}b\right]  \left(  a-b\right)  \geq0.
\]
We use inequalities (\ref{uniq1b}) and (\ref{uniq1c}) and inequality
(\ref{uniq1}) becomes:%
\begin{equation}%
\begin{array}
[c]{l}%
\left[  e^{qV_{\sigma}}(\hat{\Gamma}_{\sigma}^{\varepsilon})^{q}%
+e^{qV_{\sigma}}(\tilde{\Gamma}_{\sigma}^{\varepsilon})^{q}\right]
+q\,\mathbb{E}^{\mathcal{F}_{\sigma}}%
{\displaystyle\int_{\sigma}^{\theta}}
e^{qV_{r}}\left[  (\hat{\Gamma}_{r}^{\varepsilon})^{q}+(\tilde{\Gamma}%
_{r}^{\varepsilon})^{q}\right]  dV_{r}\medskip\\
\quad+\dfrac{q}{4}\,n_{q}\,\mathbb{E}^{\mathcal{F}_{\sigma}}{%
{\displaystyle\int_{\sigma}^{\theta}}
}e^{qV_{r}}\left[  \frac{1+\beta}{\beta}\,|M^{\varepsilon}-Y|^{2}%
+\frac{1+\beta}{4}\,|\hat{Y}-\tilde{Y}|^{2}+\delta_{q}\right]  ^{\left(
q-2\right)  /2}|\hat{Z}-\tilde{Z}|^{2}dr\medskip\\
\quad+{q\,\mathbb{E}^{\mathcal{F}_{\sigma}}%
{\displaystyle\int_{\sigma}^{\theta}}
}e^{qV_{r}}\left[  (\hat{\Gamma}_{r}^{\varepsilon})^{q-2}{\Psi(}r,\hat{Y}%
_{r})+(\tilde{\Gamma}_{r}^{\varepsilon})^{q-2}{\Psi(}r,\tilde{Y}_{r})\right]
dQ_{r}\medskip\\
\leq\mathbb{E}^{\mathcal{F}_{\sigma}}\left[  e^{qV_{\theta}}(\hat{\Gamma
}_{\theta}^{\varepsilon})^{q}+e^{qV_{\theta}}(\tilde{\Gamma}_{\theta
}^{\varepsilon})^{q}\right]  +{q\,\mathbb{E}^{\mathcal{F}_{\sigma}}%
{\displaystyle\int_{\sigma}^{\theta}}
}e^{qV_{r}}\left[  (\hat{\Gamma}_{r}^{\varepsilon})^{q-2}+(\tilde{\Gamma}%
_{r}^{\varepsilon})^{q-2}\right]  {\Psi}\left(  r,M_{r}^{\varepsilon}\right)
dQ_{r}\medskip\\
+{q\,\mathbb{E}^{\mathcal{F}_{\sigma}}%
{\displaystyle\int_{\sigma}^{\theta}}
}e^{qV_{r}}\left[  (\hat{\Gamma}_{r}^{\varepsilon})^{q-2}\langle
M_{r}^{\varepsilon}-\hat{Y}_{r},-\hat{H}(r,\hat{Y}_{r},\hat{Z}_{r}%
)\rangle+(\tilde{\Gamma}_{r}^{\varepsilon})^{q-2}\langle M_{r}^{\varepsilon
}-\tilde{Y}_{r},-\tilde{H}(r,\tilde{Y}_{r},\tilde{Z}_{r})\rangle\right]
dQ_{r}\,.
\end{array}
\label{uniq1d}%
\end{equation}
Let $0\leq u\leq v$ and the stopping times $v^{\ast}=Q_{v}^{-1},$ $u^{\ast
}=Q_{u}^{-1}\,,$ where $Q_{\cdot}^{-1}$ is the inverse of the function
$r\mapsto Q_{r}:[0,\infty)\rightarrow\lbrack0,\infty).$

Let, for each $k,i\in\mathbb{N}^{\ast},$ the stopping times%
\[%
\begin{array}
[c]{l}%
\alpha_{k}=\inf\Big\{u\geq0:\left\updownarrow V\right\updownarrow _{u}%
+\sup\nolimits_{r\in\left[  0,u\right]  }|e^{V_{r}}\hat{Y}_{r}-\hat{Y}%
_{0}|+\sup\nolimits_{r\in\left[  0,u\right]  }|e^{V_{r}}\tilde{Y}_{r}%
-\tilde{Y}_{0}|+%
{\displaystyle\int_{0}^{u}}
e^{2V_{r}}|\hat{Z}_{r}|^{2}dr\medskip\\
\quad\quad\quad\quad+%
{\displaystyle\int_{0}^{u}}
e^{2V_{r}}|\tilde{Z}_{r}|^{2}dr+%
{\displaystyle\int_{0}^{u}}
e^{V_{r}}|\hat{H}(r,\hat{Y}_{r},\hat{Z}_{r})|dQ_{r}+%
{\displaystyle\int_{0}^{u}}
e^{V_{r}}|\tilde{H}(r,\tilde{Y}_{r},\tilde{Z}_{r})|dQ_{r}\medskip\\
\quad\quad\quad\quad+%
{\displaystyle\int_{0}^{u}}
e^{2V_{r}}{\Psi(}r,\hat{Y}_{r})dQ_{r}+%
{\displaystyle\int_{0}^{u}}
e^{2V_{r}}{\Psi(}r,\tilde{Y}_{r})dQ_{r}\geq k\Big\}.
\end{array}
\]
and define%
\[
u_{k}^{\ast}=\sigma\wedge u^{\ast}\wedge\alpha_{k}\quad\text{and}\quad
v_{k+i}^{\ast}=\theta\wedge v^{\ast}\wedge\alpha_{k+i}\,.
\]
We consider in (\ref{uniq1d})%
\[
\sigma=u_{k}^{\ast}\quad\text{and}\quad\theta=v_{k+i}^{\ast}%
\]
and passing to $\liminf_{\varepsilon\searrow0}$ we obtain (using Proposition
\ref{p1-approx}, Fatou's Lemma and Lebesgue dominated convergence theorem):%
\[%
\begin{array}
[c]{l}%
\displaystyle2\,e^{qV_{u_{k}^{\ast}}}\left(  \dfrac{1}{4}\,\big|\hat{Y}%
_{u_{k}^{\ast}}-\tilde{Y}_{u_{k}^{\ast}}\big|^{2}+\delta_{q}\right)
^{q/2}+2q\,{\mathbb{E}}^{\mathcal{F}_{u_{k}^{\ast}}}{%
{\displaystyle\int_{u_{k}^{\ast}}^{v_{k+i}^{\ast}}}
e^{qV_{r}}}\left(  \dfrac{1}{4}\,\big|\hat{Y}_{r}-\tilde{Y}_{r}\big|^{2}%
+\delta_{q}\right)  ^{q/2}dV_{r}\medskip\\
\displaystyle\quad+\dfrac{q}{4}\,n_{q}\,{\mathbb{E}}^{\mathcal{F}_{u_{k}%
^{\ast}}}{%
{\displaystyle\int_{u_{k}^{\ast}}^{v_{k+i}^{\ast}}}
}e^{qV_{r}}\left(  \dfrac{1+\beta}{4}\,|\hat{Y}_{r}-\tilde{Y}_{r}|^{2}%
+\delta_{q}\right)  ^{\left(  q-2\right)  /2}|\hat{Z}_{r}-\tilde{Z}_{r}%
|^{2}dr\medskip\\
\displaystyle\quad+q\,{\mathbb{E}}^{\mathcal{F}_{u_{k}^{\ast}}}{%
{\displaystyle\int_{u_{k}^{\ast}}^{v_{k+i}^{\ast}}}
}e^{qV_{r}}\left(  \dfrac{1}{4}\,|\hat{Y}_{r}-\tilde{Y}_{r}|^{2}+\delta
_{q}\right)  ^{\left(  q-2\right)  /2}\left[  \Psi(r,\hat{Y}_{r}%
)+\Psi(r,\tilde{Y}_{r})\right]  {dQ_{r}}\medskip\\
\displaystyle\leq2\,{\mathbb{E}}^{\mathcal{F}_{u_{k}^{\ast}}}e^{qV_{v_{k+i}%
^{\ast}}}\left(  \dfrac{1}{4}\,\big|\hat{Y}_{v_{k+i}^{\ast}}-\tilde
{Y}_{v_{k+i}^{\ast}}\big|^{2}+\delta_{q}\right)  ^{q/2}\medskip\\
\displaystyle\quad+q\,{\mathbb{E}}^{\mathcal{F}_{u_{k}^{\ast}}}{%
{\displaystyle\int_{u_{k}^{\ast}}^{v_{k+i}^{\ast}}}
}e^{qV_{r}}\left(  \dfrac{1}{4}\,|\hat{Y}_{r}-\tilde{Y}_{r}|^{2}+\delta
_{q}\right)  ^{\left(  q-2\right)  /2}\cdot2{\Psi}\left(  r,Y_{r}\right)
{dQ_{r}}\medskip\\
\displaystyle\quad+\dfrac{{q}}{2}\,{\mathbb{E}}^{\mathcal{F}_{u_{k}^{\ast}}}{%
{\displaystyle\int_{u_{k}^{\ast}}^{v_{k+i}^{\ast}}}
}e^{qV_{r}}\left(  \dfrac{1}{4}\,|\hat{Y}_{r}-\tilde{Y}_{r}|^{2}+\delta
_{q}\right)  ^{\left(  q-2\right)  /2}\langle\hat{Y}_{r}-\tilde{Y}_{r},\hat
{H}(r,\hat{Y}_{r},\hat{Z}_{r})-\tilde{H}(r,\tilde{Y}_{r},\tilde{Z}_{r})\rangle
dQ_{r}\,.
\end{array}
\]
By Fatou's Lemma, as as $\beta\rightarrow0,$ we deduce that the previous
inequality holds true also for $\beta=0.$

We remark now that%
\[
2{\Psi}\left(  r,Y_{r}\right)  =2{\Psi}\Big(r,\frac{1}{2}\hat{Y}_{r}+\frac
{1}{2}\tilde{Y}_{r}\Big)\leq\Psi(r,\hat{Y}_{r})+\Psi(r,\tilde{Y}_{r}).
\]
and%
\begin{align*}
&  \langle\hat{Y}_{r}-\tilde{Y}_{r},\hat{H}(r,\hat{Y}_{r},\hat{Z}_{r}%
)-\tilde{H}(r,\hat{Y}_{r},\hat{Z}_{r})+\tilde{H}(r,\hat{Y}_{r},\hat{Z}%
_{r})-\tilde{H}(r,\tilde{Y}_{r},\tilde{Z}_{r})\rangle dQ_{r}\\
&  \leq\langle\hat{Y}_{r}-\tilde{Y}_{r},\hat{H}(r,\hat{Y}_{r},\hat{Z}%
_{r})-\tilde{H}(r,\hat{Y}_{r},\hat{Z}_{r})\rangle dQ_{r}+|\hat{Y}_{r}%
-\tilde{Y}_{r}|^{2}dV_{r}+\dfrac{n_{p}\lambda}{2}\,|\hat{Z}_{r}-\tilde{Z}%
_{r}|^{2}dr.
\end{align*}
Hence%
\[%
\begin{array}
[c]{l}%
2\,e^{qV_{u_{k}^{\ast}}}\left(  \dfrac{1}{4}\,\big|\hat{Y}_{u_{k}^{\ast}%
}-\tilde{Y}_{u_{k}^{\ast}}\big|^{2}+\delta_{q}\right)  ^{q/2}+2q\delta
_{q}\,{\mathbb{E}}^{\mathcal{F}_{u_{k}^{\ast}}}{%
{\displaystyle\int_{u_{k}^{\ast}}^{v_{k+i}^{\ast}}}
e^{qV_{r}}}\left(  \dfrac{1}{4}\,|\hat{Y}_{r}-\tilde{Y}_{r}|^{2}+\delta
_{q}\right)  ^{\left(  q-2\right)  /2}dV_{r}\medskip\\
\quad+\dfrac{q}{4}\left(  n_{q}-n_{p}\lambda\right)  \,{\mathbb{E}%
}^{\mathcal{F}_{u_{k}^{\ast}}}{%
{\displaystyle\int_{u_{k}^{\ast}}^{v_{k+i}^{\ast}}}
}e^{qV_{r}}\left(  \dfrac{1}{4}\,|\hat{Y}_{r}-\tilde{Y}_{r}|^{2}+\delta
_{q}\right)  ^{\left(  q-2\right)  /2}|\hat{Z}_{r}-\tilde{Z}_{r}%
|^{2}dr\medskip\\
\leq2\,{\mathbb{E}}^{\mathcal{F}_{u_{k}^{\ast}}}e^{qV_{v_{k+i}^{\ast}}}\left(
\dfrac{1}{4}\,\big|\hat{Y}_{v_{k+i}^{\ast}}-\tilde{Y}_{v_{k+i}^{\ast}%
}\big|^{2}+\delta_{q}\right)  ^{q/2}\medskip\\
\quad+\dfrac{{q}}{2}\,{\mathbb{E}}^{\mathcal{F}_{u_{k}^{\ast}}}{%
{\displaystyle\int_{u_{k}^{\ast}}^{v_{k+i}^{\ast}}}
}e^{qV_{r}}\left(  \dfrac{1}{4}\,|\hat{Y}_{r}-\tilde{Y}_{r}|^{2}+\delta
_{q}\right)  ^{\left(  q-2\right)  /2}\langle\hat{Y}_{r}-\tilde{Y}_{r},\hat
{H}(r,\hat{Y}_{r},\hat{Z}_{r})-\tilde{H}(r,\hat{Y}_{r},\hat{Z}_{r})\rangle
dQ_{r}\,.
\end{array}
\]
Since $q\in\left\{  2,p\wedge2\right\}  ,$ we have (see Remark \ref{r-nq})%
\[
n_{q}-n_{p}\lambda\geq n_{q}\left(  1-\lambda\right)  =\left(  q-1\right)
\left(  1-\lambda\right)  ,
\]
passing to $\lim_{u,v\rightarrow\infty}\,,$ by Fatou's Lemma and Lebesgue's
dominated convergence theorem and using the continuity of the natural
filtration $\left\{  \mathcal{F}_{r}:r\geq0\right\}  ,$ it follows that%
\begin{equation}%
\begin{array}
[c]{l}%
2\,e^{qV_{\sigma\wedge\alpha_{k}}}\left(  \dfrac{1}{4}\,\big|\hat{Y}%
_{\sigma\wedge\alpha_{k}}-\tilde{Y}_{\sigma\wedge\alpha_{k}}\big|^{2}%
+\delta_{q}\right)  ^{q/2}\medskip\\
\quad+2q\delta_{q}\,{\mathbb{E}}^{\mathcal{F}_{\sigma\wedge\alpha_{k}}}{%
{\displaystyle\int_{\sigma\wedge\alpha_{k}}^{\theta\wedge\alpha_{k+i}}}
e^{qV_{r}}}\left(  \dfrac{1}{4}\,|\hat{Y}_{r}-\tilde{Y}_{r}|^{2}+\delta
_{q}\right)  ^{\left(  q-2\right)  /2}dV_{r}\medskip\\
\quad+\dfrac{q}{4}\left(  q-1\right)  \left(  1-\lambda\right)  \,{\mathbb{E}%
}^{\mathcal{F}_{\sigma\wedge\alpha_{k}}}{%
{\displaystyle\int_{\sigma\wedge\alpha_{k}}^{\theta\wedge\alpha_{k+i}}}
}e^{qV_{r}}\left(  \dfrac{1}{4}\,|\hat{Y}_{r}-\tilde{Y}_{r}|^{2}+\delta
_{q}\right)  ^{\left(  q-2\right)  /2}|\hat{Z}_{r}-\tilde{Z}_{r}%
|^{2}dr\medskip\\
\leq2\,{\mathbb{E}}^{\mathcal{F}_{\sigma\wedge\alpha_{k}}}e^{qV_{\theta
\wedge\alpha_{k+i}}}\left(  \dfrac{1}{4}\,\big|\hat{Y}_{\theta\wedge
\alpha_{k+i}}-\tilde{Y}_{\theta\wedge\alpha_{k+i}}\big|^{2}+\delta_{q}\right)
^{q/2}\medskip\\
\quad+q\,{\mathbb{E}}^{\mathcal{F}_{\sigma\wedge\alpha_{k}}}{%
{\displaystyle\int_{\sigma\wedge\alpha_{k}}^{\theta\wedge\alpha_{k+i}}}
}e^{qV_{r}}\left(  \dfrac{1}{4}\,|\hat{Y}_{r}-\tilde{Y}_{r}|^{2}+\delta
_{q}\right)  ^{\left(  q-1\right)  /2}\big|\hat{H}(r,\hat{Y}_{r},\hat{Z}%
_{r})-\tilde{H}(r,\hat{Y}_{r},\hat{Z}_{r})\big|dQ_{r}\,.
\end{array}
\label{uniq1e}%
\end{equation}
Passing to the limit, as $\delta\rightarrow0_{+}\,,$ by Fatou's Lemma for the
left-hand side and inequality%
\[
\delta_{q}\left(  \dfrac{1}{4}\,|\hat{Y}_{r}-\tilde{Y}_{r}|^{2}+\delta
_{q}\right)  ^{\left(  q-2\right)  /2}\leq\delta_{q}{}^{q/2},
\]
and by Lebesgue's dominated convergence theorem for the right-hand side we get%
\begin{equation}%
\begin{array}
[c]{l}%
\displaystyle e^{qV_{\sigma\wedge\alpha_{k}}}\big|\hat{Y}_{\sigma\wedge
\alpha_{k}}-\tilde{Y}_{\sigma\wedge\alpha_{k}}\big|^{q}\medskip\\
\displaystyle\quad+\dfrac{q}{2}\left(  q-1\right)  \left(  1-\lambda\right)
\,{\mathbb{E}}^{\mathcal{F}_{\sigma\wedge\alpha_{k}}}{%
{\displaystyle\int_{\sigma\wedge\alpha_{k}}^{\theta\wedge\alpha_{k+i}}}
}e^{qV_{r}}|\hat{Y}_{r}-\tilde{Y}_{r}|^{q-2}\,|\hat{Z}_{r}-\tilde{Z}_{r}%
|^{2}dr\medskip\\
\displaystyle\leq{\mathbb{E}}^{\mathcal{F}_{\sigma\wedge\alpha_{k}}%
}e^{qV_{\theta\wedge\alpha_{k+i}}}\big|\hat{Y}_{\theta\wedge\alpha_{k+i}%
}-\tilde{Y}_{\theta\wedge\alpha_{k+i}}\big|^{q}\medskip\\
\displaystyle\quad+q\,{\mathbb{E}}^{\mathcal{F}_{\sigma\wedge\alpha_{k}}}{%
{\displaystyle\int_{\sigma\wedge\alpha_{k}}^{\theta\wedge\alpha_{k+i}}}
}e^{qV_{r}}|\hat{Y}_{r}-\tilde{Y}_{r}|^{q-1}\big|\hat{H}(r,\hat{Y}_{r},\hat
{Z}_{r})-\tilde{H}(r,\hat{Y}_{r},\hat{Z}_{r})\big|dQ_{r}\,.
\end{array}
\label{uniq-33}%
\end{equation}
Passing to the limit successively, $\lim_{i\rightarrow\infty}\,,$
$\lim_{k\rightarrow\infty}$ and $\lim_{T\rightarrow\infty}\,,$ in
(\ref{uniq-33}), we obtain (using Fatou's Lemma and Lebesgue dominated
convergence theorem via condition (\ref{def0-1})), for any stopping times
$0\leq\sigma\leq\theta\leq\tau,$%
\begin{equation}%
\begin{array}
[c]{l}%
e^{qV_{\sigma}}\big|\hat{Y}_{\sigma}-\tilde{Y}_{\sigma}\big|^{q}+\dfrac{q}%
{2}\,\left(  q-1\right)  \left(  1-\lambda\right)  \,{\mathbb{E}}%
^{\mathcal{F}_{\sigma}}{%
{\displaystyle\int_{\sigma}^{\theta}}
}e^{qV_{r}}|\hat{Y}_{r}-\tilde{Y}_{r}|^{q-2}\,|\hat{Z}_{r}-\tilde{Z}_{r}%
|^{2}dr\medskip\\
\leq{\mathbb{E}}^{\mathcal{F}_{\sigma}}e^{qV_{\theta}}\big|\hat{Y}_{\theta
}-\tilde{Y}_{\theta}\big|^{q}+q\,{\mathbb{E}}^{\mathcal{F}_{\sigma}}{%
{\displaystyle\int_{\sigma}^{\theta}}
}e^{qV_{r}}|\hat{Y}_{r}-\tilde{Y}_{r}|^{q-1}\big|\hat{H}(r,\hat{Y}_{r},\hat
{Z}_{r})-\tilde{H}(r,\hat{Y}_{r},\hat{Z}_{r})\big|dQ_{r}\,,\quad
\mathbb{P}\text{--a.s.}%
\end{array}
\label{uniq-44}%
\end{equation}
(we also used the continuity of the natural filtration $\left\{
\mathcal{F}_{r}:r\geq0\right\}  \,$).$\medskip$

From (\ref{uniq-44}) we obtain conclusion (\ref{cont-1}) if we use Holder's
inequality and (\ref{def-11ccc}) and ine\-qua\-lity $\big|\hat{Y}_{r}%
-\tilde{Y}_{r}\big|^{q}\leq2^{q-1}\,\big(|\hat{Y}_{r}|^{q}+|\tilde{Y}_{r}%
|^{q}\big).\medskip$

Applying now Proposition \ref{an-prop-yd}, we infer from (\ref{uniq-44}) that,
for all $0<\alpha<1,$%
\begin{equation}%
\begin{array}
[c]{l}%
\mathbb{E}\sup\nolimits_{r\in\left[  \sigma,\theta\right]  }e^{\alpha qV_{r}%
}\big|\hat{Y}_{r}-\tilde{Y}_{r}\big|^{\alpha q}+\left(  q-1\right)  ^{\alpha
}\,\mathbb{E}\Big(%
{\displaystyle\int_{\sigma}^{\theta}}
e^{qV_{r}}|\hat{Y}_{r}-\tilde{Y}_{r}|^{q-2}\,|\hat{Z}_{r}-\tilde{Z}_{r}%
|^{2}dr\Big)^{\alpha}\medskip\\
\leq C_{q,\alpha,\lambda}\left[  \big(\mathbb{E}e^{qV_{\theta}}\big|\hat
{Y}_{\theta}-\tilde{Y}_{\theta}\big|^{q}\big)^{\alpha}+L^{\frac{\alpha\left(
q-1\right)  }{q}}\,\bigg(\mathbb{E}\Big(%
{\displaystyle\int_{\sigma}^{\theta}}
e^{V_{r}}\big|\hat{H}(r,\hat{Y}_{r},\hat{Z}_{r})-\tilde{H}(r,\hat{Y}_{r}%
,\hat{Z}_{r})\big|dQ_{r}\Big)^{q}\bigg)^{\frac{\alpha}{q}}\right]  ,
\end{array}
\label{uniq-55}%
\end{equation}
where $L$ is a constant such that, see inequality (\ref{def-11ccc}),%
\[
\mathbb{E}\sup\nolimits_{r\in\left[  0,T\right]  }e^{qV_{r}}\big|\hat{Y}%
_{r}-\tilde{Y}_{r}\big|^{q}\leq2^{q-1}\,\mathbb{E}\sup\nolimits_{r\in\left[
0,T\right]  }e^{qV_{r}}\big(|\hat{Y}_{r}|^{q}+|\tilde{Y}_{r}|^{q}\big)\leq L.
\]
Hence conclusion (\ref{cont-2}) follows.$\medskip$

The uniqueness property is a consequence, since, if $\hat{\eta}=\tilde{\eta}$
and $\hat{H}=\tilde{H},$ we conclude, from (\ref{cont-2}), in the case $q>1,$
that $\hat{Y}=\tilde{Y}$ in $S_{m}^{0}$ and $\hat{Z}=\tilde{Z}$ in
$\Lambda_{m\times k}^{0}\,.$\hfill
\end{proof}

\begin{remark}
We notice that if we have the uniqueness of $Y$ and if we multiply
(\ref{uniq1e}) with $\delta^{1-q/2},$ then the uniqueness of $Z$ follows also
from (\ref{uniq1e}) by taking $\delta\rightarrow0_{+}\,.$
\end{remark}

\section{Existence of the solution}

\subsection{Existence on a deterministic interval time $\left[  0,T\right]  $}

\hspace{\parindent}The existence of a $L^{p}$--variational solution will be
proved firstly in the case of a deterministic time interval, i.e. $\tau=T>0.$

\begin{lemma}
[Strong solution]\label{l1-strong sol}We suppose that assumptions $\left(
\mathrm{A}_{1}-\mathrm{A}_{7}\right)  $ are satisfied. Let $V^{\left(
+\right)  }$ be given by definition (\ref{defV_3}). In addition we assume
that:$\medskip$

\noindent$\left(  i\right)  $ there exists $L>0$ such that%
\begin{equation}
\left\vert \eta\right\vert +\varphi\left(  \eta\right)  +\psi\left(
\eta\right)  +\ell_{t}+F_{1}^{\#}\left(  t\right)  +G_{1}^{\#}\left(
t\right)  \leq L,\quad\text{a.e. }t\in\left[  0,T\right]  ,\;\mathbb{P}%
\text{--a.s.;} \label{ap-eq-1b}%
\end{equation}
\noindent$\left(  ii\right)  $ there exists $\delta>0$ such that%
\begin{equation}
\mathbb{E}\exp\left[  \left(  2+\delta\right)  V_{T}^{\left(  +\right)
}\right]  <\infty; \label{ap-eq-1bb}%
\end{equation}
\noindent$\left(  iii\right)  $ there exists $\tilde{L}>0$ such that%
\begin{equation}
\big|e^{V_{T}^{\left(  +\right)  }}\eta\big|^{2}+\bigg(%
{\displaystyle\int_{0}^{T}}
e^{V_{r}^{\left(  +\right)  }}\big(F_{1}^{\#}\left(  r\right)  dr+G_{1}%
^{\#}\left(  r\right)  dA_{r}\big)\bigg)^{2}\leq\tilde{L},\quad\mathbb{P}%
-\text{a.s.;} \label{ap-eq-1c}%
\end{equation}
\noindent$\left(  iv\right)  $ for\footnote{The constant $C_{\lambda
}:=C_{2,\lambda}\,,$ where $C_{2,\lambda}$ is given by (\ref{an3a}).}
$\tilde{\rho}_{0}\xlongequal{\hspace{-4pt}{\rm def}\hspace{-4pt}}(C_{\lambda
}\tilde{L})^{1/2}>0$ it holds%
\begin{equation}
\mathbb{E}%
{\displaystyle\int_{0}^{T}}
e^{2V_{r}^{\left(  +\right)  }}\left[  \big(F_{1+\tilde{\rho}_{0}}^{\#}\left(
r\right)  \big)^{2}dr+\big(G_{1+\tilde{\rho}_{0}}^{\#}\left(  r\right)
\big)^{2}dA_{r}\right]  <\infty. \label{ap-eq-6}%
\end{equation}
Then the multivalued BSDE%
\[
\left\{
\begin{array}
[c]{l}%
\displaystyle Y_{t}+{\int_{t}^{T}}dK_{r}=Y_{T}+{\int_{t}^{T}}H\left(
r,Y_{r},Z_{r}\right)  dQ_{r}-{\int_{t}^{T}}Z_{r}dB_{r}\,,\medskip\\
\displaystyle dK_{t}=U_{t}^{\left(  1\right)  }dt+U_{t}^{\left(  2\right)
}dA_{t}\,,\medskip\\
\displaystyle U_{t}^{\left(  1\right)  }dt\in\partial\varphi\left(
Y_{t}\right)  dt\quad\text{and}\quad U_{t}^{\left(  2\right)  }dA_{t}%
\in\partial\psi\left(  Y_{t}\right)  dA_{t}\,,\quad t\in\left[  0,T\right]  ,
\end{array}
\,\right.
\]
has a strong a solution $\left(  Y,Z,U^{\left(  1\right)  },U^{\left(
2\right)  }\right)  \in S_{m}^{0}\times\Lambda_{m\times k}^{0}\times
\Lambda_{m}^{0}\times\Lambda_{m}^{0}$ such that
\[
\mathbb{E}\sup\nolimits_{t\in\left[  0,T\right]  }{e^{2V_{r}^{\left(
+\right)  }}}\left\vert Y_{r}\right\vert ^{2}+\mathbb{E}%
{\displaystyle\int_{0}^{T}}
{e^{2V_{r}^{\left(  +\right)  }}}\left\vert Z_{r}\right\vert ^{2}dr+\mathbb{E}%
{\displaystyle\int_{0}^{T}}
{e^{2V_{r}^{\left(  +\right)  }}}\big|U_{r}^{\left(  1\right)  }%
\big|^{2}dr+\mathbb{E}%
{\displaystyle\int_{0}^{T}}
{e^{2V_{r}^{\left(  +\right)  }}}\big|U_{r}^{\left(  2\right)  }%
\big|^{2}dA_{r}<\infty.
\]

\end{lemma}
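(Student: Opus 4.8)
The plan is the classical Moreau--Yosida penalization: replace the subdifferentials $\partial\varphi,\partial\psi$ by the smooth monotone gradients $\nabla\varphi_\varepsilon,\nabla\psi_\varepsilon$ and let $\varepsilon\searrow0$. For each $\varepsilon>0$ I would first solve the penalized BSDE
\[
Y_t^\varepsilon+\int_t^T\big[\nabla\varphi_\varepsilon(Y_r^\varepsilon)\,dr+\nabla\psi_\varepsilon(Y_r^\varepsilon)\,dA_r\big]=\eta+\int_t^T H(r,Y_r^\varepsilon,Z_r^\varepsilon)\,dQ_r-\int_t^T Z_r^\varepsilon\,dB_r .
\]
Written against $dQ_t=dt+dA_t$ its generator is $H(r,y,z)-\alpha_r\nabla\varphi_\varepsilon(y)-(1-\alpha_r)\nabla\psi_\varepsilon(y)$; by \eqref{fi-lip} and \eqref{minimum point} the added terms are monotone, $\tfrac1\varepsilon$--Lipschitz and vanish at $0$, so the generator is still monotone in $y$ (via \eqref{F, G assumpt 3}) and Lipschitz in $z$. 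Existence and uniqueness of $(Y^\varepsilon,Z^\varepsilon)\in S_m^2[0,T]\times\Lambda_{m\times k}^2(0,T)$ then follow from the monotone BSDE theory recalled from \cite{pa-ra/14}, the bounded data (i) placing the solution in the square--integrable class.

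Next I would derive $\varepsilon$--uniform a priori estimates. Itô's formula applied to $e^{2V_t^{(+)}}|Y_t^\varepsilon|^2$, together with the monotonicity bound \eqref{F, G assumpt 3}, the very definition \eqref{defV_3} of $V^{(+)}$ (chosen so that its increments absorb the quadratic--in--$Y$ part of the generator) and $\langle Y^\varepsilon,\nabla\varphi_\varepsilon(Y^\varepsilon)\rangle\ge\varphi_\varepsilon(Y^\varepsilon)\ge0$ (and likewise for $\psi$), yields
\[
\mathbb{E}\sup\nolimits_{t\le T}e^{2V_t^{(+)}}|Y_t^\varepsilon|^2+\mathbb{E}\int_0^T e^{2V_r^{(+)}}|Z_r^\varepsilon|^2\,dr\le C\tilde L ,
\]
the exponential moment (ii) guaranteeing both that the stochastic integral is a genuine martingale and that the data term is integrable. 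Combined with the appendix estimate (with the constant $C_\lambda=C_{2,\lambda}$ of the footnote) one also gets the pathwise sup--bound $e^{V_t^{(+)}}|Y_t^\varepsilon|\le\tilde\rho_0$, which localizes $y$ to $|y|\le1+\tilde\rho_0$; assumption (iv) then makes $H(\cdot,Y^\varepsilon,\cdot)$ (through $F^\#_{1+\tilde\rho_0},G^\#_{1+\tilde\rho_0}$) square--integrable against the weight, which is what closes the bound.

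The main obstacle, and the step where assumption (A$_7$) is indispensable, is the $\varepsilon$--uniform control of the two reflection terms $\mathbb{E}\int_0^T e^{2V_r^{(+)}}|\nabla\varphi_\varepsilon(Y_r^\varepsilon)|^2\,dr$ and $\mathbb{E}\int_0^T e^{2V_r^{(+)}}|\nabla\psi_\varepsilon(Y_r^\varepsilon)|^2\,dA_r$. To obtain these I would apply Itô's formula to $e^{2V_t^{(+)}}\big(\varphi_\varepsilon(Y_t^\varepsilon)+\psi_\varepsilon(Y_t^\varepsilon)\big)$, using $\nabla\varphi_\varepsilon=\partial\varphi_\varepsilon$; the squared gradient norms then appear with the favorable sign, while the dangerous terms are handled exactly by the compatibility conditions \eqref{compAssumpt}. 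Indeed (A$_7$)-(i) keeps the cross--gradient term $\langle\nabla\varphi_\varepsilon,\nabla\psi_\varepsilon\rangle$ nonnegative, hence on the good side, whereas (ii)--(iii) rewrite the measure--mismatched pairings $\langle\nabla\varphi_\varepsilon,G\rangle$ (integrated $dA$) and $\langle\nabla\psi_\varepsilon,F\rangle$ (integrated $dt$) as $|\nabla\psi_\varepsilon|\,|G|$ and $|\nabla\varphi_\varepsilon|\,|F|$, so that after Young's inequality the squared gradients are reabsorbed into the left-hand side and the remaining data terms are bounded by (iv) and the sup--bound on $Y^\varepsilon$.

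Finally I would show $(Y^\varepsilon,Z^\varepsilon)$ is Cauchy. Applying Itô to $e^{2V_t^{(+)}}|Y_t^\varepsilon-Y_t^\delta|^2$, the subdifferential cross terms are controlled by \eqref{fi-Cauchy}, which bounds $-\langle Y^\varepsilon-Y^\delta,\nabla\varphi_\varepsilon(Y^\varepsilon)-\nabla\varphi_\delta(Y^\delta)\rangle$ by $\tfrac{\varepsilon+\delta}{2}\big[|\nabla\varphi_\varepsilon(Y^\varepsilon)|^2+|\nabla\varphi_\delta(Y^\delta)|^2\big]$ (and analogously for $\psi$); since the gradient norms are uniformly bounded by the previous step, this gives $\mathbb{E}\sup_{t\le T}e^{2V_t^{(+)}}|Y_t^\varepsilon-Y_t^\delta|^2+\mathbb{E}\int_0^T e^{2V_r^{(+)}}|Z_r^\varepsilon-Z_r^\delta|^2\,dr\le C(\varepsilon+\delta)$. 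Hence $Y^\varepsilon\to Y$ in the weighted $S_m^2$ and $Z^\varepsilon\to Z$ in the weighted $\Lambda_{m\times k}^2$. The uniform gradient bounds let me extract weak limits $\nabla\varphi_\varepsilon(Y^\varepsilon)\rightharpoonup U^{(1)}$ and $\nabla\psi_\varepsilon(Y^\varepsilon)\rightharpoonup U^{(2)}$ in the respective weighted $L^2$ spaces; passing to the limit in the penalized equation gives the equation with $dK=U^{(1)}\,dt+U^{(2)}\,dA$, and since $J_\varepsilon(Y^\varepsilon)=Y^\varepsilon-\varepsilon\nabla\varphi_\varepsilon(Y^\varepsilon)\to Y$ by \eqref{fi-lip}$(a)$, the demiclosedness of the maximal monotone graphs $\partial\varphi,\partial\psi$ yields the inclusions $U^{(1)}_t\,dt\in\partial\varphi(Y_t)\,dt$ and $U^{(2)}_t\,dA_t\in\partial\psi(Y_t)\,dA_t$, producing the strong solution with the stated integrability.
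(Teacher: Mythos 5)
Your overall strategy coincides with the paper's (Moreau--Yosida penalization, a priori bounds, uniform gradient estimates via $(\mathrm{A}_7)$, a Cauchy argument, weak limits and demiclosedness of $\partial\varphi,\partial\psi$), but there is a genuine gap at the very first step: you keep $F$ and $G$ unmollified and claim that the penalized equation is solvable by ``monotone BSDE theory from \cite{pa-ra/14}''. Under the hypotheses of this lemma that appeal is not available. The generator is only \emph{continuous} and monotone in $y$, and every existence theorem for monotone continuous generators (e.g.\ \cite{br-de-hu-pa/03}, or the corresponding results in \cite{pa-ra/14}) requires an integrability condition on the local growth $F^{\#}_{\rho},G^{\#}_{\rho}$ for \emph{every} radius $\rho>0$ (something like $\mathbb{E}\big(\int_0^T F_\rho^{\#}(s)\,ds\big)^p<\infty$ for all $\rho$). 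Here assumption \eqref{ap-eq-1b} controls only the radius $\rho=1$, assumption \eqref{ap-eq-6} only the single radius $\rho=1+\tilde\rho_0$, and for all other radii assumption $(\mathrm{A}_5)$ gives merely $\mathbb{P}$--a.s.\ finiteness of $\int_0^T F_\rho^{\#}ds+\int_0^T G_\rho^{\#}dA_s$, which is not enough. The a priori bound $|Y^\varepsilon|\le\tilde\rho_0$ cannot rescue the citation, since it is derived \emph{after} a solution exists.

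This is precisely why the paper's proof introduces the extra layer you omitted: $F$ and $G$ are replaced by the mollifier--truncation approximations $F_\varepsilon,G_\varepsilon$ of Section \ref{Annex-MA} (convolution in $y$, projection $\beta_\varepsilon$ in $z$, and the cutoff $\mathbf{1}_{[0,1]}(\varepsilon|F|)$), and the whole generator carries the additional indicator $\mathbf{1}_{[0,1/\varepsilon]}(A_r)$, see \eqref{ap-eq-1a}. By \eqref{ma-1} and \eqref{ma-4} the resulting $\Phi_\varepsilon=H_\varepsilon-\nabla_y\Psi^\varepsilon$ is globally Lipschitz and bounded by $C/\varepsilon^2$, so the \emph{Lipschitz} result \cite[Lemma 5.20]{pa-ra/14} applies with no growth hypotheses beyond \eqref{ap-eq-1b}. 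The price is paid later: the compatibility inequalities of $(\mathrm{A}_7)$ must be transferred from $F,G$ to $F_\varepsilon,G_\varepsilon$ (the paper's estimate $\langle\nabla\psi_\varepsilon(y),F_\varepsilon(t,y,z)\rangle\le(2+|\nabla\varphi_\varepsilon(y)|)\,|F|_\varepsilon(t,y,z)$), and the Cauchy step acquires mollification-error terms $R^{\varepsilon,\delta},N^{\varepsilon,\delta}$ beyond the $\tfrac{\varepsilon+\delta}{2}$ penalization terms of \eqref{fi-Cauchy} that you use. If you granted yourself existence for the penalized equation, the remainder of your argument (uniform estimates, Cauchy bound, weak limits, demiclosedness) would indeed go through and would even be cleaner than the paper's; but as written the existence step rests on a theorem whose hypotheses fail, so the proof is incomplete without the mollification/truncation device (or an equivalent truncation of $F,G$ in $y$ compatible with monotonicity).
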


\begin{proof}
Let $0<\varepsilon\leq1.$ We consider the approximating BSDE%
\begin{equation}
Y_{t}^{\varepsilon}+{\int_{t}^{T}}\nabla_{y}\Psi^{\varepsilon}(r,Y_{r}%
^{\varepsilon})dQ_{r}=\eta+{\int_{t}^{T}}H_{\varepsilon}(r,Y_{r}^{\varepsilon
},Z_{r}^{\varepsilon})dQ_{r}-{\int_{t}^{T}}Z_{r}^{\varepsilon}\,dB_{r}%
\,,\quad\mathbb{P}\text{--a.s., }t\in\left[  0,T\right]  , \label{ap-eq -1}%
\end{equation}
where%
\begin{equation}%
\begin{array}
[c]{l}%
\displaystyle\Psi^{\varepsilon}\left(  r,y\right)
\xlongequal{\hspace{-4pt}{\rm def}\hspace{-4pt}}\alpha_{r}\,\varphi
_{\varepsilon}\left(  y\right)  +\left(  1-\alpha_{r}\right)  \,\psi
_{\varepsilon}\left(  y\right)  \,\mathbf{1}_{[0,\frac{1}{\varepsilon}%
]}\left(  A_{r}\right)  ,\medskip\\
\displaystyle\nabla_{y}\Psi^{\varepsilon}\left(  r,y\right)  =\left[
\alpha_{r}\,\nabla_{y}\varphi_{\varepsilon}\left(  y\right)  +\left(
1-\alpha_{r}\right)  \,\nabla_{y}\psi_{\varepsilon}\left(  y\right)  \right]
\,\mathbf{1}_{[0,\frac{1}{\varepsilon}]}\left(  A_{r}\right)  ,\medskip\\
\displaystyle H_{\varepsilon}\left(  r,y,z\right)
\xlongequal{\hspace{-4pt}{\rm def}\hspace{-4pt}}\left[  \alpha_{r}%
\,F_{\varepsilon}\left(  r,y,z\right)  +\left(  1-\alpha_{r}\right)
\,G_{\varepsilon}\left(  r,y\right)  \right]  \,\mathbf{1}_{[0,\frac
{1}{\varepsilon}]}\left(  A_{r}\right)  ,
\end{array}
\label{ap-eq-1a}%
\end{equation}
where $\varphi_{\varepsilon}$ and $\psi_{\varepsilon}$ are the Moreau-Yosida's
regularization given by (\ref{fi-MY}) and $F_{\varepsilon},G_{\varepsilon}$
are the mollifier approximations defined by Section \ref{Annex-MA}.

By (\ref{ma-1}) and (\ref{fi-lip}) we see that the function%
\[
\Phi_{\varepsilon}\left(  r,y,z\right)
\xlongequal{\hspace{-4pt}{\rm def}\hspace{-4pt}}H_{\varepsilon}\left(
r,y,z\right)  -\nabla_{y}\Psi^{\varepsilon}\left(  r,y\right)
\]
is a Lipschitz function:%
\[%
\begin{array}
[c]{l}%
\displaystyle\left\vert \Phi_{\varepsilon}\left(  r,y,z\right)  -\Phi
_{\varepsilon}\left(  r,\hat{y},\hat{z}\right)  \right\vert \medskip\\
\displaystyle\leq\bigg[\alpha_{r}\left(  \ell_{r}\,\left\vert z-\hat
{z}\right\vert +\dfrac{\kappa\left(  1+\ell_{t}\right)  }{\varepsilon^{2}%
}\,\left\vert y-\hat{y}\right\vert \right)  +\left(  1-\alpha_{r}\right)
\,\dfrac{\kappa}{\varepsilon^{2}}\,\left\vert y-\hat{y}\right\vert \medskip\\
\displaystyle\quad+\dfrac{1}{\varepsilon}\,\alpha_{r}\,\left\vert y-\hat
{y}\right\vert +\dfrac{1}{\varepsilon}\left(  1-\alpha_{r}\right)
\,\left\vert y-\hat{y}\right\vert \bigg]\,\mathbf{1}_{[0,\frac{1}{\varepsilon
}]}\left(  A_{r}\right)  \medskip\\
\displaystyle\leq\left[  \alpha_{r}\,\dfrac{\kappa L+\kappa+1}{\varepsilon
^{2}}\,\left\vert y-\hat{y}\right\vert +\left(  1-\alpha_{r}\right)
\,\dfrac{\kappa+1}{\varepsilon^{2}}\,\left\vert y-\hat{y}\right\vert
+L\,\alpha_{r}\left\vert z-\hat{z}\right\vert \right]  \,\mathbf{1}%
_{[0,\frac{1}{\varepsilon}]}\left(  A_{r}\right)  \medskip\\
\displaystyle\leq\left[  \dfrac{\kappa L+\kappa+1}{\varepsilon^{2}%
}\,\left\vert y-\hat{y}\right\vert +L\,\alpha_{r}\left\vert z-\hat
{z}\right\vert \right]  \,\mathbf{1}_{[0,\frac{1}{\varepsilon}]}\left(
A_{r}\right)  .
\end{array}
\]
If we use the above properties of $\Phi_{\varepsilon}\,,$ the presence of the
indicator $\mathbf{1}_{[0,\frac{1}{\varepsilon}]}\left(  A_{r}\right)  $ in
the definition of $\Phi_{\varepsilon}\,,$ assumption (\ref{ap-eq-1b}) and
properties (\ref{ma-4}) and (\ref{ma-1}--$a$) we see that the assumptions of
\cite[Lemma 5.20]{pa-ra/14} are satisfied for any $p^{\prime}\geq2$ arbitrary fixed.

Therefore equation (\ref{ap-eq -1}) has a unique solution $\left(
Y^{\varepsilon},Z^{\varepsilon}\right)  \in S_{m}^{p^{\prime}}\left[
0,T\right]  \times\Lambda_{m\times k}^{p^{\prime}}\left(  0,T\right)  $ and
consequently, for any $p^{\prime}\geq2,$%
\begin{equation}
\mathbb{E}\sup\nolimits_{t\in\left[  0,T\right]  }\left\vert Y_{t}%
^{\varepsilon}\right\vert ^{p^{\prime}}<\infty. \label{ap-eq-2}%
\end{equation}
Remark that, by (\ref{ma-2}), since $n_{p}\leq1,$%
\begin{align*}
&  \big\langle Y_{t}^{\varepsilon},\Phi_{\varepsilon}\left(  t,Y_{t}%
^{\varepsilon},Z_{t}^{\varepsilon}\right)  \big\rangle dQ_{t}\medskip\\[3pt]
&  =\big\langle Y_{t}^{\varepsilon},F_{\varepsilon}\left(  t,Y_{t}%
^{\varepsilon},Z_{t}^{\varepsilon}\right)  \big\rangle\,\mathbf{1}%
_{[0,\frac{1}{\varepsilon}]}\left(  A_{r}\right)  dt+\big\langle Y_{t}%
^{\varepsilon},G_{\varepsilon}\left(  t,Y_{t}^{\varepsilon}\right)
\big\rangle\,\mathbf{1}_{[0,\frac{1}{\varepsilon}]}\left(  A_{r}\right)
dA_{t}\medskip\\[3pt]
&  \quad-\big\langle Y_{t}^{\varepsilon},\nabla\varphi_{\varepsilon}\left(
t,Y_{t}^{\varepsilon}\right)  \big\rangle\,\mathbf{1}_{[0,\frac{1}%
{\varepsilon}]}\left(  A_{r}\right)  dt-\big\langle Y_{t}^{\varepsilon}%
,\nabla\psi_{\varepsilon}\left(  t,Y_{t}^{\varepsilon}\right)
\big\rangle\,\mathbf{1}_{[0,\frac{1}{\varepsilon}]}\left(  A_{r}\right)
dA_{t}\medskip\\[3pt]
&  \leq\Big[\left\vert Y_{t}^{\varepsilon}\right\vert \,F_{1}^{\#}\left(
t\right)  +\Big(\mu_{t}+\frac{1}{2n_{p}\lambda}\,\ell_{t}^{2}\Big)^{+}%
\left\vert Y_{t}^{\varepsilon}\right\vert ^{2}+\dfrac{n_{p}\lambda}%
{2}\,\left\vert Z_{t}^{\varepsilon}\right\vert ^{2}\Big]dt+\left[  \left\vert
Y_{t}^{\varepsilon}\right\vert G_{1}^{\#}\left(  t\right)  +\nu_{t}%
^{+}\left\vert Y_{t}^{\varepsilon}\right\vert ^{2}\right]  dA_{t}%
\medskip\\[3pt]
&  \leq\left\vert Y_{t}^{\varepsilon}\right\vert \,\bar{H}_{1}^{\#}\left(
t\right)  dQ_{t}+\left\vert Y_{t}^{\varepsilon}\right\vert ^{2}dV_{t}^{\left(
+\right)  }+\dfrac{\lambda}{2}\,\left\vert Z_{t}^{\varepsilon}\right\vert
^{2}dt,
\end{align*}
where%
\[
\bar{H}_{1}^{\#}\left(  t\right)
\xlongequal{\hspace{-4pt}{\rm def}\hspace{-4pt}}\alpha_{t}F_{1}^{\#}\left(
t\right)  +\left(  1-\alpha_{t}\right)  G_{1}^{\#}\left(  t\right)  .
\]
By Young's inequality and assumption (\ref{ap-eq-1bb}) and (\ref{ap-eq-2}) we
have%
\begin{align}
\mathbb{E}\sup\nolimits_{t\in\left[  0,T\right]  }e^{2V_{t}^{\left(  +\right)
}}\left\vert Y_{t}^{\varepsilon}\right\vert ^{2}  &  \leq\mathbb{E}\left[
\big(\exp2V_{T}^{\left(  +\right)  }\big)\,\sup\nolimits_{t\in\left[
0,T\right]  }\left\vert Y_{t}^{\varepsilon}\right\vert ^{2}\right]
\medskip\label{ap-eq-9}\\
&  \leq\left[  \frac{2}{2+\delta}\,\mathbb{E}\exp\left(  2+\delta\right)
V_{T}^{\left(  +\right)  }+\frac{\delta}{2+\delta}\,\mathbb{E}\sup
\nolimits_{t\in\left[  0,T\right]  }\left\vert Y_{t}^{\varepsilon}\right\vert
^{\left(  4+2\delta\right)  /\delta}\right]  <\infty,\nonumber
\end{align}
therefore, by Proposition \ref{Appendix_result 1}, we have%
\[
\mathbb{E}^{\mathcal{F}_{t}}\bigg[\sup\nolimits_{r\in\left[  t,T\right]
}\big|e^{V_{r}^{\left(  +\right)  }}Y_{r}^{\varepsilon}\big|^{2}+%
{\displaystyle\int_{t}^{T}}
e^{2V_{r}^{\left(  +\right)  }}\left\vert Z_{r}^{\varepsilon}\right\vert
^{2}dr\bigg]\leq C_{\lambda}\,\mathbb{E}^{\mathcal{F}_{t}}\bigg[\big|e^{V_{T}%
^{\left(  +\right)  }}\eta\big|^{2}+\Big(%
{\displaystyle\int_{t}^{T}}
e^{V_{r}^{\left(  +\right)  }}\bar{H}_{1}^{\#}\left(  r\right)  dQ_{r}%
\Big)^{2}\bigg]
\]
($C_{\lambda}=C_{2,\lambda}\,,$ where $C_{2,\lambda}$ is given by
(\ref{an3a})).$\medskip$

From the above inequality we get, using (\ref{ap-eq-1c}) and (\ref{ma-4}),
that, $\mathbb{P}$--a.s., for all $t\in\left[  0,T\right]  ,$%
\begin{equation}%
\begin{array}
[c]{ll}%
\left(  a\right)  & \left\vert Y_{t}^{\varepsilon}\right\vert \leq
e^{V_{t}^{\left(  +\right)  }}\left\vert Y_{t}^{\varepsilon}\right\vert
\leq\left[  \mathbb{E}^{\mathcal{F}_{t}}\Big(\sup\nolimits_{r\in\left[
t,T\right]  }\big|e^{V_{r}^{\left(  +\right)  }}Y_{r}^{\varepsilon}%
\big|^{2}\Big)\right]  ^{1/2}\leq(C_{\lambda}\tilde{L})^{1/2}=\tilde{\rho}%
_{0}\,,\medskip\\
\left(  b\right)  & \mathbb{E}\Big(%
{\displaystyle\int_{0}^{T}}
e^{2V_{r}^{\left(  +\right)  }}\left\vert Z_{r}^{\varepsilon}\right\vert
^{2}dr\Big)\leq\tilde{\rho}_{0}^{2}\,,\medskip\\
\left(  c\right)  & \left\vert F_{\varepsilon}\left(  t,Y_{t}^{\varepsilon
},Z_{t}^{\varepsilon}\right)  \right\vert \leq\ell_{r}\left\vert
Z_{r}^{\varepsilon}\right\vert +F_{1+\tilde{\rho}_{0}}^{\#}\left(  r\right)
,\quad\left\vert G_{\varepsilon}\left(  t,Y_{t}^{\varepsilon}\right)
\right\vert \leq G_{1+\tilde{\rho}_{0}}^{\#}\left(  r\right)  \medskip\\
\left(  d\right)  & \left\vert H_{\varepsilon}(r,Y_{r}^{\varepsilon}%
,Z_{r}^{\varepsilon})\right\vert \leq\left[  \alpha_{r}\left(  \ell
_{r}\left\vert Z_{r}^{\varepsilon}\right\vert +F_{1+\tilde{\rho}_{0}}%
^{\#}\left(  r\right)  \right)  +\left(  1-\alpha_{r}\right)  G_{1+\tilde
{\rho}_{0}}^{\#}\left(  r\right)  \right]  \,\mathbf{1}_{[0,\frac
{1}{\varepsilon}]}\left(  A_{r}\right)  .
\end{array}
\label{ap-eq-3}%
\end{equation}
Using the stochastic subdifferential inequality (see \cite[Lemma 2.38, Remark
2.39]{pa-ra/14})%
\[%
\begin{array}
[c]{r}%
e^{2V_{t}^{\left(  +\right)  }}\varphi_{\varepsilon}(Y_{t}^{\varepsilon})\leq
e^{2V_{s}^{\left(  +\right)  }}\varphi_{\varepsilon}(Y_{s}^{\varepsilon})+%
{\displaystyle\int_{t}^{s}}
e^{2V_{r}^{\left(  +\right)  }}\left\langle \nabla\varphi_{\varepsilon}%
(Y_{r}^{\varepsilon}),\Phi_{\varepsilon}(r,Y_{r}^{\varepsilon},Z_{r}%
^{\varepsilon})\right\rangle dQ_{r}\medskip\\
-%
{\displaystyle\int_{t}^{s}}
e^{2V_{r}^{\left(  +\right)  }}\left\langle \nabla\varphi_{\varepsilon}%
(Y_{r}^{\varepsilon}),Z_{r}^{\varepsilon}dB_{r}\right\rangle ,\quad0\leq t\leq
s\leq T
\end{array}
\]
(and similar inequality for $\psi_{\varepsilon}$) and following the ideas from
\cite{ma-ra/10}, \cite{ma-ra/15} and \cite[Section 5.6.2]{pa-ra/14}, we deduce
that:%
\begin{equation}%
\begin{array}
[c]{l}%
e^{2V_{t}^{\left(  +\right)  }}\left[  \varphi_{\varepsilon}(Y_{t}%
^{\varepsilon})+\psi_{\varepsilon}(Y_{t}^{\varepsilon})\right]  \medskip\\
\quad+%
{\displaystyle\int_{t}^{s}}
e^{2V_{r}^{\left(  +\right)  }}\Big[\alpha_{r}|\nabla\varphi_{\varepsilon
}(Y_{r}^{\varepsilon})|^{2}+\left\langle \nabla\varphi_{\varepsilon}%
(Y_{r}^{\varepsilon}),\nabla\psi_{\varepsilon}(Y_{r}^{\varepsilon
})\right\rangle +\left(  1-\alpha_{r}\right)  |\nabla\psi_{\varepsilon}%
(Y_{r}^{\varepsilon})|^{2}\Big]dQ_{r}\medskip\\
\leq e^{2V_{s}^{\left(  +\right)  }}\left[  \varphi_{\varepsilon}%
(Y_{s}^{\varepsilon})+\psi_{\varepsilon}(Y_{s}^{\varepsilon})\right]  +%
{\displaystyle\int_{t}^{s}}
e^{2V_{r}^{\left(  +\right)  }}\left\langle \nabla\varphi_{\varepsilon}%
(Y_{r}^{\varepsilon})+\nabla\psi_{\varepsilon}(Y_{r}^{\varepsilon
}),H_{\varepsilon}(r,Y_{r}^{\varepsilon},Z_{r}^{\varepsilon})\right\rangle
dQ_{r}\medskip\\
\quad-%
{\displaystyle\int_{t}^{s}}
e^{2V_{r}^{\left(  +\right)  }}\left\langle \nabla\varphi_{\varepsilon}%
(Y_{r}^{\varepsilon})+\nabla\psi_{\varepsilon}(Y_{r}^{\varepsilon}%
),Z_{r}^{\varepsilon}dB_{r}\right\rangle .
\end{array}
\label{ap-eq-4}%
\end{equation}
The compatibility assumptions (\ref{compAssumpt}) and inequality (\ref{ma-4})
yield for $\left\vert y\right\vert \leq\tilde{\rho}_{0}:$%
\[%
\begin{array}
[c]{l}%
\displaystyle\left\langle \nabla\psi_{\varepsilon}(y),F_{\varepsilon
}(t,y,z)\right\rangle \medskip\\
\displaystyle=%
{\displaystyle\int_{\overline{B\left(  0,1\right)  }}}
\left\langle \nabla\psi_{\varepsilon}(y)-\nabla\psi_{\varepsilon
}(y-\varepsilon u),F\left(  t,y-\varepsilon u,\beta_{\varepsilon}\left(
z\right)  \right)  \right\rangle \,\mathbf{1}_{\left[  0,1\right]  }\left(
\varepsilon\left\vert F\left(  t,y-\varepsilon u\right)  ,0\right\vert
\right)  \rho\left(  u\right)  du\medskip\\
\displaystyle\quad+%
{\displaystyle\int_{\overline{B\left(  0,1\right)  }}}
\left\langle \nabla\psi_{\varepsilon}(y-\varepsilon u),F\left(
t,y-\varepsilon u,\beta_{\varepsilon}\left(  z\right)  \right)  \right\rangle
\,\mathbf{1}_{\left[  0,1\right]  }\left(  \varepsilon\left\vert F\left(
t,y-\varepsilon u\right)  ,0\right\vert \right)  \rho\left(  u\right)
du\medskip\\
\displaystyle\leq\frac{1}{\varepsilon}\,%
{\displaystyle\int_{\overline{B\left(  0,1\right)  }}}
\left\vert \varepsilon u\right\vert \,\left\vert F\left(  t,y-\varepsilon
u,\beta_{\varepsilon}\left(  z\right)  \right)  \right\vert \,\mathbf{1}%
_{\left[  0,1\right]  }\left(  \varepsilon\left\vert F\left(  t,y-\varepsilon
u\right)  ,0\right\vert \right)  \rho\left(  u\right)  du\medskip\\
\displaystyle\quad+%
{\displaystyle\int_{\overline{B\left(  0,1\right)  }}}
\left\vert \nabla\psi_{\varepsilon}(y-\varepsilon u)\right\vert \left\vert
F\left(  t,y-\varepsilon u,\beta_{\varepsilon}\left(  z\right)  \right)
\right\vert \,\mathbf{1}_{\left[  0,1\right]  }\left(  \varepsilon\left\vert
F\left(  t,y-\varepsilon u\right)  ,0\right\vert \right)  \rho\left(
u\right)  du\medskip\\
\displaystyle\leq\left\vert F\right\vert _{\varepsilon}\left(  t,y,z\right)  +%
{\displaystyle\int_{\overline{B\left(  0,1\right)  }}}
\left[  \left\vert \nabla\varphi_{\varepsilon}(y-\varepsilon u)-\nabla
\varphi_{\varepsilon}(y)\right\vert +\left\vert \nabla\varphi_{\varepsilon
}(y)\right\vert \right]  \medskip\\
\displaystyle\quad\quad\quad\quad\quad\quad\quad\quad\cdot\left\vert F\left(
t,y-\varepsilon u,\beta_{\varepsilon}\left(  z\right)  \right)  \right\vert
\,\mathbf{1}_{\left[  0,1\right]  }\left(  \varepsilon\left\vert F\left(
t,y-\varepsilon u\right)  ,0\right\vert \right)  \rho\left(  u\right)
du\medskip\\
\displaystyle\leq\left\vert F\right\vert _{\varepsilon}\left(  t,y,z\right)
+\left(  1+\left\vert \nabla\varphi_{\varepsilon}(y)\right\vert \right)
\,\left\vert F\right\vert _{\varepsilon}\left(  t,y,z\right)  =\left(
2+\left\vert \nabla\varphi_{\varepsilon}(y)\right\vert \right)  \,\left\vert
F\right\vert _{\varepsilon}\left(  t,y,z\right)  \medskip\\
\displaystyle\leq2L\left\vert z\right\vert +2F_{1+\tilde{\rho}_{0}}%
^{\#}\left(  t\right)  +\left\vert \nabla\varphi_{\varepsilon}(y)\right\vert
\left(  L\left\vert z\right\vert +F_{1+\tilde{\rho}_{0}}^{\#}\left(  t\right)
\right)
\end{array}
\]
and similarly%
\[
\left\langle \nabla\varphi_{\varepsilon}(y),G_{\varepsilon}(t,y)\right\rangle
=\left(  2+\left\vert \nabla\psi_{\varepsilon}(y)\right\vert \right)
\,\left\vert G\right\vert _{\varepsilon}\left(  t,y\right)  \leq
2G_{1+\tilde{\rho}_{0}}^{\#}\left(  t\right)  +\left\vert \nabla
\psi_{\varepsilon}(y)\right\vert G_{1+\tilde{\rho}_{0}}^{\#}\left(  t\right)
.\medskip
\]
Hence, using the above inequalities and the definition of $H_{\varepsilon
}(t,y,z),$ we have, for any $\left\vert y\right\vert \leq\tilde{\rho}_{0}\,,$%
\[%
\begin{array}
[c]{l}%
\left\langle \nabla\varphi_{\varepsilon}(y)+\nabla\psi_{\varepsilon
}(y),H_{\varepsilon}(s,y,z)\right\rangle \medskip\\
=\left\langle \nabla\varphi_{\varepsilon}(y)+\nabla\psi_{\varepsilon
}(y),\alpha_{s}F_{\varepsilon}\left(  s,y,z\right)  +\left(  1-\alpha
_{s}\right)  G_{\varepsilon}\left(  s,y\right)  \right\rangle \mathbf{1}%
_{[0,\frac{1}{\varepsilon}]}\left(  A_{r}\right)  \medskip\\
\leq\alpha_{t}\,\left(  2+2\left\vert \nabla\varphi_{\varepsilon
}(y)\right\vert \right)  \,\left\vert F\right\vert _{\varepsilon}\left(
t,y,z\right)  +\left(  1-\alpha_{t}\right)  \,\left(  2+2\left\vert \nabla
\psi_{\varepsilon}(y)\right\vert \right)  \,\left\vert G\right\vert
_{\varepsilon}\left(  t,y\right)  \medskip\\
\leq\alpha_{t}\Big[\dfrac{1}{2}\left\vert \nabla\varphi_{\varepsilon
}(y)\right\vert ^{2}+1+3\big(\left\vert F\right\vert _{\varepsilon}\left(
t,y,z\right)  \big)^{2}\Big]+\left(  1-\alpha_{t}\right)  \Big[\dfrac{1}%
{2}\left\vert \nabla\psi_{\varepsilon}(y)\right\vert ^{2}+1+3\big(\left\vert
G\right\vert _{\varepsilon}\left(  t,y\right)  \big)^{2}\Big].
\end{array}
\]
Using (\ref{minimum point}) we deduce inequality%
\begin{equation}
\mathbb{E}\big(e^{2V_{T}^{\left(  +\right)  }}\left(  \varphi_{\varepsilon
}(Y_{T}^{\varepsilon})+\psi_{\varepsilon}(Y_{T}^{\varepsilon})\right)
\big)\leq\mathbb{E}\big(e^{2V_{T}^{\left(  +\right)  }}\left(  \varphi
(\eta)+\psi(\eta)\right)  \big). \label{ap-eq-8}%
\end{equation}
On the other hand,%
\begin{equation}
M_{t}^{\varepsilon}=\int_{0}^{t}e^{2V_{r}^{\left(  +\right)  }}\left\langle
\nabla\varphi_{\varepsilon}(Y_{r}^{\varepsilon})+\nabla\psi_{\varepsilon
}(Y_{r}^{\varepsilon}),Z_{r}^{\varepsilon}dB_{r}\right\rangle \quad\text{is a
martingale.} \label{ap-eq_10}%
\end{equation}
By Young's inequality and assumption (\ref{ap-eq-1bb}) we have%
\begin{equation}
\mathbb{E}\int_{0}^{T}e^{2V_{r}^{\left(  +\right)  }}dQ_{r}\leq\mathbb{E}%
\big(e^{2V_{T}^{\left(  +\right)  }}Q_{T}\big)\leq\left[  \frac{2}{2+\delta
}\,\mathbb{E}\exp\left(  2+\delta\right)  V_{T}^{\left(  +\right)  }%
+\frac{\delta}{2+\delta}\,\mathbb{E}Q_{T}^{\left(  2+\delta\right)  /\delta
}\right]  <\infty\label{ap-eq_11}%
\end{equation}
and%
\begin{equation}
\mathbb{E}\big(e^{2V_{T}^{\left(  +\right)  }}\left(  \varphi(\eta)+\psi
(\eta)\right)  \big)\leq2L\,\mathbb{E}\big(e^{2V_{T}^{\left(  +\right)  }%
}\big)<\infty. \label{ap-eq_12}%
\end{equation}
Therefore, using inequalities (\ref{ma-4}), (\ref{ap-eq-3}--$a$) and
(\ref{ap-eq-8}--\ref{ap-eq_12}), we deduce from (\ref{ap-eq-4}) that, for all
$0\leq t\leq s\leq T,$%
\begin{equation}%
\begin{array}
[c]{l}%
\mathbb{E}e^{2V_{t}^{\left(  +\right)  }}\varphi_{\varepsilon}(Y_{t}%
^{\varepsilon})+\mathbb{E}e^{2V_{t}^{\left(  +\right)  }}\psi_{\varepsilon
}(Y_{t}^{\varepsilon})+\dfrac{1}{2}\,\mathbb{E}%
{\displaystyle\int_{t}^{T}}
e^{2V_{r}^{\left(  +\right)  }}\Big[|\nabla\varphi_{\varepsilon}%
(Y_{r}^{\varepsilon})|^{2}dr+|\nabla\psi_{\varepsilon}(Y_{r}^{\varepsilon
})|^{2}dA_{r}\Big]\medskip\\
\leq\mathbb{E}\left[  e^{2V_{T}^{\left(  +\right)  }}\left(  \varphi
(\eta)+\psi(\eta)\right)  \right]  \medskip\\
\quad+\mathbb{E}%
{\displaystyle\int_{t}^{T}}
e^{2V_{r}^{\left(  +\right)  }}\left(  1+3\big(\left\vert F\right\vert
_{\varepsilon}\left(  r,Y_{r}^{\varepsilon},Z_{r}^{\varepsilon}\right)
\big)^{2}\right)  dr+\mathbb{E}%
{\displaystyle\int_{t}^{T}}
e^{2V_{r}^{\left(  +\right)  }}\left(  1+3\big(\left\vert G\right\vert
_{\varepsilon}\left(  r,Y_{r}^{\varepsilon}\right)  \big)^{2}\right)
dA_{r}\medskip\\
\leq\mathbb{E}\left[  e^{2V_{T}^{\left(  +\right)  }}\left(  \varphi
(\eta)+\psi(\eta)\right)  \right]  \medskip\\
\quad+\mathbb{E}%
{\displaystyle\int_{t}^{T}}
e^{2V_{r}^{\left(  +\right)  }}\left[  1+6L^{2}\left\vert Z_{r}^{\varepsilon
}\right\vert ^{2}+6\big(F_{1+\tilde{\rho}_{0}}^{\#}\left(  r\right)
\big)^{2}\right]  dr+\mathbb{E}%
{\displaystyle\int_{t}^{T}}
e^{2V_{r}^{\left(  +\right)  }}\left[  1+3\big(G_{1+\tilde{\rho}_{0}}%
^{\#}\left(  r\right)  \big)^{2}\right]  dA_{r}\,.
\end{array}
\label{ap-eq-5a}%
\end{equation}
Therefore, by assumption (\ref{ap-eq-6}) and (\ref{ap-eq-3}$-b$),%
\begin{equation}%
\begin{array}
[c]{ll}%
\left(  a\right)  & \sup_{t\in\left[  0,T\right]  }\left[  \mathbb{E}%
e^{2V_{t}^{\left(  +\right)  }}\varphi_{\varepsilon}(Y_{t}^{\varepsilon
})+\mathbb{E}e^{2V_{t}^{\left(  +\right)  }}\psi_{\varepsilon}(Y_{t}%
^{\varepsilon})\right]  \leq C_{\tilde{\rho}_{0},L,T,\lambda}\medskip\\
\left(  b\right)  & \mathbb{E}%
{\displaystyle\int_{0}^{T}}
e^{2V_{r}^{\left(  +\right)  }}\Big[|\nabla\varphi_{\varepsilon}%
(Y_{r}^{\varepsilon})|^{2}dr+|\nabla\psi_{\varepsilon}(Y_{r}^{\varepsilon
})|^{2}dA_{r}\Big]\leq C_{\tilde{\rho}_{0},L,T,\lambda}%
\end{array}
\label{ap-eq-7}%
\end{equation}
($C_{\tilde{\rho}_{0},L,T,\lambda}$ is independent of $\varepsilon$%
).$\medskip$

Let $\varepsilon,\delta\in(0,1].$ We have%
\[
Y_{t}^{\varepsilon}-Y_{t}^{\delta}=\int_{t}^{T}dK_{r}^{\varepsilon,\delta
}-\int_{t}^{T}(Z_{r}^{\varepsilon}-Z_{r}^{\delta})dB_{r}\,,\quad
\mathbb{P}\text{--a.s.,}%
\]
with%
\begin{align*}
dK_{r}^{\varepsilon,\delta}  &  =\Big[H_{\varepsilon}(r,Y_{r}^{\varepsilon
},Z_{r}^{\varepsilon})-H_{\delta}(r,Y_{r}^{\delta},Z_{r}^{\delta})-\left[
\nabla_{y}\Psi^{\varepsilon}(r,Y_{r}^{\varepsilon})-\nabla_{y}\Psi^{\delta
}(r,Y_{r}^{\delta})\right]  \Big]dQ_{r}\\[2pt]
&  =\alpha_{r}\left[  F_{\varepsilon}(r,Y_{r}^{\varepsilon},Z_{r}%
^{\varepsilon})-F_{\delta}(r,Y_{r}^{\delta},Z_{r}^{\delta})\right]
\mathbf{1}_{\left[  0,\frac{1}{\varepsilon}\right]  }\left(  A_{r}\right)
dr\\[2pt]
&  \quad+\left(  1-\alpha_{r}\right)  \left[  G_{\varepsilon}(r,Y_{r}%
^{\varepsilon})-G_{\delta}(r,Y_{r}^{\delta})\right]  \mathbf{1}_{\left[
0,\frac{1}{\varepsilon}\right]  }\left(  A_{r}\right)  dA_{r}\\[2pt]
&  \quad+\alpha_{r}\,F_{\delta}(r,Y_{r}^{\delta},Z_{r}^{\delta}%
)\Big(\mathbf{1}_{\left[  0,\frac{1}{\varepsilon}\right]  }\left(
A_{r}\right)  -\mathbf{1}_{\left[  0,\frac{1}{\delta}\right]  }\left(
A_{r}\right)  \Big)dr\\[2pt]
&  \quad+\left(  1-\alpha_{r}\right)  G_{\delta}(r,Y_{r}^{\delta
})\Big(\mathbf{1}_{\left[  0,\frac{1}{\varepsilon}\right]  }\left(
A_{r}\right)  -\mathbf{1}_{\left[  0,\frac{1}{\delta}\right]  }\left(
A_{r}\right)  \Big)dA_{r}\\[2pt]
&  \quad-\alpha_{r}\left[  \nabla\varphi_{\varepsilon}(Y_{r}^{\varepsilon
})-\nabla\varphi_{\delta}(Y_{r}^{\delta})\right]  dr-\left(  1-\alpha
_{r}\right)  \left[  \nabla\psi_{\varepsilon}(Y_{r}^{\varepsilon})-\nabla
\psi_{\delta}(Y_{r}^{\delta})\right]  dA_{r}\,.
\end{align*}
By (\ref{fi-Cauchy}) and (\ref{ma-3}$-c$) and (\ref{ap-eq-3}--$a$) we have
(since $n_{p}\leq1$)%
\begin{align*}
\big\langle Y_{r}^{\varepsilon}-Y_{r}^{\delta},dK_{r}^{\varepsilon,\delta
}\big\rangle  &  \leq dR_{r}^{\varepsilon,\delta}+|Y_{r}^{\varepsilon}%
-Y_{r}^{\delta}|dN_{r}^{\varepsilon,\delta}+|Y_{r}^{\varepsilon}-Y_{r}%
^{\delta}|^{2}dV_{r}^{\left(  +\right)  }+\dfrac{\lambda}{2}\,|Z_{r}%
^{\varepsilon}-Z_{r}^{\delta}|^{2}dr\\[2pt]
&  \leq\left(  1+2\tilde{\rho}_{0}\right)  d\left(  R_{r}^{\varepsilon,\delta
}+N_{r}^{\varepsilon,\delta}\right)  +|Y_{r}^{\varepsilon}-Y_{r}^{\delta}%
|^{2}dV_{r}^{\left(  +\right)  }+\dfrac{\lambda}{2}\,|Z_{r}^{\varepsilon
}-Z_{r}^{\delta}|^{2}dr,
\end{align*}
where%
\begin{align*}
dR_{r}^{\varepsilon,\delta}  &  =\left\vert \varepsilon-\delta\right\vert
\left[  \mu_{r}^{+}\,\left\vert \varepsilon-\delta\right\vert +2F_{1+\tilde
{\rho}_{0}}^{\#}\left(  r\right)  +2\ell_{r}\left\vert Z_{r}^{\varepsilon
}\right\vert \right]  dr+\left\vert \varepsilon-\delta\right\vert \left[
\nu_{r}^{+}\left\vert \varepsilon-\delta\right\vert +2G_{1+\tilde{\rho}_{0}%
}^{\#}\left(  r\right)  \right]  dA_{r}\\[2pt]
&  \quad+\dfrac{\varepsilon+\delta}{2}\big(|\nabla\varphi_{\varepsilon}%
(Y_{r}^{\varepsilon})|^{2}+|\nabla\varphi_{\delta}(Y_{r}^{\delta}%
)|^{2}\big)dr+\dfrac{\varepsilon+\delta}{2}\big(|\nabla\psi_{\varepsilon
}(Y_{r}^{\varepsilon})|^{2}+|\nabla\psi_{\delta}(Y_{r}^{\delta})|^{2}%
\big)dA_{r}%
\end{align*}
and%
\begin{align*}
dN_{r}^{\varepsilon,\delta}  &  =\Big[2\mu_{r}^{+}\,\left\vert \varepsilon
-\delta\right\vert +\ell_{r}\left\vert Z_{r}^{\delta}\right\vert
\,\mathbf{1}_{[\frac{1}{\varepsilon}\wedge\frac{1}{\delta},\infty)}\left(
\left\vert Z_{r}^{\delta}\right\vert +A_{r}\right)  \,\mathbf{1}%
_{\varepsilon\neq\delta}\\[2pt]
&  \quad+\big(F_{1+\tilde{\rho}_{0}}^{\#}\left(  r\right)  +\ell_{r}\left\vert
Z_{r}^{\delta}\right\vert \big)\,\mathbf{1}_{[\frac{1}{\varepsilon}\wedge
\frac{1}{\delta},\infty)}\big(F_{1+\tilde{\rho}_{0}}^{\#}\left(  r\right)
+A_{r}\big)\Big]dr\\[2pt]
&  \quad+\left[  2\nu_{r}^{+}\,\left\vert \varepsilon-\delta\right\vert
+G_{1+\tilde{\rho}_{0}}^{\#}\left(  r\right)  \,\mathbf{1}_{[\frac
{1}{\varepsilon}\wedge\frac{1}{\delta},\infty)}\big(G_{1+\tilde{\rho}_{0}%
}^{\#}\left(  r\right)  +A_{r}\big)\right]  dA_{r}\,.
\end{align*}
By (\ref{ap-eq-9}) and Proposition \ref{Appendix_result 1} we get%
\[
\mathbb{E}\sup\nolimits_{r\in\left[  0,T\right]  }e^{2V_{r}^{\left(  +\right)
}}|Y_{r}^{\varepsilon}-Y_{r}^{\delta}|^{2}+\mathbb{E}{%
{\displaystyle\int_{0}^{T}}
}e^{2V_{r}^{\left(  +\right)  }}|Z_{r}^{\varepsilon}-Z_{r}^{\delta}|^{2}dr\leq
C_{\lambda}\,\mathbb{E}%
{\displaystyle\int_{0}^{T}}
e^{2V_{r}^{\left(  +\right)  }}d\left(  R_{r}^{\varepsilon,\delta}%
+N_{r}^{\varepsilon,\delta}\right)  .
\]
Boundedness assumptions (\ref{exponential_moment}), (\ref{ip-mnl}),
(\ref{ap-eq-1b}), (\ref{ap-eq-1bb}), (\ref{ap-eq-6}) and (\ref{ap-eq-3}$-b$),
(\ref{ap-eq-7}$-b$) yield%
\begin{equation}
\lim\nolimits_{\varepsilon,\delta\rightarrow0}\mathbb{E}%
{\displaystyle\int_{0}^{T}}
e^{2V_{r}^{\left(  +\right)  }}d\left(  R_{r}^{\varepsilon,\delta}%
+N_{r}^{\varepsilon,\delta}\right)  =0 \label{Cauchy_sequence}%
\end{equation}
(also the calculus for obtaining (\ref{ap-eq-9}) is useful).

For instance, if we denote%
\[
\bar{H}_{1+\tilde{\rho}_{0}}^{\#}\left(  t\right)
\xlongequal{\hspace{-4pt}{\rm def}\hspace{-4pt}}\alpha_{t}F_{1+\tilde{\rho
}_{0}}^{\#}\left(  t\right)  +\left(  1-\alpha_{t}\right)  G_{1+\tilde{\rho
}_{0}}^{\#}\left(  t\right)  ,
\]
we deduce%
\[%
\begin{array}
[c]{l}%
\displaystyle\mathbb{E}%
{\displaystyle\int_{0}^{T}}
e^{2V_{r}^{\left(  +\right)  }}\left[  F_{1+\tilde{\rho}_{0}}^{\#}\left(
r\right)  dr+G_{1+\tilde{\rho}_{0}}^{\#}\left(  r\right)  dA_{r}\right]
=\mathbb{E}%
{\displaystyle\int_{0}^{T}}
e^{2V_{r}^{\left(  +\right)  }}\bar{H}_{1+\tilde{\rho}_{0}}^{\#}\left(
r\right)  dQ_{r}\medskip\\
\displaystyle\leq\bigg(\mathbb{E}%
{\displaystyle\int_{0}^{T}}
e^{2V_{r}^{\left(  +\right)  }}\big(\bar{H}_{1+\tilde{\rho}_{0}}^{\#}\left(
r\right)  \big)^{2}dQ_{r}\bigg)^{1/2}\bigg(\mathbb{E}%
{\displaystyle\int_{0}^{T}}
e^{2V_{r}^{\left(  +\right)  }}dQ_{r}\bigg)^{1/2}\medskip\\
\displaystyle\leq\sqrt{2}\bigg(\mathbb{E}%
{\displaystyle\int_{0}^{T}}
e^{2V_{r}^{\left(  +\right)  }}\left[  \big(F_{1+\tilde{\rho}_{0}}^{\#}\left(
r\right)  \big)^{2}dr+\big(G_{1+\tilde{\rho}_{0}}^{\#}\left(  r\right)
\big)^{2}dA_{r}\right]  \bigg)^{1/2}\left(  \mathbb{E}e^{2V_{T}^{\left(
+\right)  }}Q_{T}\right)  ^{1/2}%
\end{array}
\]
which is finite, using assumption (\ref{ap-eq-6}) and inequality
(\ref{ap-eq_11}).

For instance, for any $a>0,$%
\[%
\begin{array}
[c]{l}%
\displaystyle\mathbb{E}%
{\displaystyle\int_{0}^{T}}
e^{2V_{r}^{\left(  +\right)  }}\left(  \mu_{r}^{+}\,dr+\nu_{r}^{+}%
\,dA_{r}\right)  \leq\mathbb{E}\bigg(e^{2V_{T}^{\left(  +\right)  }}%
{\displaystyle\int_{0}^{T}}
\left(  \mu_{r}^{+}\,dr+\nu_{r}^{+}\,dA_{r}\right)  \bigg)\medskip\\
\displaystyle\leq\frac{2}{2+a}\,\mathbb{E}\exp\left(  2+a\right)
V_{T}^{\left(  +\right)  }+\frac{a}{2+a}\,\mathbb{E}\bigg(%
{\displaystyle\int_{0}^{T}}
\left(  \mu_{r}^{+}\,dr+\nu_{r}^{+}\,dA_{r}\right)  \bigg)^{\left(
2+a\right)  /a}.
\end{array}
\]
On the other hand, by Holder's inequality,%
\[%
\begin{array}
[c]{l}%
\displaystyle\mathbb{E}\bigg(\int_{0}^{T}\left(  \mu_{r}^{+}\,dr+\nu_{r}%
^{+}\,dA_{r}\right)  \bigg)^{\left(  2+a\right)  /a}\leq\bigg(\mathbb{E}%
\bigg(\int_{0}^{T}\left(  \mu_{r}^{+}\,dr+\nu_{r}^{+}\,dA_{r}\right)
\bigg)^{k}\bigg)^{\frac{2+a}{ak}}\medskip\\
\displaystyle\leq\bigg(\frac{k!}{p^{k}}\bigg)^{\frac{2+a}{ak}}\bigg(\mathbb{E}%
\Big(e^{p\int_{0}^{T}\left(  \mu_{r}^{+}\,dr+\nu_{r}^{+}\,dA_{r}\right)
}\Big)\bigg)^{\frac{2+a}{ak}},
\end{array}
\]
where $\mathbb{N}^{\ast}\ni k=\left[  \frac{2+a}{a}\right]  +1>\frac{2+a}%
{a}\,,$ since%
\[
\alpha^{k}\leq\frac{k!}{p^{k}}\,e^{p\alpha},\quad\text{for any }\alpha>0,p>1.
\]
Hence $\mathbb{E}%
{\displaystyle\int_{0}^{T}}
e^{2V_{r}^{\left(  +\right)  }}\left(  \mu_{r}^{+}\,dr+\nu_{r}^{+}%
\,dA_{r}\right)  <\infty.\medskip$

For instance,%
\[%
\begin{array}
[c]{l}%
\displaystyle\mathbb{E}%
{\displaystyle\int_{0}^{T}}
e^{2V_{r}^{\left(  +\right)  }}\ell_{r}\,\left\vert Z_{r}^{\delta}\right\vert
\,\mathbf{1}_{[\frac{1}{\varepsilon}\wedge\frac{1}{\delta},\infty)}\left(
\left\vert Z_{r}^{\delta}\right\vert +A_{r}\right)  dr\leq L\left(
\varepsilon+\delta\right)  \,\mathbb{E}%
{\displaystyle\int_{0}^{T}}
e^{2V_{r}^{\left(  +\right)  }}\left\vert Z_{r}^{\delta}\right\vert \,\left(
\left\vert Z_{r}^{\delta}\right\vert +A_{r}\right)  dr\medskip\\
\displaystyle=L\left(  \varepsilon+\delta\right)  \,\bigg(\mathbb{E}%
{\displaystyle\int_{0}^{T}}
e^{2V_{r}^{\left(  +\right)  }}\left\vert Z_{r}^{\delta}\right\vert
^{2}dr+\mathbb{E}%
{\displaystyle\int_{0}^{T}}
e^{2V_{r}^{\left(  +\right)  }}\left\vert Z_{r}^{\delta}\right\vert
\,A_{r}\,dr\bigg),
\end{array}
\]
since%
\[
\mathbf{1}_{[\frac{1}{\varepsilon}\wedge\frac{1}{\delta},\infty)}\left(
\left\vert Z_{r}^{\delta}\right\vert +A_{r}\right)  \leq\displaystyle\frac
{\left\vert Z_{r}^{\delta}\right\vert +A_{r}}{\frac{1}{\varepsilon}\wedge
\frac{1}{\delta}}\leq\left(  \varepsilon+\delta\right)  \left(  \left\vert
Z_{r}^{\delta}\right\vert +A_{r}\right)  \,,\quad\text{for any }r.
\]
On the other hand,%
\[%
\begin{array}
[c]{l}%
\displaystyle\mathbb{E}%
{\displaystyle\int_{0}^{T}}
e^{2V_{r}^{\left(  +\right)  }}\left\vert Z_{r}^{\delta}\right\vert
\,A_{r}\,dr\leq\frac{1}{2}\mathbb{E}%
{\displaystyle\int_{0}^{T}}
e^{2V_{r}^{\left(  +\right)  }}\left\vert Z_{r}^{\delta}\right\vert
^{2}dr+\frac{1}{2}\mathbb{E}%
{\displaystyle\int_{0}^{T}}
e^{2V_{r}^{\left(  +\right)  }}A_{r}^{2}\,dr\medskip\\
\displaystyle\leq\frac{1}{2}\mathbb{E}%
{\displaystyle\int_{0}^{T}}
e^{2V_{r}^{\left(  +\right)  }}\left\vert Z_{r}^{\delta}\right\vert
^{2}dr+\frac{T}{2}\,\mathbb{E}\left(  e^{2V_{T}^{\left(  +\right)  }}A_{T}%
^{2}\right)  \medskip\\
\displaystyle\leq\frac{1}{2}\mathbb{E}%
{\displaystyle\int_{0}^{T}}
e^{2V_{r}^{\left(  +\right)  }}\left\vert Z_{r}^{\delta}\right\vert
^{2}dr+\frac{2}{2+\delta}\,\mathbb{E}\exp\left(  2+\delta\right)
V_{T}^{\left(  +\right)  }+\frac{\delta}{2+\delta}\,\mathbb{E}A_{T}^{\left(
4+2\delta\right)  /\delta}\medskip\\
\displaystyle\leq\frac{1}{2}\mathbb{E}%
{\displaystyle\int_{0}^{T}}
e^{2V_{r}^{\left(  +\right)  }}\left\vert Z_{r}^{\delta}\right\vert
^{2}dr+\frac{2}{2+\delta}\,\mathbb{E}\exp\left(  2+\delta\right)
V_{T}^{\left(  +\right)  }+\frac{\delta}{2+\delta}\,\mathbb{E}e^{\frac
{4+2\delta}{\delta}A_{T}}.
\end{array}
\]
Hence $\mathbb{E}%
{\displaystyle\int_{0}^{T}}
e^{2V_{r}^{\left(  +\right)  }}\ell_{r}\,\left\vert Z_{r}^{\delta}\right\vert
\,\mathbf{1}_{[\frac{1}{\varepsilon}\wedge\frac{1}{\delta},\infty)}\left(
\left\vert Z_{r}^{\delta}\right\vert +A_{r}\right)  dr\rightarrow0,$ as
$\varepsilon,\delta\rightarrow0.\medskip$

Using the similar calculus for the other quantities, conclusion
(\ref{Cauchy_sequence}) is completely proved.$\medskip$

Consequently there exists $(Y,Z)\in S_{m}^{0}\times\Lambda_{m\times k}^{0}$
such that%
\[
\mathbb{E}\sup\nolimits_{r\in\left[  0,T\right]  }e^{2V_{r}^{\left(  +\right)
}}|Y_{r}^{\varepsilon}-Y_{r}|^{2}+\mathbb{E}\int_{0}^{T}e^{2V_{r}^{\left(
+\right)  }}|Z_{r}^{\varepsilon}-Z_{r}|^{2}dr\rightarrow0,\quad\text{as
}\varepsilon\rightarrow0.
\]
From (\ref{ap-eq-7}) there exist two p.m.s.p. $U^{\left(  1\right)  }$ and
$U^{\left(  2\right)  }$, such that along a sequence $\varepsilon
_{n}\rightarrow0$, we have%
\begin{align*}
e^{V^{\left(  +\right)  }}\nabla\varphi_{\varepsilon_{n}}(Y^{\varepsilon_{n}%
})  &  \xrightharpoonup[]{\;\;\;\;}e^{V^{\left(  +\right)  }}U^{\left(
1\right)  },\quad\text{weakly in }L^{2}\left(  \Omega\times\left[  0,T\right]
,d\mathbb{P}\otimes dt;\mathbb{R}^{m}\right)  ,\\[3pt]
e^{V^{\left(  +\right)  }}\nabla\psi_{\varepsilon_{n}}(Y^{\varepsilon_{n}})
&  \xrightharpoonup[]{\;\;\;\;}e^{V^{\left(  +\right)  }}U^{\left(  2\right)
},\quad\text{weakly in }L^{2}\left(  \Omega\times\left[  0,T\right]
,d\mathbb{P}\otimes dA_{t};\mathbb{R}^{m}\right)  .
\end{align*}
Passing to limit in the approximating equation (\ref{ap-eq -1}), for
$\varepsilon=\varepsilon_{n}\rightarrow0,$ we infer%
\[
Y_{t}+\int_{t}^{T}U_{r}dQ_{r}=\eta+\int_{t}^{T}H(r,Y_{r},Z_{r})dQ_{r}-\int
_{t}^{T}Z_{r}dB_{r}\,,\;\mathbb{P}\text{--a.s.,}%
\]
where%
\[
U_{r}\xlongequal{\hspace{-4pt}{\rm def}\hspace{-4pt}}\left[  \alpha_{r}%
U_{r}^{1}+\left(  1-\alpha_{r}\right)  U_{r}^{2}\right]  ,\;\text{for }%
r\in\left[  0,T\right]  .
\]
Since $\nabla\varphi_{\varepsilon}(y)\in\partial\varphi\left(  y-\varepsilon
\nabla\varphi_{\varepsilon}(y)\right)  $ then for all $E\in\mathcal{F},$
$0\leq t\leq s\leq T$ and $X\in S_{m}^{2}\left[  0,T\right]  $%
\[%
\begin{array}
[c]{r}%
\mathbb{E}%
{\displaystyle\int_{t}^{s}}
\big\langle e^{2V_{r}^{\left(  +\right)  }}\nabla\varphi_{\varepsilon_{n}%
}(Y_{r}^{\varepsilon_{n}}),X_{r}-Y_{r}^{\varepsilon_{n}}%
\big\rangle\,\mathbf{1}_{E}\,dr+\mathbb{E}%
{\displaystyle\int_{t}^{s}}
e^{2V_{r}^{\left(  +\right)  }}\varphi(Y_{r}^{\varepsilon_{n}}-\varepsilon
\nabla\varphi_{\varepsilon}(Y_{r}^{\varepsilon_{n}}))\,\mathbf{1}%
_{E}\,dr\medskip\\
\leq\mathbb{E}%
{\displaystyle\int_{t}^{s}}
e^{2V_{r}^{\left(  +\right)  }}\varphi(X_{r})\,\mathbf{1}_{E}\,dr.
\end{array}
\]
Passing to $\liminf_{n\rightarrow\infty}$ in the above inequality we obtain
$U_{s}^{\left(  1\right)  }\in\partial\varphi(Y_{s}),\quad d\mathbb{P}\otimes
ds-$a.e. and, with similar arguments, $U_{s}^{\left(  2\right)  }\in
\partial\psi(Y_{s}),\quad d\mathbb{P}\otimes dA_{s}-$a.e..$\medskip$

Summarizing the above conclusions we conclude that $(Y,Z,U)\in S_{m}%
^{0}\left[  0,T\right]  \times\Lambda_{m\times k}^{0}\left[  0,T\right]
\times\Lambda_{m\times k}^{0}\left[  0,T\right]  $ is a strong solution of%
\begin{equation}
\left\{
\begin{array}
[c]{r}%
Y_{t}+%
{\displaystyle\int_{t}^{T}}
\big(U_{s}^{\left(  1\right)  }ds+U_{s}^{\left(  2\right)  }dA_{s}\big)=\eta+%
{\displaystyle\int_{t}^{T}}
\left[  F\left(  s,Y_{s},Z_{s}\right)  ds+G\left(  s,Y_{s}\right)
dA_{s}\right]  -%
{\displaystyle\int_{t}^{T}}
Z_{s}dB_{s}\,,\medskip\\
\text{for any }t\in\left[  0,T\right]  ,\medskip\\
\multicolumn{1}{l}{U_{s}^{\left(  1\right)  }\in\partial\varphi(Y_{s}),\quad
d\mathbb{P}\otimes ds-\text{a.e.}\quad\text{and }\quad U_{s}^{\left(
2\right)  }\in\partial\psi(Y_{s}),\quad d\mathbb{P}\otimes dA_{s}%
-\text{a.e.,}\quad\text{on }\left[  0,T\right]  .}%
\end{array}
\right.  \label{BSVI-0T}%
\end{equation}
Moreover, from (\ref{ap-eq-3}),%
\begin{equation}%
\begin{array}
[c]{ll}%
\left(  a\right)  & \left\vert Y_{t}\right\vert \leq e^{V_{t}^{\left(
+\right)  }}\left\vert Y_{t}\right\vert \leq\left[  \mathbb{E}^{\mathcal{F}%
_{t}}\Big(\sup\nolimits_{r\in\left[  t,T\right]  }\big|e^{V_{r}^{\left(
+\right)  }}Y_{r}\big|^{2}\Big)\right]  ^{1/2}\leq\tilde{\rho}_{0}%
\,,\medskip\\
\left(  b\right)  & \mathbb{E}\Big(%
{\displaystyle\int_{0}^{T}}
e^{2V_{r}^{\left(  +\right)  }}\left\vert Z_{r}\right\vert ^{2}dr\Big)\leq
\tilde{\rho}_{0}^{2}\,.
\end{array}
\label{b-1}%
\end{equation}
From inequalities (\ref{ma-4}) and (\ref{ap-eq-3}--$a$) we have%
\[%
\begin{array}
[c]{l}%
\displaystyle\big(\left\vert F\right\vert _{\varepsilon}\left(  r,Y_{r}%
^{\varepsilon},Z_{r}^{\varepsilon}\right)  \big)^{2}\leq2L^{2}\left\vert
Z_{r}^{\varepsilon}\right\vert ^{2}+2\big(F_{1+\tilde{\rho}_{0}}^{\#}\left(
r\right)  \big)^{2}\quad\text{and}\medskip\\
\displaystyle\big(\left\vert G\right\vert _{\varepsilon}\left(  r,Y_{r}%
^{\varepsilon}\right)  \big)^{2}\leq\big(G_{1+\tilde{\rho}_{0}}^{\#}\left(
r\right)  \big)^{2}%
\end{array}
\]
and therefore, passing to $\liminf_{\varepsilon\rightarrow0}$ in the first
inequality of (\ref{ap-eq-5a}) and using (\ref{ma-1}$-b$) and Fatou's Lemma
and Lebesgue's dominated convergence theorem, we have%
\begin{equation}%
\begin{array}
[c]{l}%
\dfrac{1}{2}\,\mathbb{E}%
{\displaystyle\int_{0}^{T}}
e^{2V_{r}^{\left(  +\right)  }}\Big[\big|U_{r}^{\left(  1\right)  }%
\big|^{2}dr+\big|U_{r}^{\left(  2\right)  }\big|^{2}dA_{r}\Big]\medskip\\
\leq\mathbb{E}\left[  e^{2V_{T}^{\left(  +\right)  }}\left(  \varphi
(\eta)+\psi(\eta)\right)  \right]  +\mathbb{E}%
{\displaystyle\int_{0}^{T}}
e^{2V_{r}^{\left(  +\right)  }}\left(  1+6L^{2}\left\vert Z_{r}\right\vert
^{2}+6\left\vert F\left(  r,Y_{r},0\right)  \right\vert ^{2}\right)
dr\medskip\\
\quad+\mathbb{E}%
{\displaystyle\int_{0}^{T}}
e^{2V_{r}^{\left(  +\right)  }}\left(  1+3\left\vert G\left(  r,Y_{r}\right)
\right\vert ^{2}\right)  dA_{r}\,.
\end{array}
\label{b-1a}%
\end{equation}
\hfill
\end{proof}

\begin{proposition}
[$L^{p}$-- variational solution]\label{p1-wvs}We suppose that assumptions
$\left(  \mathrm{A}_{1}-\mathrm{A}_{7}\right)  $ are satisfied. Let
$V^{\left(  +\right)  }$ be given by definition (\ref{defV_3}). In addition we
assume that:$\medskip$

\noindent$\left(  i\right)  $ there exists $\hat{L}>0$ such that%
\begin{equation}
\big|e^{V_{T}^{\left(  +\right)  }}\eta\big|^{2}+\Big(%
{\displaystyle\int_{0}^{T}}
e^{V_{r}^{\left(  +\right)  }}\left(  \left\vert F\left(  r,0,0\right)
\right\vert dr+\left\vert G\left(  r,0\right)  \right\vert dA_{r}\right)
\Big)^{2}\leq\hat{L}; \label{assump-H(t,0,0)}%
\end{equation}
\noindent$\left(  ii\right)  $ there exists $a\in(1+n_{p}\lambda,p\wedge2)$
such that%
\begin{equation}%
\begin{array}
[c]{rl}%
\left(  a\right)  & \mathbb{E}\bigg(%
{\displaystyle\int_{0}^{T}}
\ell_{s}^{2}ds\bigg)^{\frac{a}{2-a}}<\infty,\medskip\\
\left(  b\right)  & \mathbb{E}\bigg[%
{\displaystyle\int_{0}^{T}}
e^{V_{s}^{\left(  +\right)  }}\left(  F_{1+\hat{\rho}_{0}}^{\#}\left(
s\right)  ds+G_{1+\hat{\rho}_{0}}^{\#}\left(  s\right)  dA_{s}\right)
\bigg]^{a}<\infty,
\end{array}
\label{ip-1a}%
\end{equation}
where\footnote{The constant $C_{\lambda}:=C_{2,\lambda}\,,$ where
$C_{2,\lambda}$ is given by (\ref{an3a}).} $\hat{\rho}_{0}%
\xlongequal{\hspace{-4pt}{\rm def}\hspace{-4pt}}(C_{\lambda}\hat{L}%
)^{1/2};\medskip$

\noindent$\left(  iii\right)  $ there exists a p.m.s.p. $\left(  \Theta
_{t}\right)  _{t\geq0}$ and, for each $\rho\geq0,$ there exist an
non-decreasing function $K_{\rho}:\mathbb{R}_{+}\rightarrow\mathbb{R}_{+}$
such that%
\begin{equation}
F_{\rho}^{\#}\left(  t\right)  +G_{\rho}^{\#}\left(  t\right)  \leq K_{\rho
}\left(  \Theta_{t}\right)  ,\quad\text{a.e. }t\in\left[  0,T\right]
,\;\mathbb{P}\text{--a.s..} \label{ip-1b}%
\end{equation}
Then the multivalued BSDE
\[
\left\{
\begin{array}
[c]{r}%
\displaystyle Y_{t}+{\int_{t}^{T}}dK_{r}=\eta+{\int_{t}^{T}}H\left(
r,Y_{r},Z_{r}\right)  dQ_{r}-{\int_{t}^{T}}Z_{r}dB_{r}\,,\quad\mathbb{P}%
\text{--a.s., for all }t\in\left[  0,T\right]  ,\medskip\\
\multicolumn{1}{l}{\displaystyle dK_{r}=U_{r}dQ_{r}\in\partial_{y}\Psi\left(
r,Y_{r}\right)  dQ_{r}}%
\end{array}
\,\right.
\]
has a unique $L^{p}$--variational solution, in the sense of Definition
\ref{definition_weak solution}.$\medskip$

Moreover, for all $t\in\left[  0,T\right]  ,$ $\mathbb{P}$--a.s.,%
\begin{equation}%
\begin{array}
[c]{r}%
\displaystyle\mathbb{E}^{\mathcal{F}_{t}}\Big(\sup\nolimits_{s\in\left[
t,T\right]  }\big|e^{V_{s}^{\left(  +\right)  }}Y_{s}\big|^{p}\Big)+\mathbb{E}%
^{\mathcal{F}_{t}}\Big(%
{\displaystyle\int_{t}^{T}}
e^{2V_{s}^{\left(  +\right)  }}\left(  \varphi\left(  Y_{s}\right)
ds+\psi\left(  Y_{s}\right)  dA_{s}\right)  \Big)^{p/2}\medskip\\
\displaystyle\quad+\mathbb{E}^{\mathcal{F}_{t}}\Big(%
{\displaystyle\int_{0}^{T}}
e^{2V_{s}^{\left(  +\right)  }}\left\vert Z_{s}\right\vert ^{2}ds\Big)^{p/2}%
\medskip\\
\displaystyle\leq C_{p,\lambda}\,\mathbb{E}^{\mathcal{F}_{t}}\left[
e^{pV_{T}^{\left(  +\right)  }}\left\vert \eta\right\vert ^{p}+\Big(%
{\displaystyle\int_{t}^{T}}
e^{V_{s}^{\left(  +\right)  }}\left(  \left\vert F\left(  r,0,0\right)
\right\vert dr+\left\vert G\left(  t,0\right)  \right\vert dA_{r}\right)
\Big)^{p}\right]  .
\end{array}
\label{es-p}%
\end{equation}

\end{proposition}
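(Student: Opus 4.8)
The plan is to obtain uniqueness for free from Theorem \ref{uniq}, to read off the a priori bound (\ref{es-p}) from Proposition \ref{p-estim}, and to prove existence by approximating the data so as to fall under the strong-solution Lemma \ref{l1-strong sol}, then passing to the limit in the variational inequality. Uniqueness is immediate: two $L^p$-variational solutions share the coefficients $\mu,\nu,\ell$, so Theorem \ref{uniq} with $\hat\eta=\tilde\eta$ and $\hat H=\tilde H$ gives $\hat Y=\tilde Y$ in $S_m^0$ and $\hat Z=\tilde Z$ in $\Lambda_{m\times k}^0$. For (\ref{es-p}) I would specialize Proposition \ref{p-estim} to $\sigma=t$, $\theta=T$, $a=p$, $q=p\wedge2$ and $M=0$ (hence $N=R=L=0$): the $\sup$-term is controlled by (\ref{def-11ccc}) and the $|Z|^2$- and $\Psi$-integrals by (\ref{def-11aa}), whose residual $\sup$-term is reinserted from (\ref{def-11ccc}); one runs the argument with $V^{(+)}$ in place of $V$, which is licit because $V^{(+)}-V$ is nondecreasing and hence only enlarges the monotonicity bound used in that proof, and finally $\int e^{V^{(+)}}|H(r,0,0)|dQ_r=\int e^{V^{(+)}}(|F(r,0,0)|dr+|G(r,0)|dA_r)$ is exactly the driving term of (\ref{es-p}).

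For existence I would introduce, for $n\in\mathbb N$, the truncated terminal value $\eta_n=\eta\,\mathbf{1}_{\{|\eta|+\varphi(\eta)+\psi(\eta)\le n\}}$ (bounded, with $\varphi(\eta_n)+\psi(\eta_n)\le n$ by $0\in\partial\varphi(0)\cap\partial\psi(0)$), together with bounded modifications $F^n,G^n$ of $F,G$ and truncated coefficients $\mu^n,\nu^n,\ell^n$ at level $n$ retaining the monotonicity-in-$y$ and Lipschitz-in-$z$ structure (\ref{F, G assumpt 2}); the moment bound (\ref{ip-mnl}), assumption (i) and the growth control (\ref{ip-1b}) then secure hypotheses (i)--(iv) of Lemma \ref{l1-strong sol} for each $n$. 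That Lemma produces a strong solution $(Y^n,Z^n,U^n)$, which by Remark \ref{s-w} is an $L^p$-variational solution of the $n$-th problem. The decisive uniform input is the pathwise bound $|Y^n_t|\le\hat\rho_0=(C_\lambda\hat L)^{1/2}$, obtained from the $q=2$ branch of (\ref{def1}) through Proposition \ref{p-estim} (Part II with $q=a=2$, i.e.\ (\ref{def-11cc})) and assumption (i), exactly as in (\ref{ap-eq-3}$-a$); it pins the $y$-argument inside the fixed ball $\overline{B(0,\hat\rho_0)}$, so (\ref{ip-1b}) yields $|F^n(r,Y^n_r,0)|\le K_{\hat\rho_0}(\Theta_r)$ uniformly in $n$, which is the key to dominated convergence of the generators.

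The passage to the limit I would organize in two layers. Suppose first that $\mu,\nu,\ell,F_1^{\#},G_1^{\#}$ are already bounded; then only the terminal value need be truncated (to control $\varphi(\eta)+\psi(\eta)$), the generator $H$ is unchanged, all $(Y^n,Z^n)$ share a single $V$, and the stability inequality (\ref{cont-2}) with $\hat H=\tilde H$ collapses to $C(\mathbb E e^{qV_\tau}|\eta_n-\eta_m|^q)^\alpha$, which tends to $0$; thus $(Y^n,Z^n)$ converges, and the limit satisfies (\ref{def1})/(\ref{def1d}) after Fatou on the left, dominated convergence on the right, and lower semicontinuity for the convex terms $\int e^{qV}(\Gamma_r)^{q-2}\Psi(r,Y_r)dQ_r$. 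To remove the boundedness of $\mu,\nu,\ell,F_1^{\#},G_1^{\#}$ I would then send the truncation level to infinity: the weights $V^n\uparrow V$ now differ, but the uniform estimate (\ref{es-p}), with constants depending only on $p,\lambda,\hat L$ and the moments in (\ref{ip-mnl}) and (\ref{ip-1a}), furnishes uniform bounds, and here the confinement $|Y^n|\le\hat\rho_0$ together with (\ref{ip-1b}) and dominated convergence controls the generator so that one again passes to the limit in the variational inequality; uniqueness (Theorem \ref{uniq}) then upgrades subsequential limits to convergence of the whole sequence.

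The hard part is precisely this final limit. For $1<p<2$ one has only the variational inequality rather than an equation, so strong compactness of the $Z^n$ is not automatic; closing the limit must rest on the convexity and lower semicontinuity of $\Psi$ in the $\Psi$-terms, on the uniform bound $|Y^n|\le\hat\rho_0$ with (\ref{ip-1b}) for the dominated convergence of the generator, and on the simultaneous validity of (\ref{def1}) for both $q=2$ and $q=p\wedge2$ --- it is the $q=2$ branch that delivers the pathwise bound and the estimate $\mathbb E(\int e^{2V}|Z|^2dr)^{p/2}$ needed to ensure, via Proposition \ref{prop2-exp-exp}, that the stochastic integrals remain martingales while passing to the limit.
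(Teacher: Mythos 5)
Your uniqueness step, your derivation of (\ref{es-p}) from Proposition \ref{p-estim} (run with $V^{(+)}$ in place of $V$, which is indeed licit since $dV\le dV^{(+)}$), and your first approximation layer (bounded coefficients, terminal value truncated, Cauchy property from (\ref{cont-2}) with $\hat H=\tilde H$) are all sound and close in spirit to the paper. The genuine gap is in your second layer, where the truncation of $\mu,\nu,\ell,F^{\#},G^{\#}$ is removed. There you invoke only uniform bounds, weak compactness, lower semicontinuity of the convex terms, and ``dominated convergence controls the generator.'' This does not close: the right-hand side of (\ref{def1}) contains $\int(\Gamma_r)^{q-2}\langle M_r-Y^n_r,\,N_r-H(r,Y^n_r,Z^n_r)\rangle dQ_r$, and $H$ is genuinely nonlinear (Lipschitz) in $z$; this term is neither convex nor concave in $Z$, so weak convergence of $Z^n$ gives nothing, and dominated convergence presupposes a.e.\ (hence strong/in-measure) convergence of $Z^n$, which uniform bounds do not supply. ``Uniqueness upgrades subsequential limits'' is circular here, because without strong convergence you cannot show that any subsequential weak limit satisfies (\ref{def1}) in the first place. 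Note also that repairing the layer by applying Theorem \ref{uniq} across truncation levels (all $H^{(n)}$ do share the coefficients $\mu^+,\nu^+,\ell$) fails on exponents: (\ref{cont-2}) works at $q=p\wedge2$, and bounding the resulting generator-difference term by H\"older against $\mathbb{E}\int e^{2V^{(+)}}|Z^n|^2\,dr\le\hat\rho_0^2$ would require $\mathbb{E}\big(\int_0^T\ell_s^2\,ds\big)^{q/(2-q)}<\infty$, which is strictly stronger than hypothesis (ii)(a) since $a<q$.

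What the paper does instead — and what your proposal is missing — is a single truncation by the indicator $\mathbf{1}_{[0,n]}(\beta_t)$ (with $\beta_t$ aggregating $|\mu_t|,|\nu_t|,\ell_t,V^{(+)}_t,F^{\#}_{1+\hat\rho_0},G^{\#}_{1+\hat\rho_0},\Theta_t$), followed by a \emph{strong} Cauchy estimate for $(Y^{(n+i)}-Y^{(n)},Z^{(n+i)}-Z^{(n)})$ at the intermediate exponent $a\in(1+n_p\lambda,\,p\wedge2)$: the generator differences are supported on $\{\beta_s>n\}$ and dominated by $\mathbf{1}_{(n,\infty)}(\beta_s)\big(\ell_s|Z^{(n)}_s|+F^{\#}_{\hat\rho_0}(s)\,ds+G^{\#}_{\hat\rho_0}(s)\,dA_s\big)$ thanks to the confinement $|Y^{(n)}|\le\hat\rho_0$; Proposition \ref{Appendix_result 1} (with the elementary but essential inequality $n_p\lambda\le n_a\lambda'$, $\lambda'=\frac{n_p\lambda+a-1}{2(a-1)}\in(0,1)$, which is exactly where $a>1+n_p\lambda$ enters) plus H\"older's inequality then yield convergence of $\mathbb{E}\sup e^{aV^{(+)}}|Y^{(n+i)}-Y^{(n)}|^a+\mathbb{E}\big(\int e^{aV^{(+)}}|Z^{(n+i)}-Z^{(n)}|^2ds\big)^{a/2}$ to $0$, consuming both parts of assumption (ii). This gives a.s.\ convergence of $Z^{(n)}$ along the whole scheme, after which Fatou on the left, dominated convergence of the generator term, and continuity of the stochastic integral in probability let one pass to the limit in (\ref{an-vws}) and in (\ref{es-np}). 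Your proposal never actually uses assumption (ii) for convergence, which is the tell-tale sign of the missing step.
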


\begin{proof}
Let $t\in\left[  0,T\right]  ,$ $n\in\mathbb{N}^{\ast}$ and%
\[
\beta_{t}=t+A_{t}+\left\vert \mu_{t}\right\vert +\left\vert \nu_{t}\right\vert
+\ell_{t}+V_{t}^{\left(  +\right)  }+F_{1+\hat{\rho}_{0}}^{\#}\left(
t\right)  +G_{1+\hat{\rho}_{0}}^{\#}\left(  t\right)  +\Theta_{t}\,.
\]
Consider the BSDE%
\begin{equation}
\left\{
\begin{array}
[c]{l}%
Y_{t}^{\left(  n\right)  }+%
{\displaystyle\int_{t}^{T}}
U_{s}^{\left(  n\right)  }dQ_{s}=\eta^{\left(  n\right)  }+%
{\displaystyle\int_{t}^{T}}
H^{\left(  n\right)  }\big(s,Y_{s}^{\left(  n\right)  },Z_{s}^{\left(
n\right)  }\big)dQ_{s}-%
{\displaystyle\int_{t}^{T}}
Z_{s}^{\left(  n\right)  }dB_{s}\,,\quad t\in\left[  0,T\right]  ,\medskip\\
U_{s}^{\left(  n\right)  }=\alpha_{r}U_{r}^{\left(  1,n\right)  }+\left(
1-\alpha_{r}\right)  U_{r}^{\left(  2,n\right)  }\medskip\\
U_{s}^{\left(  1,n\right)  }\in\partial\varphi(Y_{s}^{\left(  n\right)
}),\quad d\mathbb{P}\otimes ds\text{--a.e.}\quad\text{and }\quad
U_{s}^{\left(  2,n\right)  }\in\partial\psi(Y_{s}^{\left(  n\right)  }),\quad
d\mathbb{P}\otimes dA_{s}\text{--a.e. on }\left[  0,T\right]  ,
\end{array}
\right.  \label{st2-1}%
\end{equation}
where%
\[%
\begin{array}
[c]{l}%
\displaystyle\eta^{\left(  n\right)  }%
\xlongequal{\hspace{-4pt}{\rm def}\hspace{-4pt}}\eta\,\mathbf{1}_{\left[
0,n\right]  }\big(\left\vert \eta\right\vert +\varphi\left(  \eta\right)
+\psi\left(  \eta\right)  +V_{T}^{\left(  +\right)  }\big),\medskip\\
\displaystyle F^{\left(  n\right)  }\left(  t,y,z\right)
\xlongequal{\hspace{-4pt}{\rm def}\hspace{-4pt}}F\left(  t,y,z\right)
\mathbf{1}_{\left[  0,n\right]  }\left(  \beta_{t}\right)  \quad
\text{and}\quad G^{\left(  n\right)  }\left(  t,y,z\right)
\xlongequal{\hspace{-4pt}{\rm def}\hspace{-4pt}}G\left(  t,y\right)
\mathbf{1}_{\left[  0,n\right]  }\left(  \beta_{t}\right)  ,\medskip\\
\displaystyle H^{\left(  n\right)  }\left(  s,y,z\right)
\xlongequal{\hspace{-4pt}{\rm def}\hspace{-4pt}}\alpha_{s}F^{\left(  n\right)
}\left(  s,y,z\right)  +\left(  1-\alpha_{s}\right)  G^{\left(  n\right)
}\left(  s,y\right)  .
\end{array}
\]
If we denote%
\[
\displaystyle\mu_{t}^{\left(  n\right)  }%
\xlongequal{\hspace{-4pt}{\rm def}\hspace{-4pt}}\mathbf{1}_{\left[
0,n\right]  }\left(  \beta_{t}\right)  \,\mu_{t}\,,\quad\quad\nu_{t}^{\left(
n\right)  }\xlongequal{\hspace{-4pt}{\rm def}\hspace{-4pt}}\mathbf{1}_{\left[
0,n\right]  }\left(  \beta_{t}\right)  \,\nu_{t}\,,\quad\quad\ell_{t}^{\left(
n\right)  }\xlongequal{\hspace{-4pt}{\rm def}\hspace{-4pt}}\mathbf{1}_{\left[
0,n\right]  }\left(  \beta_{t}\right)  \,\ell_{t}\,,
\]
then we have%
\[%
\begin{array}
[c]{l}%
\displaystyle\big\langle y-\hat{y},H^{\left(  n\right)  }(t,y,z)-H^{\left(
n\right)  }(t,\hat{y},z)\big\rangle\leq\big(\alpha_{t}\mu_{t}^{\left(
n\right)  }+\left(  1-\alpha_{t}\right)  \nu_{t}^{\left(  n\right)
}\big)\left\vert y-\hat{y}\right\vert ^{2},\medskip\\
\displaystyle\big|H^{\left(  n\right)  }(t,y,z)-H^{\left(  n\right)
}(t,y,\hat{z})\big|\leq\alpha_{t}\ell_{t}^{\left(  n\right)  }\,\left\vert
z-\hat{z}\right\vert .
\end{array}
\]
Of course,%
\[%
\begin{array}
[c]{l}%
\displaystyle\big|\eta^{\left(  n\right)  }\big|\leq n\,\mathbf{1}_{\left[
0,n\right]  }\big(\left\vert \eta\right\vert +V_{T}^{\left(  +\right)
}\big),\medskip\\
\displaystyle\big|\mu_{t}^{\left(  n\right)  }\big|\leq n\,\mathbf{1}_{\left[
0,n\right]  }\left(  \beta_{t}\right)  ,\quad\big|\nu_{t}^{\left(  n\right)
}\big|\leq n\,\mathbf{1}_{\left[  0,n\right]  }\left(  \beta_{t}\right)
,\quad\big|\ell_{t}^{\left(  n\right)  }\big|\leq n\,\mathbf{1}_{\left[
0,n\right]  }\left(  \beta_{t}\right)  ,\medskip\\
\displaystyle F_{1}^{\left(  n\right)  \#}\left(  t\right)  =\sup
\nolimits_{\left\vert u\right\vert \leq1}\big|F^{\left(  n\right)  }\left(
t,u,0\right)  \big|\leq n\mathbf{1}_{\left[  0,n\right]  }\left(  \beta
_{t}\right)  ,\medskip\\
\displaystyle G_{1}^{\left(  n\right)  \#}\left(  t\right)  =\sup
\nolimits_{\left\vert u\right\vert \leq1}\big|G^{\left(  n\right)  }\left(
t,u\right)  \big|\leq n\mathbf{1}_{\left[  0,n\right]  }\left(  \beta
_{t}\right)  .\medskip
\end{array}
\]
Let%
\[
\theta_{n}:=\inf\big\{r\geq0:r+A_{r}+V_{r}^{\left(  +\right)  }>n\big\}.
\]
We have $\mathbf{1}_{\left[  0,n\right]  }\left(  \beta_{r}\right)
\leq\mathbf{1}_{\left[  0,\theta_{n}\right]  }\left(  r\right)  $ and
therefore%
\begin{align}
V_{t}^{\left(  n,+\right)  }  &
\xlongequal{\hspace{-4pt}{\rm def}\hspace{-4pt}}\int_{0}^{t}\left[
\Big(\mu_{r}^{\left(  n\right)  }+\frac{1}{2n_{p}\lambda}\,\big(\ell
_{r}^{\left(  n\right)  }\big)^{2}\Big)^{+}dr+\big(\nu_{r}^{\left(  n\right)
}\big)^{+}dA_{r}\right] \label{defV_4}\\[2pt]
&  =\int_{0}^{t}\mathbf{1}_{\left[  0,n\right]  }\left(  \beta_{r}\right)
\left[  \Big(\mu_{r}+\frac{1}{2n_{p}\lambda}\left(  \ell_{r}\right)
^{2}\Big)^{+}dr+\nu_{r}^{+}dA_{r}\right]  \medskip\nonumber\\[2pt]
&  \leq\int_{0}^{t}\mathbf{1}_{\left[  0,\theta_{n}\right]  }\left(  \beta
_{r}\right)  \left[  \Big(\mu_{r}+\frac{1}{2n_{p}\lambda}\left(  \ell
_{r}\right)  ^{2}\Big)^{+}dr+\nu_{r}^{+}dA_{r}\right]  =V_{t\wedge\theta_{n}%
}^{\left(  +\right)  }\leq V_{\theta_{n}}^{\left(  +\right)  }\leq n\nonumber
\end{align}
and%
\begin{align*}
&  \big|e^{V_{T}^{\left(  n,+\right)  }}\eta^{\left(  n\right)  }%
\big|^{2}+\Big(%
{\displaystyle\int_{0}^{T}}
e^{V_{r}^{\left(  n,+\right)  }}\big(F_{1}^{\left(  n\right)  \#}\left(
r\right)  dr+G_{1}^{\left(  n\right)  \#}\left(  r\right)  dA_{r}%
\big)\Big)^{2}\\
&  \leq n^{2}e^{2n}+n^{2}e^{2n}\bigg(%
{\displaystyle\int_{0}^{T\wedge\theta_{n}}}
\left(  dr+dA_{r}\right)  \bigg)^{2}\leq n^{2}e^{2n}\left(  1+n^{2}\right)
=\tilde{L}^{\left(  n\right)  }%
\end{align*}
and, for every $\rho\geq0,$%
\[
F_{\rho}^{\left(  n\right)  \#}\left(  t\right)  +G_{\rho}^{\left(  n\right)
\#}\left(  t\right)  \leq\left[  F_{\rho}^{\#}\left(  t\right)  +G_{\rho}%
^{\#}\left(  t\right)  \right]  \,\mathbf{1}_{\left[  0,n\right]  }\left(
\beta_{t}\right)  \leq K_{\rho}\left(  \Theta_{t}\right)  \,\mathbf{1}%
_{\left[  0,n\right]  }\left(  \beta_{t}\right)  \leq K_{\rho}\left(
n\right)  .
\]
Therefore assumptions (\ref{ap-eq-1b}--\ref{ap-eq-6}) are satisfied.$\medskip$

Hence, by Lemma \ref{l1-strong sol}, there exists a unique (strong) solution
$\left(  Y^{\left(  n\right)  },Z^{\left(  n\right)  },U^{\left(  n\right)
}\right)  \in S_{m}^{0}\left[  0,T\right]  \times\Lambda_{m\times k}%
^{0}\left(  0,T\right)  \times\Lambda_{m}^{0}\left(  0,T\right)  $ of BSDE
(\ref{st2-1}).$\medskip$

We have%
\begin{equation}%
\begin{array}
[c]{l}%
\big\langle Y_{t}^{\left(  n\right)  },H^{\left(  n\right)  }(t,Y_{t}^{\left(
n\right)  },Z_{t}^{\left(  n\right)  })-U_{t}^{\left(  n\right)
}\big\rangle dQ_{t}\medskip\\
\leq\Big[\Big(\alpha_{t}\mu_{t}+\left(  1-\alpha_{t}\right)  \nu_{t}%
+\alpha_{t}\,\dfrac{1}{2n_{p}\lambda}\,\ell_{t}^{2}\Big)\mathbf{1}_{\left[
0,n\right]  }\left(  \beta_{t}\right)  \big|Y_{t}^{\left(  n\right)
}\big|^{2}\medskip\\
\quad+\alpha_{t}\mathbf{1}_{\left[  0,n\right]  }\left(  \beta_{t}\right)
\,\dfrac{n_{p}\lambda}{2}\,\big|Z_{t}^{\left(  n\right)  }\big|^{2}%
+|H^{\left(  n\right)  }\left(  t,0,0\right)  |\big|Y_{t}^{\left(  n\right)
}\big|\Big]dQ_{t}\medskip\\
\leq\big|Y_{t}^{\left(  n\right)  }\big|d\bar{N}_{t}+\big|Y_{t}^{\left(
n\right)  }\big|^{2}dV_{t}^{\left(  +\right)  }+\dfrac{\lambda}{2}%
\,\big|Z_{t}^{\left(  n\right)  }\big|^{2}dt,
\end{array}
\label{yn-ineq}%
\end{equation}
where%
\begin{equation}
\bar{N}_{t}:=\int_{0}^{t}\left[  \left\vert F\left(  r,0,0\right)  \right\vert
dr+\left\vert G\left(  r,0\right)  \right\vert dA_{r}\right]  . \label{def_N}%
\end{equation}
Since by (\ref{b-1}), for all $t\in\left[  0,T\right]  $, $\mathbb{P}$--a.s.,%
\[
\big|Y_{t}^{\left(  n\right)  }\big|^{2}\leq\big|e^{V_{t}^{\left(  n,+\right)
}}Y_{t}^{\left(  n\right)  }\big|^{2}\leq\mathbb{E}^{\mathcal{F}_{t}}%
\Big(\sup\nolimits_{r\in\left[  t,T\right]  }\big|e^{V_{r}^{\left(
n,+\right)  }}Y_{r}^{\left(  n\right)  }\big|^{2}\Big)\leq C_{\lambda}%
\tilde{L}^{\left(  n\right)  }=:\left(  \tilde{\rho}_{0}^{n}\right)  ^{2}%
\]
and $\left\vert \eta^{\left(  n\right)  }\right\vert \leq\left\vert
\eta\right\vert $, we deduce, by Proposition \ref{Appendix_result 1} applied
to (\ref{yn-ineq}), that, for all $t\in\left[  0,T\right]  ,$%
\[
\mathbb{E}^{\mathcal{F}_{t}}\Big(\sup\nolimits_{r\in\left[  t,T\right]
}\big|e^{V_{r}^{\left(  +\right)  }}Y_{r}^{\left(  n\right)  }\big|^{2}%
\Big)+\mathbb{E}^{\mathcal{F}_{t}}\Big(%
{\displaystyle\int_{t}^{T}}
e^{2V_{r}^{\left(  +\right)  }}\big|Z_{r}^{\left(  n\right)  }\big|^{2}%
dr\Big)\leq C_{\lambda}\,\mathbb{E}^{\mathcal{F}_{t}}\bigg[\big|e^{V_{T}%
^{\left(  +\right)  }}\eta\big|^{2}+\Big(%
{\displaystyle\int_{t}^{T}}
e^{V_{r}^{\left(  +\right)  }}d\bar{N}_{r}\Big)^{2}\bigg]
\]
(the constant $C_{\lambda}:=C_{2,\lambda}\,,$ where $C_{2,\lambda}$ is given
by (\ref{an3a}).$\medskip$

By assumption (\ref{assump-H(t,0,0)}) we have, for all $t\in\left[
0,T\right]  ,$ $\mathbb{P}$--a.s.,%
\begin{equation}%
\begin{array}
[c]{l}%
\displaystyle\big|Y_{t}^{\left(  n\right)  }\big|\leq\big|e^{V_{t}^{\left(
+\right)  }}Y_{t}^{\left(  n\right)  }\big|\leq\Big[\mathbb{E}^{\mathcal{F}%
_{t}}\Big(\sup\nolimits_{r\in\left[  t,T\right]  }\big|e^{V_{r}^{\left(
+\right)  }}Y_{r}^{\left(  n\right)  }\big|^{2}\Big)\Big]^{1/2}\leq
(C_{\lambda}\hat{L})^{1/2}=\hat{\rho}_{0}\,,\medskip\\
\displaystyle\mathbb{E}%
{\displaystyle\int_{0}^{T}}
e^{2V_{r}^{\left(  +\right)  }}\big|Z_{r}^{\left(  n\right)  }\big|^{2}%
dr\leq\hat{\rho}_{0}^{2}\,.
\end{array}
\label{yn-ineq_2}%
\end{equation}
Let $n,i\in\mathbb{N}^{\ast}$ arbitrary fixed. We have%
\[%
\begin{array}
[c]{l}%
\displaystyle Y_{t}^{\left(  n+i\right)  }-Y_{t}^{\left(  n\right)  }+\int
_{t}^{T}\big(U_{s}^{\left(  n+i\right)  }-U_{s}^{\left(  n\right)
}\big)dQ_{s}\medskip\\
\multicolumn{1}{r}{\displaystyle=\eta^{\left(  n+i\right)  }-\eta^{\left(
n\right)  }+\int_{t}^{T}\Big(H^{\left(  n+i\right)  }\big(s,Y_{s}^{\left(
n+i\right)  },Z_{s}^{\left(  n+i\right)  }\big)-H^{\left(  n\right)
}\big(s,Y_{s}^{\left(  n\right)  },Z_{s}^{\left(  n\right)  }\big)\Big)dQ_{s}%
}\\
\multicolumn{1}{r}{\displaystyle-\int_{t}^{T}\big(Z_{s}^{\left(  n+i\right)
}-Z_{s}^{\left(  n\right)  }\big)dB_{s}\,.}%
\end{array}
\]
We recall the property%
\[
\big\langle Y_{s}^{\left(  n+i\right)  }-Y_{s}^{\left(  n\right)  }%
,U_{s}^{\left(  n+i\right)  }-U_{s}^{\left(  n\right)  }\big\rangle dQ_{s}%
\geq0.
\]
On the other hand%
\[%
\begin{array}
[c]{l}%
\displaystyle\big\langle Y_{s}^{\left(  n+i\right)  }-Y_{s}^{\left(  n\right)
},H^{\left(  n+i\right)  }\big(s,Y_{s}^{\left(  n+i\right)  },Z_{s}^{\left(
n+i\right)  }\big)-H^{\left(  n\right)  }\big(s,Y_{s}^{\left(  n\right)
},Z_{s}^{\left(  n\right)  }\big)\big\rangle dQ_{s}\medskip\\
\displaystyle=\big\langle Y_{s}^{\left(  n+i\right)  }-Y_{s}^{\left(
n\right)  },H^{\left(  n+i\right)  }\big(s,Y_{s}^{\left(  n+i\right)  }%
,Z_{s}^{\left(  n+i\right)  }\big)-H^{\left(  n+i\right)  }\big(s,Y_{s}%
^{\left(  n\right)  },Z_{s}^{\left(  n\right)  }\big)\big\rangle
dQ_{s}\medskip\\
\displaystyle\quad+\big\langle Y_{s}^{\left(  n+i\right)  }-Y_{s}^{\left(
n\right)  },H^{\left(  n+i\right)  }\big(s,Y_{s}^{\left(  n\right)  }%
,Z_{s}^{\left(  n\right)  }\big)-H^{\left(  n\right)  }\big(s,Y_{s}^{\left(
n\right)  },Z_{s}^{\left(  n\right)  }\big)\big\rangle dQ_{s}\medskip\\
\displaystyle\leq\mathbf{1}_{\left[  0,n+i\right]  }\left(  \beta_{s}\right)
\Big(\mu_{s}ds+\nu_{s}dA_{s}+\dfrac{1}{2n_{p}\lambda}\,\ell_{s}^{2}%
ds\Big)\big|Y_{s}^{\left(  n+i\right)  }-Y_{s}^{\left(  n\right)  }%
\big|^{2}+\dfrac{n_{p}\lambda}{2}\,\big|Z_{s}^{\left(  n+i\right)  }%
-Z_{s}^{\left(  n\right)  }\big|^{2}ds\medskip\\
\displaystyle\quad+\big|Y_{s}^{\left(  n+i\right)  }-Y_{s}^{\left(  n\right)
}\big|\,\left\vert \mathbf{1}_{\left[  0,n+i\right]  }\left(  \beta
_{s}\right)  -\mathbf{1}_{\left[  0,n\right]  }\left(  \beta_{s}\right)
\right\vert \,\big(\ell_{s}\big|Z_{s}^{\left(  n\right)  }\big|+F_{\hat{\rho
}_{0}}^{\#}\left(  s\right)  ds+G_{\hat{\rho}_{0}}^{\#}\left(  s\right)
dA_{s}\big)\medskip\\
\displaystyle\leq\big|Y_{s}^{\left(  n+i\right)  }-Y_{s}^{\left(  n\right)
}\big|\,\mathbf{1}_{\left(  n,\infty\right)  }\left(  \beta_{s}\right)
\,\big(\ell_{s}\big|Z_{s}^{\left(  n\right)  }\big|+F_{\hat{\rho}_{0}}%
^{\#}\left(  s\right)  ds+G_{\hat{\rho}_{0}}^{\#}\left(  s\right)
dA_{s}\big)\medskip\\
\displaystyle\quad+\big|Y_{s}^{\left(  n+i\right)  }-Y_{s}^{\left(  n\right)
}\big|^{2}dV_{s}^{\left(  +\right)  }+\dfrac{n_{a}\lambda^{\prime}}%
{2}\big|Z_{s}^{\left(  n+i\right)  }-Z_{s}^{\left(  n\right)  }\big|^{2}ds,
\end{array}
\]
since%
\begin{equation}
\left\vert \mathbf{1}_{\left[  0,n+i\right]  }\left(  \beta_{s}\right)
-\mathbf{1}_{\left[  0,n\right]  }\left(  \beta_{s}\right)  \right\vert
\leq\mathbf{1}_{\left(  n,\infty\right)  }\left(  \beta_{s}\right)
\label{indicator_ineq}%
\end{equation}
and%
\[
n_{p}\lambda\leq n_{a}\lambda^{\prime},\quad\text{with }\lambda^{\prime
}:=\frac{n_{p}\lambda+a-1}{2\left(  a-1\right)  }\in\left(  0,1\right)
\]
(where $a$ is given by assumption (\ref{ip-1a}) and $n_{a}:=\left(
a-1\right)  \wedge1=a-1\,$).$\medskip$

By (\ref{yn-ineq_2}) and Proposition \ref{Appendix_result 1} and H\"{o}lder's
inequality we obtain:%
\[%
\begin{array}
[c]{l}%
\displaystyle\mathbb{E}\sup\nolimits_{s\in\left[  0,T\right]  }e^{aV_{s}%
^{\left(  +\right)  }}\big|Y_{s}^{\left(  n+i\right)  }-Y_{s}^{\left(
n\right)  }\big|^{a}+\mathbb{E}\left(
{\displaystyle\int_{0}^{T}}
e^{aV_{s}^{\left(  +\right)  }}\big|Z_{s}^{\left(  n+i\right)  }%
-Z_{s}^{\left(  n\right)  }\big|^{2}ds\right)  ^{a/2}\medskip\\
\displaystyle\leq C_{a,\lambda}\,\mathbb{E}\left[  e^{aV_{T}^{\left(
+\right)  }}\left\vert \eta\right\vert ^{a}\mathbf{1}_{\left(  n,\infty
\right)  }\big(\left\vert \eta\right\vert +V_{T}^{\left(  +\right)
}\big)\right]  \medskip\\
\displaystyle\quad\mathbb{+}C_{a,\lambda}\,\mathbb{E}\left(
{\displaystyle\int_{0}^{T}}
e^{V_{s}^{\left(  +\right)  }}\mathbf{1}_{\left(  n,\infty\right)  }\left(
\beta_{s}\right)  \left[  \ell_{s}\big|Z_{s}^{\left(  n\right)  }%
\big|+F_{\hat{\rho}_{0}}^{\#}\left(  s\right)  ds+G_{\hat{\rho}_{0}}%
^{\#}\left(  s\right)  dA_{s}\right]  \right)  ^{a}\medskip\\
\displaystyle\leq C_{a,\lambda}\,\mathbb{E}\left[  e^{aV_{T}^{\left(
+\right)  }}\left\vert \eta\right\vert ^{a}\mathbf{1}_{\left(  n,\infty
\right)  }\big(\left\vert \eta\right\vert +V_{T}^{\left(  +\right)
}\big)\right]  \mathbb{+}2^{a-1}C_{a,\lambda}\,\mathbb{E}\left(
{\displaystyle\int_{0}^{T}}
e^{V_{s}^{\left(  +\right)  }}\mathbf{1}_{\left(  n,\infty\right)  }\left(
\beta_{s}\right)  \,\ell_{s}\,\big|Z_{s}^{\left(  n\right)  }\big|ds\right)
^{a}\medskip\\
\displaystyle\quad+2^{a-1}C_{a,\lambda}\,\mathbb{E}\left(
{\displaystyle\int_{0}^{T}}
e^{V_{s}^{\left(  +\right)  }}\mathbf{1}_{\left(  n,\infty\right)  }\left(
\beta_{s}\right)  \left(  F_{\hat{\rho}_{0}}^{\#}\left(  s\right)
ds+G_{\hat{\rho}_{0}}^{\#}\left(  s\right)  dA_{s}\right)  \right)
^{a}\medskip\\
\displaystyle\leq C_{a,\lambda}\,\mathbb{E}\left[  e^{aV_{T}^{\left(
+\right)  }}\left\vert \eta\right\vert ^{a}\mathbf{1}_{\left(  n,\infty
\right)  }\big(\left\vert \eta\right\vert +V_{T}^{\left(  +\right)
}\big)\right]  +C_{a,\lambda}^{\prime}\mathbb{E}\left[  \Big(%
{\displaystyle\int_{0}^{T}}
\ell_{s}^{2}\mathbf{1}_{(n,\infty)}\left(  \beta_{s}\right)  ds\Big)^{\frac
{a}{2}}\Big(%
{\displaystyle\int_{0}^{T}}
e^{2V_{s}^{\left(  +\right)  }}\left\vert Z_{s}^{n}\right\vert ^{2}%
ds\Big)^{\frac{a}{2}}\right]  \medskip\\
\displaystyle\quad+C_{a,\lambda}^{\prime}\,\mathbb{E}\left(
{\displaystyle\int_{0}^{T}}
e^{V_{s}^{\left(  +\right)  }}\mathbf{1}_{\left(  n,\infty\right)  }\left(
\beta_{s}\right)  \left(  F_{\hat{\rho}_{0}}^{\#}\left(  s\right)
ds+G_{\hat{\rho}_{0}}^{\#}\left(  s\right)  dA_{s}\right)  \right)
^{a}\medskip\\
\displaystyle\leq C_{a,\lambda}\,\hat{L}^{\frac{a}{2}}\,\mathbb{E}\left[
\mathbf{1}_{\left(  n,\infty\right)  }\big(\left\vert \eta\right\vert
+V_{T}^{\left(  +\right)  }\big)\right]  +C_{a,\lambda}^{\prime}\left[
\mathbb{E}\Big(%
{\displaystyle\int_{0}^{T}}
\ell_{s}^{2}\mathbf{1}_{(n,\infty)}\left(  \beta_{s}\right)  ds\Big)^{\frac
{a}{2-a}}\right]  ^{\frac{2-a}{2}}\left[  \mathbb{E}\Big(%
{\displaystyle\int_{0}^{T}}
e^{2V_{s}^{\left(  +\right)  }}\left\vert Z_{s}^{n}\right\vert ^{2}%
ds\Big)\right]  ^{\frac{a}{2}}\medskip\\
\displaystyle\quad+C_{a,\lambda}^{\prime}\,\mathbb{E}\left(
{\displaystyle\int_{0}^{T}}
e^{V_{s}^{\left(  +\right)  }}\mathbf{1}_{\left(  n,\infty\right)  }\left(
\beta_{s}\right)  \left(  F_{\hat{\rho}_{0}}^{\#}\left(  s\right)
ds+G_{\hat{\rho}_{0}}^{\#}\left(  s\right)  dA_{s}\right)  \right)  ^{a}.
\end{array}
\]
Hence there exists $\left(  Y,Z\right)  \in S_{m}^{0}\left[  0,T\right]
\times\Lambda_{m\times k}^{0}\left(  0,T\right)  $ such that%
\begin{equation}%
\begin{array}
[c]{rl}%
\left(  j\right)  & \left\vert Y_{t}\right\vert \leq e^{V_{t}^{\left(
+\right)  }}\left\vert Y_{t}\right\vert \leq(C_{\lambda}\hat{L})^{1/2}%
=\hat{\rho}_{0}\,,\quad\text{for all }t\in\left[  0,T\right]  ,\;\mathbb{P}%
\text{--a.s.},\medskip\\
\left(  jj\right)  & \mathbb{E}%
{\displaystyle\int_{0}^{T}}
e^{2V_{r}^{\left(  +\right)  }}\left\vert Z_{r}\right\vert ^{2}dr\leq\hat
{\rho}_{0}^{2}\,,\medskip\\
\left(  jjj\right)  & \lim\nolimits_{n\rightarrow\infty}\bigg[\mathbb{E}%
\sup\nolimits_{s\in\left[  0,T\right]  }e^{aV_{s}^{\left(  +\right)  }%
}\left\vert Y_{s}^{n}-Y_{s}\right\vert ^{a}+\mathbb{E}\Big(%
{\displaystyle\int_{0}^{T}}
e^{aV_{s}^{\left(  +\right)  }}\left\vert Z_{s}^{n}-Z_{s}\right\vert
^{2}ds\Big)^{a/2}\bigg]=0,\medskip\\
\left(  jv\right)  & \left(  Y_{t},Z_{t}\right)  =\left(  \eta,0\right)
,\quad\text{for all }t>T.
\end{array}
\label{c-1}%
\end{equation}
Using (\ref{st2-1}) and assumption $\left(  \mathrm{A}_{4}\right)  $ we deduce%
\[
\varphi\big(Y_{t}^{\left(  n\right)  }\big)dt+\psi\big(Y_{t}^{\left(
n\right)  }\big)dA_{t}\leq\big\langle Y_{t}^{\left(  n\right)  }%
,U_{t}^{\left(  1,n\right)  }\big\rangle dt+\big\langle Y_{t}^{\left(
n\right)  },U_{t}^{\left(  2,n\right)  }\big\rangle dA_{t}%
\]
and therefore%
\[%
\begin{array}
[c]{l}%
\displaystyle\varphi\big(Y_{t}^{\left(  n\right)  }\big)dt+\psi\big(Y_{t}%
^{\left(  n\right)  }\big)dA_{t}+\big\langle Y_{t}^{\left(  n\right)
},H^{\left(  n\right)  }(t,Y_{t}^{\left(  n\right)  },Z_{t}^{\left(  n\right)
})-U_{t}^{\left(  n\right)  }\big\rangle dQ_{t}\leq\big\langle Y_{t}^{\left(
n\right)  },H^{\left(  n\right)  }(t,Y_{t}^{\left(  n\right)  },Z_{t}^{\left(
n\right)  })\big\rangle dQ_{t}\medskip\\
\displaystyle\leq\Big[\Big(\alpha_{t}\mu_{t}+\left(  1-\alpha_{t}\right)
\nu_{t}+\alpha_{t}\,\dfrac{1}{2n_{p}\lambda}\,\ell_{t}^{2}\Big)\,\mathbf{1}%
_{\left[  0,n\right]  }\left(  \beta_{t}\right)  \big|Y_{t}^{\left(  n\right)
}\big|^{2}\medskip\\
\displaystyle\quad+\alpha_{t}\mathbf{1}_{\left[  0,n\right]  }\left(
\beta_{t}\right)  \,\dfrac{n_{p}\lambda}{2}\,\big|Z_{t}^{\left(  n\right)
}\big|^{2}+\big|H^{\left(  n\right)  }\left(  t,0,0\right)  \big|\big|Y_{t}%
^{\left(  n\right)  }\big|\Big]dQ_{t}\medskip\\
\displaystyle\leq\big|Y_{t}^{\left(  n\right)  }\big|d\bar{N}_{t}%
+\big|Y_{t}^{\left(  n\right)  }\big|^{2}dV_{t}^{\left(  +\right)  }%
+\dfrac{n_{p}\lambda}{2}\,\big|Z_{t}^{\left(  n\right)  }\big|^{2}dr,
\end{array}
\]
where $\bar{N}$ is defined by (\ref{def_N}).

Also by (\ref{yn-ineq_2}) and assumption (\ref{ip-mnl}) we have%
\[
\mathbb{E}\sup\nolimits_{t\in\left[  0,T\right]  }e^{pV_{t}^{\left(  +\right)
}}\big|Y_{t}^{\left(  n\right)  }\big|^{p}\leq\hat{\rho}_{0}^{p}%
\,\mathbb{E}\exp\left(  p\int_{0}^{T}\left(  \left\vert \mu_{s}\right\vert
+\frac{1}{2n_{p}\lambda}\,\ell_{s}^{2}\right)  ds+p\int_{0}^{T}\left\vert
\nu_{s}\right\vert dA_{s}\right)  <\infty.
\]
Hence, by Proposition \ref{Appendix_result 1}, we deduce that, for all
$t\in\left[  0,T\right]  ,$%
\begin{equation}%
\begin{array}
[c]{l}%
\displaystyle\mathbb{E}^{\mathcal{F}_{t}}\Big(\sup\nolimits_{s\in\left[
t,T\right]  }\big|e^{V_{s}^{\left(  +\right)  }}Y_{s}^{\left(  n\right)
}\big|^{p}\Big)+\mathbb{E}^{\mathcal{F}_{t}}\Big(%
{\displaystyle\int_{t}^{T}}
e^{2V_{s}^{\left(  +\right)  }}\Big(\varphi\big(Y_{s}^{\left(  n\right)
}\big)ds+\psi\big(Y_{s}^{\left(  n\right)  }\big)dA_{s}\Big)\Big)^{p/2}%
\medskip\\
\displaystyle\quad+\mathbb{E}^{\mathcal{F}_{t}}\Big(%
{\displaystyle\int_{t}^{T}}
e^{2V_{s}^{\left(  +\right)  }}\big|Z_{s}^{\left(  n\right)  }\big|^{2}%
ds\Big)^{p/2}\medskip\\
\displaystyle\leq C_{p,\lambda}\,\mathbb{E}^{\mathcal{F}_{t}}\left[
e^{pV_{T}^{\left(  +\right)  }}\left\vert \eta\right\vert ^{p}+\Big(%
{\displaystyle\int_{t}^{T}}
e^{V_{s}^{\left(  +\right)  }}d\bar{N}_{s}\Big)^{p}\right]  ,\quad
\mathbb{P}\text{--a.s..}%
\end{array}
\label{es-np}%
\end{equation}
By Remark \ref{s-w} and inequality $V_{t}\leq V_{s}^{\left(  +\right)  },$ we
see that $\left(  Y^{\left(  n\right)  },Z^{\left(  n\right)  }\right)  ,$ as
a strong solution of (\ref{st2-1}), is also an $L^{p}-$variational solution on
$\left[  0,T\right]  $ for (\ref{st2-1}).\medskip

Hence, for%
\[
q\in\{2,p\wedge2\},\quad\delta_{q}=\delta\mathbf{1}_{[1,2)}\left(  q\right)
\quad\text{and}\quad\Gamma_{t}^{\left(  n\right)  }=\big(\big|M_{t}%
-Y_{t}^{\left(  n\right)  }\big|^{2}+\delta_{q}\big)^{1/2},
\]
it holds%
\begin{equation}%
\begin{array}
[c]{l}%
\big(\Gamma_{t}^{\left(  n\right)  }\big)^{q}+\dfrac{q\left(  q-1\right)  }%
{2}\,%
{\displaystyle\int_{t}^{s}}
{\big(\Gamma_{r}^{\left(  n\right)  }\big)^{q-2}}{\Large \,}\big|R_{r}%
-Z_{r}^{\left(  n\right)  }\big|^{2}dr+{q%
{\displaystyle\int_{t}^{s}}
}\big({\Gamma_{r}^{\left(  n\right)  }\big)^{q-2}\Psi}\big(r,Y_{r}^{\left(
n\right)  }\big)dQ_{r}\medskip\\
\leq\big(\Gamma_{s}^{\left(  n\right)  }\big)^{q}+{q%
{\displaystyle\int_{t}^{s}}
}\big({\Gamma_{r}^{\left(  n\right)  }\big)^{q-2}\Psi}\left(  r,M_{r}\right)
dQ_{r}\medskip\\
\quad+q%
{\displaystyle\int_{t}^{s}}
{\big(\Gamma_{r}^{\left(  n\right)  }\big)^{q-2}}\langle M_{r}-Y_{r}^{\left(
n\right)  },N_{r}-H\big(r,Y_{r}^{\left(  n\right)  },Z_{r}^{\left(  n\right)
}\big)\rangle dQ_{r}\medskip\\
\quad-q%
{\displaystyle\int_{t}^{s}}
\big({\Gamma_{r}^{\left(  n\right)  }\big)^{q-2}}\,\langle M_{r}%
-Y_{r}^{\left(  n\right)  },{\big(}R_{r}-Z_{r}^{\left(  n\right)  }%
{\big)}dB_{r}\rangle;
\end{array}
\label{an-vws}%
\end{equation}
for any $0\leq t\leq s<\infty$ and $M\in\mathcal{V}_{m}^{0}$ of the form
(\ref{def_M}), i.e. $M_{t}=M_{T}+\int_{t}^{T}N_{r}dQ_{r}-\int_{t}^{T}%
R_{r}dB_{r}\,.\medskip$

By convergence result (\ref{c-1}$-jjj$) and assumptions $(\mathrm{A}%
_{4}-\mathrm{A}_{6})$ we can pass to $\liminf_{n\rightarrow\infty}$ (on a
subsequence) in (\ref{es-np}) and (\ref{an-vws}) to conclude that $\left(
Y,Z\right)  $ is also an $L^{p}-$variational solution on $\left[  0,T\right]
$ and inequality (\ref{es-p}) holds.\hfill
\end{proof}

\begin{corollary}
Let the assumptions of Proposition \ref{p1-wvs} be satisfied. If $\varphi
=\psi=0,$ then BSDE%
\begin{equation}
Y_{t}=\eta+{\int_{t}^{T}}H\left(  r,Y_{r},Z_{r}\right)  dQ_{r}-{\int_{t}^{T}%
}Z_{r}dB_{r}\,,\quad\mathbb{P}\text{--a.s., }t\in\left[  0,T\right]  ,
\label{bsde-cl}%
\end{equation}
has a unique strong solution $\left(  Y,Z\right)  \in S_{m}^{p}\left[
0,T\right]  \times\Lambda_{m\times k}^{p}\left(  0,T\right)  .$
\end{corollary}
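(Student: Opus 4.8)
The plan is to specialize Proposition \ref{p1-wvs} to the degenerate case $\varphi=\psi=0$ and then to recognize the resulting $L^{p}$--variational solution as a strong solution.

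First I would apply Proposition \ref{p1-wvs}. All its hypotheses are in force, and with $\varphi=\psi=0$ one has $\Psi\equiv 0$ and $\partial\varphi=\partial\psi=\{0\}$, so the constraint $dK_{r}=U_{r}dQ_{r}\in\partial_{y}\Psi(r,Y_{r})dQ_{r}$ reduces to $K\equiv 0$ and assumption $(\mathrm{A}_{7})$ holds trivially. This yields a unique $L^{p}$--variational solution $(Y,Z)$ together with estimate (\ref{es-p}). To place $(Y,Z)$ in the right spaces I would take $t=0$ in (\ref{es-p}); since $V^{(+)}$ is nonnegative, $e^{V_{s}^{(+)}}\ge 1$, whence $\mathbb{E}\sup_{s\in[0,T]}|Y_{s}|^{p}$ and $\mathbb{E}(\int_{0}^{T}|Z_{s}|^{2}ds)^{p/2}$ are dominated by the left-hand side of (\ref{es-p}). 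Its right-hand side is finite because the pathwise bound (\ref{assump-H(t,0,0)}) gives $e^{pV_{T}^{(+)}}|\eta|^{p}\le\hat{L}^{p/2}$ and $(\int_{0}^{T}e^{V_{s}^{(+)}}(|F(s,0,0)|ds+|G(s,0)|dA_{s}))^{p}\le\hat{L}^{p/2}$, $\mathbb{P}$--a.s. Hence $(Y,Z)\in S_{m}^{p}[0,T]\times\Lambda_{m\times k}^{p}(0,T)$.

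Next I would upgrade the variational inequality to the strong equation (\ref{bsde-cl}) by revisiting the approximation built in the proof of Proposition \ref{p1-wvs}. Since $\partial\varphi=\partial\psi=\{0\}$, the subgradient terms in (\ref{st2-1}) vanish ($U^{(n)}\equiv 0$) and each approximating problem is a genuine BSDE $Y_{t}^{(n)}=\eta^{(n)}+\int_{t}^{T}H^{(n)}(r,Y_{r}^{(n)},Z_{r}^{(n)})dQ_{r}-\int_{t}^{T}Z_{r}^{(n)}dB_{r}$. Using the convergence (\ref{c-1}$-jjj$) of $(Y^{(n)},Z^{(n)})$ to $(Y,Z)$ and $\eta^{(n)}\to\eta$, I would let $n\to\infty$ along a subsequence: the martingale part converges because $Z^{(n)}\to Z$ in $\Lambda_{m\times k}^{a}(0,T)$, and for the drift one has $H^{(n)}(r,\cdot,\cdot)=H(r,\cdot,\cdot)\mathbf{1}_{[0,n]}(\beta_{r})\to H(r,\cdot,\cdot)$ together with a.s. convergence of $(Y_{r}^{(n)},Z_{r}^{(n)})$ and continuity of $H(r,\cdot,\cdot)$ from $(\mathrm{A}_{5})$, so that $H^{(n)}(r,Y_{r}^{(n)},Z_{r}^{(n)})\to H(r,Y_{r},Z_{r})$. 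Passing to the limit then identifies $(Y,Z)$ as a strong solution of (\ref{bsde-cl}).

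For uniqueness I would observe that any strong solution $(Y,Z)\in S_{m}^{p}[0,T]\times\Lambda_{m\times k}^{p}(0,T)$ of (\ref{bsde-cl}) is, by Remark \ref{s-w}, an $L^{p}$--variational solution, and these are unique by Proposition \ref{p1-wvs}; hence the strong solution is unique. I expect the main obstacle to lie in the passage to the limit in the nonlinear drift: the integrand $|H^{(n)}(r,Y_{r}^{(n)},Z_{r}^{(n)})|$ must be dominated uniformly in $n$, which I would obtain from the bound $|Y_{r}^{(n)}|\le\hat{\rho}_{0}$ and the Lipschitz estimate (\ref{F, G assumpt 3}) (controlling $\ell_{r}|Z_{r}^{(n)}|$), combined with the uniform-in-$n$ bound (\ref{yn-ineq_2}) on $\int_{0}^{T}e^{2V_{r}^{(+)}}|Z_{r}^{(n)}|^{2}dr$ and the integrability (\ref{ip-1a}) of $\ell$, $F_{1+\hat{\rho}_{0}}^{\#}$ and $G_{1+\hat{\rho}_{0}}^{\#}$, via Hölder's inequality.
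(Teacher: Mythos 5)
Your proposal is correct and follows essentially the same route as the paper: both pass to the limit $n\to\infty$ in the approximating equation (\ref{st2-1}) with $\varphi=\psi=0$ (so $U^{(1)}=U^{(2)}=0$), using the convergences (\ref{c-1}) and assumptions $(\mathrm{A}_{4}-\mathrm{A}_{6})$ to identify $(Y,Z)$ as a solution of (\ref{bsde-cl}), and both obtain $(Y,Z)\in S_{m}^{p}\left[0,T\right]\times\Lambda_{m\times k}^{p}\left(0,T\right)$ from the estimate (\ref{es-p}) combined with the pathwise bound (\ref{assump-H(t,0,0)}). Your explicit domination argument for the drift term and your uniqueness argument via Remark \ref{s-w} together with the uniqueness of the $L^{p}$--variational solution are details the paper leaves implicit, but they are consistent with its intended reasoning.
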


\begin{proof}
Based on the results from (\ref{c-1}) and assumptions $(\mathrm{A}%
_{4}-\mathrm{A}_{6})$ we can pass to limit $\lim_{n\rightarrow\infty}$ in the
approximating equation (\ref{st2-1}) with $\varphi=\psi=0$ and $U^{\left(
1\right)  }=U^{\left(  2\right)  }=0$ to infer that $\left(  Y,Z\right)  $
satisfies (\ref{bsde-cl}). From (\ref{es-p}) and assumption
(\ref{assump-H(t,0,0)}) we get $\left(  Y,Z\right)  \in S_{m}^{p}\left[
0,T\right]  \times\Lambda_{m\times k}^{p}\left(  0,T\right)  .$ Moreover by
(\ref{c-1}$-j$)
\[
\left\vert Y_{t}\right\vert \leq\hat{\rho}_{0}\,,\quad\text{for all }%
t\in\left[  0,T\right]  ,\;\mathbb{P}\text{--a.s..}%
\]
\hfill
\end{proof}

\begin{corollary}
Let the assumptions of Proposition \ref{p1-wvs} be satisfied. In addition we
assume that:%
\begin{equation}%
\begin{array}
[c]{rl}%
\left(  i\right)  & \mathbb{E}\left[  e^{2V_{T}^{\left(  +\right)  }}\left(
\varphi(\eta)+\psi(\eta)\right)  \right]  <\infty,\medskip\\
\left(  ii\right)  & \mathbb{E}%
{\displaystyle\int_{0}^{T}}
e^{2V_{r}^{\left(  +\right)  }}dQ_{r}<\infty,\medskip\\
\left(  iii\right)  & \mathbb{E}%
{\displaystyle\int_{0}^{T}}
e^{2V_{r}^{\left(  +\right)  }}\left(  \big|F_{\hat{\rho}_{0}}^{\#}\left(
r\right)  \big|^{2}dr+\big|G_{\hat{\rho}_{0}}^{\#}\left(  r\right)
\big|^{2}dA_{r}\right)  <\infty.
\end{array}
\label{ip-diez}%
\end{equation}
Then the BSDE%
\[
\left\{
\begin{array}
[c]{l}%
\displaystyle Y_{t}+{\int_{t}^{T}}dK_{r}=Y_{T}+{\int_{t}^{T}}H\left(
r,Y_{r},Z_{r}\right)  dQ_{r}-{\int_{t}^{T}}Z_{r}dB_{r},\quad\mathbb{P}%
\text{--a.s., for all }t\in\left[  0,T\right]  ,\medskip\\
\displaystyle dK_{r}=U_{r}^{\left(  1\right)  }dr+U_{r}^{\left(  2\right)
}dA_{r}\,,\medskip\\
\displaystyle U^{\left(  1\right)  }dr\in\partial\varphi\left(  Y_{r}\right)
dr\quad\text{and}\quad U^{\left(  2\right)  }dA_{r}\in\partial\psi\left(
Y_{r}\right)  dA_{r}%
\end{array}
\,\right.
\]
has a unique strong a solution $\left(  Y,Z,U^{\left(  1\right)  },U^{\left(
2\right)  }\right)  \in S_{m}^{0}\times\Lambda_{m\times k}^{0}\times
\Lambda_{m}^{0}$ $\times\Lambda_{m}^{0}$ such that
\begin{equation}%
\begin{array}
[c]{r}%
\displaystyle\mathbb{E}\sup\nolimits_{t\in\left[  0,T\right]  }e^{2V_{t}%
}\left\vert Y_{t}\right\vert ^{2}+\mathbb{E}\bigg(%
{\displaystyle\int_{0}^{T}}
{e^{2V_{r}}}\left\vert Z_{r}\right\vert ^{2}dr\bigg)+\mathbb{E}\bigg(%
{\displaystyle\int_{0}^{T}}
{e^{2V_{r}}}\big|U_{r}^{\left(  1\right)  }\big|^{2}dr\bigg)\medskip\\
\displaystyle+\mathbb{E}\bigg(%
{\displaystyle\int_{0}^{T}}
{e^{2V_{r}}}\big|U_{r}^{\left(  2\right)  }\big|^{2}dA_{r}\bigg)<\infty.
\end{array}
\label{0}%
\end{equation}
Moreover
\[
\left\vert Y_{t}\right\vert \leq e^{V_{t}^{\left(  +\right)  }}\left\vert
Y_{t}\right\vert \leq\hat{\rho}_{0}\,,\quad\text{for all }t\in\left[
0,T\right]  ,\;\mathbb{P}\text{--a.s..}%
\]

\end{corollary}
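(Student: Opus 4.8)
The plan is to upgrade the $L^{p}$--variational solution of Proposition \ref{p1-wvs} to a genuine strong solution by recovering the feedback densities $U^{\left(1\right)},U^{\left(2\right)}$ as weak limits of the subdifferential terms in the approximating scheme; the extra hypotheses (\ref{ip-diez}) are exactly what turns the a priori estimates from $L^{p}$ into $L^{2}$.

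First I would invoke Proposition \ref{p1-wvs}, which produces the unique $L^{p}$--variational solution $\left(Y,Z\right)$ as a limit of the strong solutions $\big(Y^{\left(n\right)},Z^{\left(n\right)},U^{\left(1,n\right)},U^{\left(2,n\right)}\big)$ of (\ref{st2-1}) furnished by Lemma \ref{l1-strong sol}, and which records the uniform a priori bounds $\big\vert Y_{t}^{\left(n\right)}\big\vert\leq\hat{\rho}_{0}$ and $\mathbb{E}\int_{0}^{T}e^{2V_{r}^{\left(+\right)}}\big\vert Z_{r}^{\left(n\right)}\big\vert^{2}dr\leq\hat{\rho}_{0}^{2}$ together with the convergence $\left(\ref{c-1}\text{-}jjj\right)$. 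Since $V_{t}\leq V_{t}^{\left(+\right)}$, these bounds pass to the limit and already furnish the first two terms of (\ref{0}), that is $\mathbb{E}\sup_{t\in\left[0,T\right]}e^{2V_{t}}\left\vert Y_{t}\right\vert^{2}+\mathbb{E}\int_{0}^{T}e^{2V_{r}}\left\vert Z_{r}\right\vert^{2}dr<\infty$; only the $L^{2}$--control of $U^{\left(1\right)},U^{\left(2\right)}$ remains.

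Next I would reprove, uniformly in $n$, the subdifferential energy inequality of the type (\ref{ap-eq-4})--(\ref{b-1a}) for the $n$--th solution, now keeping the \emph{untruncated} coefficients on the right: the compatibility assumptions $\left(\mathrm{A}_{7}\right)$, the uniform bound $\big\vert Y^{\left(n\right)}\big\vert\leq\hat{\rho}_{0}$ (whence $\big\vert F\big(r,Y_{r}^{\left(n\right)},0\big)\big\vert\leq F_{\hat{\rho}_{0}}^{\#}\left(r\right)$, $\big\vert G\big(r,Y_{r}^{\left(n\right)}\big)\big\vert\leq G_{\hat{\rho}_{0}}^{\#}\left(r\right)$) and $\varphi\big(\eta^{\left(n\right)}\big)+\psi\big(\eta^{\left(n\right)}\big)\leq\varphi\left(\eta\right)+\psi\left(\eta\right)$ yield
\[
\begin{aligned}
\frac{1}{2}\,\mathbb{E}\int_{0}^{T}e^{2V_{r}^{\left(+\right)}}\Big[\big\vert U_{r}^{\left(1,n\right)}\big\vert^{2}dr+\big\vert U_{r}^{\left(2,n\right)}\big\vert^{2}dA_{r}\Big]
&\leq\mathbb{E}\big[e^{2V_{T}^{\left(+\right)}}\left(\varphi\left(\eta\right)+\psi\left(\eta\right)\right)\big]\\
&\quad+C\,\mathbb{E}\int_{0}^{T}e^{2V_{r}^{\left(+\right)}}\Big(1+\ell_{r}^{2}\big\vert Z_{r}^{\left(n\right)}\big\vert^{2}+\big\vert F_{\hat{\rho}_{0}}^{\#}\left(r\right)\big\vert^{2}\Big)dr\\
&\quad+C\,\mathbb{E}\int_{0}^{T}e^{2V_{r}^{\left(+\right)}}\Big(1+\big\vert G_{\hat{\rho}_{0}}^{\#}\left(r\right)\big\vert^{2}\Big)dA_{r}.
\end{aligned}
\]
The right-hand side is finite uniformly in $n$: $\left(\ref{ip-diez}\text{-}i\right)$ controls the terminal term, $\left(\ref{ip-diez}\text{-}ii\right)$ the two constants, $\left(\ref{ip-diez}\text{-}iii\right)$ the $F_{\hat{\rho}_{0}}^{\#},G_{\hat{\rho}_{0}}^{\#}$ terms, while the cross term $\mathbb{E}\int e^{2V_{r}^{\left(+\right)}}\ell_{r}^{2}\big\vert Z_{r}^{\left(n\right)}\big\vert^{2}dr$ is absorbed by combining the uniform bound on $\mathbb{E}\int e^{2V^{\left(+\right)}}\big\vert Z^{\left(n\right)}\big\vert^{2}$ with the integrability of $\ell$ from $\left(\ref{ip-1a}\text{-}a\right)$ via H\"{o}lder's inequality, as in the Cauchy estimate of Proposition \ref{p1-wvs}.

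Finally, by weak compactness in $L^{2}\big(\Omega\times\left[0,T\right],d\mathbb{P}\otimes dt\big)$ and $L^{2}\big(\Omega\times\left[0,T\right],d\mathbb{P}\otimes dA_{t}\big)$ I would extract, along a subsequence, weak limits $e^{V^{\left(+\right)}}U^{\left(1,n\right)}\rightharpoonup e^{V^{\left(+\right)}}U^{\left(1\right)}$ and $e^{V^{\left(+\right)}}U^{\left(2,n\right)}\rightharpoonup e^{V^{\left(+\right)}}U^{\left(2\right)}$. Passing to the limit in (\ref{st2-1}), using $\eta^{\left(n\right)}\to\eta$, $H^{\left(n\right)}\to H$ and $\left(\ref{c-1}\text{-}jjj\right)$, gives the asserted equation for $\big(Y,Z,U^{\left(1\right)},U^{\left(2\right)}\big)$, while the inclusions $U^{\left(1\right)}\in\partial\varphi\left(Y\right)$, $d\mathbb{P}\otimes dt$--a.e., and $U^{\left(2\right)}\in\partial\psi\left(Y\right)$, $d\mathbb{P}\otimes dA_{t}$--a.e., follow from the maximal-monotone closedness argument used at the end of Lemma \ref{l1-strong sol} (the subdifferential inequality tested against an arbitrary $X\in S_{m}^{2}\left[0,T\right]$, integrated and passed to $\liminf_{n}$, the crossed term $\langle U^{\left(\cdot,n\right)},Y^{\left(n\right)}\rangle$ converging by weak--times--strong convergence and $\varphi,\psi$ being lower semicontinuous). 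Weak lower semicontinuity of the norms then transfers the energy bound above to $\big(U^{\left(1\right)},U^{\left(2\right)}\big)$, and with $V_{t}\leq V_{t}^{\left(+\right)}$ this gives the last two terms of (\ref{0}); uniqueness is inherited from the uniqueness of the $L^{p}$--variational solution (Proposition \ref{p1-wvs}, equivalently Theorem \ref{uniq}), since the strong solution just built is in particular such a solution. I expect the main difficulty to lie precisely where the new hypotheses are used together: keeping the gradient energy estimate uniform in $n$ (especially the $\ell$--weighted $Z$--term) and closing the subdifferential inclusions under the weak limit of $U^{\left(\cdot,n\right)}$ against the only strongly convergent $Y^{\left(n\right)}$.
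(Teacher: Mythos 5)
Your proposal follows the paper's own proof step for step: you take the approximating strong solutions $\big(Y^{(n)},Z^{(n)},U^{(1,n)},U^{(2,n)}\big)$ of (\ref{st2-1}) constructed in the proof of Proposition \ref{p1-wvs}, re-derive the gradient energy inequality (\ref{ap-eq-5a})--(\ref{b-1a}) for the $n$-th problem, use (\ref{ip-diez}) together with (\ref{yn-ineq_2}) to make that estimate uniform in $n$, extract weak $L^{2}$ limits of $e^{V^{(+)}}U^{(1,n)}$ and $e^{V^{(+)}}U^{(2,n)}$, pass to the limit in the equation, close the inclusions $U^{(1)}\in\partial\varphi(Y)$ and $U^{(2)}\in\partial\psi(Y)$ by the lower semicontinuity argument used at the end of Lemma \ref{l1-strong sol}, and obtain uniqueness from Theorem \ref{uniq} via Remark \ref{s-w}. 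All of this is exactly the structure of the paper's proof, and these steps are sound.

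The one genuine gap is your justification of the uniform bound for the cross term $\mathbb{E}\int_{0}^{T}e^{2V_{r}^{(+)}}\ell_{r}^{2}\big|Z_{r}^{(n)}\big|^{2}dr$. You claim it is absorbed ``via H\"older's inequality, as in the Cauchy estimate of Proposition \ref{p1-wvs}'', using only $\mathbb{E}\int_{0}^{T}e^{2V_{r}^{(+)}}|Z_{r}^{(n)}|^{2}dr\le\hat{\rho}_{0}^{2}$ and assumption (\ref{ip-1a}$-a$). But that Cauchy estimate concerns a term of the form $\mathbb{E}\big(\int_{0}^{T}e^{V_{s}^{(+)}}\ell_{s}|Z_{s}^{(n)}|\,ds\big)^{a}$ with $a<2$: there one applies Cauchy--Schwarz inside the time integral and then H\"older in $\omega$ with exponents $\tfrac{2}{2-a}$ and $\tfrac{2}{a}$, which is precisely why $\mathbb{E}\big(\int_{0}^{T}\ell_{s}^{2}ds\big)^{a/(2-a)}$ and $\mathbb{E}\int_{0}^{T}e^{2V_{s}^{(+)}}|Z_{s}^{(n)}|^{2}ds$ are the right quantities. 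The gradient estimate instead requires the power-one quantity $\mathbb{E}\int_{0}^{T}e^{2V_{r}^{(+)}}\ell_{r}^{2}|Z_{r}^{(n)}|^{2}dr$, in which $\ell^{2}$ and $|Z^{(n)}|^{2}$ multiply each other inside a single $dr$-integral; with only the two bounds at your disposal no H\"older splitting is possible (you would need a higher $dr$-moment of $|Z^{(n)}|^{2}$, or control of $\sup_{r}e^{2V_{r}}|Z_{r}^{(n)}|^{2}$, neither of which is available). The paper closes this step differently: in its display the factor multiplying $|Z_{r}^{(n)}|^{2}$ is the constant $L^{2}$, i.e.\ it invokes the pointwise bound $\ell_{t}\le L$ coming from the framework of Lemma \ref{l1-strong sol} (the Lipschitz coefficient of the truncated $n$-th problem), so that the term is dominated by $6L^{2}\hat{\rho}_{0}^{2}$ through (\ref{yn-ineq_2}). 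You correctly identified this cross term as the main difficulty, but the H\"older substitute you propose does not repair it; to complete the argument one must either use a pointwise (in $t,\omega$) bound on $\ell$ as the paper effectively does, or find a genuinely different estimate for this term.
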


\begin{proof}
We are in the framework of the proof of Proposition \ref{p1-wvs}. We can
deduce again inequality (\ref{ap-eq-5a}) and therefore, using (\ref{yn-ineq_2}%
),%
\[%
\begin{array}
[c]{l}%
\displaystyle\dfrac{1}{2}\,\mathbb{E}%
{\displaystyle\int_{0}^{T}}
e^{2V_{r}^{\left(  n,+\right)  }}\Big[\big|U_{r}^{\left(  1,n\right)
}\big|^{2}dr+\big|U_{r}^{\left(  2,n\right)  }\big|^{2}dA_{r}\Big]\medskip\\
\displaystyle\leq\mathbb{E}\left[  e^{2V_{T}^{\left(  n,+\right)  }%
}\big(\varphi(\eta^{\left(  n\right)  })+\psi(\eta^{\left(  n\right)
})\big)\right]  +\mathbb{E}%
{\displaystyle\int_{0}^{T}}
e^{2V_{r}^{\left(  n,+\right)  }}\left(  1+6L^{2}\big|Z_{r}^{\left(  n\right)
}\big|^{2}+6\big|F^{\left(  n\right)  }(r,Y_{r}^{\left(  n\right)
},0)\big|^{2}\right)  dr\medskip\\
\displaystyle\quad+\mathbb{E}%
{\displaystyle\int_{0}^{T}}
e^{2V_{r}^{\left(  n,+\right)  }}\left(  1+3\big|G^{\left(  n\right)
}(r,Y_{r}^{\left(  n\right)  })\big|^{2}\right)  dA_{r}\medskip\\
\displaystyle\leq\mathbb{E}\left[  e^{2V_{T}^{\left(  +\right)  }}%
\big(\varphi(\eta)+\psi(\eta)\big)\right]  +\mathbb{E}%
{\displaystyle\int_{0}^{T}}
e^{2V_{r}^{\left(  +\right)  }}\left(  dr+dA_{r}\right)  +6L^{2}\hat{\rho}%
_{0}^{2}\medskip\\
\displaystyle\quad+6\,\mathbb{E}%
{\displaystyle\int_{0}^{T}}
e^{2V_{r}^{\left(  +\right)  }}\big|F_{\hat{\rho}_{0}}^{\#}\left(  r\right)
\big|^{2}dr+3\,\mathbb{E}%
{\displaystyle\int_{0}^{T}}
e^{2V_{r}^{\left(  +\right)  }}\big|G_{\hat{\rho}_{0}}^{\#}\left(  r\right)
\big|^{2}dA_{r}\,,
\end{array}
\]
where $V^{\left(  n,+\right)  }$ is defined by (\ref{defV_4}).$\medskip$

Hence, using assumptions (\ref{ip-diez}), there exists $\big(\hat{U}^{\left(
1\right)  },\hat{U}^{\left(  2\right)  }\big)\in\Lambda_{m}^{0}\left(
0,T\right)  \times\Lambda_{m}^{0}\left(  0,T\right)  $ such that, on a
subsequence still denoted by $\left\{  U^{\left(  1,n\right)  },U^{\left(
2,n\right)  };\;n\in\mathbb{N}^{\ast}\right\}  ,$ we have, if we denote
$\left(  U^{\left(  1\right)  },U^{\left(  2\right)  }\right)
\xlongequal{\hspace{-4pt}{\rm def}\hspace{-4pt}}\big(e^{-V^{\left(  +\right)
}}\hat{U}^{\left(  1\right)  },e^{-V^{\left(  +\right)  }}\hat{U}^{\left(
2\right)  }\big)\in\Lambda_{m}^{0}\left(  0,T\right)  \times\Lambda_{m}%
^{0}\left(  0,T\right)  :$%
\[%
\begin{array}
[c]{l}%
e^{V_{r}^{\left(  n,+\right)  }}U^{\left(  1,n\right)  }%
\xrightharpoonup[]{\;\;\;\;}e^{V^{\left(  +\right)  }}U^{\left(  1\right)
},\quad\text{weakly in }L^{2}\left(  \Omega\times\left[  0,T\right]
,d\mathbb{P}\otimes dt;\mathbb{R}^{m}\right)  ,\medskip\\
e^{V^{\left(  n,+\right)  }}U^{\left(  2,n\right)  }%
\xrightharpoonup[]{\;\;\;\;}e^{V^{\left(  +\right)  }}U^{\left(  2\right)
},\quad\text{weakly in }L^{2}\left(  \Omega\times\left[  0,T\right]
,d\mathbb{P}\otimes dA_{t};\mathbb{R}^{m}\right)  .
\end{array}
\]
and, passing to the limit,%
\[%
\begin{array}
[c]{l}%
\dfrac{1}{2}\,\mathbb{E}%
{\displaystyle\int_{0}^{T}}
e^{2V_{r}^{\left(  +\right)  }}\Big[\big|U_{r}^{\left(  1\right)  }%
\big|^{2}dr+\big|U_{r}^{\left(  2\right)  }\big|^{2}dA_{r}\Big]\medskip\\
\leq\mathbb{E}\left[  e^{2V_{T}^{\left(  +\right)  }}\left(  \varphi
(\eta)+\psi(\eta)\right)  \right]  +\mathbb{E}%
{\displaystyle\int_{0}^{T}}
e^{2V_{r}^{\left(  +\right)  }}\left(  1+6L^{2}\left\vert Z_{r}\right\vert
^{2}+6\left\vert F\left(  r,Y_{r},0\right)  \right\vert ^{2}\right)
dr\medskip\\
\quad+\mathbb{E}%
{\displaystyle\int_{0}^{T}}
e^{2V_{r}^{\left(  +\right)  }}\left(  1+3\left\vert G\left(  r,Y_{r}\right)
\right\vert ^{2}\right)  dA_{r}\,.
\end{array}
\]
Passing to $\lim_{n\rightarrow\infty}$ in the approximating equation
(\ref{st2-1}) and using the results from the proof of Proposition \ref{p1-wvs}
we infer%
\begin{equation}
\left\{
\begin{array}
[c]{l}%
Y_{t}+%
{\displaystyle\int_{t}^{T}}
U_{s}dQ_{s}=\eta+%
{\displaystyle\int_{t}^{T}}
H\left(  s,Y_{s},Z_{s}\right)  dQ_{s}-%
{\displaystyle\int_{t}^{T}}
Z_{s}dB_{s}\,,\;t\in\left[  0,T\right]  ,\medskip\\
U_{s}=\alpha_{r}U_{r}^{\left(  1\right)  }+\left(  1-\alpha_{r}\right)
U_{r}^{\left(  2\right)  }\medskip\\
U_{s}^{\left(  1\right)  }\in\partial\varphi(Y_{s})~,\quad d\mathbb{P}\otimes
ds-a.e.\quad\text{and }\quad U_{s}^{\left(  2\right)  }\in\partial\psi
(Y_{s}),\quad d\mathbb{P}\otimes dA_{s}-a.e.\quad\text{on }\left[  0,T\right]
,
\end{array}
\right.  \label{st2-11}%
\end{equation}
and the conclusion follows.\hfill
\end{proof}

\begin{theorem}
[$L^{p}$-- variational solution]\label{t2exist}We suppose that assumptions
$\left(  \mathrm{A}_{1}-\mathrm{A}_{7}\right)  $ are satisfied. In addition we
assume that:$\medskip$

\noindent$\left(  i\right)  $%
\begin{equation}
\mathbb{E}\left[  e^{pV_{T}}\left\vert \eta\right\vert ^{p}+\Big(%
{\displaystyle\int_{0}^{T}}
e^{V_{s}}\left(  \left\vert F\left(  r,0,0\right)  \right\vert dr+\left\vert
G\left(  t,0\right)  \right\vert dA_{r}\right)  \Big)^{p}\right]  <\infty,
\label{ip-t2-2}%
\end{equation}
where $V$ is defined by (\ref{defV_1});$\medskip$

\noindent$\left(  ii\right)  $ there exists $a\in\left(  1+n_{p}%
\lambda,p\wedge2\right)  $ such that%
\begin{equation}%
\begin{array}
[c]{rl}%
\left(  a\right)  & \displaystyle\mathbb{E}\bigg(\int_{0}^{T}\ell_{s}%
^{2}ds\bigg)^{\frac{a}{2-a}}<\infty,\medskip\\
\left(  b\right)  & \displaystyle\mathbb{E}\left[
{\displaystyle\int_{0}^{T}}
e^{V_{s}^{\left(  +\right)  }}\left(  F_{\rho}^{\#}\left(  s\right)
ds+G_{\rho}^{\#}\left(  s\right)  dA_{s}\right)  \right]  ^{a}<\infty
,\quad\text{for all }\rho>0,
\end{array}
\label{ip-t2-1}%
\end{equation}
where $V^{\left(  +\right)  }$ is defined by (\ref{defV_3}) and $F_{\rho}%
^{\#}\,,G_{\rho}^{\#}$ are defined by (\ref{def F sharp});$\medskip$

\noindent$\left(  iii\right)  $ there exists a positive p.m.s.p. $\left(
\Theta_{t}\right)  _{t\geq0}$ and for each $\rho\geq0$ there exist an
non-decreasing function $K_{\rho}:\mathbb{R}_{+}\rightarrow\mathbb{R}_{+}$
such that%
\begin{equation}
F_{\rho}^{\#}\left(  t\right)  +G_{\rho}^{\#}\left(  t\right)  \leq K_{\rho
}\left(  \Theta_{t}\right)  ,\quad\text{a.e. }t\in\left[  0,T\right]  .
\label{ip-t2-1a}%
\end{equation}
Then the multivalued BSDE
\[
\left\{
\begin{array}
[c]{r}%
\displaystyle Y_{t}+{\int_{t}^{T}}dK_{r}=\eta+{\int_{t}^{T}}H\left(
r,Y_{r},Z_{r}\right)  dQ_{r}-{\int_{t}^{T}}Z_{r}dB_{r}\,,\quad\text{a.s., for
all }t\in\left[  0,T\right]  ,\medskip\\
\multicolumn{1}{l}{\displaystyle dK_{r}=U_{r}dQ_{r}\in\partial_{y}\Psi\left(
r,Y_{r}\right)  dQ_{r}}%
\end{array}
\,\right.
\]
has a unique $L^{p}$--variational solution, in the sense of Definition
\ref{definition_weak solution}.

Moreover this solution satisfies%
\begin{equation}%
\begin{array}
[c]{l}%
\mathbb{E}\Big(\sup\nolimits_{t\in\left[  0,T\right]  }e^{pV_{t}}\left\vert
Y_{t}\right\vert ^{p}\Big)+\mathbb{E}\left(
{\displaystyle\int_{0}^{T}}
{e^{2V_{r}}}\left\vert Z_{r}\right\vert ^{2}dr\right)  ^{p/2}+\mathbb{E}%
\left(
{\displaystyle\int_{0}^{T}}
e^{2V_{r}}{\Psi}\left(  r,Y_{r}\right)  dQ_{r}\right)  ^{p/2}\medskip\\
\quad+\mathbb{E}\left(
{\displaystyle\int_{0}^{T}}
{e^{qV_{r}}\left\vert Y_{r}\right\vert ^{q-2}}\left\vert Z_{r}\right\vert
^{2}dr\right)  ^{p/q}+\mathbb{E}\left(
{\displaystyle\int_{0}^{T}}
{e^{qV_{r}}\left\vert Y_{r}\right\vert ^{q-2}\Psi}\left(  r,Y_{r}\right)
dQ_{r}\right)  ^{p/q}\medskip\\
\leq C_{p,\lambda}\,\mathbb{E}\left[  e^{pV_{T}}\left\vert \eta\right\vert
^{p}+\Big(%
{\displaystyle\int_{0}^{T}}
e^{V_{r}}\left\vert H\left(  r,0,0\right)  \right\vert dQ_{r}\Big)^{p}\right]
,
\end{array}
\label{t2-bound}%
\end{equation}
where $q\in\{2,p\wedge2\}.$
\end{theorem}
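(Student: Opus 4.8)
This is immediate from Theorem \ref{uniq}. If $(\hat Y,\hat Z)$ and $(\tilde Y,\tilde Z)$ are two $L^{p}$--variational solutions associated with the same data $(\eta,H)$, then they share the coefficients $\mu,\nu,\ell$, so the continuity estimate (\ref{cont-2}) applies with $\hat\eta=\tilde\eta$ and $\hat H=\tilde H$; its right--hand side then vanishes, whence $\hat Y=\tilde Y$ in $S_{m}^{0}$ and $\hat Z=\tilde Z$ in $\Lambda_{m\times k}^{0}$. It therefore remains to construct a solution.

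\textbf{Existence: outer localization.} The idea is to reduce, by a truncation, the expectation--type integrability of assumptions (\ref{ip-t2-2})--(\ref{ip-t2-1a}) to the pathwise bounds required by Proposition \ref{p1-wvs}. Following the construction in the proof of Proposition \ref{p1-wvs}, I would introduce the auxiliary process $\beta_{t}$ and the stopping times $\theta_{n}$ together with the truncated data $\eta^{(n)}=\eta\,\mathbf{1}_{[0,n]}(|\eta|+\varphi(\eta)+\psi(\eta)+V_{T}^{(+)})$ and $F^{(n)}=F\,\mathbf{1}_{[0,n]}(\beta_{\cdot})$, $G^{(n)}=G\,\mathbf{1}_{[0,n]}(\beta_{\cdot})$, so that on $\{\beta\le n\}$ all coefficients and both $|e^{V_{T}^{(+)}}\eta^{(n)}|$ and $\int_{0}^{T}e^{V_{r}^{(+)}}(|F^{(n)}(r,0,0)|dr+|G^{(n)}(r,0)|dA_{r})$ become pathwise bounded by a constant depending on $n$. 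I would check that the truncated data verify hypotheses (i)--(iii) of Proposition \ref{p1-wvs}, the integrability (\ref{ip-t2-1}) and the growth (\ref{ip-t2-1a}) being used precisely here, and then apply that proposition to obtain, for each $n$, a unique $L^{p}$--variational solution $(Y^{(n)},Z^{(n)})$ satisfying the a priori estimate (\ref{es-p}) with data $(\eta^{(n)},H^{(n)})$.

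\textbf{Uniform estimates and the Cauchy property.} Because the truncation only shrinks the weights, $V_{t}^{(n)}\le V_{t}$ and $V_{t}^{(n,+)}\le V_{t}^{(+)}$, while $|\eta^{(n)}|\le|\eta|$ and $|H^{(n)}(\cdot,0,0)|\le|H(\cdot,0,0)|$. Applying the a priori estimate (\ref{def-11ccc}) of Proposition \ref{p-estim} (with $M=0$) to each $(Y^{(n)},Z^{(n)})$ and exploiting these comparisons, assumption (\ref{ip-t2-2}) furnishes bounds for $\mathbb{E}\sup_{t}e^{pV_{t}}|Y_{t}^{(n)}|^{p}$, for $\mathbb{E}(\int_{0}^{T}e^{2V_{r}}|Z_{r}^{(n)}|^{2}dr)^{p/2}$ and for the associated $\Psi$--term that are uniform in $n$. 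To pass to the limit I would write the equation for $Y_{t}^{(n+i)}-Y_{t}^{(n)}$, use the monotonicity of $H$ and split the generator difference through the indicator gap $\mathbf{1}_{(n,\infty)}(\beta_{\cdot})$; then, exactly as in the Cauchy argument of the proof of Proposition \ref{p1-wvs}, Proposition \ref{Appendix_result 1} together with H\"older's inequality (the exponent $a/(2-a)$ being furnished by (\ref{ip-t2-1})--$a$, in order to control the term $\ell_{s}|Z_{s}^{(n)}|$) gives
\[
\mathbb{E}\sup\nolimits_{t\in[0,T]}e^{aV_{t}^{(+)}}|Y_{t}^{(n+i)}-Y_{t}^{(n)}|^{a}+\mathbb{E}\Big(\int_{0}^{T}e^{aV_{r}^{(+)}}|Z_{r}^{(n+i)}-Z_{r}^{(n)}|^{2}dr\Big)^{a/2}\longrightarrow 0,
\]
as $n\to\infty$, since every error term carries a factor $\mathbf{1}_{(n,\infty)}(\beta_{\cdot})$ or $\mathbf{1}_{(n,\infty)}(|\eta|+V_{T}^{(+)})$ and vanishes by dominated convergence. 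Hence $(Y^{(n)},Z^{(n)})$ is Cauchy and converges to some $(Y,Z)\in S_{m}^{0}\times\Lambda_{m\times k}^{0}$.

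\textbf{Passage to the limit and conclusion.} Finally I would pass to the limit, along a subsequence, in the variational inequality (\ref{an-vws}) written for $(Y^{(n)},Z^{(n)})$ and for each $q\in\{2,p\wedge2\}$: Fatou's lemma handles the terms $\int(\Gamma_{r})^{q-2}\Psi(r,Y_{r}^{(n)})dQ_{r}$ and $\int(\Gamma_{r})^{q-2}|R_{r}-Z_{r}^{(n)}|^{2}dr$ on the left, while Lebesgue's dominated convergence and the continuity in probability of the stochastic integral handle the right--hand side, so that $(Y,Z)$ satisfies (\ref{def1}) for both values of $q$. Conditions (\ref{def0-1}) and (\ref{def0-2}) then follow from the uniform estimates, and the terminal condition (\ref{def2}) is trivial since $\tau=T$ is deterministic and $(Y_{t},Z_{t})=(\eta,0)$ for $t>T$; thus $(Y,Z)$ is an $L^{p}$--variational solution in the sense of Definition \ref{definition_weak solution}, and the sharp bound (\ref{t2-bound}) is obtained by applying the estimate (\ref{def-11ccc}) of Proposition \ref{p-estim} to the limit and using (\ref{ip-t2-2}). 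The main obstacle is exactly this limiting step: one must simultaneously secure the $n$--uniform $L^{p}$ bounds, recover the inequality (\ref{def1}) for $q=2$ as well as for $q=p\wedge2$ (indispensable, when $1<p<2$, to estimate $\mathbb{E}(\int_{0}^{T}e^{2V_{r}}|Z_{r}|^{2}dr)^{p/2}$), and carry out the lower--semicontinuity passage for the $\Psi$--terms, all while upgrading the coarse $V^{(+)}$--weighted controls available on the truncations to the sharp $V$--weighted estimate through the comparisons $V^{(n)}\le V$ and $V^{(n,+)}\le V^{(+)}$.
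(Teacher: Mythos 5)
Your skeleton (truncate, solve via Proposition \ref{p1-wvs}, get uniform bounds, prove a Cauchy property, pass to the limit in the variational inequality) matches the paper's, but the Cauchy step as you state it does not work. The approximants $(Y^{(n)},Z^{(n)})$ that Proposition \ref{p1-wvs} delivers are only $L^{p}$--variational solutions: they satisfy the family of inequalities (\ref{def1}), but come with no bounded--variation process $K^{(n)}$ and no equation. Hence you cannot ``write the equation for $Y^{(n+i)}_t-Y^{(n)}_t$'' and you cannot invoke Proposition \ref{Appendix_result 1}, which requires the semimartingale identity $Y_t=Y_T+\int_t^T dK_s-\int_t^T Z_s dB_s$. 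That computation is legitimate \emph{inside} the proof of Proposition \ref{p1-wvs}, where the approximants are strong solutions produced by Lemma \ref{l1-strong sol}; it is not available one level up, in Theorem \ref{t2exist}. The paper instead derives the Cauchy property from the continuity estimate (\ref{cont-2}) of Theorem \ref{uniq}, which was proved precisely for solutions defined through the inequalities (\ref{def1}) --- this is the missing ingredient in your plan (you use Theorem \ref{uniq} only for uniqueness).

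Your choice of truncation compounds the problem. With $F^{(n)}=F\,\mathbf{1}_{[0,n]}(\beta)$, $G^{(n)}=G\,\mathbf{1}_{[0,n]}(\beta)$ (the truncation of Proposition \ref{p1-wvs}), the generator gap $H^{(n+i)}-H^{(n)}$ evaluated along $(Y^{(n)},Z^{(n)})$ can only be dominated by $\big[\ell_r|Z_r^{(n)}|+F^{\#}_{\rho_n}(r)+G^{\#}_{\rho_n}(r)\big]\mathbf{1}_{(n,\infty)}(\beta_r)$ with a radius $\rho_n=1+\hat{\rho}_0^{(n)}$ that blows up with $n$: in Theorem \ref{t2exist} the data satisfy only the expectation bound (\ref{ip-t2-2}), not the pathwise bound (\ref{assump-H(t,0,0)}), so there is no $n$--uniform bound $|Y^{(n)}|\leq\hat{\rho}_0$ and no fixed dominating function; ``dominated convergence'' therefore does not close the estimate. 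In addition, $H^{(n)}$ and $H^{(n+i)}$ then carry truncated, hence different, coefficients $\mu^{(n)},\nu^{(n)},\ell^{(n)}$, violating the hypothesis of Theorem \ref{uniq} that the two generators share $\mu,\nu,\ell$. The paper's truncation is additive, $F^{(n)}=F-F(\cdot,0,0)\,\mathbf{1}_{(n,\infty)}(\beta)$ and $G^{(n)}=G-G(\cdot,0)\,\mathbf{1}_{(n,\infty)}(\beta)$, which leaves the coefficients untouched and makes the difference $|H^{(n+i)}-H^{(n)}|\leq[\alpha|F(\cdot,0,0)|+(1-\alpha)|G(\cdot,0)|]\,\mathbf{1}_{(n,\infty)}(\beta)$ independent of $(y,z)$; then assumption (\ref{ip-t2-2}) alone drives the error in (\ref{Cauchy-3}) to zero. (Relatedly, assumption (\ref{ip-t2-1})--$a$ is not needed to control any $\ell_s|Z_s^{(n)}|$ term at this level --- with the correct truncation no such term appears; it is used only to verify the hypotheses of Proposition \ref{p1-wvs} for the truncated data.) Your final limiting step and the derivation of (\ref{t2-bound}) are consistent with the paper once the Cauchy property is repaired in this way.
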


\begin{proof}
Let $t\in\left[  0,T\right]  $ and%
\[
\beta_{t}=t+A_{t}+\left\vert \mu_{t}\right\vert +\left\vert \nu_{t}\right\vert
+\ell_{t}+V_{t}^{\left(  +\right)  }+\left\vert F\left(  t,0,0\right)
\right\vert +\left\vert G\left(  t,0\right)  \right\vert +\Theta_{t},
\]
Define, for $n\in\mathbb{N}^{\ast},$%
\[%
\begin{array}
[c]{l}%
\displaystyle\eta^{\left(  n\right)  }%
\xlongequal{\hspace{-4pt}{\rm def}\hspace{-4pt}}\eta\,\mathbf{1}_{\left[
0,n\right]  }\big(\left\vert \eta\right\vert +V_{T}^{\left(  +\right)
}\big),\medskip\\
\displaystyle F^{\left(  n\right)  }\left(  t,y,z\right)
\xlongequal{\hspace{-4pt}{\rm def}\hspace{-4pt}}F\left(  t,y,z\right)
-F\left(  t,0,0\right)  \,\mathbf{1}_{\left(  n,\infty\right)  }\left(
\beta_{t}\right)  ,\medskip\\
\displaystyle G^{\left(  n\right)  }\left(  t,y\right)
\xlongequal{\hspace{-4pt}{\rm def}\hspace{-4pt}}G\left(  t,y\right)  -G\left(
t,0\right)  \,\mathbf{1}_{\left(  n,\infty\right)  }\left(  \beta_{t}\right)
,\medskip\\
\displaystyle H^{\left(  n\right)  }\left(  t,y,z\right)
\xlongequal{\hspace{-4pt}{\rm def}\hspace{-4pt}}\alpha_{t}F^{\left(  n\right)
}\left(  t,y,z\right)  +\left(  1-\alpha_{t}\right)  G^{\left(  n\right)
}\left(  t,y\right)  .
\end{array}
\]
We highlight the following the following properties of the function
$H^{\left(  n\right)  }:$%
\begin{equation}%
\begin{array}
[c]{l}%
\left\langle y^{\prime}-y,H^{\left(  n\right)  }(t,y^{\prime},z)-H^{\left(
n\right)  }(t,y,z)\right\rangle \leq\left[  \mu_{t}\alpha_{t}+\nu_{t}\left(
1-\alpha_{t}\right)  \right]  \left\vert y^{\prime}-y\right\vert ^{2}%
,\medskip\\
\left\vert H^{\left(  n\right)  }(t,y,z^{\prime})-H^{\left(  n\right)
}(t,y,z)\right\vert \leq\alpha_{t}\,\ell_{t}\,\left\vert z^{\prime
}-z\right\vert ,\medskip\\
\left\vert H^{\left(  n+i\right)  }(t,y,z)-H^{\left(  n\right)  }%
(t,y,z)\right\vert \leq\left[  \alpha_{t}\left\vert F\left(  t,0,0\right)
\right\vert +\left(  1-\alpha_{t}\right)  \left\vert G\left(  t,0\right)
\right\vert \right]  \,\mathbf{1}_{\left(  n,\infty\right)  }\left(  \beta
_{t}\right)
\end{array}
\label{t2-hn}%
\end{equation}
and, using the previous, the monotonicity properties%
\begin{align*}
\big\langle y,H^{\left(  n\right)  }\left(  t,y,z\right)  \big\rangle  &
\leq|y|\left[  \alpha_{t}\left\vert F\left(  t,0,0\right)  \right\vert
+\left(  1-\alpha_{t}\right)  \left\vert G\left(  t,0\right)  \right\vert
\right]  \,\mathbf{1}_{\left[  0,n\right]  }\left(  \beta_{t}\right)
+|y|^{2}dV_{s}+\alpha_{t}\,\dfrac{n_{p}\lambda}{2}\,\left\vert z\right\vert
^{2}\\[2pt]
&  \leq|y|\left[  \alpha_{t}\left\vert F\left(  t,0,0\right)  \right\vert
+\left(  1-\alpha_{t}\right)  \left\vert G\left(  t,0\right)  \right\vert
\right]  \,\mathbf{1}_{\left[  0,n\right]  }\left(  \beta_{t}\right)
+|y|^{2}dV_{s}^{\left(  +\right)  }+\alpha_{t}\,\dfrac{n_{p}\lambda}%
{2}\,\left\vert z\right\vert ^{2}%
\end{align*}
and%
\begin{align*}
\big\langle Y_{t}^{\prime}-Y_{t},H^{\left(  n\right)  }(t,Y_{t}^{\prime}%
,Z_{t}^{\prime})-H^{\left(  n\right)  }(t,Y_{t},Z_{t})\big\rangle dQ_{t}  &
\leq\left\vert Y_{t}^{\prime}-Y_{t}\right\vert ^{2}dV_{t}+\dfrac{n_{p}\lambda
}{2}\,\left\vert Z_{t}^{\prime}-Z_{t}\right\vert ^{2}dt\\[2pt]
&  \leq\left\vert Y_{t}^{\prime}-Y_{t}\right\vert ^{2}dV_{t}^{\left(
+\right)  }+\dfrac{n_{p}\lambda}{2}\,\left\vert Z_{t}^{\prime}-Z_{t}%
\right\vert ^{2}dt.
\end{align*}
Clearly, the assumptions of Proposition \ref{p1-wvs} are satisfied for the
approximating BSDE%
\begin{equation}
\left\{
\begin{array}
[c]{l}%
Y_{t}^{\left(  n\right)  }+%
{\displaystyle\int_{t}^{T}}
dK_{s}=\eta^{\left(  n\right)  }+%
{\displaystyle\int_{t}^{T}}
H^{\left(  n\right)  }\big(s,Y_{s}^{\left(  n\right)  },Z_{s}^{\left(
n\right)  }\big)dQ_{s}-%
{\displaystyle\int_{t}^{T}}
Z_{s}^{\left(  n\right)  }dB_{s}\,,\quad t\in\left[  0,T\right]  ,\medskip\\
dK_{s}^{\left(  n\right)  }\in\partial_{y}\Psi\big(r,Y_{r}^{\left(  n\right)
}\big)dQ_{r}=\alpha_{r}\partial\varphi\big(Y_{r}^{\left(  n\right)
}\big)dr+\left(  1-\alpha_{r}\right)  \partial\psi\big(Y_{r}^{\left(
n\right)  }\big)dA_{r}%
\end{array}
\right.  \label{t2-ae}%
\end{equation}
and therefore there exists a unique $L^{p}$--variational solution $\left(
Y^{\left(  n\right)  },Z^{\left(  n\right)  }\right)  $ of (\ref{t2-ae}%
).$\medskip$

Since $V\leq V^{\left(  +\right)  },$ we obtain from (\ref{es-p})%
\[
\mathbb{E}\Big(\sup\nolimits_{r\in\left[  0,T\right]  }{e^{pV_{r}}}%
\big|Y_{r}^{\left(  n\right)  }\big|^{p}\Big)+\mathbb{E}\bigg(%
{\displaystyle\int_{0}^{T}}
{e^{2V_{r}}}\big|Z_{r}^{\left(  n\right)  }\big|^{2}dr\bigg)^{p/2}%
+\mathbb{E}\bigg(%
{\displaystyle\int_{0}^{T}}
e^{2V_{r}}{\Psi}\big(r,Y_{r}^{\left(  n\right)  }\big)dQ_{r}\bigg)^{p/2}%
<\infty
\]
and for any $q\in\{2,p\wedge2\},$ $\delta_{q}=\delta\mathbf{1}_{[1,2)}\left(
q\right)  $ and $\Gamma_{t}^{\left(  n\right)  }%
\xlongequal{\hspace{-4pt}{\rm def}\hspace{-4pt}}\Big(\big|M_{t}-Y_{t}^{\left(
n\right)  }\big|^{2}+\delta_{q}\Big)^{1/2}$ it holds%
\begin{equation}%
\begin{array}
[c]{l}%
\big(\Gamma_{t}^{\left(  n\right)  }\big)^{q}+\dfrac{q\left(  q-1\right)  }{2}%
{\displaystyle\int_{t}^{s}}
\big({\Gamma_{r}^{\left(  n\right)  }\big)^{q-2}}{\Large \,}\big|R_{r}%
-Z_{r}^{\left(  n\right)  }\big|^{2}dr+{q%
{\displaystyle\int_{t}^{s}}
}\big({\Gamma_{r}^{\left(  n\right)  }\big)^{q-2}\Psi\big(}r,Y_{r}^{\left(
n\right)  }\big)dQ_{r}\medskip\\
\leq\big(\Gamma_{s}^{\left(  n\right)  }\big)^{q}+{q%
{\displaystyle\int_{t}^{s}}
}\big({\Gamma_{r}^{\left(  n\right)  }\big)^{q-2}\Psi}\left(  r,M_{r}\right)
dQ_{r}\medskip\\
\quad+q%
{\displaystyle\int_{t}^{s}}
{\big(\Gamma_{r}^{\left(  n\right)  }\big)^{q-2}}\langle M_{r}-Y_{r}^{\left(
n\right)  },N_{r}-H^{\left(  n\right)  }{\big(}r,Y_{r}^{\left(  n\right)
},Z_{r}^{\left(  n\right)  }\big)\rangle dQ_{r}\medskip\\
\quad-q%
{\displaystyle\int_{t}^{s}}
\big({\Gamma_{r}^{\left(  n\right)  }\big)^{q-2}}\,\langle M_{r}%
-Y_{r}^{\left(  n\right)  },{\big(}R_{r}-Z_{r}^{\left(  n\right)  }%
\big)dB_{r}\rangle
\end{array}
\label{vw-ap2}%
\end{equation}
for any $0\leq t\leq s<\infty$ and for any $M\in\mathcal{V}_{m}^{0}$ of the
form (\ref{def_M}), i.e. $M_{t}=M_{T}+\int_{t}^{T}N_{r}dQ_{r}-\int_{t}%
^{T}R_{r}dB_{r}\,.\medskip$

Since $\mathbb{E}\big(\sup\nolimits_{r\in\left[  0,T\right]  }{e^{pV_{r}}%
}\big|Y_{r}^{\left(  n\right)  }\big|^{p}\big|<\infty$ and inequality
(\ref{vw-ap2}) holds for $1<q=p\wedge2\leq p,$ inequalities (\ref{def-11ccc})
and (\ref{def-11aa}) yield%
\begin{equation}%
\begin{array}
[c]{l}%
\mathbb{E}\Big(\sup\nolimits_{t\in\left[  0,T\right]  }e^{pV_{t}}%
\big|Y_{t}^{\left(  n\right)  }\big|^{p}\Big)+\mathbb{E}\bigg(%
{\displaystyle\int_{0}^{T}}
{e^{2V_{r}}}\big|Z_{r}^{\left(  n\right)  }\big|^{2}dr\bigg)^{p/2}%
+\mathbb{E}\bigg(%
{\displaystyle\int_{0}^{T}}
e^{2V_{r}}{\Psi\big(}r,Y_{r}^{\left(  n\right)  }\big)dQ_{r}\bigg)^{p/2}%
\medskip\\
\quad+\mathbb{E}\bigg(%
{\displaystyle\int_{0}^{T}}
{e^{qV_{r}}\big|Y_{r}^{\left(  n\right)  }\big|^{q-2}}\big|Z_{r}^{\left(
n\right)  }\big|^{2}dr\bigg)^{p/q}+\mathbb{E}\bigg(%
{\displaystyle\int_{0}^{T}}
{e^{qV_{r}}\big|Y_{r}^{\left(  n\right)  }\big|^{q-2}\Psi\big(}r,Y_{r}%
^{\left(  n\right)  }\big)dQ_{r}\bigg)^{p/q}\medskip\\
\leq C_{p,\lambda}\,\mathbb{E}\left[  e^{pV_{T}}\left\vert \eta\right\vert
^{p}+\Big(%
{\displaystyle\int_{0}^{T}}
e^{V_{r}}\left\vert H\left(  r,0,0\right)  \right\vert dQ_{r}\Big)^{p}\right]
.
\end{array}
\label{vw-ap3}%
\end{equation}
From (\ref{cont-2}), with $q=p\wedge2,$ we have for all $0<\alpha<1:$%
\begin{equation}%
\begin{array}
[c]{l}%
\mathbb{E}\sup\nolimits_{t\in\left[  0,T\right]  }e^{\alpha qV_{t}}%
\big|Y_{t}^{\left(  n+i\right)  }-Y_{t}^{\left(  n\right)  }\big|^{\alpha
q}+\bigg(\mathbb{E}%
{\displaystyle\int_{0}^{T}}
e^{2V_{r}}\dfrac{\big|Z_{r}^{\left(  n+i\right)  }-Z_{r}^{\left(  n\right)
}\big|^{2}}{\big(e^{V_{r}}\big|Y_{t}^{\left(  n+i\right)  }-Y_{t}^{\left(
n\right)  }\big|+1\big)^{2-q}}\,dr\bigg)^{\alpha}\medskip\\
\leq C_{\alpha,q,\lambda}\Bigg[\,\mathbb{E}e^{qV_{T}}\left\vert \eta^{\left(
n+i\right)  }-\eta^{\left(  n\right)  }\right\vert ^{q}\medskip\\
\quad+K\,\bigg(\mathbb{E}\bigg(%
{\displaystyle\int_{0}^{T}}
e^{V_{r}}\big|H^{\left(  n+i\right)  }(t,Y_{t}^{\left(  n\right)  }%
,Z_{t}^{\left(  n\right)  })-H^{\left(  n\right)  }(t,Y_{t}^{\left(  n\right)
},Z_{t}^{\left(  n\right)  })\big|dQ_{r}\bigg)^{q}\bigg)^{1/q}\Bigg]^{\alpha},
\end{array}
\label{Cauchy-3}%
\end{equation}
where%
\[%
\begin{array}
[c]{l}%
\displaystyle K=\bigg[{\mathbb{E}}e^{qV_{T}}\big|\eta^{\left(  n+i\right)
}\big|^{q}+{\mathbb{E}}\bigg(%
{\displaystyle\int_{0}^{T}}
e^{V_{r}}\big|H^{\left(  n+i\right)  }\left(  r,0,0\right)  \big|dQ_{r}%
^{q}\bigg)+{\mathbb{E}}e^{qV_{T}}\big|\eta^{\left(  n\right)  }\big|^{q}%
\medskip\\
\hfill\hfill\displaystyle\quad+{\mathbb{E}}\bigg(%
{\displaystyle\int_{0}^{T}}
e^{V_{r}}\big|H^{\left(  n\right)  }\left(  r,0,0\right)  \big|dQ_{r}%
^{q}\bigg)\bigg]^{\left(  q-1\right)  /q}\medskip\\
\displaystyle\leq2^{\left(  q-1\right)  /q}\bigg[{\mathbb{E}}\left(
e^{qV_{T}}\left\vert \eta\right\vert ^{q}+\bigg(%
{\displaystyle\int_{0}^{T}}
e^{V_{r}}\left\vert F\left(  r,0,0\right)  \right\vert dr+\left\vert G\left(
r,0\right)  \right\vert dA_{r}^{q}\bigg)\right)  \bigg]^{\left(  q-1\right)
/q}%
\end{array}
\]
and $C_{\alpha,q,\lambda}$ is a positive constant depending only $\alpha,q$
and $\lambda.\medskip$

First we remark, see also (\ref{indicator_ineq}),%
\[
\mathbb{E}e^{qV_{T}}\big|\eta^{\left(  n+i\right)  }-\eta^{\left(  n\right)
}\big|^{q}\leq\mathbb{E}e^{qV_{T}}\left\vert \eta\right\vert ^{q}%
\,\mathbf{1}_{\left(  n,\infty\right)  }\big(\left\vert \eta\right\vert
+V_{T}^{\left(  +\right)  }\big)\rightarrow0,\quad\mathbb{P}\text{--a.s.,}%
\;\text{for }n\rightarrow\infty,
\]
since, by $1<q\leq p$ and assumption (\ref{ip-t2-2}), we have%
\[
\mathbb{E}e^{qV_{T}}\left\vert \eta\right\vert ^{q}\leq\left(  \mathbb{E}%
~e^{pV_{T}}\left\vert \eta\right\vert ^{p}\right)  ^{q/p}<\infty.
\]
Again by (\ref{indicator_ineq}) and assumption (\ref{ip-t2-2}) we deduce:%
\[%
\begin{array}
[c]{l}%
\displaystyle\mathbb{E}\bigg(%
{\displaystyle\int_{0}^{T}}
e^{V_{r}}\big|H^{\left(  n+i\right)  }(t,Y_{t}^{\left(  n\right)  }%
,Z_{t}^{\left(  n\right)  })-H^{\left(  n\right)  }(t,Y_{t}^{\left(  n\right)
},Z_{t}^{\left(  n\right)  })\big|dQ_{r}\bigg)^{q}\medskip\\
\displaystyle\leq\mathbb{E}\bigg(%
{\displaystyle\int_{0}^{T}}
e^{V_{r}}\left[  \left\vert F\left(  r,0,0\right)  \right\vert \mathbf{1}%
_{\left(  n,\infty\right)  }\left(  \beta_{r}\right)  dr+\left\vert G\left(
r,0\right)  \right\vert \mathbf{1}_{\left(  n,\infty\right)  }\left(
\beta_{r}\right)  dA_{r}\right]  \bigg)^{q}\medskip\\
\displaystyle\leq2^{q-1}\,\bigg[\mathbb{E}\bigg(%
{\displaystyle\int_{0}^{T}}
e^{V_{r}}\left\vert F\left(  r,0,0\right)  \right\vert \mathbf{1}_{\left(
n,\infty\right)  }\left(  \beta_{r}\right)  dr\bigg)^{q}+\mathbb{E}\bigg(%
{\displaystyle\int_{0}^{T}}
e^{V_{r}}\left\vert G\left(  r,0\right)  \right\vert \mathbf{1}_{\left(
n,\infty\right)  }\left(  \beta_{r}\right)  dA_{r}\bigg)^{q}\bigg]\medskip\\
\displaystyle\rightarrow0,\quad\mathbb{P}\text{--a.s., for }n\rightarrow
\infty.
\end{array}
\]
From (\ref{Cauchy-3}) we conclude that there exists $\left(  Y,Z\right)  \in
S_{m}^{0}\times\Lambda_{m\times k}^{0}$ such that (on a subsequence)%
\[
\sup\nolimits_{t\in\left[  0,T\right]  }\big|Y_{t}^{\left(  n\right)  }%
-Y_{t}\big|+%
{\displaystyle\int_{0}^{T}}
\big|Z_{r}^{\left(  n\right)  }-Z_{r}\big|^{2}dr\rightarrow0,\quad
\mathbb{P}\text{--a.s.,}\;\text{for }n\rightarrow\infty.
\]
Passing to $\liminf_{n\rightarrow\infty}$ in (\ref{vw-ap2}) and (\ref{vw-ap3})
we infer that $\left(  Y,Z\right)  $ is an $L^{p}$--variational
solution.\hfill
\end{proof}

\subsection{Existence on a random interval time $\left[  0,\tau\right]  $}

\begin{theorem}
\label{t3-random}We suppose that assumptions $\left(  \mathrm{A}%
_{1}-\mathrm{A}_{7}\right)  $ are satisfied. Let $V$ and $V^{\left(  +\right)
}$ be given by definitions (\ref{defV_1}) and (\ref{defV_3}) respectively. In
addition we assume that:$\medskip$

\noindent$\left(  i\right)  $%
\[
0\leq{\Psi}\left(  r,\eta\right)  \leq\mathbf{1}_{q\geq2}\,{\Psi}\left(
r,\eta\right)  ,\quad\text{a.e. }r\geq0;
\]
\noindent$\left(  ii\right)  $%
\begin{equation}%
\begin{array}
[c]{l}%
\mathbb{E}\big(\sup\nolimits_{t\geq0}e^{pV_{t}}\left\vert \xi_{t}\right\vert
^{p}\big)+\mathbb{E}\bigg(%
{\displaystyle\int_{0}^{\tau}}
{e^{2V_{r}}}\left\vert \zeta_{r}\right\vert ^{2}dr\bigg)^{p/2}+\mathbb{E}%
\bigg(%
{\displaystyle\int_{0}^{\tau}}
e^{2V_{r}}{\Psi}\left(  r,\xi_{r}\right)  dQ_{r}\bigg)^{p/2}\medskip\\
\hfill+\mathbb{E}\bigg(%
{\displaystyle\int_{0}^{\tau}}
e^{V_{r}}\left(  \left\vert F\left(  r,0,0\right)  \right\vert dr+\left\vert
G\left(  r,0\right)  \right\vert dA_{r}\right)  \bigg)^{p}%
\xlongequal{\hspace{-4pt}{\rm def}\hspace{-4pt}}L<\infty;
\end{array}
\label{t3-ip1}%
\end{equation}
\noindent$\left(  iii\right)  $%
\begin{equation}
\lim\nolimits_{t\rightarrow\infty}\mathbb{E}\left(  \Lambda_{t}\right)
=0,\smallskip\label{t3-ip4}%
\end{equation}
where $\Lambda_{t}\xlongequal{\hspace{-4pt}{\rm def}\hspace{-4pt}}\bigg(%
{\displaystyle\int_{t}^{\infty}}
e^{V_{r}}\mathbf{1}_{q\geq2}{\Psi}\left(  r,\xi_{r}\right)  dQ_{r}%
\bigg)^{p/2}+\bigg(%
{\displaystyle\int_{t}^{\infty}}
e^{V_{r}}\left\vert H\left(  r,\xi_{r},\zeta_{r}\right)  \right\vert
dQ_{r}\bigg)^{p};$

\noindent$\left(  iv\right)  $ there exists $a\in\left(  1+n_{p}%
\lambda,p\wedge2\right)  $ such that for every $T\geq0:$%
\begin{equation}%
\begin{array}
[c]{rl}%
\left(  a\right)  & \mathbb{E}\bigg(%
{\displaystyle\int_{0}^{T}}
\ell_{s}^{2}ds\bigg)^{\frac{a}{2-a}}<\infty,\medskip\\
\left(  b\right)  & \mathbb{E}\bigg(%
{\displaystyle\int_{0}^{T}}
e^{V_{s}^{\left(  +\right)  }}\left(  F_{\rho}^{\#}\left(  s\right)
ds+G_{\rho}^{\#}\left(  s\right)  dA_{s}\right)  \bigg)^{a}<\infty
,\quad\text{for all }\rho>0;
\end{array}
\label{t3-ip2}%
\end{equation}
\noindent$\left(  v\right)  $ there exists a positive p.m.s.p. $\left(
\Theta_{t}\right)  _{t\geq0}$ and, for each $\rho\geq0,$ there exist an
non-decreasing function $K_{\rho}:\mathbb{R}_{+}\rightarrow\mathbb{R}_{+}$
such that%
\begin{equation}
F_{\rho}^{\#}\left(  t\right)  +G_{\rho}^{\#}\left(  t\right)  \leq K_{\rho
}\left(  \Theta_{t}\right)  ,\quad\text{a.e. }t\geq0,\text{ }\mathbb{P}%
\text{--a.s..} \label{t3-ip3}%
\end{equation}
Then the multivalued BSDE%
\[
\left\{
\begin{array}
[c]{r}%
\displaystyle Y_{t}+{\int_{t\wedge\tau}^{\tau}}dK_{r}=\eta+{\int_{t\wedge\tau
}^{\tau}}H\left(  r,Y_{r},Z_{r}\right)  dQ_{r}-{\int_{t\wedge\tau}^{\tau}%
}Z_{r}dB_{r}\,,\quad\mathbb{P}\text{--a.s., for all }t\geq0,\medskip\\
\multicolumn{1}{l}{\displaystyle dK_{r}=U_{r}dQ_{r}\in\partial_{y}\Psi\left(
r,Y_{r}\right)  dQ_{r}}%
\end{array}
\,\right.
\]
has a unique $L^{p}$--variational solution, in the sense of Definition
\ref{definition_weak solution}.

Moreover this solution satisfies%
\begin{equation}%
\begin{array}
[c]{l}%
\mathbb{E}\Big(\sup\nolimits_{t\in\left[  0,\tau\right]  }e^{pV_{t}}\left\vert
Y_{t}\right\vert ^{p}\Big)+\mathbb{E}\left(
{\displaystyle\int_{0}^{\tau}}
{e^{2V_{r}}}\left\vert Z_{r}\right\vert ^{2}dr\right)  ^{p/2}+\mathbb{E}%
\left(
{\displaystyle\int_{0}^{\tau}}
e^{2V_{r}}{\Psi}\left(  r,Y_{r}\right)  dQ_{r}\right)  ^{p/2}\medskip\\
\quad+\mathbb{E}\left(
{\displaystyle\int_{0}^{\tau}}
{e^{qV_{r}}\left\vert Y_{r}\right\vert ^{q-2}}\left\vert Z_{r}\right\vert
^{2}dr\right)  ^{p/q}+\mathbb{E}\left(
{\displaystyle\int_{0}^{\tau}}
{e^{qV_{r}}\left\vert Y_{r}\right\vert ^{q-2}\Psi}\left(  r,Y_{r}\right)
dQ_{r}\right)  ^{p/q}\medskip\\
\leq C_{p,\lambda}\,\mathbb{E}\left[  e^{pV_{\tau}}\left\vert \eta\right\vert
^{p}+\bigg(%
{\displaystyle\int_{0}^{\tau}}
e^{V_{r}}\left\vert H\left(  r,0,0\right)  \right\vert dQ_{r}\bigg)^{p}%
\right]  .
\end{array}
\label{t3-c1}%
\end{equation}

\end{theorem}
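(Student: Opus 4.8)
The plan is to obtain both assertions by letting the deterministic horizon tend to infinity: existence on $[0,n]$ comes from Theorem \ref{t2exist}, the passage $n\to\infty$ is controlled by the continuity estimate of Theorem \ref{uniq}, and the random time $\tau$ is already encoded through the factor $\mathbf 1_{[0,\tau]}$ in $H$ and $\Psi$. Uniqueness is immediate: if $(\hat Y,\hat Z)$ and $(\tilde Y,\tilde Z)$ are two $L^{p}$--variational solutions on $[0,\tau]$ (for $q=p\wedge2$), they share the data $\hat\eta=\tilde\eta=\eta$ and the generator $\hat H=\tilde H=H$, so the generator--mismatch term in \eqref{cont-2} vanishes and we conclude $\hat Y=\tilde Y$ in $S_{m}^{0}$ and $\hat Z=\tilde Z$ in $\Lambda_{m\times k}^{0}$.

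For existence I would first approximate by a finite horizon. For each $n\in\mathbb N^{\ast}$ set $\eta^{(n)}:=\xi_{n}=\mathbb E^{\mathcal F_{n}}\eta$; since $\varphi,\psi$ are convex and nonnegative, Jensen's inequality gives $\varphi(\xi_{n})+\psi(\xi_{n})\le\mathbb E^{\mathcal F_{n}}[\varphi(\eta)+\psi(\eta)]<\infty$, so $(\mathrm A_{4})$ holds for $\xi_{n}$, while hypotheses $(ii),(iv),(v)$ supply the remaining assumptions of Theorem \ref{t2exist} on $[0,n]$. That theorem yields a unique $L^{p}$--variational solution $(Y^{(n)},Z^{(n)})$ on $[0,n]$ with terminal value $\xi_{n}$, satisfying \eqref{t2-bound}; because $|\xi_{n}|\le\mathbb E^{\mathcal F_{n}}|\eta|$ and $\mathbb E(\sup_{t}e^{pV_{t}}|\xi_{t}|^{p})<\infty$ by \eqref{t3-ip1}, the right-hand side of \eqref{t2-bound} is bounded uniformly in $n$. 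I would extend each pair to $[0,\infty)$ by $(Y^{(n)}_{t},Z^{(n)}_{t}):=(\xi_{t},\zeta_{t})$ for $t>n$; note that on $\{\tau\le n\}$ the generator vanishes on $[\tau,n]$, whence $Y^{(n)}\equiv\xi$ there and $Y^{(n)}_{\tau}=\xi_{\tau}=\eta$, so the approximation is already exact past $\tau$.

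The heart of the argument is the Cauchy property of $(Y^{(n)},Z^{(n)})_{n}$. To compare $Y^{(n)}$ with $Y^{(n+i)}$ I would split at time $n$. On $[0,n]$ both solve the same equation, so \eqref{cont-2} applied on $[0,n]$ (its mismatch term being zero) bounds $\mathbb E\sup_{[0,n]}e^{\alpha qV_{t}}|Y^{(n)}_{t}-Y^{(n+i)}_{t}|^{\alpha q}$, together with the corresponding $Z$--term, by $C\big(\mathbb E\,e^{qV_{n}}|\xi_{n}-Y^{(n+i)}_{n}|^{q}\big)^{\alpha}$; on $[n,n+i]$ the extension gives $Y^{(n)}=\xi$, and on $[n+i,\infty)$ both equal $\xi$. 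The main obstacle is the remaining estimate of $\mathbb E\,e^{qV_{n}}|\xi_{n}-Y^{(n+i)}_{n}|^{q}$, where the two processes solve equations with different subdifferential data ($\Psi$ for $Y^{(n+i)}$ versus none for $\xi$), so Theorem \ref{uniq} does not apply verbatim. Here I would argue directly: writing $\bar Y=Y^{(n+i)}-\xi$, $\bar Z=Z^{(n+i)}-\zeta$ on $[n,n+i]$ and applying It\^o to $e^{qV}(|\bar Y|^{2}+\delta_{q})^{q/2}$, the subdifferential term $-\langle\bar Y_{r},dK_{r}\rangle$ is controlled, via $dK_{r}\in\partial_{y}\Psi(r,Y^{(n+i)}_{r})dQ_{r}$ and $\Psi\ge0$, by $\Psi(r,\xi_{r})dQ_{r}$, while the generator is decomposed using the monotonicity and Lipschitz bounds \eqref{F, G assumpt 3} so that the only uncontrolled contribution is $\langle\bar Y_{r},H(r,\xi_{r},\zeta_{r})\rangle dQ_{r}$. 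Invoking hypothesis $(i)$ (which makes $\Psi(r,\xi_{r})$ enter only through $\mathbf 1_{q\ge2}$), H\"older's inequality, and the conditional estimates of Proposition \ref{p-estim}, this produces
\[
\mathbb E\,e^{qV_{n}}\,|\xi_{n}-Y^{(n+i)}_{n}|^{q}\le C\,\big(\mathbb E(\Lambda_{n})\big)^{q/p},
\]
which tends to $0$ uniformly in $i$ by \eqref{t3-ip4}. Combining the three regions gives the Cauchy property in the topology of $\mathbb E\sup e^{\alpha qV}|\cdot|^{\alpha q}$ and $\mathbb E(\int e^{2V}|\cdot|^{2})^{\alpha}$ for every $\alpha\in(0,1)$.

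Finally I would let $n\to\infty$. The limit $(Y,Z)$ inherits \eqref{def0-1}--\eqref{def0-2} and the bound \eqref{t3-c1} by passing to $\liminf$ in \eqref{t2-bound} (Fatou), and it satisfies the variational inequality \eqref{def1} by passing to the limit in the analogous inequality for each $(Y^{(n)},Z^{(n)})$, using dominated convergence for the stochastic integral and lower semicontinuity of $\Psi$. The terminal behaviour \eqref{def2} follows since, for $t>n$, $(Y_{t},Z_{t})$ agrees with $(\xi_{t},\zeta_{t})$ up to the Cauchy error, so $e^{pV_{T}}|Y_{T}-\xi_{T}|^{p}+\big(\int_{T}^{\infty}e^{2V_{s}}|Z_{s}-\zeta_{s}|^{2}ds\big)^{p/2}\to0$ in probability as $T\to\infty$. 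This identifies $(Y,Z)$ as the unique $L^{p}$--variational solution on $[0,\tau]$, completing the proof.
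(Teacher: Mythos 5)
Your overall strategy is the same as the paper's: truncate at deterministic horizons with terminal data $\xi_{n}=\mathbb{E}^{\mathcal{F}_{n}}\eta$, solve on $[0,n]$ via Theorem \ref{t2exist}, extend by $(\xi,\zeta)$ past $n$, obtain uniform bounds from (\ref{t2-bound}), prove a Cauchy property by splitting at time $n$ (the tail controlled through $\mathbb{E}(\Lambda_{n})$ and (\ref{t3-ip4}), the part on $[0,n]$ through the continuity estimate with vanishing mismatch term), and pass to the limit; the uniqueness argument via Theorem \ref{uniq} is identical. Two steps, however, are not correct as written.

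First, for the tail estimate you propose to apply It\^{o}'s formula to $e^{qV}(|\bar Y|^{2}+\delta_{q})^{q/2}$ with $\bar Y=Y^{(n+i)}-\xi$, controlling the term $-\langle\bar Y_{r},dK_{r}\rangle$ through $dK_{r}\in\partial_{y}\Psi(r,Y^{(n+i)}_{r})dQ_{r}$. No such process $K$ is available: the pair $(Y^{(n+i)},Z^{(n+i)})$ produced by Theorem \ref{t2exist} is only an $L^{p}$--variational solution, i.e.\ it satisfies the family of inequalities (\ref{def1}) of Definition \ref{definition_weak solution}, and the impossibility of producing a bounded-variation $K$ with $dK\in\partial_{y}\Psi\,dQ$ when $1<p<2$ is precisely the reason the variational formulation exists. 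The legitimate substitute --- and what the paper actually does --- is to take $M=\xi$, $R=\zeta$, $N=0$, $L=0$ in Proposition \ref{p-estim}, whose hypotheses require only (\ref{def1}); combined with hypothesis $(i)$ (which removes $\Psi(r,\xi_{r})$ when $q<2$ via the indicator $\mathbf{1}_{q\geq2}$ in (\ref{def-11c})), it yields $\mathbb{E}^{\mathcal{F}_{n}}\big(\sup_{r\geq n}e^{pV_{r}}|\xi_{r}-Y^{(n+i)}_{r}|^{p}\big)\leq C_{p,\lambda}\,\mathbb{E}^{\mathcal{F}_{n}}(\Lambda_{n})$. Since you cite Proposition \ref{p-estim} at the end of this step anyway, the repair is simply to delete the It\^{o} detour and invoke that proposition directly.

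Second, when $p<2$ (so $q=p\wedge2<2$), the continuity estimates (\ref{cont-1})--(\ref{cont-2}) control only the weighted quantity $\mathbb{E}\big(\int e^{2V_{r}}|Z^{(n+i)}_{r}-Z^{(n)}_{r}|^{2}\,\big(e^{V_{r}}|Y^{(n+i)}_{r}-Y^{(n)}_{r}|+1\big)^{q-2}dr\big)^{\alpha}$, not the unweighted one you assert; the Cauchy property for $Z$ in the topology of $\mathbb{E}\big(\int e^{2V}|\cdot|^{2}dr\big)^{\alpha}$ does not follow from them alone. The paper bridges this gap by introducing $\Delta^{(n)}_{t}=\sup_{i}e^{V_{t}}|Y^{(n+i)}_{t}-Y^{(n)}_{t}|$, proving $\mathbb{E}\sup_{t}(\Delta^{(n)}_{t})^{\alpha p}\to0$, pulling the factor $\big(\sup_{t}\Delta^{(n)}_{t}+1\big)^{q-2}$ out of the integral in (\ref{bound-5}), and then letting $i\to\infty$ and $T\to\infty$ (Beppo Levi) to obtain $\int_{0}^{\infty}e^{2V_{r}}|Z_{r}-Z^{(n)}_{r}|^{2}dr\to0$ a.s.\ along a subsequence; this is what licenses passing to the limit in the stochastic integral of (\ref{def1n}) and the verification of (\ref{def2}). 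Your outline needs this (or an equivalent) additional argument for the $Z$--component.
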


\begin{remark}
Our initial assumptions are about $\eta$ but the first three terms from
(\ref{t3-ip1}) involves the processes $\xi$ and $\zeta$ (associated to
$\eta\,$). However, we remark that in order to obtain that the first three
terms in (\ref{t3-ip1}) are bounded it is sufficient to impose%
\[
\mathbb{E}\big(\sup\nolimits_{t\geq0}e^{pV_{t}}\left\vert \eta\right\vert
^{p}\big)<\infty
\]
(for the proof we can apply \cite[Corollary 2.45]{pa-ra/14} for the process
$\sup_{t\geq0}V_{t}\,$) and respectively%
\[
\mathbb{E}\bigg(%
{\displaystyle\int_{0}^{\tau}}
e^{2V_{r}}{\Psi}\left(  r,\eta\right)  dQ_{r}\bigg)^{p/2}<\infty.
\]

\end{remark}

\begin{proof}
By Theorem \ref{t2exist} there exists a unique pair $\left(  Y^{\left(
n\right)  },Z^{\left(  n\right)  }\right)  $ as $L^{p}$--variational solution
on $\left[  0,n\right]  $ of the BSDE, with the final data $\mathbb{\xi}%
_{n}=\mathbb{E}^{\mathcal{F}_{n}}\eta,$%
\[
\left\{
\begin{array}
[c]{l}%
Y_{t}^{\left(  n\right)  }+%
{\displaystyle\int_{t}^{n}}
dK_{s}^{\left(  n\right)  }=\mathbb{\xi}_{n}+%
{\displaystyle\int_{t}^{n}}
H\big(s,Y_{s}^{\left(  n\right)  },Z_{s}^{\left(  n\right)  }\big)dQ_{s}-%
{\displaystyle\int_{t}^{n}}
Z_{s}^{\left(  n\right)  }dB_{s}\,,\quad t\in\left[  0,n\right]  ,\medskip\\
dK_{s}^{\left(  n\right)  }\in\partial_{y}\Psi(s,Y_{s}^{\left(  n\right)
})dQ_{s}\,.
\end{array}
\right.
\]
Hence%
\begin{equation}
\big(Y_{t}^{\left(  n\right)  },Z_{t}^{\left(  n\right)  }\big)=\left(
\xi_{t},\zeta_{t}\right)  ,\quad\text{for all }t\geq n\quad\text{and}%
\quad\big(Y_{t}^{\left(  n\right)  },Z_{t}^{\left(  n\right)  }\big)=\left(
\eta,0\right)  ,\quad\text{for all }t\geq\tau. \label{propr_yn_zn}%
\end{equation}
By (\ref{t2-bound}) we have%
\begin{equation}%
\begin{array}
[c]{l}%
\mathbb{E}\Big(\sup\nolimits_{t\geq0}e^{pV_{t}}\big|Y_{t}^{\left(  n\right)
}\big|^{p}\Big)+\mathbb{E}\left(
{\displaystyle\int_{0}^{\infty}}
{e^{2V_{r}}}\big|Z_{r}^{\left(  n\right)  }\big|^{2}dr\right)  ^{p/2}%
+\mathbb{E}\left(
{\displaystyle\int_{0}^{\infty}}
e^{2V_{r}}{\Psi}\big(r,Y_{r}^{\left(  n\right)  }\big)dQ_{r}\right)
^{p/2}\medskip\\
\leq\mathbb{E}\Big(\sup\nolimits_{t\in\left[  0,n\right]  }e^{pV_{t}%
}\big|Y_{t}^{\left(  n\right)  }\big|^{p}\Big)+\mathbb{E}\left(
{\displaystyle\int_{0}^{n}}
{e^{2V_{r}}}\big|Z_{r}^{\left(  n\right)  }\big|^{2}dr\right)  ^{p/2}%
+\mathbb{E}\left(
{\displaystyle\int_{0}^{n}}
e^{2V_{r}}{\Psi}\big(r,Y_{r}^{\left(  n\right)  }\big)dQ_{r}\right)
^{p/2}\medskip\\
\quad+\mathbb{E}\Big(\sup\nolimits_{t\geq0}e^{pV_{t}}\left\vert \xi
_{t}\right\vert ^{p}\Big)+\mathbb{E}\left(
{\displaystyle\int_{0}^{\tau}}
{e^{2V_{r}}}\left\vert \zeta_{r}\right\vert ^{2}dr\right)  ^{p/2}%
+\mathbb{E}\left(
{\displaystyle\int_{0}^{\tau}}
e^{2V_{r}}{\Psi}\left(  r,\xi_{r}\right)  dQ_{r}\right)  ^{p/2}\medskip\\
\leq C_{p,\lambda}\,\mathbb{E}\left[  e^{pV_{n}}\left\vert \mathbb{\xi}%
_{n}\right\vert ^{p}+\Big(%
{\displaystyle\int_{0}^{n}}
e^{V_{r}}\left\vert H\left(  r,0,0\right)  \right\vert dQ_{r}\Big)^{p}\right]
+L\medskip\\
\leq L\cdot C_{p,\lambda}%
+L\xlongequal{\hspace{-4pt}{\rm def}\hspace{-4pt}}\tilde{L}.
\end{array}
\label{bound-4}%
\end{equation}
On the other hand, let us take in Proposition \ref{p-estim}%
\[
M_{t}=\xi_{t}=\mathbb{E}^{\mathcal{F}_{t}}\eta,\quad R_{t}=\zeta_{t}\,,\quad
N_{t}=0,\quad\text{and }L_{t}=0
\]
and therefore%
\[
M_{t}=M_{T}+%
{\displaystyle\int_{t}^{T}}
0\,dr-%
{\displaystyle\int_{t}^{T}}
R_{r}dB_{r}\,,\quad\text{for all }0\leq t\leq T<\infty.
\]
Since, for all $k\in\mathbb{N}^{\ast},$%
\[
\mathbb{E}\left(  \sup\nolimits_{t\in\left[  0,k\right]  }{e^{pV_{t}}%
}\big|M_{t}-Y_{t}^{\left(  k\right)  }\big|^{p}\right)  \leq2^{p-1}%
\,\mathbb{E}\sup\nolimits_{t\in\left[  0,k\right]  }{e^{pV_{t}}}\left\vert
\xi_{t}\right\vert ^{p}+2^{p-1}\,\mathbb{E}\sup\nolimits_{t\in\left[
0,k\right]  }{e^{pV_{t}}}\big|Y_{t}^{\left(  k\right)  }\big|^{p}<\infty,
\]
we can apply (\ref{def-11c}) and (\ref{def-11}) in order to deduce that, for
all $0\leq t\leq s\leq k,$%
\[%
\begin{array}
[c]{l}%
\displaystyle\mathbb{E}^{\mathcal{F}_{t}}\Big(\sup\nolimits_{r\in\left[
t,s\right]  }e^{pV_{r}}\big|\mathbb{\xi}_{r}-Y_{r}^{\left(  k\right)
}\big|^{p}\Big)+\mathbb{E}^{\mathcal{F}_{t}}\left(
{\displaystyle\int_{t}^{s}}
{e^{2V_{r}}}\left\vert \zeta_{r}-Z_{r}^{\left(  k\right)  }\right\vert
^{2}dr\right)  ^{p/2}\medskip\\
\displaystyle\leq C_{p,\lambda}\,\mathbb{E}^{\mathcal{F}_{t}}\bigg[e^{pV_{s}%
}\big|\mathbb{\xi}_{s}-Y_{s}^{\left(  k\right)  }\big|^{p}+\left(
{\displaystyle\int_{t}^{s}}
e^{V_{r}}\mathbf{1}_{q\geq2}{\Psi}\left(  r,\xi_{r}\right)  dQ_{r}\right)
^{p/2}+\Big(%
{\displaystyle\int_{t}^{s}}
e^{V_{r}}\left\vert H\left(  r,\xi_{r},\zeta_{r}\right)  \right\vert
dQ_{r}\Big)^{p}\bigg].
\end{array}
\]
Hence%
\[%
\begin{array}
[c]{l}%
\displaystyle\mathbb{E}^{\mathcal{F}_{t}}e^{pV_{s}}\big|\mathbb{\xi}_{s}%
-Y_{s}^{\left(  k\right)  }\big|^{p}\leq\mathbb{E}^{\mathcal{F}_{t}}%
\Big(\sup\nolimits_{s\in\left[  t,k\right]  }e^{pV_{s}}\big|\mathbb{\xi}%
_{s}-Y_{s}^{\left(  k\right)  }\big|^{p}\Big)\medskip\\
\displaystyle\leq C_{p,\lambda}\,\mathbb{E}^{\mathcal{F}_{t}}\bigg[e^{pV_{k}%
}\big|\mathbb{\xi}_{k}-Y_{k}^{\left(  k\right)  }\big|^{p}+\left(
{\displaystyle\int_{t}^{k}}
e^{V_{r}}\mathbf{1}_{q\geq2}{\Psi}\left(  r,\xi_{r}\right)  dQ_{r}\right)
^{p/2}+\Big(%
{\displaystyle\int_{t}^{k}}
e^{V_{r}}\left\vert H\left(  r,\xi_{r},\zeta_{r}\right)  \right\vert
dQ_{r}\Big)^{p}\bigg]\medskip\\
\displaystyle=C_{p,\lambda}\,\mathbb{E}^{\mathcal{F}_{t}}\bigg[\left(
{\displaystyle\int_{t}^{k}}
e^{V_{r}}\mathbf{1}_{q\geq2}{\Psi}\left(  r,\xi_{r}\right)  dQ_{r}\right)
^{p/2}+\Big(%
{\displaystyle\int_{t}^{k}}
e^{V_{r}}\left\vert H\left(  r,\xi_{r},\zeta_{r}\right)  \right\vert
dQ_{r}\Big)^{p}\bigg].
\end{array}
\]
Using the above two inequalities we deduce%
\[%
\begin{array}
[c]{l}%
\displaystyle\mathbb{E}^{\mathcal{F}_{t}}\Big(\sup\nolimits_{r\in\left[
t,s\right]  }e^{pV_{r}}\big|\mathbb{\xi}_{r}-Y_{r}^{\left(  k\right)
}\big|^{p}\Big)+\mathbb{E}^{\mathcal{F}_{t}}\left(
{\displaystyle\int_{t}^{s}}
{e^{2V_{r}}}\big|\zeta_{r}-Z_{r}^{\left(  k\right)  }\big|^{2}dr\right)
^{p/2}\medskip\\
\displaystyle\leq C_{p,\lambda}\,\mathbb{E}^{\mathcal{F}_{t}}\bigg[\bigg(%
{\displaystyle\int_{t}^{k}}
e^{V_{r}}\mathbf{1}_{q\geq2}{\Psi}\left(  r,\xi_{r}\right)  dQ_{r}%
\bigg)^{p/2}+\bigg(%
{\displaystyle\int_{t}^{k}}
e^{V_{r}}\left\vert H\left(  r,\xi_{r},\zeta_{r}\right)  \right\vert
dQ_{r}^{p}\bigg)\bigg]\medskip\\
\displaystyle\leq C_{p,\lambda}\,\mathbb{E}^{\mathcal{F}_{t}}\bigg[\bigg(%
{\displaystyle\int_{t}^{\infty}}
e^{V_{r}}\mathbf{1}_{q\geq2}{\Psi}\left(  r,\xi_{r}\right)  dQ_{r}%
\bigg)^{p/2}+\bigg(%
{\displaystyle\int_{t}^{\infty}}
e^{V_{r}}\left\vert H\left(  r,\xi_{r},\zeta_{r}\right)  \right\vert
dQ_{r}\bigg)^{p}\bigg]\medskip\\
\displaystyle=C_{p,\lambda}\cdot\mathbb{E}^{\mathcal{F}_{t}}\left(
\Lambda_{t}\right)  .
\end{array}
\]
In particular, since%
\[
\big(Y_{r}^{\left(  n\right)  },Z_{r}^{\left(  n\right)  }\big)=\left(
\xi_{t},\zeta_{t}\right)  ,\quad\text{for all }r\geq n\quad\text{and}%
\quad\big(Y_{r}^{\left(  n+i\right)  },Z_{r}^{\left(  n+i\right)
}\big)=\left(  \xi_{t},\zeta_{t}\right)  ,\quad\text{for all }r\geq n+i,
\]
we obtain%
\[%
\begin{array}
[c]{l}%
\displaystyle\mathbb{E}^{\mathcal{F}_{n}}\Big(\sup\nolimits_{r\geq n}%
e^{pV_{r}}\big|Y_{r}^{\left(  n\right)  }-Y_{r}^{\left(  n+i\right)
}\big|^{p}\Big)+\mathbb{E}^{\mathcal{F}_{n}}\left(
{\displaystyle\int_{n}^{\infty}}
{e^{2V_{r}}}\big|Z_{r}^{\left(  n\right)  }-Z_{r}^{\left(  n+i\right)
}\big|^{2}dr\right)  ^{p/2}\medskip\\
\displaystyle=\mathbb{E}^{\mathcal{F}_{n}}\Big(\sup\nolimits_{r\in\left[
n,n+i\right]  }e^{pV_{r}}\big|Y_{r}^{\left(  n\right)  }-Y_{r}^{\left(
n+i\right)  }\big|^{p}\Big)+\mathbb{E}^{\mathcal{F}_{n}}\left(
{\displaystyle\int_{n}^{n+i}}
{e^{2V_{r}}}\big|Z_{r}^{\left(  n\right)  }-Z_{r}^{\left(  n+i\right)
}\big|^{2}dr\right)  ^{p/2}\medskip\\
\displaystyle=\mathbb{E}^{\mathcal{F}_{n}}\Big(\sup\nolimits_{r\in\left[
n,n+i\right]  }e^{pV_{r}}\big|\mathbb{\xi}_{r}-Y_{r}^{\left(  n+i\right)
}\big|^{p}\Big)+\mathbb{E}^{\mathcal{F}_{n}}\left(
{\displaystyle\int_{n}^{n+i}}
{e^{2V_{r}}}\big|\zeta_{r}-Z_{r}^{\left(  n+i\right)  }\big|^{2}dr\right)
^{p/2}\medskip\\
\displaystyle\leq C_{p,\lambda}\,\mathbb{E}^{\mathcal{F}_{n}}\bigg[\left(
{\displaystyle\int_{n}^{\infty}}
e^{V_{r}}\mathbf{1}_{q\geq2}{\Psi}\left(  r,\xi_{r}\right)  dQ_{r}\right)
^{p/2}+\Big(%
{\displaystyle\int_{n}^{\infty}}
e^{V_{r}}\left\vert H\left(  r,\xi_{r},\zeta_{r}\right)  \right\vert
dQ_{r}\Big)^{p}\bigg]\medskip\\
\displaystyle=C_{p,\lambda}\cdot\mathbb{E}^{\mathcal{F}_{n}}\left(
\Lambda_{n}\right)
\end{array}
\]
and therefore, using (\ref{t3-ip4}),%
\[
\lim\nolimits_{n\rightarrow\infty}\mathbb{E}\Big(\sup\nolimits_{r\geq
n}e^{pV_{r}}\big|\mathbb{\xi}_{r}-Y_{r}^{\left(  n\right)  }\big|^{p}%
\Big)+\mathbb{E}\left(
{\displaystyle\int_{n}^{\infty}}
{e^{2V_{r}}}\big|\zeta_{r}-Z_{r}^{\left(  n\right)  }\big|^{2}dr\right)
^{p/2}=0.
\]
By the continuity property (\ref{cont-2}) on $\left[  0,n\right]  ,$ with
$0<\alpha<1,$%
\begin{equation}%
\begin{array}
[c]{l}%
\displaystyle\mathbb{E}\sup\nolimits_{t\in\left[  0,n\right]  }e^{\alpha
qV_{t}}|Y_{t}^{\left(  n+i\right)  }-Y_{t}^{\left(  n\right)  }|^{\alpha
q}+\bigg(\mathbb{E}\int_{0}^{n}e^{2V_{r}}\frac{\big|Z_{r}^{\left(  n+i\right)
}-Z_{r}^{\left(  n\right)  }\big|^{2}}{\big(e^{V_{r}}\big|Y_{r}^{\left(
n+i\right)  }-Y_{r}^{\left(  n\right)  }\big|+1\big)^{2-q}}dr\bigg)^{\alpha
}\medskip\\
\displaystyle\leq C_{\alpha,q,\lambda}\left[  \mathbb{E}e^{qV_{n}}%
|Y_{n}^{\left(  n+i\right)  }-Y_{n}^{\left(  n\right)  }|^{q}\right]
^{\alpha}\leq C_{\alpha,q,\lambda}\left[  \mathbb{E}e^{pV_{n}}|Y_{n}^{\left(
n+i\right)  }-\xi_{n}|^{p}\right]  ^{\alpha q/p}\medskip\\
\displaystyle\leq C_{\alpha,p,\lambda}\cdot\left[  \mathbb{E}\left(
\Lambda_{n}\right)  \right]  ^{\alpha q/p}\rightarrow0,\quad\text{as
}n\rightarrow\infty.
\end{array}
\label{0n}%
\end{equation}
Hence, using again H\"{o}lder's inequality,%
\[%
\begin{array}
[c]{l}%
\displaystyle\mathbb{E}\sup\nolimits_{t\geq0}e^{\alpha qV_{t}}|Y_{t}^{\left(
n+i\right)  }-Y_{t}^{\left(  n\right)  }|^{\alpha q}\medskip\\
\displaystyle\leq\mathbb{E}\sup\nolimits_{t\in\left[  0,n\right]  }e^{\alpha
qV_{t}}|Y_{t}^{\left(  n+i\right)  }-Y_{t}^{\left(  n\right)  }|^{\alpha
q}+\left(  \mathbb{E}\Big(\sup\nolimits_{r\geq n}e^{pV_{r}}\big|Y_{r}^{\left(
n\right)  }-Y_{r}^{\left(  n+i\right)  }\big|^{p}\Big)\right)  ^{\alpha
q/p}\medskip\\
\displaystyle\leq C_{\alpha,p,\lambda}\cdot\left[  \mathbb{E}\left(
\Lambda_{n}\right)  \right]  ^{\alpha q/p}\rightarrow0,\quad\text{as
}n\rightarrow\infty.
\end{array}
\]
Hence there exists $Y\in S_{m}^{0}$ such that%
\[
\mathbb{E}\sup\nolimits_{t\geq0}e^{\alpha qV_{t}}|Y_{t}-Y_{t}^{\left(
n\right)  }|^{\alpha q}\rightarrow0,\quad\text{as }n\rightarrow\infty
\]
and, on a subsequence denoted also by $Y^{\left(  n\right)  },$%
\begin{equation}
\sup\nolimits_{t\geq0}e^{\alpha qV_{t}}|Y_{t}-Y_{t}^{\left(  n\right)
}|^{\alpha q}\rightarrow0,\quad\mathbb{P}\text{--a.s., as }n\rightarrow\infty.
\label{conv1}%
\end{equation}
Since $\big(Y_{t}^{\left(  n\right)  },Z_{t}^{\left(  n\right)  }\big)=\left(
\eta,0\right)  ,$ for all $t\geq\tau$ and $n\in\mathbb{N}^{\ast},$ it clearly
follows that $Y_{t}=\eta,$ for all $t\geq\tau.$

By Fatou's Lemma applied to inequality (\ref{bound-4}) we deduce%
\[
\mathbb{E}\Big(\sup\nolimits_{t\geq0}e^{pV_{t}}\left\vert Y_{t}\right\vert
^{p}\Big)+\mathbb{E}\left(
{\displaystyle\int_{0}^{\infty}}
e^{2V_{r}}{\Psi}\left(  r,Y_{r}\right)  dQ_{r}\right)  ^{p/2}\leq\tilde{L}.
\]
From (\ref{bound-4}) we also infer that there exists $Z\in\Lambda_{m\times
k}^{0}$ such that%
\[
Z^{\left(  n\right)  }\xrightharpoonup[]{\;\;\;\;}Z,\quad\text{weakly in
}L^{p}\left(  \Omega;L^{2}\left(  \mathbb{R}_{+};\mathbb{R}^{m\times
k}\right)  \right)  ,\;\text{as }n\rightarrow\infty
\]
and%
\[
\mathbb{E}\left(
{\displaystyle\int_{0}^{\infty}}
{e^{2V_{r}}}\left\vert Z_{r}\right\vert ^{2}dr\right)  ^{p/2}\leq
\liminf\nolimits_{n\rightarrow\infty}\mathbb{E}\left(
{\displaystyle\int_{0}^{\infty}}
{e^{2V_{r}}}\big|Z_{r}^{\left(  n\right)  }\big|^{2}dr\right)  ^{p/2}%
\leq\tilde{L}.
\]
Now, by the continuity property (\ref{cont-1}) on $\left[  0,n\right]  ,$ we
have, $\mathbb{P}$--a.s., for all $t\in\left[  0,n\right]  ,$%
\begin{equation}%
\begin{array}
[c]{l}%
\displaystyle e^{qV_{t}}\big|Y_{t}^{\left(  n+i\right)  }-Y_{t}^{\left(
n\right)  }\big|^{q}+c_{q,\lambda}\,{\mathbb{E}}^{\mathcal{F}_{t}}{\int
_{t}^{n}}e^{2V_{r}}\frac{\big|Z_{r}^{\left(  n+i\right)  }-Z_{r}^{\left(
n\right)  }\big|^{2}}{\big(e^{V_{r}}\big|Y_{r}^{\left(  n+i\right)  }%
-Y_{r}^{\left(  n\right)  }\big|+1\big)^{2-q}}\,dr\medskip\\
\displaystyle e^{qV_{t}}\big|Y_{t}^{\left(  n+i\right)  }-Y_{t}^{\left(
n\right)  }\big|^{q}+c_{q,\lambda}\,{\mathbb{E}}^{\mathcal{F}_{t}}{\int
_{t}^{n}}e^{qV_{r}}\big|Y_{r}^{\left(  n+i\right)  }-Y_{r}^{\left(  n\right)
}\big|^{q-2}\big|Z_{r}^{\left(  n+i\right)  }-Z_{r}^{\left(  n\right)
}\big|^{2}dr\medskip\\
\displaystyle\leq{\mathbb{E}}^{\mathcal{F}_{t}}e^{qV_{n}}\big|Y_{n}^{\left(
n+i\right)  }-Y_{n}^{\left(  n\right)  }\big|^{q}\leq\left(  \mathbb{E}%
^{\mathcal{F}_{t}}e^{pV_{n}}\big|Y_{n}^{\left(  n+i\right)  }-\xi_{n}%
\big|^{p}\right)  ^{q/p}\medskip\\
\displaystyle\leq\left[  \mathbb{E}^{\mathcal{F}_{t}}\left(  C_{\lambda
,\alpha,p}\,\mathbb{E}^{\mathcal{F}_{n}}\left(  \Lambda_{n}\right)  \right)
\right]  ^{q/p}=C_{\lambda,\alpha,p}\left[  \mathbb{E}^{\mathcal{F}_{t}%
}\left(  \Lambda_{n}\right)  \right]  ^{q/p}.
\end{array}
\label{bound-5}%
\end{equation}
Therefore, if we denote $\Delta_{t}^{\left(  n\right)  }%
\xlongequal{\hspace{-4pt}{\rm def}\hspace{-4pt}}\sup_{i\in\mathbb{N}^{\ast}%
}e^{V_{t}}\big|Y_{t}^{\left(  n+i\right)  }-Y_{t}^{\left(  n\right)  }\big|,$
then%
\[
\big(\Delta_{t}^{\left(  n\right)  }\big)^{p}\leq C_{\lambda,\alpha
,p}\,\mathbb{E}^{\mathcal{F}_{t}}\left(  \Lambda_{n}\right)  ,\quad
\mathbb{P}\text{--a.s., }\quad\text{for all }t\in\left[  0,n\right]  .
\]
From \cite[Proposition 1.56]{pa-ra/14} we infer%
\[
\mathbb{E}\sup\nolimits_{t\in\left[  0,T\right]  }\big(\Delta_{t}^{\left(
n\right)  }\big)^{\alpha p}\leq\frac{1}{1-\alpha}\,\left(  C_{\lambda
,\alpha,p}\,\mathbb{E}\left(  \Lambda_{n}\right)  \right)  ^{\alpha}%
,\quad\text{for all }0<\alpha<1.
\]
Consequently, by Beppo Levi monotone convergence theorem for $T\rightarrow
\infty,$ it follows%
\[
\mathbb{E}\sup\nolimits_{t\geq0}\big(\Delta_{t}^{\left(  n\right)
}\big)^{\alpha p}\leq\frac{1}{1-\alpha}\,\left(  C_{\lambda,\alpha
,p}\,\mathbb{E}\left(  \Lambda_{n}\right)  \right)  ^{\alpha},\rightarrow
0,\quad\text{as }n\rightarrow\infty.
\]
Let $T>0$ be arbitrary and $T\leq n.$ Then, from (\ref{bound-5}),%
\[%
\begin{array}
[c]{l}%
\displaystyle c_{q,\lambda}\,{\mathbb{E}}\bigg(\,\frac{1}{\big(\sup
\nolimits_{t\geq0}\Delta_{t}^{\left(  n\right)  }+1\big)^{2-q}}\,{\int_{0}%
^{T}}e^{2V_{r}}\big|Z_{r}^{\left(  n+i\right)  }-Z_{r}^{\left(  n\right)
}\big|^{2}dr\bigg)\medskip\\
\displaystyle\leq c_{q,\lambda}\,{\mathbb{E}\int_{0}^{n}}e^{2V_{r}}%
\frac{\big|Z_{r}^{\left(  n+i\right)  }-Z_{r}^{\left(  n\right)  }\big|^{2}%
}{\big(e^{V_{r}}\big|Y_{r}^{\left(  n+i\right)  }-Y_{r}^{\left(  n\right)
}\big|+1\big)^{2-q}}\,dr\leq\left[  C_{\lambda,\alpha,p}\,\mathbb{E}\left(
\Lambda_{n}\right)  \right]  ^{q/p}.
\end{array}
\]
In this inequality we pass to $\liminf_{i\rightarrow\infty}$ and it follows%
\[
c_{q,\lambda}\,{\mathbb{E}}\bigg(\,\frac{1}{\big(\sup\nolimits_{t\geq0}%
\Delta_{t}^{\left(  n\right)  }+1\big)^{2-q}}\,{\int_{0}^{T}}e^{2V_{r}%
}\big|Z_{r}-Z_{r}^{\left(  n\right)  }\big|^{2}dr\bigg)\leq\left[
C_{\lambda,\alpha,p}\,\mathbb{E}\left(  \Lambda_{n}\right)  \right]  ^{q/p}.
\]
Now, by Beppo Levi monotone convergence theorem for $T\rightarrow\infty,$ we
obtain%
\[
c_{q,\lambda}\,{\mathbb{E}}\bigg(\,\frac{1}{\big(\sup\nolimits_{t\geq0}%
\Delta_{t}^{\left(  n\right)  }+1\big)^{2-q}}\,{\int_{0}^{\infty}}e^{2V_{r}%
}\big|Z_{r}-Z_{r}^{\left(  n\right)  }\big|^{2}dr\bigg)\leq\left[
C_{\lambda,\alpha,p}\,\mathbb{E}\left(  \Lambda_{n}\right)  \right]
^{q/p}\rightarrow0,\quad\text{as }n\rightarrow\infty.
\]
Hence, on a subsequence denoted also by $Z^{\left(  n\right)  },$ we get
\begin{equation}
{%
{\displaystyle\int_{0}^{\infty}}
}e^{2V_{r}}\big|Z_{r}-Z_{r}^{\left(  n\right)  }\big|^{2}dr\rightarrow
0,\quad\mathbb{P}\text{--a.s., as }n\rightarrow\infty. \label{conv2}%
\end{equation}
Since $Z_{r}^{\left(  n\right)  }=0$ for all $r>\tau,$ we clearly deduce
$Z_{r}=0,$ for all $r>\tau.\medskip$

We shall verify that (\ref{def2}) is satisfied. For $1\leq n<T$ we have%
\[%
\begin{array}
[c]{l}%
\displaystyle\limsup\nolimits_{T\rightarrow\infty}\left[  e^{pV_{T}}\left\vert
Y_{T}-\xi_{T}\right\vert ^{p}+\bigg(\int_{T}^{\infty}e^{2V_{s}}\left\vert
Z_{s}-\zeta_{s}\right\vert ^{2}ds\bigg)^{p/2}\right]  \medskip\\
\displaystyle=\limsup\nolimits_{T\rightarrow\infty}\left[  e^{pV_{T}%
}\big|Y_{T}-Y_{T}^{\left(  n\right)  }\big|^{p}+\bigg(\int_{T}^{\infty
}e^{2V_{s}}\big|Z_{s}-Z_{s}^{\left(  n\right)  }\big|^{2}ds\bigg)^{p/2}%
\right]  \medskip\\
\displaystyle\leq\big(\sup\nolimits_{t\geq0}e^{pV_{t}}\big|Y_{t}%
-Y_{t}^{\left(  n\right)  }\big|^{p}\big)^{\alpha q/p}+\bigg(\int_{0}^{\infty
}e^{2V_{s}}\big|Z_{s}-Z_{s}^{\left(  n\right)  }\big|^{2}ds\bigg)^{p/2}.
\end{array}
\]
From (\ref{conv1}) and (\ref{conv2}) we get%
\[
e^{pV_{T}}\left\vert Y_{T}-\xi_{T}\right\vert ^{p}+\bigg(\int_{T}^{\infty
}e^{2V_{s}}\left\vert Z_{s}-\zeta_{s}\right\vert ^{2}ds\bigg)^{p/2}%
\rightarrow0,\quad\mathbb{P}\text{--a.s., \ as }T\rightarrow\infty,
\]
and consequently the convergence holds true in $L^{0}\left(  \Omega
,\mathcal{F},\mathbb{P};\mathbb{R}^{m}\right)  $ (which is the convergence in
pro\-ba\-bi\-lity).$\medskip$

In order to verify (\ref{def1}), let $0\leq t\leq s<\infty$ be arbitrary
chosen and let $n>s.$

Then, for $q\in\{2,p\wedge2\},$ $\delta_{q}=\delta\mathbf{1}_{[1,2)}\left(
q\right)  $ and $\Gamma_{t}^{\left(  n\right)  }=\big(\big|M_{t}%
-Y_{t}^{\left(  n\right)  }\big|^{2}+\delta_{q}\big)^{1/2},$ it holds%
\begin{equation}%
\begin{array}
[c]{l}%
\displaystyle\big(\Gamma_{t}^{\left(  n\right)  }\big)^{q}+\dfrac{q\left(
q-1\right)  }{2}%
{\displaystyle\int_{t}^{s}}
\big({\Gamma_{r}^{\left(  n\right)  }\big)^{q-2}}{\Large \,}\big|R_{r}%
-Z_{r}^{\left(  n\right)  }\big|^{2}dr+{q%
{\displaystyle\int_{t}^{s}}
}\big({\Gamma_{r}^{\left(  n\right)  }\big)^{q-2}\Psi\big(}r,Y_{r}^{\left(
n\right)  }\big)dQ_{r}\medskip\\
\displaystyle\leq\big(\Gamma_{s}^{\left(  n\right)  }\big)^{q}+{q%
{\displaystyle\int_{t}^{s}}
}\big({\Gamma_{r}^{\left(  n\right)  }\big)^{q-2}\Psi}\left(  r,M_{r}\right)
dQ_{r}-q%
{\displaystyle\int_{t}^{s}}
\big({\Gamma_{r}^{\left(  n\right)  }\big)^{q-2}}\,\langle M_{r}%
-Y_{r}^{\left(  n\right)  },{\big(}R_{r}-Z_{r}^{\left(  n\right)  }%
\big)dB_{r}\rangle\medskip\\
\displaystyle\quad+q%
{\displaystyle\int_{t}^{s}}
{\big(\Gamma_{r}^{\left(  n\right)  }\big)^{q-2}}\langle M_{r}-Y_{r}^{\left(
n\right)  },N_{r}-H^{\left(  n\right)  }{\big(}r,Y_{r}^{\left(  n\right)
},Z_{r}^{\left(  n\right)  }\big)\rangle dQ_{r}\,,
\end{array}
\label{def1n}%
\end{equation}
for all $M\in\mathcal{V}_{m}^{0}$ of the form%
\[
M_{t}=M_{T}+%
{\displaystyle\int_{t}^{T}}
N_{r}dQ_{r}-%
{\displaystyle\int_{t}^{T}}
R_{r}dB_{r}\,.
\]
Passing to the limit, for $n\rightarrow\infty,$ in (\ref{def1n}) we infer
(using for the left-hand side the Fatou's Lemma and, for the right-hand side,
the Lebesgue's dominated convergence theorem and the continuity of the
stochastic integral with respect to the convergence in probability) that the
pair $\left(  Y,Z\right)  $ satisfies inequality (\ref{def1}).\hfill
\end{proof}

\section{Appendix}

\hspace{\parindent}In this section, mainly based on some results from
\cite{pa-ra/14} and their proofs, we recall and we obtain new inequalities and
properties useful in our framework and frequently used in our paper. These
results concern mainly inequalities for BSDEs and are interesting by
themselves. For more details the interested readers are referred to the
monograph of Pardoux and R\u{a}\c{s}canu \cite{pa-ra/14}.

Let $\left\{  B_{t}:t\geq0\right\}  $ be a $k$--dimensional Brownian motion
with respect to a given stochastic basis $\left(  \Omega,\mathcal{F}%
,\mathbb{P},\{\mathcal{F}_{t}\}_{t\geq0}\right)  $, where $\left(
\mathcal{F}_{t}\right)  _{t\geq0}$ is the natural filtration associated to
$\left\{  B_{t}:t\geq0\right\}  $ augmented with $\mathcal{N}$ (the set of
$\mathbb{P}$--null events of $\mathcal{F}\,$).

\begin{notation}
If $p\geq1$ we denote $n_{p}%
\xlongequal{\hspace{-4pt}{\rm def}\hspace{-4pt}}\left(  p-1\right)  \wedge1.$
\end{notation}

\subsection{An It\^{o}'s formula and some backward stochastic inequalities}

\hspace{\parindent}For the proof of the next result see equality $\left(
2.24\right)  $ from the proof of \cite[Proposition 2.26]{pa-ra/14} and
\cite[Corollary 2.29]{pa-ra/14}.

\begin{proposition}
\label{p1-ito}Let $p\in\mathbb{R}$, $\rho\geq0$ and $\delta$ such that
$\delta\geq0,$ if $p\geq2$ and $\delta>0,$ if $p<2.$

Let $Y\in S_{d}^{0}$ be a local \textit{semimartingale of the form}%
\begin{equation}%
\begin{array}
[c]{l}%
\displaystyle Y_{t}=Y_{0}-\int_{0}^{t}dK_{r}+\int_{0}^{t}R_{r}dB_{r},\quad
t\geq0\quad\text{or equivalently}\medskip\\
\displaystyle Y_{t}=Y_{T}+\int_{t}^{T}dK_{r}-\int_{t}^{T}R_{r}dB_{r}%
,\quad\text{for all }0\leq t\leq T,
\end{array}
\label{ito1}%
\end{equation}
\textit{where }$R\in\Lambda_{m\times k}^{0}\,,$ $K\in S_{m}^{0}\,,$ $K_{\cdot
}\in\mathrm{BV}_{\mathrm{loc}}\left(  \mathbb{R}_{+};\mathbb{R}^{m}\right)  ,$
$\mathbb{P}$--a.s..

Let $\varphi_{\rho,\delta}:\mathbb{R}^{d}\rightarrow\left(  0,\infty\right)
,$%
\[
\varphi_{\rho,\delta}\left(  x\right)  =\bigg(\dfrac{\left\vert x\right\vert
^{2}}{1+\rho\left\vert x\right\vert ^{2}}+\delta\bigg)^{1/2}.
\]
By It\^{o}'s formula, applied to $\varphi_{\rho,\delta}^{p}\left(
Y_{t}\right)  ,$ with $p\in\mathbb{R},$ we have, for all $0\leq t\leq s\leq
T,$%
\begin{equation}%
\begin{array}
[c]{l}%
\displaystyle\varphi_{\rho,\delta}^{p}\left(  Y_{t}\right)  +\dfrac{p}{2}%
{\displaystyle\int_{t}^{s}}
R_{r}^{\left(  p,\rho,\delta\right)  }dr+\dfrac{p}{2}\left[  L_{s}^{\left(
p,\rho,\delta\right)  }-L_{t}^{\left(  p,\rho,\delta\right)  }\right]
\medskip\\
\displaystyle=\varphi_{\rho,\delta}^{p}\left(  Y_{s}\right)  +\dfrac{p}{2}%
{\displaystyle\int_{t}^{s}}
Q_{r}^{\left(  p,\rho,\delta\right)  }dr+p%
{\displaystyle\int_{t}^{s}}
\big\langle U_{r}^{\left(  p,\rho,\delta\right)  },dK_{r}\big\rangle-p%
{\displaystyle\int_{t}^{s}}
\big\langle U_{r}^{\left(  p,\rho,\delta\right)  },R_{r}dB_{r}%
\big\rangle,\quad\mathbb{P}\text{--a.s.,}%
\end{array}
\label{ito2}%
\end{equation}
where

$U_{r}^{\left(  p,\rho,\delta\right)  }=\varphi_{\rho,\delta}^{p-2}\left(
Y_{r}\right)  \,\dfrac{1}{(1+\rho|Y_{r}|^{2})^{2}}\,Y_{r}\,,\medskip$

$R_{r}^{\left(  p,\rho,\delta\right)  }=\varphi_{\rho,\delta}^{p-4}\left(
Y_{r}\right)  \,\dfrac{1}{(1+\rho|Y_{r}|^{2})^{3}}\,\bigg[\dfrac{p-1}%
{1+\rho\left\vert Y_{r}\right\vert ^{2}}\,\left\vert R_{r}^{\ast}%
Y_{r}\right\vert ^{2}+\left(  \left\vert R_{r}\right\vert ^{2}\left\vert
Y_{r}\right\vert ^{2}-\left\vert R_{r}^{\ast}Y_{r}\right\vert ^{2}\right)
\bigg],\medskip$

$L_{t}^{\left(  p,\rho,\delta\right)  }=\delta\,%
{\displaystyle\int_{0}^{t}}
\varphi_{\rho,\delta}^{p-4}\left(  Y_{r}\right)  \,\dfrac{1}{(1+\rho
|Y_{r}|^{2})^{3}}\,\left[  \left\vert R_{r}\right\vert ^{2}+\rho\left(
\left\vert R_{r}\right\vert ^{2}\left\vert Y_{r}\right\vert ^{2}-\left\vert
R_{r}^{\ast}Y_{r}\right\vert ^{2}\right)  \right]  dr,\medskip$

and$\medskip$

$Q_{r}^{\left(  p,\rho,\delta\right)  }=\varphi_{\rho,\delta}^{p-2}\left(
Y_{r}\right)  \,\dfrac{3\rho}{(1+\rho|Y_{r}|^{2})^{3}}\,\left\vert R_{r}%
^{\ast}Y_{r}\right\vert ^{2}.\medskip$

In the case $\rho=0$ we have%
\begin{equation}%
\begin{array}
[c]{l}%
\displaystyle\big(\left\vert Y_{t}\right\vert ^{2}+\delta\big)^{p/2}+\dfrac
{p}{2}\,%
{\displaystyle\int_{t}^{s}}
\big(\left\vert Y_{r}\right\vert ^{2}+\delta\big)^{\left(  p-4\right)
/2}\left[  \left(  p-1\right)  \left\vert R_{r}^{\ast}Y_{r}\right\vert
^{2}+\big(\left\vert R_{r}\right\vert ^{2}\left\vert Y_{r}\right\vert
^{2}-\left\vert R_{r}^{\ast}Y_{r}\right\vert ^{2}\big)\right]  dr\medskip\\
\displaystyle\quad+\dfrac{p}{2}\,%
{\displaystyle\int_{t}^{s}}
\delta\,\big(\left\vert Y_{r}\right\vert ^{2}+\delta\big)^{\left(  p-4\right)
/2}\left\vert R_{r}\right\vert ^{2}dr\medskip\\
\displaystyle=\big(\left\vert Y_{s}\right\vert ^{2}+\delta\big)^{p/2}+p%
{\displaystyle\int_{t}^{s}}
\big(\left\vert Y_{r}\right\vert ^{2}+\delta\big)^{\left(  p-2\right)
/2}\left\langle Y_{r},dK_{r}\right\rangle \medskip\\
\displaystyle\quad-p%
{\displaystyle\int_{t}^{s}}
\big(\left\vert Y_{r}\right\vert ^{2}+\delta\big)^{\left(  p-2\right)
/2}\left\langle Y_{r},R_{r}dB_{r}\right\rangle .
\end{array}
\label{ito3}%
\end{equation}

\end{proposition}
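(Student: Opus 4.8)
The plan is to derive the identity as a direct application of the classical multidimensional It\^o formula to the real-valued function $f:=\varphi_{\rho,\delta}^{p}$ composed with the continuous semimartingale $Y$; the four named processes $U_r^{(p,\rho,\delta)}$, $R_r^{(p,\rho,\delta)}$, $L_t^{(p,\rho,\delta)}$ and $Q_r^{(p,\rho,\delta)}$ are then obtained by computing $\nabla f$ and $\nabla^{2}f$ and reorganizing. Since the asserted equality is a pathwise one on $[t,s]$, no integrability beyond $Y\in S_{d}^{0}$, $R\in\Lambda_{m\times k}^{0}$ and $K\in\mathrm{BV}_{\mathrm{loc}}$ is needed: the finite-variation part $K$ does not enter the quadratic variation and $d\langle Y^{i},Y^{j}\rangle_{r}=(R_{r}R_{r}^{*})_{ij}\,dr$, so It\^o's formula, applied forward and then rewritten backward over $[t,s]$, yields
\begin{equation*}
f(Y_{t})=f(Y_{s})+\int_{t}^{s}\big\langle\nabla f(Y_{r}),dK_{r}\big\rangle-\int_{t}^{s}\big\langle\nabla f(Y_{r}),R_{r}\,dB_{r}\big\rangle-\frac{1}{2}\int_{t}^{s}\mathrm{Tr}\big[\nabla^{2}f(Y_{r})\,R_{r}R_{r}^{*}\big]\,dr .
\end{equation*}

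First I would check that $f$ is of class $C^{2}$ under the stated hypotheses, which is what legitimizes It\^o's formula. Writing $h(x)=|x|^{2}/(1+\rho|x|^{2})$ so that $f=(h+\delta)^{p/2}$, the denominator $1+\rho|x|^{2}\ge1$ never vanishes and $h\in C^{\infty}(\mathbb{R}^{d})$; hence for $\delta>0$ we have $f\in C^{\infty}$ for every real $p$, while for $\delta=0$ one has $f(x)=|x|^{p}\big(g(|x|^{2})\big)^{p/2}$ with $g$ smooth and $g(0)=1$, so $f\in C^{2}(\mathbb{R}^{d})$ precisely when $p\ge2$. This matches exactly the dichotomy imposed on $(p,\delta)$.

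Next I would compute the derivatives. From $\nabla h(x)=2x/(1+\rho|x|^{2})^{2}$ one gets
\begin{equation*}
\nabla f(x)=p\,\varphi_{\rho,\delta}^{p-2}(x)\,\frac{x}{(1+\rho|x|^{2})^{2}}=p\,U^{(p,\rho,\delta)}(x),
\end{equation*}
which immediately produces the $dK$-- and $dB$--integrals in the required form. For the second-order term I would use the Hessian
\begin{equation*}
\nabla^{2}f=\frac{p}{2}(h+\delta)^{p/2-2}\Big[\frac{p-2}{2}\,\nabla h\,(\nabla h)^{*}+(h+\delta)\,\nabla^{2}h\Big],\qquad \nabla^{2}h=\frac{2}{a^{2}}I-\frac{8\rho}{a^{3}}\,x x^{*},
\end{equation*}
with $a=1+\rho|x|^{2}$, and contract it against $R_{r}R_{r}^{*}$ by means of $\mathrm{Tr}[\nabla h(\nabla h)^{*}R R^{*}]=4a^{-4}|R^{*}Y|^{2}$ and $\mathrm{Tr}[\nabla^{2}h\,RR^{*}]=2a^{-2}|R|^{2}-8\rho a^{-3}|R^{*}Y|^{2}$.

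The main obstacle is the bookkeeping that splits this single trace into the three separate objects of the statement: comparing the first display with the asserted equality shows that everything reduces to the pointwise relation $\mathrm{Tr}[\nabla^{2}f\,R_{r}R_{r}^{*}]=p\big(R_{r}^{(p,\rho,\delta)}+\dot L_{r}^{(p,\rho,\delta)}-Q_{r}^{(p,\rho,\delta)}\big)$, where $\dot L_{r}^{(p,\rho,\delta)}$ denotes the $dr$--density of $L^{(p,\rho,\delta)}$. To prove it I would first write $\varphi_{\rho,\delta}^{p-2}=\varphi_{\rho,\delta}^{p-4}\big(|Y|^{2}a^{-1}+\delta\big)$ so that every contribution carries the common factor $\varphi_{\rho,\delta}^{p-4}$, and then regroup using the two elementary identities $a^{-3}=(1+\rho|Y|^{2})a^{-4}$ (which redistributes the $|R^{*}Y|^{2}$ terms between the $a^{-4}$ and $\rho|Y|^{2}a^{-4}$ monomials) and $|R|^{2}(1+\rho|Y|^{2})=a\,|R|^{2}$ (which identifies the $\delta$--part with $\dot L_{r}^{(p,\rho,\delta)}$). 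Matching the coefficients of $|R^{*}Y|^{2}a^{-4}$, $\rho|Y|^{2}|R^{*}Y|^{2}a^{-4}$, $|R|^{2}|Y|^{2}a^{-3}$, $\delta|R|^{2}a^{-2}$ and $\delta\rho|R^{*}Y|^{2}a^{-3}$ then gives the equality term by term (the first two coefficients collapsing to $p-2$ and $-4$, respectively), which is exactly what the definitions of $R_{r}^{(p,\rho,\delta)}$, $L_{t}^{(p,\rho,\delta)}$ and $Q_{r}^{(p,\rho,\delta)}$ encode. Rearranging the backward It\^o expansion then gives the stated identity, and the case $\rho=0$ follows by setting $\rho=0$ (so that $a\equiv1$ and $Q^{(p,\rho,\delta)}\equiv0$) in the general formula.
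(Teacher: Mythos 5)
Your proof is correct and follows exactly the route the statement itself indicates: apply It\^{o}'s formula to $\varphi_{\rho,\delta}^{p}(Y)$ (legitimate since the hypotheses on $(p,\delta)$ make this function $C^{2}$), compute $\nabla f=pU^{(p,\rho,\delta)}$ and the Hessian, and verify the pointwise identity $\mathrm{Tr}[\nabla^{2}f\,R_{r}R_{r}^{*}]=p\big(R_{r}^{(p,\rho,\delta)}+\dot L_{r}^{(p,\rho,\delta)}-Q_{r}^{(p,\rho,\delta)}\big)$, whose coefficient matching I checked and which holds term by term. The paper itself gives no proof, deferring to equality $(2.24)$ in the proof of Proposition 2.26 and Corollary 2.29 of \cite{pa-ra/14}, so your self-contained computation is precisely the argument being outsourced there.
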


\begin{remark}
\label{r1-ito}If $p\geq1,$ then%
\[%
\begin{array}
[c]{l}%
\displaystyle\left(  p-1\right)  \left\vert R_{r}^{\ast}Y_{r}\right\vert
^{2}+\big(\left\vert R_{r}\right\vert ^{2}\left\vert Y_{r}\right\vert
^{2}-\left\vert R_{r}^{\ast}Y_{r}\right\vert ^{2}\big)+\delta\left\vert
R_{r}\right\vert ^{2}\medskip\\
\displaystyle\geq n_{p}\left[  \left\vert R_{r}^{\ast}Y_{r}\right\vert
^{2}+\big(\left\vert R_{r}\right\vert ^{2}\left\vert Y_{r}\right\vert
^{2}-\left\vert R_{r}^{\ast}Y_{r}\right\vert ^{2}\big)\right]  +\delta
\left\vert R_{r}\right\vert ^{2}\medskip\\
\displaystyle=\big(n_{p}\left\vert Y_{r}\right\vert ^{2}+\delta\big)\left\vert
R_{r}\right\vert ^{2}\geq n_{p}\big(\left\vert Y_{r}\right\vert ^{2}%
+\delta\big)\left\vert R_{r}\right\vert ^{2}%
\end{array}
\]
and from (\ref{ito3}) we infer%
\begin{equation}%
\begin{array}
[c]{l}%
\displaystyle\big(\left\vert Y_{t}\right\vert ^{2}+\delta\big)^{p/2}+\dfrac
{p}{2}%
{\displaystyle\int_{t}^{s}}
\big(\left\vert Y_{r}\right\vert ^{2}+\delta\big)^{\left(  p-4\right)
/2}\big(n_{p}\left\vert Y_{r}\right\vert ^{2}+\delta\big)\left\vert
R_{r}\right\vert ^{2}dr\medskip\\
\displaystyle\leq\big(\left\vert Y_{s}\right\vert ^{2}+\delta\big)^{p/2}+p%
{\displaystyle\int_{t}^{s}}
\big(\left\vert Y_{r}\right\vert ^{2}+\delta\big)^{\left(  p-2\right)
/2}\left\langle Y_{r},dK_{r}\right\rangle \medskip\\
\displaystyle\quad-p%
{\displaystyle\int_{t}^{s}}
\big(\left\vert Y_{r}\right\vert ^{2}+\delta\big)^{\left(  p-2\right)
/2}\left\langle Y_{r},R_{r}dB_{r}\right\rangle ,\quad\mathbb{P}\text{--a.s.,}%
\end{array}
\label{ito4}%
\end{equation}
for all $0\leq t\leq s\leq T.$
\end{remark}

\subsection{Backward stochastic inequalities}

\hspace{\parindent}Based on \cite[Proposition 6.80]{pa-ra/14} and its proof we
adapt here the Pardoux--R\u{a}\c{s}canu's inequalities (6.92) and (6.94) from
\cite{pa-ra/14} to the case of our framework (namely the fact that the
solutions are defined using not an equality but a stochastic inequality, see
Definition \ref{definition_weak solution}).

\begin{proposition}
\label{an-prop-dz}Let $\left(  Y,Z\right)  \in S_{m}^{0}\times\Lambda_{m\times
k}^{0}$ and $a\geq0,$ $\gamma\in\mathbb{R}$ such that, for all $0\leq t\leq
s<\infty,$%
\[
{%
{\displaystyle\int_{t}^{s}}
}\left\vert Z_{r}\right\vert ^{2}dr+{%
{\displaystyle\int_{t}^{s}}
}dD_{r}\leq a\left\vert Y_{s}\right\vert ^{2}+a{%
{\displaystyle\int_{t}^{s}}
}\left(  dR_{r}+\left\vert Y_{r}\right\vert dN_{r}\right)  +\gamma%
{\displaystyle\int_{t}^{s}}
\,\langle Y_{r},Z_{r}dB_{r}\rangle,\quad\mathbb{P}\text{--a.s.,}%
\]
where $R,N$ and $D$ are increasing and continuous p.m.s.p. with $R_{0}%
=N_{0}=D_{0}=0.$

Then, for all $q>0$ and for all stopping times $0\leq\sigma\leq\theta<\infty,$
the following inequality hold:%
\begin{equation}%
\begin{array}
[c]{l}%
\mathbb{E}^{\mathcal{F}_{\sigma}}\Big(%
{\displaystyle\int_{\sigma}^{\theta}}
\left\vert Z_{r}\right\vert ^{2}dr\Big)^{q/2}+\mathbb{E}^{\mathcal{F}_{\sigma
}}\Big(%
{\displaystyle\int_{\sigma}^{\theta}}
dD_{r}\Big)^{q/2}\medskip\\
\leq C_{a,\gamma,q}\,\mathbb{E}^{\mathcal{F}_{\sigma}}\bigg[\sup
\nolimits_{r\in\left[  \sigma,\theta\right]  }\left\vert Y_{r}\right\vert
^{q}+\Big(%
{\displaystyle\int_{\sigma}^{\theta}}
dR_{r}\Big)^{q/2}+\Big(%
{\displaystyle\int_{\sigma}^{\theta}}
\left\vert Y_{r}\right\vert dN_{r}\Big)^{q/2}\bigg]\medskip\\
\leq2\,C_{a,\gamma,q}\,\mathbb{E}^{\mathcal{F}_{\sigma}}\bigg[\sup
\nolimits_{r\in\left[  \sigma,\theta\right]  }\left\vert Y_{r}\right\vert
^{q}+\Big(%
{\displaystyle\int_{\sigma}^{\theta}}
dR_{r}\Big)^{q/2}+\Big(%
{\displaystyle\int_{\sigma}^{\theta}}
dN_{r}\Big)^{q}\bigg],\quad\mathbb{P}\text{--a.s.,}%
\end{array}
\label{an4}%
\end{equation}
where $C_{a,\gamma,q}$ is a positive constant depending only on $a,\gamma$ and
$q.$
\end{proposition}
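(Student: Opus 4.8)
The plan is to derive the bound in two movements, exactly paralleling the structure of the Pardoux--R\u{a}\c{s}canu inequalities (6.92) and (6.94) that the statement says it adapts. First I would handle the case $q=2$ directly, then bootstrap to general $q>0$ via a stopping-time/localization argument. For $q=2$, start from the hypothesis written with $\sigma=t$ and $\theta$ a fixed time, take conditional expectation $\mathbb{E}^{\mathcal{F}_{\sigma}}$, and observe that the stochastic-integral term $\gamma\int_{\sigma}^{\theta}\langle Y_r,Z_rdB_r\rangle$ is a (local) martingale whose conditional expectation vanishes after a suitable localization. This immediately yields
\[
\mathbb{E}^{\mathcal{F}_{\sigma}}\Big(\int_{\sigma}^{\theta}|Z_r|^2dr\Big)+\mathbb{E}^{\mathcal{F}_{\sigma}}\Big(\int_{\sigma}^{\theta}dD_r\Big)\le a\,\mathbb{E}^{\mathcal{F}_{\sigma}}\Big[\sup_{r\in[\sigma,\theta]}|Y_r|^2+\int_{\sigma}^{\theta}dR_r+\int_{\sigma}^{\theta}|Y_r|dN_r\Big],
\]
which is (\ref{an4}) for $q=2$ with $C_{a,\gamma,2}=a$.

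For general $q$, the standard device is to raise the running supremum to the power $q/2$ rather than taking a single expectation. Concretely, I would introduce the stopping times that freeze the increasing processes, apply the hypothesis on the random interval $[\sigma,\theta\wedge\tau_n]$, and combine it with the Burkholder--Davis--Gundy inequality applied to the martingale part $\int\langle Y_r,Z_rdB_r\rangle$, whose quadratic variation is controlled by $\big(\sup_r|Y_r|^2\big)\int|Z_r|^2dr$. A Young inequality then absorbs the factor of $\int|Z_r|^2dr$ coming from BDG into the left-hand side, producing the first inequality in (\ref{an4}) with a constant $C_{a,\gamma,q}$ depending only on $a,\gamma,q$. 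This is the computational heart of the argument and is precisely where one invokes the proof of \cite[Proposition 6.80]{pa-ra/14}; since we are told we may assume results stated earlier, I would cite that proof for the absorption step rather than reproduce the BDG constants.

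The second inequality in (\ref{an4}) is the easy tail. To pass from $\big(\int_{\sigma}^{\theta}|Y_r|dN_r\big)^{q/2}$ to $\big(\int_{\sigma}^{\theta}dN_r\big)^{q}$, I would bound $\int|Y_r|dN_r\le\big(\sup_{r}|Y_r|\big)\int dN_r$, apply Young's inequality in the form $xy\le\tfrac12 x^2+\tfrac12 y^2$ to split $\big(\sup|Y_r|\big)^{q/2}\big(\int dN_r\big)^{q/2}$ into a $\sup|Y_r|^q$ piece (reabsorbed into the first term) and a $\big(\int dN_r\big)^q$ piece, and collect the resulting factor of $2$ in front. This explains the constant $2\,C_{a,\gamma,q}$ and the replacement of the mixed term by the pure $dN$ term.

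The main obstacle I anticipate is the general-$q$ case when $q<2$, where one cannot simply take expectations of convex functionals and must be careful that the localizing sequence $\tau_n\uparrow\infty$ does not destroy the conditional-expectation structure; the martingale term need only be a \emph{local} martingale, so the vanishing of $\mathbb{E}^{\mathcal{F}_{\sigma}}$ of its increment requires justifying uniform integrability after the absorption, typically by first proving the estimate with $\theta$ replaced by $\theta\wedge\tau_n$ and then passing to the limit by Fatou on the left and monotone/dominated convergence on the right. Everything else—the BDG application and the Young-inequality absorptions—is routine once the localization is set up correctly, so I would spend the bulk of the write-up making the stopping-time passage rigorous and defer the explicit constant tracking to the cited \cite{pa-ra/14} estimates.
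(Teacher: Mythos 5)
Your proposal follows essentially the same route as the paper's proof: localize with stopping times $\theta_n$ that freeze $\int|Z_r|^2dr$, $D$, $R$, $N$ and the oscillation of $Y$, raise the pathwise hypothesis to the power $q/2$, control the stochastic-integral term by conditional Burkholder--Davis--Gundy, absorb the resulting $\big(\int_\sigma^{\theta_n}|Z_r|^2dr\big)^{q/2}$ by Young's inequality, split $\big(\int|Y_r|dN_r\big)^{q/2}\le\frac12\sup|Y_r|^q+\frac12\big(\int dN_r\big)^q$ for the second inequality, and pass to the limit by Fatou's Lemma. The only differences are cosmetic: the paper handles all $q>0$ in one stroke, so your separate $q=2$ warm-up is redundant, and your worry about uniform integrability is unnecessary --- after localization every term is finite, the absorption is licit, and Fatou on the left-hand side alone completes the limit passage since the right-hand side terms are monotone in the interval.
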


\begin{proof}
We follow the first part of the proof of \cite[Proposition 6.80]{pa-ra/14}.
Let the sequence of stopping times%
\begin{equation}
\theta_{n}=\theta\wedge\inf\left\{  s\geq\sigma:\sup\nolimits_{r\in\left[
\sigma,\sigma\vee s\right]  }\left\vert Y_{r}-Y_{\sigma}\right\vert +%
{\displaystyle\int_{\sigma}^{\sigma\vee s}}
\left\vert Z_{r}\right\vert ^{2}dr+%
{\displaystyle\int_{\sigma}^{\sigma\vee s}}
d\left(  D_{r}+R_{r}+N_{r}\right)  \geq n\right\}  . \label{an4-a}%
\end{equation}
We have, for $q>0,$%
\begin{equation}%
\begin{array}
[c]{l}%
\mathbb{E}^{\mathcal{F}_{\sigma}}\Big(%
{\displaystyle\int_{\sigma}^{\theta_{n}}}
\left\vert Z_{r}\right\vert ^{2}dr\Big)^{q/2}+\mathbb{E}^{\mathcal{F}_{\sigma
}}\Big(%
{\displaystyle\int_{\sigma}^{\theta_{n}}}
dD_{r}\Big)^{q/2}\medskip\\
\leq2\mathbb{E}^{\mathcal{F}_{\sigma}}\Big(%
{\displaystyle\int_{\sigma}^{\theta_{n}}}
\left\vert Z_{r}\right\vert ^{2}dr+%
{\displaystyle\int_{\sigma}^{\theta_{n}}}
dD_{r}\Big)^{q/2}\medskip\\
\leq C_{a,\gamma,q}^{\prime}\,\mathbb{E}^{\mathcal{F}_{\sigma}}\Big[\left\vert
Y_{\theta_{n}}\right\vert ^{q}+\Big(%
{\displaystyle\int_{\sigma}^{\theta_{n}}}
dR_{r}\Big)^{q/2}+\Big(%
{\displaystyle\int_{\sigma}^{\theta_{n}}}
\left\vert Y_{r}\right\vert dN_{r}\Big)^{q/2}+\Big|%
{\displaystyle\int_{\sigma}^{\theta_{n}}}
\,\langle Y_{r},Z_{r}dB_{r}\rangle\Big|^{q/2}\Big].
\end{array}
\label{an5}%
\end{equation}
By Burkholder--Davis--Gundy inequality we get%
\begin{align*}
C_{a,\gamma,q}^{\prime}\,\mathbb{E}^{\mathcal{F}_{\sigma}}\Big|%
{\displaystyle\int_{\sigma}^{\theta_{n}}}
\,\langle Y_{r},Z_{r}dB_{r}\rangle\Big|^{q/2}  &  \leq C_{a,\gamma,q}%
^{\prime\prime}\,\mathbb{E}^{\mathcal{F}_{\sigma}}\Big(%
{\displaystyle\int_{\sigma}^{\theta_{n}}}
\,\left\vert Y_{r}\right\vert ^{2}\left\vert Z_{r}\right\vert ^{2}%
dr\Big)^{q/4}\\
&  \leq C_{a,\gamma,q}^{\prime\prime}\,\mathbb{E}^{\mathcal{F}_{\sigma}}%
\sup\nolimits_{r\in\left[  \sigma,\theta_{n}\right]  }\left\vert
Y_{r}\right\vert ^{q/2}\Big(%
{\displaystyle\int_{\sigma}^{\theta_{n}}}
\,\left\vert Z_{r}\right\vert ^{2}dr\Big)^{q/4}\\
&  \leq\frac{1}{2}\,\left(  C_{a,\gamma,q}^{\prime\prime}\right)
^{2}\,\mathbb{E}^{\mathcal{F}_{\sigma}}\sup\nolimits_{r\in\left[
\sigma,\theta_{n}\right]  }\left\vert Y_{r}\right\vert ^{q}+\frac{1}%
{2}\,\mathbb{E}^{\mathcal{F}_{\sigma}}\Big(%
{\displaystyle\int_{\sigma}^{\theta_{n}}}
\left\vert Z_{r}\right\vert ^{2}dr\Big)^{q/2}%
\end{align*}
and consequently from (\ref{an5}) the following inequality holds%
\begin{equation}%
\begin{array}
[c]{l}%
\mathbb{E}^{\mathcal{F}_{\sigma}}\Big(%
{\displaystyle\int_{\sigma}^{\theta_{n}}}
\left\vert Z_{r}\right\vert ^{2}dr\Big)^{q/2}+\mathbb{E}^{\mathcal{F}_{\sigma
}}\Big(%
{\displaystyle\int_{\sigma}^{\theta_{n}}}
dD_{r}\Big)^{q/2}\medskip\\
\leq C_{a,\gamma,q}\,\mathbb{E}^{\mathcal{F}_{\sigma}}\Big[\sup\nolimits_{r\in
\left[  \sigma,\theta\right]  }\left\vert Y_{r}\right\vert ^{q}+\Big(%
{\displaystyle\int_{\sigma}^{\theta}}
dR_{r}\Big)^{q/2}+\Big(%
{\displaystyle\int_{\sigma}^{\theta}}
\left\vert Y_{r}\right\vert dN_{r}\Big)^{q/2}\Big].
\end{array}
\label{an5-aa}%
\end{equation}
Since%
\[
\Big(%
{\displaystyle\int_{\sigma}^{\theta}}
\left\vert Y_{r}\right\vert dN_{r}\Big)^{q/2}\leq\frac{1}{2}\,\sup
\nolimits_{r\in\left[  \sigma,\theta\right]  }\left\vert Y_{r}\right\vert
^{q}+\frac{1}{2}\,\Big(%
{\displaystyle\int_{\sigma}^{\theta}}
dN_{r}\Big)^{q},
\]
from (\ref{an5-aa}) we infer%
\begin{equation}%
\begin{array}
[c]{l}%
\mathbb{E}^{\mathcal{F}_{\sigma}}\Big(%
{\displaystyle\int_{\sigma}^{\theta_{n}}}
\left\vert Z_{r}\right\vert ^{2}dr\Big)^{q/2}+\mathbb{E}^{\mathcal{F}_{\sigma
}}\Big(%
{\displaystyle\int_{\sigma}^{\theta_{n}}}
dD_{r}\Big)^{q/2}\medskip\\
\leq C_{a,\gamma,q}\,\mathbb{E}^{\mathcal{F}_{\sigma}}\Big[\sup\nolimits_{r\in
\left[  \sigma,\theta\right]  }\left\vert Y_{r}\right\vert ^{q}+\Big(%
{\displaystyle\int_{\sigma}^{\theta}}
dR_{r}\Big)^{q/2}+\Big(%
{\displaystyle\int_{\sigma}^{\theta}}
dN_{r}\Big)^{q}\Big].
\end{array}
\label{an5-ab}%
\end{equation}
Consequently, by Fatou's Lemma, as $n\rightarrow\infty,$ inequality
(\ref{an4}) follows.\hfill
\end{proof}

\begin{proposition}
\label{an-prop-ydz}Let $\left(  Y,Z\right)  \in S_{m}^{0}\times\Lambda
_{m\times k}^{0}$ , $a\geq0,$ $\gamma\in\mathbb{R}$ and $1<q\leq p$ satisfying
for all $0\leq t\leq s<\infty:$%
\[%
\begin{array}
[c]{l}%
\displaystyle\left\vert Y_{t}\right\vert ^{q}+%
{\displaystyle\int_{t}^{s}}
\left\vert Y_{r}\right\vert ^{q-2}\mathbf{1}_{Y_{r}\neq0}{\Large \,}\left\vert
Z_{r}\right\vert ^{2}dr+{%
{\displaystyle\int_{t}^{s}}
\left\vert Y_{r}\right\vert ^{q-2}\,\mathbf{1}_{Y_{r}\neq0}\,dD_{r}}\medskip\\
\displaystyle\leq a\left\vert Y_{s}\right\vert ^{q}+a%
{\displaystyle\int_{t}^{s}}
\left[  \left\vert Y_{r}\right\vert ^{q-2}\mathbf{1}_{Y_{r}\neq0}%
\mathbf{1}_{q\geq2}dR_{r}+\left\vert Y_{r}\right\vert ^{q-1}dN_{r}\right]
+\gamma%
{\displaystyle\int_{t}^{s}}
\left\vert Y_{r}\right\vert ^{q-2}\,\langle Y_{r},Z_{r}dB_{r}\rangle
,\quad\mathbb{P}\text{--a.s.,}%
\end{array}
\]
where\footnote{\thinspace\ We use the convention: $\left\vert Y_{r}\right\vert
^{q-2}Y_{r}=\left\vert Y_{r}\right\vert ^{q-2}\,{\mathbf{1}_{Y_{r}\neq0}%
\,}Y_{r}\,,$ for any $q\geq1.$} $R,N$ and $D$ are increasing and continuous
p.m.s.p. with $R_{0}=N_{0}=D_{0}=0.$

If $\sigma$ and $\theta$ are two stopping times such that $0\leq\sigma
\leq\theta<\infty$ and%
\begin{equation}
\mathbb{E}\sup\nolimits_{r\in\left[  \sigma,\theta\right]  }\left\vert
Y_{r}\right\vert ^{p}<\infty, \label{an6-ab}%
\end{equation}
then, $\mathbb{P}$--a.s.,%
\begin{equation}%
\begin{array}
[c]{l}%
\mathbb{E}^{\mathcal{F}_{\sigma}}\Big[\sup\nolimits_{r\in\left[  \sigma
,\theta\right]  }\left\vert Y_{r}\right\vert ^{p}+\Big(%
{\displaystyle\int_{\sigma}^{\theta}}
\left\vert Y_{r}\right\vert ^{q-2}{\Large \,}\mathbf{1}_{Y_{r}\neq0}%
{\,}\left\vert Z_{r}\right\vert ^{2}dr\Big)^{p/q}+\Big(%
{\displaystyle\int_{\sigma}^{\theta}}
{\left\vert Y_{r}\right\vert ^{q-2}\,{\mathbf{1}_{Y_{r}\neq0}}\,dD}%
_{r}\Big)^{p/q}\Big]\medskip\\
\leq C_{p,q,a,\gamma}\,\mathbb{E}^{\mathcal{F}_{\sigma}}\Big[\left\vert
Y_{\theta}\right\vert ^{p}+\Big(%
{\displaystyle\int_{\sigma}^{\theta}}
\mathbf{1}_{q\geq2}{\,}dR_{r}\Big)^{p/2}+\Big(%
{\displaystyle\int_{\sigma}^{\theta}}
dN_{r}\Big)^{p}\Big].
\end{array}
\label{an5-b}%
\end{equation}
where $C_{p,q,a,\gamma}$ is a positive constant depending only on $p,q,a$ and
$\gamma.$
\end{proposition}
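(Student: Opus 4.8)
The plan is to follow the proof of \cite[Proposition 6.80]{pa-ra/14}, adapting it to the present situation in which the energy balance is a one--sided inequality rather than an identity. First I would reduce to a localized interval by introducing the stopping times
\[
\theta_n=\theta\wedge\inf\Big\{s\geq\sigma:\sup\nolimits_{r\in[\sigma,\sigma\vee s]}|Y_r-Y_\sigma|+\int_\sigma^{\sigma\vee s}|Y_r|^{q-2}\mathbf{1}_{Y_r\neq0}|Z_r|^2dr+\int_\sigma^{\sigma\vee s}d\big(D_r+R_r+N_r\big)\geq n\Big\},
\]
exactly as in (\ref{an4-a}), so that every quantity below is integrable on $[\sigma,\theta_n]$ and $\theta_n\uparrow\theta$ thanks to (\ref{an6-ab}); the statement (\ref{an5-b}) will then be recovered by Fatou's Lemma as $n\to\infty$. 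Throughout I abbreviate $M_t=\int_\sigma^{t}|Y_r|^{q-2}\mathbf{1}_{Y_r\neq0}\langle Y_r,Z_rdB_r\rangle$, $\mathcal I_Z=\int_\sigma^{\theta_n}|Y_r|^{q-2}\mathbf{1}_{Y_r\neq0}|Z_r|^2dr$ and $\mathcal I_D=\int_\sigma^{\theta_n}|Y_r|^{q-2}\mathbf{1}_{Y_r\neq0}dD_r$, and I aim to bound the sum $\mathbb E^{\mathcal F_\sigma}\sup_{[\sigma,\theta_n]}|Y|^p+\mathbb E^{\mathcal F_\sigma}(\mathcal I_Z)^{p/q}+\mathbb E^{\mathcal F_\sigma}(\mathcal I_D)^{p/q}$.

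The bound on the integral terms I would obtain essentially as in Proposition \ref{an-prop-dz}: taking $t=\sigma$, $s=\theta_n$ in the hypothesis and discarding the nonnegative term $|Y_\sigma|^q$ gives $\mathcal I_Z+\mathcal I_D\leq a|Y_{\theta_n}|^q+a\int_\sigma^{\theta_n}\big[|Y_r|^{q-2}\mathbf{1}_{q\geq2}dR_r+|Y_r|^{q-1}dN_r\big]+\gamma M_{\theta_n}$. Raising to the power $p/q\geq1$, taking $\mathbb E^{\mathcal F_\sigma}$, and estimating the martingale by Burkholder--Davis--Gundy through $[M]_{\theta_n}=\int_\sigma^{\theta_n}|Y_r|^{2(q-2)}|Z_r^\ast Y_r|^2dr\leq \sup_{[\sigma,\theta_n]}|Y|^q\,\mathcal I_Z$, one is led to the factor $\sup|Y|^{p/2}(\mathcal I_Z)^{p/(2q)}$, which Young's inequality splits into a small multiple of $(\mathcal I_Z)^{p/q}$ (absorbed on the left) and a multiple of $\sup|Y|^p$. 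The $R$-- and $N$--terms are brought to the stated powers $p/2$ and $p$ by two further Young inequalities, with conjugate exponents $\big(\tfrac{q}{q-2},\tfrac q2\big)$ and $\big(\tfrac{q}{q-1},q\big)$ respectively (these use $q\leq p$). The outcome is a bound of the form $\mathbb E^{\mathcal F_\sigma}\big[(\mathcal I_Z)^{p/q}+(\mathcal I_D)^{p/q}\big]\leq C\,\mathbb E^{\mathcal F_\sigma}\mathcal B_n+C_0\,\mathbb E^{\mathcal F_\sigma}\sup_{[\sigma,\theta_n]}|Y|^p$, where $\mathcal B_n=|Y_{\theta_n}|^p+\big(\int_\sigma^{\theta_n}\mathbf{1}_{q\geq2}dR_r\big)^{p/2}+\big(\int_\sigma^{\theta_n}dN_r\big)^{p}$.

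The delicate step, and the one I expect to be the main obstacle, is the supremum estimate, because of the apparent circularity between $\sup|Y|^p$ and $\mathcal I_Z$. Taking $s=\theta_n$ and $\sup_{t\in[\sigma,\theta_n]}$ in the hypothesis, and writing $\int_t^{\theta_n}=M_{\theta_n}-M_t$ so that $\sup_t\big|\int_t^{\theta_n}\cdots\big|\leq 2\sup_t|M_t|$, I obtain $\sup_t|Y_t|^q\leq a|Y_{\theta_n}|^q+a\int_\sigma^{\theta_n}[\cdots]+2|\gamma|\sup_t|M_t|$. After raising to $p/q$ and applying BDG the martingale supremum again produces $\sup|Y|^{p/2}(\mathcal I_Z)^{p/(2q)}$; rather than re--introduce the integral term I would bound $\mathcal I_Z$ \emph{pointwise} by the hypothesis, $\mathcal I_Z\leq a\sup|Y|^q+a\int_\sigma^{\theta_n}[\cdots]+|\gamma|\sup|M|$, so that the genuinely new contribution is $\sup|Y|^{p/2}\big(|\gamma|\sup|M|\big)^{p/(2q)}$. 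Since $p/(2q)$ is exactly half of $p/q$, Young's inequality with conjugate exponents $2,2$ returns $\tfrac12\sup|Y|^p+\tfrac12\sup|M|^{p/q}$, and the half of $\sup|M|^{p/q}$ self--absorbs; two more Young inequalities absorb a small multiple of $\sup|Y|^p$ and reduce the rest to $\mathcal B_n$. This yields $\mathbb E^{\mathcal F_\sigma}\sup_{[\sigma,\theta_n]}|Y|^p\leq C\,\mathbb E^{\mathcal F_\sigma}\mathcal B_n$ with no surviving integral term.

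Finally I would combine the two estimates: inserting the supremum bound of the third paragraph into the integral estimate of the second removes the $C_0\,\mathbb E^{\mathcal F_\sigma}\sup|Y|^p$ remainder, giving
\[
\mathbb E^{\mathcal F_\sigma}\Big[\sup\nolimits_{[\sigma,\theta_n]}|Y|^p+(\mathcal I_Z)^{p/q}+(\mathcal I_D)^{p/q}\Big]\leq C_{p,q,a,\gamma}\,\mathbb E^{\mathcal F_\sigma}\mathcal B_n .
\]
Letting $n\to\infty$, with Fatou's Lemma on the left (and $\theta_n\uparrow\theta$, $Y$ continuous) and dominated/monotone convergence on the right controlled by (\ref{an6-ab}), replaces $\mathcal B_n$ by its value at $\theta$ and produces exactly (\ref{an5-b}).
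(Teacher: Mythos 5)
Your localization and your second step (the bound $\mathbb{E}^{\mathcal{F}_\sigma}\big[(\mathcal{I}_Z)^{p/q}+(\mathcal{I}_D)^{p/q}\big]\leq C\,\mathbb{E}^{\mathcal{F}_\sigma}\mathcal{B}_n+C_0\,\mathbb{E}^{\mathcal{F}_\sigma}\sup|Y|^p$, where $\mathcal{I}_Z=\int_\sigma^{\theta_n}|Y_r|^{q-2}\mathbf{1}_{Y_r\neq0}|Z_r|^2dr$) are fine, but your third step does not close, and this is a genuine gap rather than a detail. After BDG the dangerous term in the supremum estimate is $C\,\mathbb{E}^{\mathcal{F}_\sigma}\big[\sup|Y|^{p/2}(\mathcal{I}_Z)^{p/(2q)}\big]$, where $C$ already contains the BDG constant and $|\gamma|^{p/q}$. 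When you substitute the pointwise bound $\mathcal{I}_Z\leq a\sup|Y|^q+a\int_\sigma^{\theta_n}[\cdots]+2|\gamma|\sup|M|$ inside, the first term produces $C\,c\,a^{p/(2q)}\,\mathbb{E}^{\mathcal{F}_\sigma}\sup|Y|^p$ (with $c$ the subadditivity constant of $x\mapsto x^{p/(2q)}$). This is not a \emph{small} multiple of $\sup|Y|^p$: it is a pure $p$--th power of the supremum with a fixed coefficient of order $C_{\mathrm{BDG}}\,a^{p/(2q)}|\gamma|^{p/q}$, to which no Young inequality with a free parameter applies, and since $C_{\mathrm{BDG}}\geq1$ and $a,\gamma$ are arbitrary it cannot be absorbed into the left-hand side. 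The same obstruction appears if you instead insert your step-two estimate, because its remainder $C_0\,\mathbb{E}^{\mathcal{F}_\sigma}\sup|Y|^p$ also carries a BDG-sized, non-tunable constant. So the circularity between $\sup|Y|^p$ and $\mathcal{I}_Z$ is never actually broken, and the final combination in your last paragraph is vacuous.

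What is missing is the paper's inequality (\ref{an6-a}): a conditional estimate of $(\mathcal{I}_Z)^{p/q}+(\mathcal{I}_D)^{p/q}$ whose right-hand side contains \emph{only} $|Y_{\theta_n}|^p$ and the weighted $R$-- and $N$--integrals, with a constant $C_{p,q,a}$ that does not involve $\gamma$ and, crucially, with no supremum term. This is obtained not by BDG but by exploiting the fact that, after localization, the stochastic integral is a true martingale: conditioning the hypothesis at an arbitrary stopping time $\tau\in[\sigma,\theta_n]$ kills the stochastic integral outright, and the power $p/q>1$ is then reached by the domination argument underlying \cite[Proposition 6.80]{pa-ra/14}, not by the Burkholder inequality. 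Once (\ref{an6-a}) is available, the supremum estimate proceeds exactly as you intended but without circularity: the BDG term is split by Young so that the coefficient of $\sup|Y|^p$ is $\tfrac12$ (absorbable, since everything is finite after localization) while the large constant lands on $(\mathcal{I}_Z)^{p/q}$, which (\ref{an6-a}) controls by data only; see (\ref{an7})--(\ref{an9-a}). To repair your proof you must replace the pointwise substitution in your third paragraph by such a martingale-killing estimate for the weighted $Z$--integral at power $p/q$.
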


\begin{proof}
We follow the proof of \cite[Proposition 6.80]{pa-ra/14}. Let the stopping
time $\theta_{n}$ be defined by
\[
\theta_{n}=\theta\wedge\inf\left\{  s\geq\sigma:\sup\nolimits_{r\in\left[
\sigma,\sigma\vee s\right]  }\left\vert Y_{r}-Y_{\sigma}\right\vert +%
{\displaystyle\int_{\sigma}^{\sigma\vee s}}
\left\vert Z_{r}\right\vert ^{2}dr+%
{\displaystyle\int_{\sigma}^{\sigma\vee s}}
d\left(  D_{r}+R_{r}+N_{r}\right)  \geq n\right\}  .
\]
For any stopping time $\tau\in\left[  \sigma,\theta_{n}\right]  $ we have%
\begin{equation}%
\begin{array}
[c]{l}%
\left\vert Y_{\tau}\right\vert ^{q}+%
{\displaystyle\int_{\tau}^{\theta_{n}}}
\left\vert Y_{r}\right\vert ^{q-2}{\Large \,}\mathbf{1}_{Y_{r}\neq
0}{\Large \,}\left\vert Z_{r}\right\vert ^{2}dr+%
{\displaystyle\int_{\tau}^{\theta_{n}}}
{\left\vert Y_{r}\right\vert ^{q-2}{\Large \,}\mathbf{1}_{Y_{r}\neq
0}{\Large \,}dD}_{r}\medskip\\
\leq a\left\vert Y_{\theta_{n}}\right\vert ^{q}+a%
{\displaystyle\int_{\tau}^{\theta_{n}}}
\left(  \left\vert Y_{r}\right\vert ^{q-2}{\Large \,}\mathbf{1}_{Y_{r}\neq
0}{\Large \,}\mathbf{1}_{q\geq2}{\Large \,}dR_{r}+\left\vert Y_{r}\right\vert
^{q-1}dN_{r}\right)  +\gamma%
{\displaystyle\int_{\tau}^{\theta_{n}}}
\left\vert Y_{r}\right\vert ^{q-2}\,\langle Y_{r},Z_{r}dB_{r}\rangle.
\end{array}
\label{an6}%
\end{equation}
Remark that
\[
M_{s}=\int_{0}^{s}\mathbf{1}_{\left[  \sigma,\theta_{n}\right]  }\left(
r\right)  \,\left\vert Y_{r}\right\vert ^{q-2}\,\langle Y_{r},Z_{r}%
dB_{r}\rangle,\quad s\geq0
\]
is a martingale, since for any $T>0,$%
\begin{align*}
\mathbb{E}\Big(%
{\displaystyle\int_{0}^{T}}
\mathbf{1}_{\left[  \sigma,\theta_{n}\right]  }\left(  r\right)  \,\left\vert
Y_{r}\right\vert ^{2q-2}\left\vert Z_{r}\right\vert ^{2}dr\Big)^{1/2}  &
\leq\mathbb{E}\Big[\sup\nolimits_{r\in\left[  \sigma,\theta_{n}\right]
}\left\vert Y_{r}\right\vert ^{q-1}\Big(\int_{\sigma}^{\theta_{n}}\left\vert
Z_{r}\right\vert ^{2}dr\Big)^{1/2}\Big]\medskip\\
&  \leq\frac{q-1}{q}\,\mathbb{E}\sup\nolimits_{r\in\left[  \sigma,\theta
_{n}\right]  }\left\vert Y_{r}\right\vert ^{q}+\frac{1}{q}\,\mathbb{E}%
\Big(\int_{\sigma}^{\theta_{n}}\left\vert Z_{r}\right\vert ^{2}dr\Big)^{q/2}%
\medskip\\
&  \leq\frac{q-1}{q}\,\mathbb{E}\left(  \left\vert Y_{\sigma}\right\vert
+n\right)  ^{q}+\frac{1}{q}\,n^{q/2}<\infty.
\end{align*}
Therefore, from (\ref{an6}),%
\begin{equation}%
\begin{array}
[c]{l}%
\mathbb{E}^{\mathcal{F}_{\sigma}}\Big[\Big(%
{\displaystyle\int_{\sigma}^{\theta_{n}}}
\left\vert Y_{r}\right\vert ^{q-2}{\Large \,}\mathbf{1}_{Y_{r}\neq
0}{\Large \,}\left\vert Z_{r}\right\vert ^{2}dr\Big)^{p/q}+\Big(%
{\displaystyle\int_{\sigma}^{\theta_{n}}}
{\left\vert Y_{r}\right\vert ^{q-2}{\Large \,}\mathbf{1}_{Y_{r}\neq
0}{\Large \,}dD}_{r}\Big)^{p/q}\Big]\medskip\\
\leq C_{p,q,a}{{\Large \,}}\mathbb{E}^{\mathcal{F}_{\sigma}}\Big[\left\vert
Y_{\theta_{n}}\right\vert ^{p}+\Big(%
{\displaystyle\int_{\sigma}^{\theta_{n}}}
\left(  \left\vert Y_{r}\right\vert ^{q-2}{{\Large \,}}\mathbf{1}_{Y_{r}\neq
0}{{\Large \,}}\mathbf{1}_{q\geq2}{{\Large \,}}dR_{r}\right)  \Big)^{p/q}%
+\Big(%
{\displaystyle\int_{\sigma}^{\theta_{n}}}
\left\vert Y_{r}\right\vert ^{q-1}dN_{r}\Big)^{p/q}\Big]
\end{array}
\label{an6-a}%
\end{equation}
and%
\begin{align}
\mathbb{E}^{\mathcal{F}_{\sigma}}\sup\nolimits_{\tau\in\left[  \sigma
,\theta_{n}\right]  }\left\vert Y_{\tau}\right\vert ^{p}  &  \leq
C_{p,q,a,\gamma}^{\prime}{{\Large \,}}\bigg[\mathbb{E}^{\mathcal{F}_{\sigma}%
}\left\vert Y_{\theta_{n}}\right\vert ^{p}+\mathbb{E}^{\mathcal{F}_{\sigma}%
}\Big(%
{\displaystyle\int_{\sigma}^{\theta_{n}}}
\left\vert Y_{r}\right\vert ^{q-2}{{\Large \,}}\mathbf{1}_{Y_{r}\neq
0}{{\Large \,}}\mathbf{1}_{q\geq2}{{\Large \,}}dR_{r}\Big)^{p/q}%
\medskip\label{an7}\\
&  \quad+\mathbb{E}^{\mathcal{F}_{\sigma}}\Big(%
{\displaystyle\int_{\sigma}^{\theta_{n}}}
\left\vert Y_{r}\right\vert ^{q-1}dN_{r}\Big)^{p/q}+\mathbb{E}^{\mathcal{F}%
_{\sigma}}\sup\nolimits_{\tau\in\left[  \sigma,\theta_{n}\right]  }\left\vert
M_{\theta_{n}}-M_{\tau}\right\vert ^{p/q}\bigg]\medskip\nonumber\\
&  \leq C_{p,q,a,\gamma}^{\prime\prime}{{\Large \,}}\bigg[\mathbb{E}%
^{\mathcal{F}_{\sigma}}\left\vert Y_{\theta_{n}}\right\vert ^{p}%
+\mathbb{E}^{\mathcal{F}_{\sigma}}\Big(%
{\displaystyle\int_{\sigma}^{\theta_{n}}}
\left\vert Y_{r}\right\vert ^{q-2}{{\Large \,}}\mathbf{1}_{Y_{r}\neq
0}{{\Large \,}}\mathbf{1}_{q\geq2}{{\Large \,}}dR_{r}\Big)^{p/q}%
\medskip\nonumber\\
&  \quad+\mathbb{E}^{\mathcal{F}_{\sigma}}\Big(%
{\displaystyle\int_{\sigma}^{\theta_{n}}}
\left\vert Y_{r}\right\vert ^{q-1}dN_{r}\Big)^{p/q}+\mathbb{\mathbb{E}%
^{\mathcal{F}_{\sigma}}}\Big(%
{\displaystyle\int_{\sigma}^{\theta_{n}}}
\left\vert Y_{r}\right\vert ^{2q-2}\left\vert Z_{r}\right\vert ^{2}%
dr\Big)^{p/\left(  2q\right)  }\bigg].\nonumber
\end{align}
On the other hand, from (\ref{an6-a}),%
\begin{align*}
\displaystyle  &  C_{p,q,a,\gamma}^{\prime\prime}\,\mathbb{\mathbb{E}%
^{\mathcal{F}_{\sigma}}}\Big(\int_{\sigma}^{\theta_{n}}\left\vert
Y_{r}\right\vert ^{2q-2}\left\vert Z_{r}\right\vert ^{2}dr\Big)^{p/\left(
2q\right)  }\\
\displaystyle  &  \leq C_{p,q,a,\gamma}^{\prime\prime}\,\mathbb{\mathbb{E}%
^{\mathcal{F}_{\sigma}}}\Big[\sup\nolimits_{r\in\left[  \sigma,\theta
_{n}\right]  }\left\vert Y_{r}\right\vert ^{p/2}\Big(\int_{\sigma}^{\theta
_{n}}\mathbf{1}_{Y_{r}\neq0}{\Large \,}\left\vert Y_{r}\right\vert
^{q-2}\left\vert Z_{r}\right\vert ^{2}dr\Big)^{p/\left(  2q\right)  }\Big]\\
\displaystyle  &  \leq\frac{1}{2}\,\mathbb{\mathbb{E}^{\mathcal{F}_{\sigma}}%
}\sup\nolimits_{r\in\left[  \sigma,\theta_{n}\right]  }\left\vert
Y_{r}\right\vert ^{p}+\frac{\left(  C_{p,q,a,\gamma}^{\prime\prime}\right)
^{2}}{2}\,\mathbb{\mathbb{E}^{\mathcal{F}_{\sigma}}}\Big(\int_{\sigma}%
^{\theta_{n}}\mathbf{1}_{Y_{r}\neq0}{\Large \,}\left\vert Y_{r}\right\vert
^{q-2}\left\vert Z_{r}\right\vert ^{2}dr\Big)^{p/q}\\
\displaystyle  &  \leq\frac{1}{2}\,\mathbb{\mathbb{E}^{\mathcal{F}_{\sigma}}%
}\sup\nolimits_{r\in\left[  \sigma,\theta_{n}\right]  }\left\vert
Y_{r}\right\vert ^{p}+C_{p,q,a,\gamma}^{\prime\prime\prime}\,\mathbb{E}%
^{\mathcal{F}_{\tau}}\bigg[\left\vert Y_{\theta_{n}}\right\vert ^{p}+\Big(%
{\displaystyle\int_{\sigma}^{\theta_{n}}}
\left(  \left\vert Y_{r}\right\vert ^{q-2}\,\mathbf{1}_{Y_{r}\neq
0}\,\mathbf{1}_{q\geq2}\,dR_{r}\right)  \Big)^{p/q}\bigg]\\
&  \displaystyle\quad+C_{p,q,a,\gamma}^{\prime\prime\prime}\,\mathbb{E}%
^{\mathcal{F}_{\tau}}\Big(%
{\displaystyle\int_{\sigma}^{\theta_{n}}}
\left\vert Y_{r}\right\vert ^{q-1}dN_{r}\Big)^{p/q}.
\end{align*}
Using this last inequality in (\ref{an7}) we obtain%
\begin{equation}%
\begin{array}
[c]{l}%
\displaystyle\mathbb{E}^{\mathcal{F}_{\sigma}}\sup\nolimits_{\tau\in\left[
\sigma,\theta_{n}\right]  }\left\vert Y_{\tau}\right\vert ^{p}\medskip\\
\displaystyle\leq C_{p,q,a,\gamma}\,\mathbb{E}^{\mathcal{F}_{\sigma}%
}\bigg[\left\vert Y_{\theta_{n}}\right\vert ^{p}+\Big(%
{\displaystyle\int_{\sigma}^{\theta_{n}}}
\left\vert Y_{r}\right\vert ^{q-2}\,\mathbf{1}_{Y_{r}\neq0}\,\mathbf{1}%
_{q\geq2}\,dR_{r}\Big)^{p/q}+\Big(%
{\displaystyle\int_{\sigma}^{\theta_{n}}}
\left\vert Y_{r}\right\vert ^{q-1}dN_{r}\Big)^{p/q}\bigg].
\end{array}
\label{an8}%
\end{equation}
Now, by Young's inequality,%
\begin{align*}
&  C_{p,q,a,\gamma}\,\mathbb{E}^{\mathcal{F}_{\sigma}}\bigg[\Big(%
{\displaystyle\int_{\sigma}^{\theta_{n}}}
\left\vert Y_{r}\right\vert ^{q-2}\,\mathbf{1}_{Y_{r}\neq0}\,\mathbf{1}%
_{q\geq2}\,dR_{r}\Big)^{p/q}+\Big(%
{\displaystyle\int_{\sigma}^{\theta_{n}}}
\left\vert Y_{r}\right\vert ^{q-1}dN_{r}\Big)^{p/q}\bigg]\\
&  \leq C_{a,\gamma}\,\mathbb{E}^{\mathcal{F}_{\sigma}}\bigg[\Big(\sup
\nolimits_{r\in\left[  \sigma,\theta_{n}\right]  }\left(  \left\vert
Y_{r}\,\mathbf{1}_{Y_{r}\neq0}\right\vert ^{q-2}\,\mathbf{1}_{q\geq2}\right)
{\displaystyle\int_{\sigma}^{\theta_{n}}}
\mathbf{1}_{q\geq2}\,dR_{r}\Big)^{p/q}+\Big(\sup\nolimits_{r\in\left[
\sigma,\theta_{n}\right]  }\left\vert Y_{r}\right\vert ^{q-1}%
{\displaystyle\int_{\sigma}^{\theta_{n}}}
dN_{r}\Big)^{p/q}\bigg]\\
&  \leq\frac{1}{2}\,\mathbb{E}^{\mathcal{F}_{\sigma}}\sup\nolimits_{\tau
\in\left[  \sigma,\theta_{n}\right]  }\left\vert Y_{\tau}\right\vert ^{p}%
+\hat{C}_{p,q,a,\gamma}\,\mathbb{E}^{\mathcal{F}_{\sigma}}\Big(%
{\displaystyle\int_{\sigma}^{\theta_{n}}}
\mathbf{1}_{q\geq2}\,dR_{r}\Big)^{p/2}+\hat{C}_{p,q,a,\gamma}\,\mathbb{E}%
^{\mathcal{F}_{\sigma}}\Big(%
{\displaystyle\int_{\sigma}^{\theta_{n}}}
dN_{r}\Big)^{p},
\end{align*}
which yields, via (\ref{an8}),%
\begin{equation}
\mathbb{E}^{\mathcal{F}_{\sigma}}\sup\nolimits_{\tau\in\left[  \sigma
,\theta_{n}\right]  }\left\vert Y_{\tau}\right\vert ^{p}\leq C_{p,q,a,\gamma
}\,\mathbb{E}^{\mathcal{F}_{\sigma}}\bigg[\left\vert Y_{\theta_{n}}\right\vert
^{p}+\Big(%
{\displaystyle\int_{\sigma}^{\theta_{n}}}
\mathbf{1}_{q\geq2}\,dR_{r}\Big)^{p/2}+\Big(%
{\displaystyle\int_{\sigma}^{\theta_{n}}}
dN_{r}\Big)^{p}\bigg]. \label{an9}%
\end{equation}
Hence, form the last two inequalities, we deduce%
\begin{equation}%
\begin{array}
[c]{l}%
\mathbb{E}^{\mathcal{F}_{\sigma}}\bigg[\Big(%
{\displaystyle\int_{\sigma}^{\theta_{n}}}
\left\vert Y_{r}\right\vert ^{q-2}\,\mathbf{1}_{Y_{r}\neq0}\,\mathbf{1}%
_{q\geq2}\,dR_{r}\Big)^{p/q}+\Big(%
{\displaystyle\int_{\sigma}^{\theta_{n}}}
\left\vert Y_{r}\right\vert ^{q-1}dN_{r}\Big)^{p/q}\bigg]\medskip\\
\leq\tilde{C}_{p,q,a,\gamma}\,\mathbb{E}^{\mathcal{F}_{\sigma}}%
\bigg[\left\vert Y_{\theta_{n}}\right\vert ^{p}+\Big(%
{\displaystyle\int_{\sigma}^{\theta_{n}}}
\mathbf{1}_{q\geq2}\,dR_{r}\Big)^{p/2}+\Big(%
{\displaystyle\int_{\sigma}^{\theta_{n}}}
dN_{r}\Big)^{p}\bigg].
\end{array}
\label{an9-a}%
\end{equation}
By Beppo Levi monotone convergence theorem and by Lebesgue dominated
convergence theorem we deduce, from (\ref{an8}), (\ref{an9}) and
(\ref{an9-a}), inequalities (\ref{an5-a}) and (\ref{an5-b}).\hfill
\end{proof}

\begin{remark}
\label{an-prop-ydz_2}Passing to the limit in (\ref{an6-a}) and (\ref{an8}), as
$n\rightarrow\infty,$ we deduce (using Beppo Levi's monotone convergence
theorem and condition (\ref{an6-ab}) and Lebesgue dominated convergence
theorem) that the following inequality holds%
\begin{equation}%
\begin{array}
[c]{l}%
\displaystyle\mathbb{E}^{\mathcal{F}_{\sigma}}\Big[\sup\nolimits_{r\in\left[
\sigma,\theta\right]  }\left\vert Y_{r}\right\vert ^{p}+\Big(%
{\displaystyle\int_{\sigma}^{\theta}}
\left\vert Y_{r}\right\vert ^{q-2}{\Large \,}\mathbf{1}_{Y_{r}\neq0}%
{\,}\left\vert Z_{r}\right\vert ^{2}dr\Big)^{p/q}+\Big(%
{\displaystyle\int_{\sigma}^{\theta}}
{\left\vert Y_{r}\right\vert ^{q-2}\,{\mathbf{1}_{Y_{r}\neq0}}\,dD}%
_{r}\Big)^{p/q}\Big]\medskip\\
\displaystyle\leq C_{p,q,a,\gamma}\,\mathbb{E}^{\mathcal{F}_{\sigma}%
}\Big[\left\vert Y_{\theta}\right\vert ^{p}+\Big(%
{\displaystyle\int_{\sigma}^{\theta}}
\left\vert Y_{r}\right\vert ^{q-2}\mathbf{1}_{Y_{r}\neq0}\mathbf{1}_{q\geq
2}dR_{r}\Big)^{p/q}+\Big(%
{\displaystyle\int_{\sigma}^{\theta}}
\left\vert Y_{r}\right\vert ^{q-1}dN_{r}\Big)^{p/q}\Big],\quad\mathbb{P}%
\text{--a.s..}%
\end{array}
\label{an5-a}%
\end{equation}
Moreover (using again the same Beppo Levi theorem and Lebesgue theorem) we can
see that inequalities (\ref{an5-b}) and (\ref{an5-a}) are also true in the
case $0\leq\sigma\leq\theta\leq\infty.$

\end{remark}

\begin{proposition}
\label{an-prop-yd}Let $Y$ be a continuous stochastic process. Let $q\geq1$ and
$b,L\geq0$ such that%
\[
\mathbb{E}\sup\nolimits_{r\in\left[  0,T\right]  }\left\vert Y_{r}\right\vert
^{q}\leq L<\infty
\]
and, for all $0\leq t\leq T<\infty,$%
\[
\mathbb{E}^{\mathcal{F}_{t}}\bigg(\left\vert Y_{t}\right\vert ^{q}+%
{\displaystyle\int_{t}^{T}}
{dD}_{r}\bigg)\leq b\,\mathbb{E}^{\mathcal{F}_{t}}\bigg[\left\vert
Y_{T}\right\vert ^{q}+%
{\displaystyle\int_{0}^{T}}
\left(  \left\vert Y_{r}\right\vert ^{q-2}\,\mathbf{1}_{Y_{r}\neq
0}\,\mathbf{1}_{q\geq2}\,dR_{r}+\left\vert Y_{r}\right\vert ^{q-1}%
dN_{r}\right)  \bigg],\quad\mathbb{P}\text{--a.s.,}%
\]
where $R,N$ and $D$ are increasing and continuous p.m.s.p. $R_{0}=N_{0}%
=D_{0}=0.\medskip$

Then, for any $0<\alpha<1,$%
\begin{equation}%
\begin{array}
[c]{l}%
\displaystyle\mathbb{E}\sup\nolimits_{t\in\left[  0,T\right]  }\left\vert
Y_{t}\right\vert ^{\alpha q}+\mathbb{E}\Big(\int_{0}^{T}{dD}_{r}\Big)^{\alpha
}\medskip\\
\displaystyle\leq\dfrac{2b^{\alpha}}{1-\alpha}\,\bigg[\left(  \mathbb{E}%
\left\vert Y_{T}\right\vert ^{q}\right)  ^{\alpha}+L^{\frac{\alpha\left(
q-2\right)  }{q}}\bigg(\mathbb{E}\Big(%
{\displaystyle\int_{0}^{T}}
\,\mathbf{1}_{q\geq2}dR_{r}\Big)^{\frac{q}{2}}\bigg)^{\frac{2\alpha}{q}%
}+L^{\frac{\alpha\left(  q-1\right)  }{q}}\bigg(\mathbb{E}\Big(%
{\displaystyle\int_{0}^{T}}
\,dN_{r}\Big)^{q}\bigg)^{\frac{\alpha}{q}}\bigg]
\end{array}
\label{an-aa0a}%
\end{equation}
and, for any $\varepsilon,\delta>0,$ there exists $C_{\alpha,q,b,\varepsilon
,\delta}>0$ such that%
\begin{equation}%
\begin{array}
[c]{l}%
\displaystyle\mathbb{E}\sup\nolimits_{t\in\left[  0,T\right]  }\left\vert
Y_{t}\right\vert ^{\alpha q}+\mathbb{E}\Big(\int_{0}^{T}{dD}_{r}\Big)^{\alpha
}\medskip\\
\displaystyle\leq C_{\alpha,q,b,\varepsilon,\delta}\,\bigg[\left(
\mathbb{E}\left\vert Y_{T}\right\vert ^{q}\right)  ^{\alpha}+\bigg(\mathbb{E}%
\Big(%
{\displaystyle\int_{0}^{T}}
\,\mathbf{1}_{q\geq2}dR_{r}\Big)^{\frac{q}{2}+\varepsilon}\bigg)^{\frac{\alpha
q}{q+2\varepsilon}}+\bigg(\mathbb{E}\Big(%
{\displaystyle\int_{0}^{T}}
\,dN_{r}\Big)^{q+\delta}\bigg)^{\frac{\alpha q}{q+\delta}}\bigg].
\end{array}
\label{an-aa0b}%
\end{equation}

\end{proposition}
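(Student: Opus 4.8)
The plan is to reduce everything to the weak-type (sub-linear moment) maximal inequality that is already available, namely the domination inequality of \cite[Proposition 1.56]{pa-ra/14}, and then to carry out the remaining bookkeeping with subadditivity of $x\mapsto x^{\alpha}$, H\"older and Young. First I would introduce the single closing variable
\[
\zeta=\left\vert Y_{T}\right\vert^{q}+\int_{0}^{T}\left\vert Y_{r}\right\vert^{q-2}\mathbf{1}_{Y_{r}\neq0}\mathbf{1}_{q\geq2}\,dR_{r}+\int_{0}^{T}\left\vert Y_{r}\right\vert^{q-1}\,dN_{r}\geq0,
\]
so that, since $\left\vert Y_{t}\right\vert^{q}$ is $\mathcal{F}_{t}$--measurable, the hypothesis splits into $\left\vert Y_{t}\right\vert^{q}\leq b\,\mathbb{E}^{\mathcal{F}_{t}}\zeta$ and $\mathbb{E}^{\mathcal{F}_{t}}\int_{t}^{T}dD_{r}\leq b\,\mathbb{E}^{\mathcal{F}_{t}}\zeta$ for every $t\in\left[0,T\right]$. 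Because $t\mapsto\mathbb{E}^{\mathcal{F}_{t}}(b\zeta)$ is a nonnegative martingale closed by $b\zeta$, the domination inequality yields simultaneously, for $0<\alpha<1$,
\[
\mathbb{E}\sup\nolimits_{t\in\left[0,T\right]}\left\vert Y_{t}\right\vert^{\alpha q}+\mathbb{E}\Big(\int_{0}^{T}dD_{r}\Big)^{\alpha}\leq\frac{2b^{\alpha}}{1-\alpha}\,\mathbb{E}\,\zeta^{\alpha},
\]
the factor $2$ arising from adding the bound for $\sup\left\vert Y\right\vert$ to the bound for the increasing process $D$, both controlled by the same martingale.

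Next I would estimate $\mathbb{E}\,\zeta^{\alpha}$. Since $x\mapsto x^{\alpha}$ is subadditive for $\alpha\in(0,1)$,
\[
\mathbb{E}\,\zeta^{\alpha}\leq\mathbb{E}\left\vert Y_{T}\right\vert^{\alpha q}+\mathbb{E}\Big(\int_{0}^{T}\left\vert Y_{r}\right\vert^{q-2}\mathbf{1}_{q\geq2}\,dR_{r}\Big)^{\alpha}+\mathbb{E}\Big(\int_{0}^{T}\left\vert Y_{r}\right\vert^{q-1}\,dN_{r}\Big)^{\alpha}.
\]
For the first term Jensen gives $\mathbb{E}\left\vert Y_{T}\right\vert^{\alpha q}\leq(\mathbb{E}\left\vert Y_{T}\right\vert^{q})^{\alpha}$. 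For the other two I would pull out the supremum, $\int\left\vert Y\right\vert^{q-2}\mathbf{1}_{q\geq2}dR\leq(\sup\left\vert Y\right\vert^{q-2})\int\mathbf{1}_{q\geq2}dR$ and likewise for $N$, and then apply H\"older with the exponent pairs $(\tfrac{q}{\alpha(q-2)},\tfrac{q}{2\alpha})$ and $(\tfrac{q}{\alpha(q-1)},\tfrac{q}{\alpha})$, whose reciprocals sum to $\alpha\leq1$. Using $\mathbb{E}\sup\left\vert Y\right\vert^{q}\leq L$ this produces exactly $L^{\alpha(q-2)/q}\big(\mathbb{E}(\int\mathbf{1}_{q\geq2}dR)^{q/2}\big)^{2\alpha/q}$ and $L^{\alpha(q-1)/q}\big(\mathbb{E}(\int dN)^{q}\big)^{\alpha/q}$, which is (\ref{an-aa0a}).

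For (\ref{an-aa0b}) the aim is to discard the a priori constant $L$; the idea is to surface the \emph{left-hand} quantity $S:=\mathbb{E}\sup_{[0,T]}\left\vert Y\right\vert^{\alpha q}$ (finite, as $S\leq L^{\alpha}$) and absorb it. I would re-run H\"older on the two integral terms, now with the \emph{conjugate} pairs $(\tfrac{q}{q-2},\tfrac{q}{2})$ and $(\tfrac{q}{q-1},q)$, getting $S^{(q-2)/q}\big(\mathbb{E}(\int\mathbf{1}_{q\geq2}dR)^{\alpha q/2}\big)^{2/q}$ and $S^{(q-1)/q}\big(\mathbb{E}(\int dN)^{\alpha q}\big)^{1/q}$. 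Jensen on the probability space then raises the inner exponents to $\tfrac{q}{2}+\varepsilon$ and $q+\delta$ (legitimate because $\tfrac{\alpha q}{2}\leq\tfrac{q}{2}+\varepsilon$ and $\alpha q\leq q+\delta$), and finally Young's inequality with the conjugate pairs $(\tfrac{q}{q-2},\tfrac{q}{2})$ and $(\tfrac{q}{q-1},q)$ splits each product into $\eta S$ plus a constant times $\big(\mathbb{E}(\int\mathbf{1}_{q\geq2}dR)^{q/2+\varepsilon}\big)^{\alpha q/(q+2\varepsilon)}$ and $\big(\mathbb{E}(\int dN)^{q+\delta}\big)^{\alpha q/(q+\delta)}$; it is precisely this chain of exponents that delivers the powers $\tfrac{\alpha q}{q+2\varepsilon}$ and $\tfrac{\alpha q}{q+\delta}$. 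Choosing $\eta$ small relative to the coefficient of $S$ coming from the domination step lets me absorb $\eta S$ into the left-hand side and conclude (\ref{an-aa0b}).

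The genuinely delicate ingredient is the domination inequality invoked in the first step: passing from the conditional-expectation control of $D$ to a bound on the sub-linear moment $\mathbb{E}(\int_{0}^{T}dD_{r})^{\alpha}$, rather than on $\mathbb{E}\int_{0}^{T}dD_{r}$, is exactly the weak-type phenomenon that fails for $\alpha\geq1$ and forces both the restriction $0<\alpha<1$ and the constant $\tfrac{1}{1-\alpha}$. Everything else is H\"older/Young bookkeeping; the only point that needs care there is the legitimacy of absorbing $S$, which is guaranteed by the a priori finiteness $S\leq L^{\alpha}$ furnished by the standing hypothesis $\mathbb{E}\sup_{[0,T]}\left\vert Y\right\vert^{q}\leq L$.
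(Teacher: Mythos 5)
Your argument breaks at the very first step, and the break is load-bearing. The weak-type domination inequality (\cite[Proposition 1.56]{pa-ra/14}, the same one the paper invokes) says that if a nonnegative process is dominated by the martingale $M_t=\mathbb{E}^{\mathcal{F}_t}(b\zeta)$, then for $0<\alpha<1$ one gets $\mathbb{E}\sup_t M_t^{\alpha}\leq\frac{1}{1-\alpha}\,\big(b\,\mathbb{E}\zeta\big)^{\alpha}$ -- the expectation of $\zeta$ is taken \emph{before} raising to the power $\alpha$. Your step claims the stronger bound $\frac{2b^{\alpha}}{1-\alpha}\,\mathbb{E}(\zeta^{\alpha})$, with the power inside the expectation, and that inequality is false: take $D=R=N\equiv0$, $b=1$, $\mathcal{F}_0$ trivial, and $|Y_t|^{q}=\mathbb{E}^{\mathcal{F}_t}\zeta$ with $\mathbb{P}(\zeta=n)=1/n=1-\mathbb{P}(\zeta=0)$; the hypothesis of the proposition holds with equality (and $L=n<\infty$), yet $\mathbb{E}\sup_t|Y_t|^{\alpha q}\geq|Y_0|^{\alpha q}=(\mathbb{E}\zeta)^{\alpha}=1$ while $\frac{2}{1-\alpha}\,\mathbb{E}(\zeta^{\alpha})=\frac{2}{1-\alpha}\,n^{\alpha-1}\rightarrow0$. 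By Jensen $\mathbb{E}(\zeta^{\alpha})\leq(\mathbb{E}\zeta)^{\alpha}$, so you have replaced the correct right-hand side by a strictly smaller quantity, and no application of the domination lemma can produce it.

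This is not a cosmetic slip, because everything downstream -- subadditivity of $x\mapsto x^{\alpha}$ applied \emph{inside} the expectation, pulling the supremum out, the generalized H\"older step with exponents whose reciprocals sum to $\alpha$, and above all the Young absorption for (\ref{an-aa0b}) -- depends on having the $\alpha$-power inside the expectation. For (\ref{an-aa0a}) the damage is repairable: starting from the correct $(\mathbb{E}\zeta)^{\alpha}$, apply subadditivity to the sum of three expectations and then ordinary H\"older inside each one; this produces factors $(\mathbb{E}\sup|Y|^{q})^{\alpha(q-1)/q}\leq L^{\alpha(q-1)/q}$ and $(\mathbb{E}\sup|Y|^{q})^{\alpha(q-2)/q}\leq L^{\alpha(q-2)/q}$, which is exactly the paper's proof. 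For (\ref{an-aa0b}), however, your absorption scheme collapses under this repair: conjugate-pair H\"older applied to $\mathbb{E}\big[\sup|Y|^{q-1}\int dN_r\big]$ yields the factor $(\mathbb{E}\sup|Y|^{q})^{(q-1)/q}$, i.e. the $q$-th moment of the supremum, which is \emph{not} controlled by any power of $S=\mathbb{E}\sup|Y|^{\alpha q}$ (Jensen gives $S\leq(\mathbb{E}\sup|Y|^{q})^{\alpha}$, the wrong direction), so it cannot be absorbed into the left-hand side. The paper circumvents this with a device your proposal is missing: it applies H\"older with the $\alpha$-dependent exponent $\beta=\frac{\alpha q}{q-1}$, which makes $(\mathbb{E}\sup|Y|^{\beta(q-1)})^{1/\beta}=(\mathbb{E}\sup|Y|^{\alpha q})^{1/\beta}$ appear but is legitimate only for $\alpha>\frac{q-1}{q}$, and then transfers the resulting estimate to arbitrary $\alpha\in(0,1)$ through an auxiliary $\bar\alpha$ close to $1$ together with a further H\"older/Jensen reduction -- that second stage is precisely where the exponents $\frac{q}{2}+\varepsilon$ and $q+\delta$ in (\ref{an-aa0b}) originate.
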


\begin{proof}
By \cite[Proposition 1.56, $(A_{3})$]{pa-ra/14}, the conclusions clearly hold
true in the case $q=1.\medskip$

Let $q>1$ and $0<\alpha<1.$

We remark first that%
\[%
\begin{array}
[c]{l}%
\displaystyle\mathbb{E}\sup\nolimits_{t\in\left[  0,T\right]  }\left\vert
Y_{t}\right\vert ^{\alpha q}\leq\mathbb{E}\sup\nolimits_{t\in\left[
0,T\right]  }\Big(\left\vert Y_{t}\right\vert ^{q}+%
{\displaystyle\int_{t}^{T}}
{dD}_{r}\Big)^{\alpha},\medskip\\
\displaystyle\mathbb{E}\Big(%
{\displaystyle\int_{0}^{T}}
{dD}_{r}\Big)^{\alpha}\leq\mathbb{E}\sup\nolimits_{t\in\left[  0,T\right]
}\Big(\left\vert Y_{t}\right\vert ^{q}+%
{\displaystyle\int_{t}^{T}}
{dD}_{r}\Big)^{\alpha},
\end{array}
\]
hence%
\begin{equation}
\mathbb{E}\sup\nolimits_{t\in\left[  0,T\right]  }\left\vert Y_{t}\right\vert
^{\alpha q}+\mathbb{E}\Big(%
{\displaystyle\int_{0}^{T}}
{dD}_{r}\Big)^{\alpha}\leq2\,\mathbb{E}\sup\nolimits_{t\in\left[  0,T\right]
}\Big(\left\vert Y_{t}\right\vert ^{q}+%
{\displaystyle\int_{t}^{T}}
{dD}_{r}\Big)^{\alpha}. \label{prop_4.5_1}%
\end{equation}
By \cite[Proposition 1.56, $(A_{3})$]{pa-ra/14} we have%
\begin{equation}%
\begin{array}
[c]{l}%
\displaystyle\mathbb{E}\sup\nolimits_{t\in\left[  0,T\right]  }\Big(\left\vert
Y_{t}\right\vert ^{q}+%
{\displaystyle\int_{t}^{T}}
{dD}_{r}\Big)^{\alpha}\medskip\\
\displaystyle\leq\frac{b^{\alpha}}{1-\alpha}\,\bigg[\mathbb{E}\left\vert
Y_{T}\right\vert ^{q}+\mathbb{E}%
{\displaystyle\int_{0}^{T}}
\left\vert Y_{r}\right\vert ^{q-2}\,\mathbf{1}_{Y_{r}\neq0}\,\mathbf{1}%
_{q\geq2}\,dR_{r}+\mathbb{E}%
{\displaystyle\int_{0}^{T}}
\left\vert Y_{r}\right\vert ^{q-1}dN_{r}\bigg]^{\alpha}\medskip\\
\displaystyle\leq\frac{b^{\alpha}}{1-\alpha}\,\big(\mathbb{E}\left\vert
Y_{T}\right\vert ^{q}\big)^{\alpha}+A^{\alpha}+B^{\alpha},
\end{array}
\label{prop_4.5}%
\end{equation}
where%
\begin{align*}
A^{\alpha}  &  =\frac{b^{\alpha}}{1-\alpha}\,\bigg(\mathbb{E}\Big(\sup
\nolimits_{r\in\left[  0,T\right]  }\left(  \left\vert Y_{r}\,\mathbf{1}%
_{Y_{r}\neq0}\right\vert ^{q-2}\,\mathbf{1}_{q\geq2}\right)  \cdot%
{\displaystyle\int_{0}^{T}}
\mathbf{1}_{q\geq2}\,dR_{r}\Big)\bigg)^{\alpha},\\
B^{\alpha}  &  =\frac{b^{\alpha}}{1-\alpha}\,\bigg(\mathbb{E}\Big(\sup
\nolimits_{r\in\left[  0,T\right]  }\left\vert Y_{r}\right\vert ^{q-1}%
{\displaystyle\int_{0}^{T}}
dN_{r}\Big)\bigg)^{\alpha}.
\end{align*}
By H\"{o}lder's inequality with $\beta=\frac{q}{q-1}>1$ and $\beta^{\prime}=q$
we have%
\begin{equation}%
\begin{array}
[c]{l}%
B^{\alpha}\leq\dfrac{b^{\alpha}}{1-\alpha}\,\left(  \mathbb{E}\sup
\nolimits_{r\in\left[  0,T\right]  }\left\vert Y_{r}\,\right\vert
^{\beta\left(  q-1\right)  }\,\right)  ^{\alpha/\beta}\cdot\bigg(\mathbb{E}%
\Big(%
{\displaystyle\int_{0}^{T}}
\,dN_{r}\Big)^{\beta^{\prime}}\bigg)^{\alpha/\beta^{\prime}}\medskip\\
\leq\dfrac{b^{\alpha}}{1-\alpha}\,L^{\alpha\left(  q-1\right)  /q}%
\,\bigg(\mathbb{E}\Big(%
{\displaystyle\int_{0}^{T}}
\,dN_{r}\Big)^{q}\bigg)^{\alpha/q}.
\end{array}
\label{an-aa1}%
\end{equation}
In addition, again by H\"{o}lder's inequality and Young's inequality, with
$\beta=\frac{\alpha q}{q-1}>1,$ $\beta^{\prime}=\frac{\alpha}{\alpha
-(q-1)/q}\,,$ $\gamma=\frac{\beta}{\alpha}=\frac{q}{q-1}>1$ and $\gamma
^{\prime}=q,$ we obtain the next inequality for any $\alpha$ such that
$\frac{q-1}{q}<\alpha<1:$%
\begin{equation}%
\begin{array}
[c]{l}%
B^{\alpha}\leq\dfrac{1}{8}\left(  \mathbb{E}\sup\nolimits_{r\in\left[
0,T\right]  }\left\vert Y_{r}\,\right\vert ^{\beta\left(  q-1\right)
}\,\right)  ^{\alpha\gamma/\beta}+C_{\alpha,\beta,\gamma,b}\bigg(\mathbb{E}%
\Big(%
{\displaystyle\int_{0}^{T}}
\,dN_{r}\Big)^{\beta^{\prime}}\bigg)^{\alpha\gamma^{\prime}/\beta^{\prime}%
}\medskip\\
=\dfrac{1}{8}\,\mathbb{E}\sup\nolimits_{r\in\left[  0,T\right]  }\left\vert
Y_{r}\,\right\vert ^{\alpha q}\,+C_{\alpha,q,b}\bigg(\mathbb{E}\Big(%
{\displaystyle\int_{0}^{T}}
\,dN_{r}\Big)^{\alpha q/\left(  \alpha q-q+1\right)  }\bigg)^{\alpha q-q+1}%
\end{array}
\label{an-aa2}%
\end{equation}
Of course,%
\begin{equation}
A^{\alpha}=\left\{
\begin{array}
[c]{ll}%
0, & \text{if }1<q<2,\medskip\\
\dfrac{b^{\alpha}}{1-\alpha}\,\bigg(\mathbb{E}%
{\displaystyle\int_{0}^{T}}
dR_{r}\bigg)^{\alpha}, & \text{if }q=2.
\end{array}
\right.  \label{an-aa2a}%
\end{equation}
If $q>2$, by H\"{o}lder's inequality with $\beta=\frac{q}{q-2}>1$ and
$\beta^{\prime}=\frac{q}{2}\,,$ we have%
\begin{equation}%
\begin{array}
[c]{l}%
A^{\alpha}\leq\dfrac{b^{\alpha}}{1-\alpha}\,\left(  \mathbb{E}\sup
\nolimits_{r\in\left[  0,T\right]  }\left\vert Y_{r}\,\right\vert
^{\beta\left(  q-2\right)  }\,\right)  ^{\alpha/\beta}\cdot\bigg(\mathbb{E}%
\Big(%
{\displaystyle\int_{0}^{T}}
\,dR_{r}\Big)^{\beta^{\prime}}\bigg)^{\alpha/\beta^{\prime}}\medskip\\
\leq\dfrac{b^{\alpha}}{1-\alpha}\,L^{\alpha\left(  q-2\right)  /q}%
\bigg(\mathbb{E}\Big(%
{\displaystyle\int_{0}^{T}}
\,dR_{r}\Big)^{q/2}\bigg)^{2\alpha/q}.
\end{array}
\label{an-aa3}%
\end{equation}
We see that inequality (\ref{an-aa3}) it is satisfied also in the case
$1<q\leq2.\medskip$

In addition, again by H\"{o}lder's inequality and Young's inequality, with
$\beta=\frac{\alpha q}{q-2}>1,$ $\beta^{\prime}=\frac{\alpha}{\alpha
-(q-2)/q}\,,$ $\gamma=\frac{\beta}{\alpha}=\frac{q}{q-2}>1$ and $\gamma
^{\prime}=\frac{q}{2}\,,$ we obtain the next inequality for any $\alpha$ such
that $\frac{q-2}{q}<\alpha<1:$%
\begin{equation}%
\begin{array}
[c]{l}%
A^{\alpha}\leq\dfrac{1}{8}\,\left(  \mathbb{E}\sup\nolimits_{r\in\left[
0,T\right]  }\left\vert Y_{r}\,\right\vert ^{\beta\left(  q-2\right)
}\,\right)  ^{\alpha\gamma/\beta}+C_{\alpha,\beta,\gamma,b}\bigg(\mathbb{E}%
\Big(%
{\displaystyle\int_{0}^{T}}
\,dR_{r}\Big)^{\beta^{\prime}}\bigg)^{\alpha\gamma^{\prime}/\beta^{\prime}%
}\medskip\\
=\dfrac{1}{8}\,\mathbb{E}\sup\nolimits_{r\in\left[  0,T\right]  }%
\big(\left\vert Y_{r}\,\right\vert ^{\alpha q}\,\big)+C_{\alpha,q,b}%
\bigg(\mathbb{E}\Big(%
{\displaystyle\int_{0}^{T}}
\,dR_{r}\Big)^{\alpha q/\left(  \alpha q-q+2\right)  }\bigg)^{\left(  \alpha
q-q+2\right)  /2}.
\end{array}
\label{an-aa4}%
\end{equation}
We see that inequality (\ref{an-aa4}) it is satisfied also in the case
$1<q\leq2$ and for any $\alpha$ such that $0<\alpha<1.\medskip$

Now it is clear that inequality (\ref{an-aa0a}) follows from inequalities
(\ref{prop_4.5_1}), (\ref{prop_4.5}), (\ref{an-aa1}), (\ref{an-aa2a}) and
(\ref{an-aa3}).$\medskip$

On the other hand, from inequalities (\ref{prop_4.5_1}), (\ref{prop_4.5}),
(\ref{an-aa2}), (\ref{an-aa2a}) and (\ref{an-aa4}) we deduce that, for any
$\alpha$ such that $\frac{q-1}{q}<\alpha<1,$%
\[%
\begin{array}
[c]{l}%
\displaystyle\mathbb{E}\sup\nolimits_{t\in\left[  0,T\right]  }\left\vert
Y_{t}\right\vert ^{\alpha q}+\mathbb{E}\Big(\int_{0}^{T}{dD}_{r}\Big)^{\alpha
}\medskip\\
\displaystyle\leq C_{\alpha,q,b}\,\bigg[\left(  \mathbb{E}\left\vert
Y_{T}\right\vert ^{q}\right)  ^{\alpha}+\bigg(\mathbb{E}\Big(%
{\displaystyle\int_{0}^{T}}
\,\mathbf{1}_{q\geq2}dR_{r}\Big)^{\frac{\alpha q}{\alpha q-q+2}}%
\bigg)^{\frac{\alpha q-q+2}{2}}+\bigg(\mathbb{E}\Big(%
{\displaystyle\int_{0}^{T}}
\,dN_{r}\Big)^{\frac{\alpha q}{\alpha q-q+1}}\bigg)^{\alpha q-q+1}\bigg].
\end{array}
\]
If $0<\alpha<1$ is arbitrary fixed, then the last inequality hold also for
$\alpha$ replaced by any $\bar{\alpha}$ such that%
\begin{equation}
\alpha\vee\frac{\mathbf{1}_{q\geq2}\,\left(  q-2\right)  \left(
q+2\varepsilon\right)  }{q\left(  q+2\varepsilon-2\right)  }\vee\frac{\left(
q-1\right)  \left(  q+\delta\right)  }{q\left(  q+\delta-1\right)  }%
<\bar{\alpha}<1. \label{alpha_bar}%
\end{equation}
By H\"{o}lder's inequality we have%
\begin{align*}
&  \mathbb{E}\sup\nolimits_{t\in\left[  0,T\right]  }\left\vert Y_{t}%
\right\vert ^{\alpha q}+\mathbb{E}\Big(\int_{0}^{T}{dD}_{r}\Big)^{\alpha}%
\leq\bigg(\mathbb{E}\sup\nolimits_{t\in\left[  0,T\right]  }\left\vert
Y_{t}\right\vert ^{\bar{\alpha}q}\bigg)^{\alpha/\bar{\alpha}}+\bigg(\mathbb{E}%
\Big(\int_{0}^{T}{dD}_{r}\Big)^{\bar{\alpha}}\bigg)^{\frac{\alpha}{\bar
{\alpha}}}\\
&  \leq2\left[  \mathbb{E}\sup\nolimits_{t\in\left[  0,T\right]  }\left\vert
Y_{t}\right\vert ^{\bar{\alpha}q}+\mathbb{E}\Big(\int_{0}^{T}{dD}%
_{r}\Big)^{\bar{\alpha}}\right]  ^{\frac{\alpha}{\bar{\alpha}}}\\
&  \leq C_{\bar{\alpha},q,b}\,\bigg[\left(  \mathbb{E}\left\vert
Y_{T}\right\vert ^{q}\right)  ^{\bar{\alpha}}+\bigg(\mathbb{E}\Big(%
{\displaystyle\int_{0}^{T}}
\,\mathbf{1}_{q\geq2}dR_{r}\Big)^{\frac{\bar{\alpha}q}{\bar{\alpha}q-q+2}%
}\bigg)^{\frac{\bar{\alpha}q-q+2}{2}}+\bigg(\mathbb{E}\Big(%
{\displaystyle\int_{0}^{T}}
\,dN_{r}\Big)^{\frac{\bar{\alpha}q}{\bar{\alpha}q-q+1}}\bigg)^{\bar{\alpha
}q-q+1}\bigg]^{\frac{\alpha}{\bar{\alpha}}}\\
&  \leq C_{\bar{\alpha},q,b}\bigg[\left(  \mathbb{E}\left\vert Y_{T}%
\right\vert ^{q}\right)  ^{\alpha}+\bigg(\mathbb{E}\Big(%
{\displaystyle\int_{0}^{T}}
\,\mathbf{1}_{q\geq2}dR_{r}\Big)^{\frac{\bar{\alpha}q}{\bar{\alpha}q-q+2}%
}\bigg)^{\frac{\bar{\alpha}q-q+2}{\bar{\alpha}q}\cdot\frac{\alpha q}{2}%
}+\bigg(\mathbb{E}\Big(%
{\displaystyle\int_{0}^{T}}
\,dN_{r}\Big)^{\frac{\bar{\alpha}q}{\bar{\alpha}q-q+1}}\bigg)^{\frac
{\bar{\alpha}q-q+1}{\bar{\alpha}q}\cdot\alpha q}\bigg]
\end{align*}
Using (\ref{alpha_bar}) we obtain%
\[
\frac{\bar{\alpha}q}{\bar{\alpha}q-q+2}\leq\frac{q}{2}+\varepsilon
\quad\text{and}\quad\frac{\bar{\alpha}q}{\bar{\alpha}q-q+1}\leq q+\delta
\]
and, by H\"{o}lder's inequality,%
\[%
\begin{array}
[c]{l}%
\displaystyle\bigg(\mathbb{E}\Big(%
{\displaystyle\int_{0}^{T}}
\mathbf{1}_{q\geq2}dR_{r}\Big)^{\frac{\bar{\alpha}q}{\bar{\alpha}q-q+2}%
}\bigg)^{\frac{\bar{\alpha}q-q+2}{\bar{\alpha}q}\,\frac{\alpha q}{2}}%
\leq\bigg(\mathbb{E}\Big(%
{\displaystyle\int_{0}^{T}}
\,\mathbf{1}_{q\geq2}dR_{r}\Big)^{\frac{q}{2}+\varepsilon}\bigg)^{\frac{\alpha
q}{q+2\varepsilon}}\medskip\\
\displaystyle\bigg(\mathbb{E}\Big(%
{\displaystyle\int_{0}^{T}}
dN_{r}\Big)^{\frac{\bar{\alpha}q}{\bar{\alpha}q-q+1}}\bigg)^{\frac{\bar
{\alpha}q-q+1}{\bar{\alpha}q}\,\alpha q}\leq\bigg(\mathbb{E}\Big(%
{\displaystyle\int_{0}^{T}}
\,dN_{r}\Big)^{q+\delta}\bigg)^{\frac{\alpha q}{q+\delta}}\,.
\end{array}
\]
Consequently inequality (\ref{an-aa0b}) holds for any $0<\alpha<1.$\hfill
\end{proof}

\begin{proposition}
\label{an-prop-dk} Let:

\begin{itemize}
\item $\left(  Y,Z\right)  \in S_{m}^{0}\times\Lambda_{m\times k}^{0}$ ;

\item $K\in S_{m}^{0}$ and $K_{\cdot}\in\mathrm{BV}_{\mathrm{loc}}\left(
\mathbb{R}_{+};\mathbb{R}^{m}\right)  ,$ $\mathbb{P}$--a.s.;

\item $D,R,N,\tilde{R}$ be some increasing continuous p.m.s.p. with
$D_{0}=R_{0}=N_{0}=0$;

\item $V$ be a bounded variation p.m.s.p. with $V_{0}=0;$

\item $\sigma$ and $\theta$ be two stopping times such that $0\leq\sigma
\leq\theta<\infty.\medskip$
\end{itemize}

\noindent\textbf{I.} If for all $0\leq t\leq s<\infty,\;\mathbb{P}$-a.s.%
\begin{equation}
\left\vert Y_{t}\right\vert ^{2}+{%
{\displaystyle\int_{t}^{s}}
}\left\vert Z_{r}\right\vert ^{2}dr+{%
{\displaystyle\int_{t}^{s}}
}dD_{r}\leq\left\vert Y_{s}\right\vert ^{2}+2{%
{\displaystyle\int_{t}^{s}}
}\left\langle Y_{r},dK_{r}\right\rangle -2%
{\displaystyle\int_{t}^{s}}
\langle Y_{r},Z_{r}dB_{r}\rangle, \label{an-10-a}%
\end{equation}
and for some $\lambda<1$%
\begin{equation}
{%
{\displaystyle\int_{t}^{s}}
}\left\langle Y_{r},dK_{r}\right\rangle \leq%
{\displaystyle\int_{t}^{s}}
\left(  dR_{r}+|Y_{r}|dN_{r}+|Y_{r}|^{2}dV_{r}\right)  +\dfrac{\lambda}{2}\,%
{\displaystyle\int_{t}^{s}}
\left\vert Z_{r}\right\vert ^{2}dr, \label{an-10-b}%
\end{equation}
then, for any $q>0$, there exists a positive constant $C_{q,\lambda}$ such
that $\mathbb{P}$--a.s.%
\begin{equation}%
\begin{array}
[c]{l}%
\mathbb{E}^{\mathcal{F}_{\sigma}}\Big(%
{\displaystyle\int_{\sigma}^{\theta}}
e^{2V_{r}}\left\vert Z_{r}\right\vert ^{2}ds\Big)^{q/2}+\mathbb{E}%
^{\mathcal{F}_{\sigma}}\Big(%
{\displaystyle\int_{\sigma}^{\theta}}
e^{2V_{r}}dD_{r}\Big)^{q/2}\medskip\\
\leq C_{q,\lambda}\mathbb{E}^{\mathcal{F}_{\sigma}}\left[  \sup\nolimits_{r\in
\left[  \sigma,\theta\right]  }\left\vert e^{V_{r}}Y_{r}\right\vert ^{q}+\Big(%
{\displaystyle\int_{\sigma}^{\theta}}
e^{2V_{r}}dR_{r}\Big)^{q/2}+\Big(%
{\displaystyle\int_{\sigma}^{\theta}}
e^{2V_{r}}\left\vert Y_{r}\right\vert dN_{r}\Big)^{q/2}\right]  \medskip\\
\leq2C_{q,\lambda}\mathbb{E}^{\mathcal{F}_{\sigma}}\left[  \sup\nolimits_{r\in
\left[  \sigma,\theta\right]  }\left\vert e^{V_{r}}Y_{r}\right\vert ^{q}+\Big(%
{\displaystyle\int_{\sigma}^{\theta}}
e^{2V_{r}}dR_{r}\Big)^{q/2}+\Big(%
{\displaystyle\int_{\sigma}^{\theta}}
e^{V_{r}}dN_{r}\Big)^{q}\right]  .
\end{array}
\label{an-11}%
\end{equation}
\noindent\textbf{II.} If $q>1,$%
\begin{equation}%
\begin{array}
[c]{rl}%
\left(  i\right)  & \displaystyle\left\vert Y_{t}\right\vert ^{q}+\dfrac{q}%
{2}\,n_{q}\,\int_{t}^{s}\left\vert Y_{r}\right\vert ^{q-2}\,\mathbf{1}%
_{Y_{r}\neq0}\,\left\vert Z_{r}\right\vert ^{2}dr+\int_{t}^{s}\left\vert
Y_{r}\right\vert ^{q-2}\,\mathbf{1}_{Y_{r}\neq0}\,dD_{r}\medskip\\
& \displaystyle\leq\left\vert Y_{s}\right\vert ^{q}+q\int_{t}^{s}\left\vert
Y_{r}\right\vert ^{q-2}\,\mathbf{1}_{Y_{r}\neq0}\,\left[  d\tilde{R}%
_{r}+\left\langle Y_{r},dK_{r}\right\rangle \right]  -q\int_{t}^{s}\left\vert
Y_{r}\right\vert ^{q-2}\,\mathbf{1}_{Y_{r}\neq0}\,\left\langle Y_{r}%
,Z_{r}dB_{r}\right\rangle ,\medskip\\
\left(  ii\right)  & \displaystyle\mathbb{E}\sup\nolimits_{r\in\left[
\sigma,\theta\right]  }e^{qV_{r}}\left\vert Y_{r}\right\vert ^{q}<\infty
\end{array}
\label{an-12}%
\end{equation}
and for some $\lambda<1$%
\begin{equation}
d\tilde{R}_{r}+\left\langle Y_{r},dK_{r}\right\rangle \leq\left(
\mathbf{1}_{q\geq2}\,dR_{r}+|Y_{r}|dN_{r}+|Y_{r}|^{2}dV_{r}\right)
+\dfrac{n_{q}}{2}\,\lambda\,\left\vert Z_{r}\right\vert ^{2}dt, \label{an-13}%
\end{equation}
then there exists some positive constant $C_{q,\lambda},$ $C_{q,\lambda
}^{\prime}$ such that $\mathbb{P}$--a.s.,%
\begin{equation}%
\begin{array}
[c]{l}%
\mathbb{E}^{\mathcal{F}_{\sigma}}\bigg[\sup\nolimits_{\tau\in\left[
\sigma,\theta\right]  }\left\vert e^{V_{r}}Y_{r}\right\vert ^{q}+%
{\displaystyle\int_{\sigma}^{\theta}}
e^{qV_{r}}\left\vert Y_{r}\right\vert ^{q-2}\,\mathbf{1}_{Y_{r}\neq
0}\,\left\vert Z_{r}\right\vert ^{2}ds+%
{\displaystyle\int_{\sigma}^{\theta}}
e^{qV_{r}}\left\vert Y_{r}\right\vert ^{q-2}\,\mathbf{1}_{Y_{r}\neq0}%
\,dD_{r}\bigg]\medskip\\
\leq C_{q,\lambda}\,\mathbb{E}^{\mathcal{F}_{\sigma}}\bigg[\left\vert
e^{V_{\theta}}Y_{\theta}\right\vert ^{q}+\Big(%
{\displaystyle\int_{\sigma}^{\theta}}
e^{qV_{r}}\left\vert Y_{r}\right\vert ^{q-2}\,\mathbf{1}_{Y_{r}\neq
0}\,\mathbf{1}_{q\geq2}\,dR_{r}\Big)+\Big(%
{\displaystyle\int_{\sigma}^{\theta}}
e^{qV_{r}}\left\vert Y_{r}\right\vert ^{q-1}dN_{r}\Big)\bigg]\medskip\\
\leq C_{q,\lambda}^{\prime}\,\mathbb{E}^{\mathcal{F}_{\sigma}}\bigg[\left\vert
e^{V_{\theta}}Y_{\theta}\right\vert ^{q}+\Big(%
{\displaystyle\int_{\sigma}^{\theta}}
e^{2V_{r}}\,\mathbf{1}_{q\geq2}\,dR_{r}\Big)^{q/2}+\Big(%
{\displaystyle\int_{\sigma}^{\theta}}
e^{V_{r}}dN_{r}\Big)^{q}\bigg].
\end{array}
\label{an-14}%
\end{equation}

\end{proposition}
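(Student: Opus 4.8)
The plan is to reduce both parts to the weight-free estimates already proved, namely Proposition \ref{an-prop-dz} for Part \textbf{I} and Proposition \ref{an-prop-ydz} (together with Remark \ref{an-prop-ydz_2}) for Part \textbf{II}, by absorbing the weight into the unknowns. First I would set $\tilde{Y}_t=e^{V_t}Y_t$ and $\tilde{Z}_t=e^{V_t}Z_t$; since $V$ is a continuous bounded variation process, $e^{V}$ is continuous and locally bounded, so $(\tilde{Y},\tilde{Z})\in S_m^0\times\Lambda_{m\times k}^0$. The whole point is to show that $(\tilde{Y},\tilde{Z})$ verifies the hypotheses of those propositions, with $D,R,N$ replaced by their weighted versions $d\tilde{D}_r=e^{2V_r}dD_r$, $d\tilde{R}_r=e^{2V_r}dR_r$ and $d\tilde{N}_r=e^{V_r}dN_r$, after which the conclusions translate back directly into (\ref{an-11}) and (\ref{an-14}).

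For Part \textbf{I}, I would first read hypothesis (\ref{an-10-a}) as the assertion that the process $\Phi_t:=|Y_t|^2+\int_0^t|Z_r|^2dr+D_t-2\int_0^t\langle Y_r,dK_r\rangle+2\int_0^t\langle Y_r,Z_rdB_r\rangle$ is nondecreasing; this exhibits $|Y|^2$ as a genuine semimartingale whose finite-variation part contains a nonnegative slack measure $dA_t=d\Phi_t\ge0$. I would then apply the Stieltjes integration-by-parts formula to $W_t|Y_t|^2$ with $W_t=e^{2V_t}$ (there is no bracket term, $W$ being continuous of bounded variation), use $dW_t=2W_tdV_t$, and discard the nonnegative contribution $\int W\,dA\ge0$. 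Multiplying the monotonicity bound (\ref{an-10-b}) by the nonnegative weight $W$ --- which is legitimate because (\ref{an-10-b}), holding for every $t\le s$, says precisely that its right-hand side dominates $\langle Y,dK\rangle$ as a measure --- and substituting, the two occurrences of $\int W|Y|^2\,dV$ cancel exactly. What remains is an inequality of the form required by Proposition \ref{an-prop-dz} for $(\tilde{Y},\tilde{Z})$, with constants $a,\gamma$ depending only on $\lambda$ (the factor $1-\lambda$ in front of $\int|\tilde{Z}|^2\,dr$ being absorbed). Invoking Proposition \ref{an-prop-dz} then yields (\ref{an-11}).

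For Part \textbf{II} the mechanism is identical, now with the weight $W_t=e^{qV_t}$ and $dW_t=qW_tdV_t$. Reading (\ref{an-12}$-i$) as saying that the associated slack process is nondecreasing realizes $|Y|^q$ as a semimartingale; integration by parts for $W_t|Y_t|^q$, the bound (\ref{an-13}) for $d\tilde{R}_r+\langle Y_r,dK_r\rangle$ multiplied by $W\ge0$, and the cancellation of the $\int W|Y|^q\,dV$ terms produce, after normalizing away the positive constant $\tfrac{q}{2}n_q(1-\lambda)$ in front of $\int|\tilde{Y}_r|^{q-2}\mathbf{1}_{Y_r\neq0}|\tilde{Z}_r|^2\,dr$ into the final constant, exactly the hypothesis of Proposition \ref{an-prop-ydz} for $\tilde{Y}$. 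The integrability requirement (\ref{an6-ab}) of that proposition is supplied by assumption (\ref{an-12}$-ii$), since $\mathbb{E}\sup_{r\in[\sigma,\theta]}|\tilde{Y}_r|^q=\mathbb{E}\sup_{r\in[\sigma,\theta]}e^{qV_r}|Y_r|^q<\infty$. Applying Proposition \ref{an-prop-ydz} and Remark \ref{an-prop-ydz_2} and translating back gives (\ref{an-14}).

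The step I expect to be the main obstacle is the rigorous justification of this reduction rather than any single estimate. One must (i) extract from each pair of integral inequalities the correct semimartingale decomposition of $|Y|^2$ (resp.\ $|Y|^q$) with a nonnegative slack, (ii) carry out the Stieltjes integration by parts against $e^{2V}$ (resp.\ $e^{qV}$) and verify the exact cancellation of the $dV$-terms --- which is precisely what makes the bounded-variation, not necessarily monotone, process $V$ harmless --- and (iii) legitimately multiply the monotonicity bounds (\ref{an-10-b}) and (\ref{an-13}) by the nonnegative weight, using that they hold for all $t\le s$ and hence dominate $\langle Y,dK\rangle$ as signed measures. The localization required to handle the merely local martingale $\int\langle Y,Z\,dB\rangle$ is inherited from the stopping-time arguments already internal to Propositions \ref{an-prop-dz} and \ref{an-prop-ydz}, so no additional work is needed there.
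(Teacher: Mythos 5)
Your strategy coincides with the paper's: combine the two hypotheses, pass to the exponentially weighted processes $\tilde Y=e^{V}Y$, $\tilde Z=e^{V}Z$, and invoke Proposition \ref{an-prop-dz} for Part \textbf{I}, respectively Proposition \ref{an-prop-ydz} together with Remark \ref{an-prop-ydz_2} for Part \textbf{II} (with the integrability hypothesis supplied by (\ref{an-12}$-ii$)). The only difference is presentational: where you carry out the weighting by hand (integration by parts against $e^{2V}$, resp.\ $e^{qV}$, plus the observation that (\ref{an-10-b}) and (\ref{an-13}), holding on every interval, are inequalities between measures and may be multiplied by a nonnegative weight), the paper simply cites a stochastic Gronwall-type lemma (Proposition 6.69 of \cite{pa-ra/14}, or Lemma 12 of \cite{ma-ra/07}) which performs exactly this transformation; your steps (i)--(iii) are in substance a proof of that cited lemma.

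There is, however, a sign slip that must be corrected, because as written the key step fails. Hypothesis (\ref{an-10-a}) does \emph{not} say that
\[
\Phi_t=|Y_t|^2+\int_0^t|Z_r|^2dr+D_t-2\int_0^t\langle Y_r,dK_r\rangle+2\int_0^t\langle Y_r,Z_rdB_r\rangle
\]
is nondecreasing; it says that the process obtained as (right-hand side) minus (left-hand side), namely
\[
P_t=|Y_t|^2+2\int_0^t\langle Y_r,dK_r\rangle-2\int_0^t\langle Y_r,Z_rdB_r\rangle-\int_0^t|Z_r|^2dr-D_t\,,
\]
is nondecreasing. Since $\Phi+P=2|Y|^2$, your $\Phi$ is in general not monotone, and the claim $d\Phi_t\ge0$ is false. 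With the correct slack $P$ the decomposition reads $d(|Y_t|^2)=|Z_t|^2dt+dD_t-2\langle Y_t,dK_t\rangle+2\langle Y_t,Z_tdB_t\rangle+dP_t$ with $dP_t\ge0$; after integration by parts the slack enters the upper bound as $-\int_t^s e^{2V_r}dP_r\le0$ and is discarded, the weighted (\ref{an-10-b}) is substituted, the terms $\pm2\int e^{2V_r}|Y_r|^2dV_r$ cancel, and one arrives exactly at the paper's intermediate weighted inequality, from which Proposition \ref{an-prop-dz} yields (\ref{an-11}). The same orientation (right-hand side minus left-hand side of (\ref{an-12}$-i$)) must be used for the slack of $|Y|^q$ in Part \textbf{II}. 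With this one correction your argument goes through and reproduces the paper's proof.
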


\begin{proof}
Using inequalities (\ref{an-10-a}) and (\ref{an-10-b}) we obtain, for all
$0\leq t\leq s<\infty,$%
\[
\left\vert Y_{t}\right\vert ^{2}+\left(  1-\lambda\right)  {%
{\displaystyle\int_{t}^{s}}
}\left\vert Z_{r}\right\vert ^{2}dr+{%
{\displaystyle\int_{t}^{s}}
}dD_{r}\leq\left\vert Y_{s}\right\vert ^{2}+{%
{\displaystyle\int_{t}^{s}}
}\left[  \left(  2dR_{r}+2|Y_{r}|dN_{r}\right)  +|Y_{r}|^{2}d\left(
2V_{r}\right)  \right]  -2%
{\displaystyle\int_{t}^{s}}
\,\langle Y_{r},Z_{r}dB_{r}\rangle,
\]
which yields, applying \cite[Proposition 6.69]{pa-ra/14} (or \cite[Lemma
12]{ma-ra/07}),%
\begin{align*}
&  \left\vert e^{V_{t}}Y_{t}\right\vert ^{2}+\left(  1-\lambda\right)  {%
{\displaystyle\int_{t}^{s}}
}\left\vert e^{V_{r}}Z_{r}\right\vert ^{2}dr+{%
{\displaystyle\int_{t}^{s}}
}e^{2V_{r}}dD_{r}\\
&  \leq\left\vert e^{V_{s}}Y_{s}\right\vert ^{2}+2{%
{\displaystyle\int_{t}^{s}}
}\left[  e^{2V_{r}}dR_{r}+|e^{V_{r}}Y_{r}|e^{V_{r}}dN_{r}\right]  -2%
{\displaystyle\int_{t}^{s}}
\,\langle e^{V_{r}}Y_{r},e^{V_{r}}Z_{r}dB_{r}\rangle.
\end{align*}
Inequality (\ref{an-11}) follows now by Proposition \ref{an-prop-dz}%
.$\medskip$

In the same manner, using (\ref{an-12}), (\ref{an-13}) and \cite[Proposition
6.69]{pa-ra/14}, we infer%
\[%
\begin{array}
[c]{l}%
\left\vert e^{V_{t}}Y_{t}\right\vert ^{q}+\dfrac{q}{2}\,n_{q}\,\left(
1-\lambda\right)
{\displaystyle\int_{t}^{s}}
\left\vert e^{V_{r}}Y_{r}\right\vert ^{q-2}\,\mathbf{1}_{Y_{r}\neq
0}\,\left\vert e^{V_{r}}Z_{r}\right\vert ^{2}dr+%
{\displaystyle\int_{t}^{s}}
\left\vert e^{V_{r}}Y_{r}\right\vert ^{q-2}\,\mathbf{1}_{Y_{r}\neq
0}\,e^{2V_{r}}dD_{r}\medskip\\
\leq\left\vert e^{V_{s}}Y_{s}\right\vert ^{q}+q%
{\displaystyle\int_{t}^{s}}
\Big[\left\vert e^{V_{r}}Y_{r}\right\vert ^{q-2}\,\mathbf{1}_{Y_{r}\neq
0}\,\mathbf{1}_{q\geq2}\,e^{2V_{r}}dR_{r}+|e^{V_{r}}Y_{r}|^{q-1}e^{V_{r}%
}dN_{r}\Big]\medskip\\
\quad-q%
{\displaystyle\int_{t}^{s}}
\left\vert e^{V_{r}}Y_{r}\right\vert ^{q-2}\,\mathbf{1}_{Y_{r}\neq
0}\,\left\langle e^{V_{r}}Y_{r},e^{V_{r}}Z_{r}dB_{r}\right\rangle .
\end{array}
\]
Inequalities from (\ref{an-14}) follow now by Proposition \ref{an-prop-ydz}
and Remark \ref{an-prop-ydz_2}.\hfill$\medskip$
\end{proof}

With a similar approach we deduce the next results.

\begin{proposition}
[{\cite[Proposition 6.80]{pa-ra/14}}]\label{Appendix_result 1}Let $\left(
Y,Z\right)  \in S_{m}^{0}\times\Lambda_{m\times k}^{0}$ satisfying%
\[
Y_{t}=Y_{T}+\int_{t}^{T}dK_{s}-\int_{t}^{T}Z_{s}dB_{s},\;0\leq t\leq
T,\quad\mathbb{P}\text{--a.s.},
\]
where $K\in S_{m}^{0}$ and $K_{\cdot}\in\mathrm{BV}_{\mathrm{loc}}\left(
\mathbb{R}_{+};\mathbb{R}^{m}\right)  $, $\mathbb{P}$--a.s..

Let $\tau$ and $\sigma$ be two stopping times such that $0\leq\tau\leq
\sigma<\infty$. Assume that there exists three increasing and continuous
p.m.s.p. $D,R,N$ with $D_{0}=R_{0}=N_{0}=0$ and a bounded variation p.m.s.p.
$V$ with $V_{0}=0$ such that for, $\lambda<1,$%
\[
dD_{t}+\left\langle Y_{t},dK_{t}\right\rangle \leq dR_{t}+|Y_{t}|dN_{t}%
+|Y_{t}|^{2}dV_{t}+\dfrac{\lambda}{2}\,\left\vert Z_{t}\right\vert ^{2}dt.
\]
Then, for any $q>0$, there exists a positive constant $C_{q,\lambda}$ such
that $\mathbb{P}$--a.s.%
\[%
\begin{array}
[c]{l}%
\displaystyle\mathbb{E}^{\mathcal{F}_{\tau}}\Big(\int_{\tau}^{\sigma}%
e^{2V_{s}}dD_{s}\Big)^{q/2}+\mathbb{E}^{\mathcal{F}_{\tau}}\Big(\int_{\tau
}^{\sigma}e^{2V_{s}}\left\vert Z_{s}\right\vert ^{2}ds\Big)^{q/2}\medskip\\
\multicolumn{1}{r}{\displaystyle\leq C_{q,\lambda}\,\mathbb{E}^{\mathcal{F}%
_{\tau}}\left[  \sup\nolimits_{s\in\left[  \tau,\sigma\right]  }\left\vert
e^{V_{s}}Y_{s}\right\vert ^{q}+\Big(\int_{\tau}^{\sigma}e^{2V_{s}}%
dR_{s}\Big)^{q/2}+\Big(\int_{\tau}^{\sigma}e^{V_{s}}dN_{s}\Big)^{q}\right]  .}%
\end{array}
\]
Moreover, if $p>1$ and%
\[%
\begin{array}
[c]{l}%
\displaystyle dD_{t}+\left\langle Y_{t},dK_{t}\right\rangle \leq\left(
\mathbf{1}_{p\geq2}dR_{t}+|Y_{t}|dN_{t}+|Y_{t}|^{2}dV_{t}\right)
+\dfrac{n_{p}}{2}\lambda\left\vert Z_{t}\right\vert ^{2}dt,\medskip\\
\displaystyle\mathbb{E}\sup\nolimits_{s\in\left[  \tau,\sigma\right]
}e^{pV_{s}}\left\vert Y_{s}\right\vert ^{p}<\infty,
\end{array}
\]
then there exists a positive constant $C_{p,\lambda}$ such that $\mathbb{P}%
$--a.s.,%
\begin{equation}%
\begin{array}
[c]{l}%
\displaystyle\mathbb{E}^{\mathcal{F}_{\tau}}\Big(\sup\nolimits_{s\in\left[
\tau,\sigma\right]  }\left\vert e^{V_{s}}Y_{s}\right\vert ^{p}\Big)+\mathbb{E}%
^{\mathcal{F}_{\tau}}\Big(\int_{\tau}^{\sigma}e^{2V_{s}}dD_{s}\Big)^{p/2}%
+\mathbb{E}^{\mathcal{F}_{\tau}}\Big(\int_{\tau}^{\sigma}e^{2V_{s}}\left\vert
Z_{s}\right\vert ^{2}ds\Big)^{p/2}\medskip\\
\displaystyle\leq C_{p,\lambda}\,\mathbb{E}^{\mathcal{F}_{\tau}}\left[
\left\vert e^{V_{\sigma}}Y_{\sigma}\right\vert ^{p}+\Big(\int_{\tau}^{\sigma
}e^{2V_{s}}\mathbf{1}_{p\geq2}dR_{s}\Big)^{p/2}+\Big(\int_{\tau}^{\sigma
}e^{V_{s}}dN_{s}\Big)^{p}\right]  .
\end{array}
\label{an3a}%
\end{equation}

\end{proposition}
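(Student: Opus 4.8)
The plan is to reduce both assertions to the weighted backward inequalities already established in Propositions \ref{an-prop-dz} and \ref{an-prop-ydz}, following the pattern of the proof of Proposition \ref{an-prop-dk}; the only genuinely new work is to manufacture, from the semimartingale representation of $Y$, the precise integral inequalities to which those propositions apply, and to introduce the exponential weight $e^{V}$.

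First I would treat the case $q>0$. Applying It\^{o}'s formula to $|Y_{t}|^{2}$ (Proposition \ref{p1-ito} with $p=2$, $\rho=\delta=0$) to the representation $Y_{t}=Y_{T}+\int_{t}^{T}dK_{s}-\int_{t}^{T}Z_{s}dB_{s}$ yields, for $0\le t\le s$, the identity $|Y_{t}|^{2}+\int_{t}^{s}|Z_{r}|^{2}dr=|Y_{s}|^{2}+2\int_{t}^{s}\langle Y_{r},dK_{r}\rangle-2\int_{t}^{s}\langle Y_{r},Z_{r}dB_{r}\rangle$. Into this I would insert the hypothesis $dD_{t}+\langle Y_{t},dK_{t}\rangle\le dR_{t}+|Y_{t}|dN_{t}+|Y_{t}|^{2}dV_{t}+\tfrac{\lambda}{2}|Z_{t}|^{2}dt$; since $D$ is increasing this bounds $2\int\langle Y,dK\rangle$ from above and allows me to move $\lambda\int|Z|^{2}$ and $2\int dD$ to the left, producing $|Y_{t}|^{2}+(1-\lambda)\int_{t}^{s}|Z_{r}|^{2}dr+\int_{t}^{s}d(2D_{r})\le|Y_{s}|^{2}+\int_{t}^{s}\bigl(2dR_{r}+2|Y_{r}|dN_{r}+|Y_{r}|^{2}d(2V_{r})\bigr)-2\int_{t}^{s}\langle Y_{r},Z_{r}dB_{r}\rangle$. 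This is exactly the inequality reached midway through the proof of Proposition \ref{an-prop-dk}, so I would then apply the exponential change of variable \cite[Proposition 6.69]{pa-ra/14} (equivalently \cite[Lemma 12]{ma-ra/07}) to pass to the processes $e^{V}Y$ and $e^{V}Z$, thereby removing the $|Y|^{2}dV$ drift in favour of the weight $e^{2V}$, and finally invoke Proposition \ref{an-prop-dz} with constants $a,\gamma$ depending on $\lambda$ to obtain the first displayed estimate.

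For the \emph{moreover} part with $p>1$ I would combine that first estimate (now available with $q=p$, because $\tfrac{n_{p}}{2}\lambda\le\tfrac{\lambda}{2}<\tfrac12$ and $\mathbf{1}_{p\ge2}dR\le dR$ make its hypothesis a consequence of the present one) with a supremum bound obtained from Proposition \ref{an-prop-ydz}. To get the latter I would apply It\^{o}'s formula to $(|Y_{t}|^{2}+\delta)^{p/2}$ and let $\delta\to0_{+}$, using Remark \ref{r1-ito}, inequality (\ref{ito4}), to produce the backward inequality weighted by $|Y_{r}|^{p-2}\mathbf{1}_{Y_{r}\neq0}$ with coefficient $\tfrac{p}{2}n_{p}$ in front of $\int|Y|^{p-2}\mathbf{1}_{Y\neq0}|Z|^{2}$; after absorbing the hypothesis term $\tfrac{n_{p}}{2}\lambda|Z|^{2}$, which leaves $\tfrac{p}{2}n_{p}(1-\lambda)>0$, and performing the same exponential change of variable via \cite[Proposition 6.69]{pa-ra/14}, Proposition \ref{an-prop-ydz} with $q=p$ together with Remark \ref{an-prop-ydz_2} — whose integrability requirement (\ref{an6-ab}) is precisely the assumed $\mathbb{E}\sup_{s\in[\tau,\sigma]}e^{pV_{s}}|Y_{s}|^{p}<\infty$ — bounds $\mathbb{E}^{\mathcal{F}_{\tau}}\sup_{s\in[\tau,\sigma]}|e^{V_{s}}Y_{s}|^{p}$ by the right-hand side of (\ref{an3a}). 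Substituting this supremum bound into the first estimate taken with $q=p$ then yields (\ref{an3a}).

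The routine parts, namely the Burkholder--Davis--Gundy control of the stochastic integral and its absorption into the supremum term by Young's inequality, are already internal to Propositions \ref{an-prop-dz} and \ref{an-prop-ydz}, so I do not expect them to cause difficulty here. The step requiring the most care is the passage to the limit $\delta\to0_{+}$ in the power-$p$ It\^{o} expansion when $1<p<2$: the factor $(|Y|^{2}+\delta)^{(p-2)/2}$ degenerates as $\delta\to0$ on the set $\{Y=0\}$, and one must verify, by Fatou on the left-hand side and dominated convergence on the right (exactly as in Remark \ref{an-prop-ydz_2}), that the correct limiting inequality carries the indicator $\mathbf{1}_{Y\neq0}$ before Proposition \ref{an-prop-ydz} is applicable.
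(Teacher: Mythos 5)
Your proposal is correct and is essentially the approach the paper intends: Proposition \ref{Appendix_result 1} is stated without proof (it is quoted from \cite[Proposition 6.80]{pa-ra/14} and introduced by ``With a similar approach we deduce the next results''), that approach being exactly what you describe --- It\^{o}'s formula applied to the BSDE representation, insertion of the hypothesis on $dD_{t}+\left\langle Y_{t},dK_{t}\right\rangle$, the exponential transform of \cite[Proposition 6.69]{pa-ra/14} (or \cite[Lemma 12]{ma-ra/07}), and then Proposition \ref{an-prop-dz} for the first estimate and Proposition \ref{an-prop-ydz} with Remark \ref{an-prop-ydz_2} for the supremum bound, precisely as in the proof of Proposition \ref{an-prop-dk}.

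One small repair is needed in the \emph{moreover} part: apply the first estimate with the increasing process $t\mapsto\int_{0}^{t}\mathbf{1}_{p\geq2}\,dR_{s}$ playing the role of $R$ (this is exactly what the hypothesis supplies), rather than invoking $\mathbf{1}_{p\geq2}\,dR_{t}\leq dR_{t}$ and using $R$ itself. With your choice the combined bound retains a term $\big(\int_{\tau}^{\sigma}e^{2V_{s}}dR_{s}\big)^{p/2}$ on the right-hand side, which for $1<p<2$ is not present in (\ref{an3a}), since there $\mathbf{1}_{p\geq2}=0$ removes the $R$-term entirely; the fix is one line and uses only what you already have.
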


Based mainly on this previous result one can prove:

\begin{proposition}
[{\cite[Corollary 6.81]{pa-ra/14}}]\label{Appendix_result 2}Let $\left(
Y,Z\right)  \in S_{m}^{0}\times\Lambda_{m\times k}^{0}$ satisfying%
\[
Y_{t}=Y_{T}+\int_{t}^{T}dK_{s}-\int_{t}^{T}Z_{s}dB_{s},\;0\leq t\leq
T,\quad\mathbb{P}\text{--a.s.,}%
\]
where $K\in S_{m}^{0}$ and $K_{\cdot}\in\mathrm{BV}_{\mathrm{loc}}\left(
\mathbb{R}_{+};\mathbb{R}^{m}\right)  ,\;\mathbb{P}$--a.s..

Let $\tau$ and $\sigma$ be two stopping times such that $0\leq\tau\leq
\sigma<\infty$. Assume that there exists two increasing and continuous
p.m.s.p. $D,N$ with $N_{0}=0$ and a bounded variation p.m.s.p. $V$ with
$V_{0}=0$ such that, for $\lambda<1,$%
\[%
\begin{array}
[c]{l}%
dD_{t}+\left\langle Y_{t},dK_{t}\right\rangle \leq|Y_{t}|dN_{t}+|Y_{t}%
|^{2}dV_{t}\,,\medskip\\
\mathbb{E}\sup\nolimits_{s\in\left[  \tau,\sigma\right]  }\left\vert e^{V_{s}%
}Y_{s}\right\vert <\infty.
\end{array}
\]
Then%
\[
e^{V_{\tau}}\left\vert Y_{\tau}\right\vert \leq\mathbb{E}^{\mathcal{F}_{\tau}%
}e^{V_{\sigma}}\left\vert Y_{\sigma}\right\vert +\mathbb{E}^{\mathcal{F}%
_{\tau}}\int_{\tau}^{\sigma}e^{V_{s}}dN_{s}%
\]
and, for all $0<a<1,$%
\[%
\begin{array}
[c]{l}%
\displaystyle\sup\nolimits_{s\in\left[  \tau,\sigma\right]  }\left[
\mathbb{E}\left(  e^{V_{s}}\left\vert Y_{s}\right\vert \right)  \right]
^{a}+\mathbb{E}\Big(\sup\nolimits_{s\in\left[  \tau,\sigma\right]  }\left\vert
e^{V_{s}}Y_{s}\right\vert ^{a}\Big)+\mathbb{E}\Big(\int_{\tau}^{\sigma
}e^{2V_{s}}\left\vert Z_{s}\right\vert ^{2}ds\Big)^{a/2}+\mathbb{E}%
\Big(\int_{\tau}^{\sigma}e^{2V_{s}}dD_{s}\Big)^{a/2}\smallskip\\
\displaystyle\leq C_{a}\,\Big(\mathbb{E}\left(  e^{V_{\sigma}}\left\vert
Y_{\sigma}\right\vert \right)  \Big)^{a}+C_{a}\,\Big(\mathbb{E}\int_{\tau
}^{\sigma}e^{V_{s}}dN_{s}\Big)^{a}%
\end{array}
\]

\end{proposition}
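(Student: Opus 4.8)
The plan is to treat this statement as the degenerate exponent $q=1$ analogue of Proposition \ref{an-prop-dk}, not directly reachable from Proposition \ref{Appendix_result 1} because at power one the coefficient $n_1=(1-1)\wedge 1=0$ destroys the $Z$--energy in the It\^o expansion. I would therefore run two separate It\^o computations, one at power $p=1$ to produce the pointwise (supermartingale-type) inequality for $e^{V}|Y|$, and one at power $p=2$ to recover the $Z$ and $D$ moments, and then glue the two together through the abstract estimate \cite[Proposition 1.56]{pa-ra/14}.

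For the pointwise inequality I would apply the It\^o formula of Proposition \ref{p1-ito}, in the form (\ref{ito4}) with $p=1$, $\rho=0$ and a regularization parameter $\delta>0$, to $X_t^\delta:=(|Y_t|^2+\delta)^{1/2}$; since $n_1=0$ the $Z$--term on the left is nonnegative and may be discarded. I would then multiply the standing monotonicity hypothesis $dD_r+\langle Y_r,dK_r\rangle\le |Y_r|\,dN_r+|Y_r|^2\,dV_r$ by the nonnegative factor $(|Y_r|^2+\delta)^{-1/2}$ and use $(|Y_r|^2+\delta)^{-1/2}|Y_r|\le 1$ together with the identity $(|Y_r|^2+\delta)^{-1/2}|Y_r|^2=X_r^\delta-\delta(|Y_r|^2+\delta)^{-1/2}$, so that the only bounded--variation drift left is $X_r^\delta\,dV_r+dN_r$ plus a remainder of size $O(\delta^{1/2})$ against the total variation of $V$. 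Removing the $dV$--drift with the integrating factor $e^{V}$ exactly as in the proof of Proposition \ref{an-prop-dk} (via \cite[Proposition 6.69]{pa-ra/14}), localizing so that the resulting stochastic integral --- whose integrand is dominated by $e^{V_r}|Z_r|$ --- is a true martingale, taking $\mathbb{E}^{\mathcal{F}_\tau}$, discarding the nonnegative $dD$ contribution, and finally letting $\delta\downarrow 0$ (dominated convergence being legitimate thanks to $\mathbb{E}\sup_{[\tau,\sigma]}|e^{V_s}Y_s|<\infty$, the $O(\delta^{1/2})$ remainder vanishing) yields the first assertion $e^{V_\tau}|Y_\tau|\le \mathbb{E}^{\mathcal{F}_\tau}\big[e^{V_\sigma}|Y_\sigma|+\int_\tau^\sigma e^{V_s}\,dN_s\big]$.

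For the $Z$ and $D$ moments I would instead expand $|Y_t|^2$ by (\ref{ito3}) with $p=2$, $\delta=0$, bound $2\langle Y_r,dK_r\rangle$ by the monotonicity hypothesis, move $2\int dD$ to the left, and absorb the $|Y_r|^2\,dV_r$ term with the factor $e^{2V}$; dropping the nonnegative $|e^{V_t}Y_t|^2$ on the left leaves an inequality for $e^{V}Y$, $e^{V}Z$, $e^{2V}D$, $e^{V}N$ of precisely the shape required by Proposition \ref{an-prop-dz} (with $R\equiv 0$, $a=1$, $\gamma=-2$). Reading Proposition \ref{an-prop-dz} at exponent $q=a$ then bounds $\mathbb{E}^{\mathcal{F}_\tau}\big(\int_\tau^\sigma e^{2V_s}|Z_s|^2\,ds\big)^{a/2}$ and $\mathbb{E}^{\mathcal{F}_\tau}\big(\int_\tau^\sigma e^{2V_s}\,dD_s\big)^{a/2}$ by a constant times $\mathbb{E}^{\mathcal{F}_\tau}\big[\sup_{[\tau,\sigma]}|e^{V_s}Y_s|^a+(\int_\tau^\sigma e^{V_s}\,dN_s)^a\big]$.

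Finally I would feed the first--step supermartingale inequality for the nonnegative process $e^{V}|Y|$ into \cite[Proposition 1.56]{pa-ra/14} to obtain simultaneously the mean bound $\sup_{[\tau,\sigma]}[\mathbb{E}(e^{V_s}|Y_s|)]^a$ and the maximal bound $\mathbb{E}\sup_{[\tau,\sigma]}|e^{V_s}Y_s|^a$ in terms of $(\mathbb{E}e^{V_\sigma}|Y_\sigma|)^a$ and $(\mathbb{E}\int_\tau^\sigma e^{V_s}\,dN_s)^a$, and then combine this maximal bound with the $Z,D$ estimate of the previous step to reach the stated inequality. The main obstacle is the first step: because $n_1=0$ the power--one expansion carries no $Z$--information, so the single regularization parameter $\delta$ cannot be sent to zero inside one estimate covering all four quantities; the two powers $p=1$ and $p=2$ must be kept strictly separate, and one has to verify the martingale property of the localized stochastic integral and the harmlessness of the $O(\delta^{1/2})$ drift remainder uniformly before passing to the limit.
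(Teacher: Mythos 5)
Your proof is correct, and it follows exactly the route the paper itself indicates: the paper gives no proof of this statement (it is quoted from \cite[Corollary 6.81]{pa-ra/14}, preceded by the remark that it can be proved ``based mainly on'' Proposition \ref{Appendix_result 1}), and your three ingredients --- the power-one It\^o/supermartingale inequality for $e^{V}|Y|$ obtained by $\delta$-regularization, \cite[Proposition 1.56]{pa-ra/14} to convert it into the $a<1$ sup-moment bounds, and the conditional $q$-moment estimate for $\int e^{2V_s}|Z_s|^2ds$ and $\int e^{2V_s}dD_s$ at $q=a$ --- constitute precisely that route. The only stylistic difference is that your second step re-derives, via It\^o at power two plus Proposition \ref{an-prop-dz}, what could be cited directly from the first part of Proposition \ref{Appendix_result 1} with $R\equiv 0$ (and no $|Z|^2$ term on the right, so any $\lambda<1$ serves).
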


\subsection{Smoothing approximations}

\begin{lemma}
\label{L1-approx}Let $\varepsilon>0$ and let $Q:\Omega\times\mathbb{R}%
_{+}\rightarrow\mathbb{R}_{+}$ be a strictly increasing continuous stochastic
process such that $Q_{0}=0$ and $\lim_{t\rightarrow\infty}Q_{t}=\infty,$ and
let $G:\Omega\times\mathbb{R}_{+}\rightarrow\mathbb{R}^{m}$ be a measurable
stochastic process such that $\sup_{t\in\mathbb{R}_{+}}\left\vert
G_{t}\right\vert <\infty,$ $\mathbb{P}$--a.s..$\smallskip$

Define%
\begin{equation}
G_{t}^{\varepsilon}=\frac{1}{Q_{\varepsilon}}\,%
{\displaystyle\int_{t\vee\varepsilon}^{\infty}}
e^{-\frac{Q_{r}-Q_{t\vee\varepsilon}}{Q_{\varepsilon}}}G_{r}\,dQ_{r}\,.
\label{def_G_epsilon}%
\end{equation}
Then $G^{\varepsilon}:\Omega\times\mathbb{R}_{+}\rightarrow\mathbb{R}^{m}$ are
continuous stochastic processes and, $\mathbb{P}$--a.s.,%
\begin{equation}%
\begin{array}
[c]{ll}%
\left(  a\right)  & \left\vert G_{t}^{\varepsilon}\right\vert \leq
\sup\nolimits_{r\geq0}\left\vert G_{r}\right\vert ,\quad\text{for all }%
t\geq0;\medskip\\
\left(  b\right)  & \lim\nolimits_{\varepsilon\rightarrow0}G_{t}^{\varepsilon
}=G_{t}\,,\;\;\text{a.e.}\mathbb{\;}t\geq0;\medskip\\
\left(  c\right)  & \left\vert G_{t}^{\varepsilon}-G_{t}\right\vert \leq
\exp\big(2-1/\sqrt{Q_{\varepsilon}}\big)\,\sup\nolimits_{r\geq0}\left\vert
G_{r}\right\vert \medskip\\
& \quad+\sup\nolimits_{r\geq0}\left\{  \left\vert G_{r}-G_{t}\right\vert
:0\leq Q_{r}-Q_{t}\leq\sqrt{Q_{\varepsilon}}\vee Q_{\varepsilon}\right\}
,\quad\text{for all }t\geq0.
\end{array}
\label{a1}%
\end{equation}
Moreover, if $G$ is a continuous stochastic process, then, for all $T>0,$%
\begin{equation}
\lim\nolimits_{\varepsilon\rightarrow0}\left(  \sup\nolimits_{s\in\left[
0,T\right]  }\left\vert G_{s}^{\varepsilon}-G_{s}\right\vert \right)
=0,\quad\mathbb{P}\text{--a.s..} \label{a2}%
\end{equation}

\end{lemma}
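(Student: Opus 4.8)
The plan is to argue $\omega$ by $\omega$ (every assertion is $\mathbb{P}$--a.s.), so that $Q_\varepsilon=Q_\varepsilon(\omega)>0$ is just a fixed positive number, and to reduce everything to a one–sided exponential average of the bounded measurable process reparametrized by $Q$. The first thing I would record is that the substitution $s=(Q_r-Q_{t\vee\varepsilon})/Q_\varepsilon$, legitimate because $Q$ is a strictly increasing continuous bijection of $[0,\infty)$, turns $dQ_r=Q_\varepsilon\,ds$ and shows that $\frac{1}{Q_\varepsilon}e^{-(Q_r-Q_{t\vee\varepsilon})/Q_\varepsilon}\,dQ_r$ is a probability measure on $[t\vee\varepsilon,\infty)$. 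Hence $G_t^\varepsilon$ is a genuine weighted average of the values $\{G_r:r\ge t\vee\varepsilon\}$, which gives $(a)$ at once, and, the weights integrating to $1$, lets me write
\[
G_t^\varepsilon-G_t=\int_{t\vee\varepsilon}^{\infty}\frac{1}{Q_\varepsilon}\,e^{-(Q_r-Q_{t\vee\varepsilon})/Q_\varepsilon}\,(G_r-G_t)\,dQ_r.
\]
Continuity of $t\mapsto G_t^\varepsilon$ I would obtain by noting it is constant on $[0,\varepsilon]$ (there $t\vee\varepsilon=\varepsilon$) and equals $e^{Q_t/Q_\varepsilon}Q_\varepsilon^{-1}\int_t^{\infty}e^{-Q_r/Q_\varepsilon}G_r\,dQ_r$ on $(\varepsilon,\infty)$, a continuous function of $t$ by dominated convergence for the tail integral, the two expressions matching at $t=\varepsilon$.

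For $(c)$ I would split the displayed integral at the $Q$--distance threshold $\rho:=\sqrt{Q_\varepsilon}\vee Q_\varepsilon$, i.e. into the \emph{near} set $\{r\ge t\vee\varepsilon:Q_r-Q_t\le\rho\}$ and the \emph{far} set $\{r\ge t\vee\varepsilon:Q_r-Q_t>\rho\}$. On the near set $|G_r-G_t|$ is dominated by the oscillation $\sup\{|G_r-G_t|:0\le Q_r-Q_t\le\rho\}$ while the total mass is $\le 1$, producing exactly the second term of $(c)$. On the far set I bound $|G_r-G_t|\le 2\sup_r|G_r|$ and estimate the residual exponential mass by the same substitution, treating $t\ge\varepsilon$ (centre $Q_t$, mass $\le e^{-1/\sqrt{Q_\varepsilon}}$ since $\rho/Q_\varepsilon\ge1/\sqrt{Q_\varepsilon}$) and $t<\varepsilon$ (centre $Q_\varepsilon$; using $0\le Q_\varepsilon-Q_t\le Q_\varepsilon$ the mass is $\le e^{\,1-\rho/Q_\varepsilon}\le e^{\,1-1/\sqrt{Q_\varepsilon}}$) separately. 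In both cases $2\times(\text{far mass})\le e^{2-1/\sqrt{Q_\varepsilon}}$, because $2\le e^2$ and $2e\le e^2$; this is the first term of $(c)$.

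For $(a2)$, the continuous case, I would feed $(c)$ into a uniform estimate on $[0,T]$: since $Q_\varepsilon\to Q_0=0$ as $\varepsilon\to0$ the factor $e^{2-1/\sqrt{Q_\varepsilon}}\to0$ kills the first term, while for the oscillation term I restrict to $r$ with $Q_r\le Q_T+\rho\le Q_T+1$ for small $\varepsilon$, hence $r$ in the fixed compact interval $[0,\,Q_\cdot^{-1}(Q_T+1)]$; there $G$ is uniformly continuous and $Q_\cdot^{-1}$ is uniformly continuous, so $\sup_{s\in[0,T]}\sup\{|G_r-G_s|:0\le Q_r-Q_s\le\rho\}\to0$ as $\rho\to0$. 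Finally $(b)$ is the one place where the crude near–estimate of $(c)$ is insufficient, and it is the step I expect to be the main obstacle. Here I would pass to $Q$--time by setting $H(u):=G_{Q_\cdot^{-1}(u)}$, a bounded measurable function on $[0,\infty)$, so that for fixed $t>0$ and $\varepsilon<t$ the change of variables gives $G_t^\varepsilon=Q_\varepsilon^{-1}\int_{Q_t}^{\infty}e^{-(u-Q_t)/Q_\varepsilon}H(u)\,du$, an Abel (exponential) average centred at $u_0=Q_t$ with scale $\lambda=Q_\varepsilon\to0$. The analytic lemma to establish is that such an average converges to $H(u_0)$ at every right Lebesgue point $u_0$ of $H$: split near and far, bound the far part by $2\,\|H\|_\infty\,e^{-\delta/\lambda}$, and control the near part through $\Phi(h)=\int_{u_0}^{u_0+h}|H-H(u_0)|=o(h)$ via a layer–cake (integration by parts) estimate of $\lambda^{-1}\int_0^{\delta}e^{-h/\lambda}\,d\Phi(h)$. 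Since almost every $u$ is a Lebesgue point, $G_t^\varepsilon\to G_t$ for $dQ$--almost every $t$, which is the notion of ``a.e.'' compatible with the $dQ$--integrals in which the lemma is later used.
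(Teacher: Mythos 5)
Your proposal is correct and essentially reproduces the paper's own argument: the same probability--kernel normalization gives $(a)$, the same near/far splitting at a $Q$--distance of order $\sqrt{Q_{\varepsilon}}$ with an exponential tail estimate gives $(c)$ and hence (\ref{a2}), and $(b)$ is again a Lebesgue--differentiation argument for $H(u)=G_{Q^{-1}(u)}$ in $Q$--time (the paper truncates the substituted integral $\int_{0}^{\infty}e^{-s}\,|H(Q_t+sQ_{\varepsilon})-H(Q_t)|\,ds$ at level $n$ rather than integrating by parts against $\Phi$, but the underlying Lebesgue--point fact is the same). Your explicit observation that the ``a.e.'' in $(b)$ is naturally $dQ$--a.e.\ (since the exceptional set is the $Q$--preimage of a Lebesgue--null set) is a correct and useful clarification of a point the paper's proof leaves implicit.
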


\begin{proof}
$\left(  b\right)  $ Let $n\in\mathbb{N}^{\ast}.$ We can assume that
$0<\varepsilon<t.$%
\begin{align*}
\left\vert G_{t}^{\varepsilon}-G_{t}\right\vert  &  \leq\dfrac{1}%
{Q_{\varepsilon}}\,\int_{t}^{\infty}e^{-\frac{Q_{r}-Q_{t}}{Q_{\varepsilon}}%
}|G_{r}-G_{t}|dQ_{r}\\
&  =\int_{0}^{\infty}e^{-s}\left\vert G_{Q^{-1}\left(  Q_{t}+sQ_{\varepsilon
}\right)  }-G_{Q^{-1}\left(  Q_{t}\right)  }\right\vert ds\\
&  \leq%
{\displaystyle\int_{0}^{n}}
\left\vert G_{Q^{-1}\left(  Q_{t}+sQ_{\varepsilon}\right)  }-G_{Q^{-1}\left(
Q_{t}\right)  }\right\vert ds+2\sup\nolimits_{r\geq0}\left\vert G_{r}%
\right\vert \int_{n}^{\infty}e^{-s}ds.
\end{align*}
Since%
\[
\lim\nolimits_{\varepsilon\rightarrow0}%
{\displaystyle\int_{0}^{n}}
\left\vert G_{Q^{-1}\left(  Q_{t}+sQ_{\varepsilon}\right)  }-G_{Q^{-1}\left(
Q_{t}\right)  }\right\vert ds=0,\quad\text{a.e. }t\in\left[  0,n\right]  ,
\]
we have for all $n\in\mathbb{N}^{\ast}$%
\[
\limsup\nolimits_{\varepsilon\rightarrow0}\left\vert G_{t}^{\varepsilon}%
-G_{t}\right\vert \leq2e^{-n}\sup\nolimits_{r\geq0}\left\vert G_{r}\right\vert
,\quad\text{a.e. }t\in\left(  0,T\right)  .
\]
which yields $\left(  b\right)  .\medskip$

\noindent$\left(  c\right)  $ Let $t_{\varepsilon}=Q^{-1}\left(  Q_{t}%
+\sqrt{Q_{\varepsilon}}\right)  .$ We have%
\[%
\begin{array}
[c]{l}%
\displaystyle\left\vert G_{t}^{\varepsilon}-G_{t}\right\vert \leq\dfrac
{1}{Q_{\varepsilon}}\,\int_{t\vee\varepsilon}^{\infty}e^{-\frac{Q_{r}%
-Q_{t\vee\varepsilon}}{Q_{\varepsilon}}}|G_{r}-G_{t}|dQ_{r}\medskip\\
\displaystyle\leq\sup\nolimits_{r\in\left[  t\vee\varepsilon,t_{\varepsilon
}\vee\varepsilon\right]  }|G_{r}-G_{t}|\,\dfrac{1}{Q_{\varepsilon}}%
\,\int_{t\vee\varepsilon}^{t_{\varepsilon}\vee\varepsilon}e^{-\frac
{Q_{r}-Q_{t\vee\varepsilon}}{Q_{\varepsilon}}}dQ_{r}+2\sup\nolimits_{s\geq
0}|G_{s}|\,\dfrac{1}{Q_{\varepsilon}}\,\int_{t_{\varepsilon}\vee\varepsilon
}^{\infty}e^{-\frac{Q_{r}-Q_{t\vee\varepsilon}}{Q_{\varepsilon}}}%
dQ_{r}\medskip\\
\displaystyle\leq\sup\nolimits_{r\in\left[  t\vee\varepsilon,t_{\varepsilon
}\vee\varepsilon\right]  }|G_{r}-G_{t}|\,\int_{0}^{\frac{Q_{t_{\varepsilon
}\vee\varepsilon}-Q_{t\vee\varepsilon}}{Q_{\varepsilon}}}\left(
-e^{-s}\right)  ds+2\sup\nolimits_{s\geq0}|G_{s}|\,\dfrac{1}{Q_{\varepsilon}%
}\,\int_{t_{\varepsilon}}^{\infty}e^{-\frac{Q_{r}-Q_{t\vee\varepsilon}%
}{Q_{\varepsilon}}}dQ_{r}\medskip\\
\displaystyle\leq\sup\nolimits_{r\in\left[  t\vee\varepsilon,t_{\varepsilon
}\vee\varepsilon\right]  }|G_{r}-G_{t}|+2e^{-\frac{Q_{t_{\varepsilon}%
}-Q_{t\vee\varepsilon}}{Q_{\varepsilon}}}\sup\nolimits_{s\geq0}|G_{s}|\,.
\end{array}
\]
Since%
\[
\frac{Q_{t_{\varepsilon}}-Q_{t\vee\varepsilon}}{Q_{\varepsilon}}=\frac
{\sqrt{Q_{\varepsilon}}}{Q_{\varepsilon}}+\frac{Q_{t}-Q_{t\vee\varepsilon}%
}{Q_{\varepsilon}}\geq\frac{1}{\sqrt{Q_{\varepsilon}}}-1,
\]
inequality (\ref{a1}$-c$) follows.$\medskip$

Clearly, (\ref{a2}) follows from (\ref{a1}$-c$).\hfill
\end{proof}

\begin{remark}
\label{R1-approx}Let $\varepsilon>0$ and let $Q:\Omega\times\mathbb{R}%
\rightarrow\mathbb{R}$ be a strictly increasing continuous stochastic process
such that $Q_{0}=0$ and $\lim_{t\rightarrow\infty}Q_{t}=\infty,$
$\lim_{t\rightarrow-\infty}Q_{t}=-\infty,$ and let $G:\Omega\times
\mathbb{R}\rightarrow\mathbb{R}^{m}$ be a measurable stochastic process such
that $\sup_{t\in\mathbb{R}_{+}}\left\vert G_{t}\right\vert <\infty,$
$\mathbb{P}$--a.s..

Then similar boundedness and convergence results as in the previous Lemma
\ref{L1-approx} hold true for $G^{i,\varepsilon}:\Omega\times\mathbb{R}%
\rightarrow\mathbb{R}^{m},$ $i=\overline{1,4}\,,$ defined by%
\begin{align*}
G_{t}^{1,\varepsilon}  &  =\frac{1}{Q_{\varepsilon}}\,%
{\displaystyle\int_{t}^{\infty}}
G_{r}\,e^{-\frac{Q_{r}-Q_{t}}{Q_{\varepsilon}}}dQ_{r}\,,\quad t\in
\mathbb{R},\\
G_{t}^{2,\varepsilon}  &  =\frac{1}{Q_{\varepsilon}}\,\int_{-\infty}^{t}%
G_{r}\,e^{-\frac{Q_{t}-Q_{r}}{Q_{\varepsilon}}}dQ_{r}\,,\quad t\in
\mathbb{R},\\
G_{t}^{3,\varepsilon}  &  =e^{-\frac{Q_{t}}{Q_{\varepsilon}}}G_{0}+\frac
{1}{Q_{\varepsilon}}\,%
{\displaystyle\int_{0}^{t}}
G_{r}\,e^{-\frac{Q_{t}-Q_{r}}{Q_{\varepsilon}}}dQ_{r}\\
&  =\frac{1}{Q_{\varepsilon}}\,\int_{-\infty}^{t}\left[  \mathbf{1}%
_{(-\infty,0)}\left(  r\right)  G_{0}+\mathbf{1}_{[0,\infty)}\left(  r\right)
G_{r}\right]  e^{-\frac{Q_{t}-Q_{r}}{Q_{\varepsilon}}}dQ_{r}\,,\quad t\geq0,\\
G_{t}^{4,\varepsilon}  &  =\mathbf{1}_{[0,\varepsilon)}\left(  t\right)
G_{0}+\mathbf{1}_{[\varepsilon,\infty)}\left(  t\right)  \,\frac
{1}{Q_{\varepsilon}}\,%
{\displaystyle\int_{0}^{t}}
G_{r}\,e^{-\frac{Q_{t}-Q_{r\vee\varepsilon}}{Q_{\varepsilon}}}dQ_{r}\,,\quad
t\geq0.
\end{align*}

\end{remark}

\begin{corollary}
\label{C1-approx}Let the assumptions of Lemma \ref{L1-approx} be satisfied and
$\varphi:\mathbb{R}^{m}\rightarrow(-\infty,+\infty]$ be a proper convex lower
semicontinuous function such that $\int_{0}^{\infty}\left\vert \varphi\left(
G_{u}\right)  \right\vert dQ_{u}<\infty,$ $\mathbb{P}$--a.s..

Then, for any $0\leq\alpha\leq\beta,$%
\[
\lim\nolimits_{\varepsilon\rightarrow0}\int_{\alpha}^{\beta}\varphi\left(
G_{r}^{\varepsilon}\right)  dQ_{r}=\int_{\alpha}^{\beta}\varphi\left(
G_{r}\right)  dQ_{r}\,,
\]
where $G^{\varepsilon}$ is given by (\ref{def_G_epsilon}).

Moreover, if $\displaystyle\mathbb{E}\int_{0}^{\infty}\left\vert
\varphi\left(  G_{u}\right)  \right\vert dQ_{u}<\infty,$ then, for any
stopping times $0\leq\sigma\leq\theta,$%
\[
\lim\nolimits_{\varepsilon\rightarrow0}\mathbb{E}\int_{\sigma}^{\theta}%
\varphi\left(  G_{r}^{\varepsilon}\right)  dQ_{r}=\mathbb{E}\int_{\sigma
}^{\theta}\varphi\left(  G_{r}\right)  dQ_{r}\,.
\]

\end{corollary}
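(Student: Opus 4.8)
The plan is to exploit the fact that, for each fixed $(\omega,t)$, the value $G^\varepsilon_t$ is a genuine probability average of $\{G_r:r\ge t\vee\varepsilon\}$: under the substitution $s=(Q_r-Q_{t\vee\varepsilon})/Q_\varepsilon$ the weight $\tfrac1{Q_\varepsilon}e^{-(Q_r-Q_{t\vee\varepsilon})/Q_\varepsilon}\,dQ_r$ becomes $e^{-s}\,ds$, a probability measure on $[0,\infty)$. Hence, by Jensen's inequality applied to the convex function $\varphi$, for every $r$
\[
\varphi(G^\varepsilon_r)\le\frac{1}{Q_\varepsilon}\int_{r\vee\varepsilon}^\infty e^{-\frac{Q_u-Q_{r\vee\varepsilon}}{Q_\varepsilon}}\varphi(G_u)\,dQ_u=:\psi^\varepsilon_r,
\]
i.e. $\varphi(G^\varepsilon_r)$ is dominated from above by the \emph{same} smoothing operator applied to $\psi:=\varphi\circ G$. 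On the other hand, Lemma \ref{L1-approx}$(b)$, read through the time change $\tau=Q_t$ (which pushes $dQ$ forward to Lebesgue measure), gives $G^\varepsilon_r\to G_r$ for $dQ$-a.e.\ $r$, so lower semicontinuity of $\varphi$ yields $\liminf_{\varepsilon\to0}\varphi(G^\varepsilon_r)\ge\varphi(G_r)$ for $dQ$-a.e.\ $r$.

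The technical heart is to show that the exponential smoothing is an approximate identity on $L^1(dQ)$: if $\psi\in L^1(\mathbb{R}_+;dQ)$, then $\int_\alpha^\beta\psi^\varepsilon_r\,dQ_r\to\int_\alpha^\beta\psi_r\,dQ_r$. In the time-changed variable this reads $\tilde\psi^\varepsilon(\tau)=\int_0^\infty e^{-s}\tilde\psi(\tau+sQ_\varepsilon)\,ds$ for $\tau\ge Q_\varepsilon$, with $Q_\varepsilon\downarrow0$; I would prove $\|\tilde\psi^\varepsilon-\tilde\psi\|_{L^1(\mathbb{R}_+)}\to0$ by bounding it through $\int_0^\infty e^{-s}\,\|\tilde\psi(\cdot+sQ_\varepsilon)-\tilde\psi\|_{L^1}\,ds$ and invoking continuity of translation in $L^1$ together with dominated convergence in $s$, treating the boundary layer $[0,Q_\varepsilon]$ separately (its contribution is $\le\int_{Q_\varepsilon}^\infty e^{-(\sigma-Q_\varepsilon)/Q_\varepsilon}|\tilde\psi(\sigma)|\,d\sigma\to0$). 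This is the step I expect to be the main obstacle, precisely because $\psi=\varphi\circ G$ is only $dQ$-integrable (not bounded, unlike $G$ itself), so Lemma \ref{L1-approx} cannot be applied to it directly and the forward-looking, non-local nature of the kernel must be controlled by hand.

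With these ingredients the pathwise statement follows from a generalized dominated convergence (Pratt's) argument: after subtracting an affine minorant $\ell$ of $\varphi$ (so that $\varphi(x)\ge\ell(x)$, and boundedness of $G^\varepsilon$ makes $\int_\alpha^\beta\ell(G^\varepsilon)\,dQ\to\int_\alpha^\beta\ell(G)\,dQ$ by ordinary dominated convergence), the functions $g^\varepsilon_r:=\varphi(G^\varepsilon_r)-\ell(G^\varepsilon_r)\ge0$ satisfy $0\le g^\varepsilon_r\le\psi^\varepsilon_r-\ell(G^\varepsilon_r)=:h^\varepsilon_r$ with $\int_\alpha^\beta h^\varepsilon\,dQ\to\int_\alpha^\beta(\psi-\ell(G))\,dQ$ and $\liminf_\varepsilon g^\varepsilon_r\ge\psi_r-\ell(G_r)=\lim_\varepsilon h^\varepsilon_r$ a.e.; Pratt's lemma then forces $\int_\alpha^\beta g^\varepsilon\,dQ\to\int_\alpha^\beta(\psi-\ell(G))\,dQ$, and adding back the affine term gives the first claim. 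In the standing setting $\varphi\ge0$ of this paper one may take $\ell\equiv0$, and this reduces to the plain squeeze $0\le\varphi(G^\varepsilon)\le\psi^\varepsilon$.

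Finally, for the expectation statement I would apply the pathwise convergence just obtained on each $\omega$ with the deterministic endpoints replaced by $\sigma(\omega)\le\theta(\omega)$, giving $\int_\sigma^\theta\varphi(G^\varepsilon_r)\,dQ_r\to\int_\sigma^\theta\varphi(G_r)\,dQ_r$ $\mathbb{P}$-a.s. To pass to the expectation I would dominate: by the contraction property $\int_0^\infty|\psi^\varepsilon_r|\,dQ_r\le\int_0^\infty|\psi_r|\,dQ_r$ (Tonelli), whence $\int_\sigma^\theta\varphi(G^\varepsilon_r)\,dQ_r\le\int_0^\infty|\varphi(G_u)|\,dQ_u$, which is $\mathbb{P}$-integrable by the hypothesis $\mathbb{E}\int_0^\infty|\varphi(G_u)|\,dQ_u<\infty$; when $\varphi\ge0$ this is a two-sided integrable bound and dominated convergence concludes. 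For a general $\varphi$ the lower bound carries the affine correction $\int_\sigma^\theta\ell(G^\varepsilon)\,dQ$, so one handles the positive part by reverse Fatou and the remainder by Fatou applied to $g^\varepsilon\ge0$, invoking either $\varphi\ge0$ (as throughout this paper) or an additional moment such as $\mathbb{E}\big[(\sup_r|G_r|)(Q_\theta-Q_\sigma)\big]<\infty$.
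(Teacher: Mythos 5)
Your proof is correct, and its skeleton coincides with the paper's: Jensen's inequality gives the majorant $\varphi(G^{\varepsilon}_{r})\le\psi^{\varepsilon}_{r}$ (the same exponential smoothing applied to $\psi:=\varphi\circ G$), lower semicontinuity of $\varphi$ together with the $dQ$-a.e.\ convergence $G^{\varepsilon}_{r}\to G_{r}$ gives the Fatou lower bound, and the two are squeezed. Where you genuinely diverge is in the central step, the convergence $\int_{\alpha}^{\beta}\psi^{\varepsilon}_{r}\,dQ_{r}\to\int_{\alpha}^{\beta}\psi_{r}\,dQ_{r}$ for a merely $dQ$-integrable $\psi$. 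The paper never analyzes $\psi^{\varepsilon}$ as a function: it applies Fubini to transfer the mollifier onto the bounded function $\mathbf{1}_{[\alpha,\beta]}$, rewriting the integral as $\int_{0}^{\infty}\varphi(G_{u})\,\mathbf{1}_{[\varepsilon,\infty)}(u)\,w_{\varepsilon}(u)\,dQ_{u}$ with $w_{\varepsilon}(u)=\frac{1}{Q_{\varepsilon}}\int_{0}^{u}\mathbf{1}_{[\alpha,\beta]}(r)\,e^{-(Q_{u}-Q_{r\vee\varepsilon})/Q_{\varepsilon}}dQ_{r}$; then Remark \ref{R1-approx} (i.e.\ the lemma applied to the indicator, after the correction (\ref{a3}) handling $r\vee\varepsilon$ versus $r$ in the kernel) gives $w_{\varepsilon}(u)\to\mathbf{1}_{[\alpha,\beta]}(u)$ a.e.\ with $0\le w_{\varepsilon}\le e+1$, and dominated convergence with dominating function $(e+1)\left\vert\varphi(G_{u})\right\vert$ concludes. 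You instead time-change by $Q$ and prove the approximate-identity property directly in $L^{1}(d\tau)$ via continuity of translations, with a separate boundary-layer estimate on $[0,Q_{\varepsilon}]$; both arguments are sound. The paper's dual argument buys economy, since it recycles Lemma \ref{L1-approx}/Remark \ref{R1-approx} and needs nothing beyond Fubini and dominated convergence; yours buys a stronger conclusion — genuine $L^{1}(dQ)$ convergence of $\psi^{\varepsilon}$ to $\psi$, hence convergence uniformly over all subintervals and immediately with random endpoints — plus the exact contraction $\int_{0}^{\infty}\left\vert\psi^{\varepsilon}_{r}\right\vert dQ_{r}\le\int_{0}^{\infty}\left\vert\psi_{r}\right\vert dQ_{r}$, which indeed holds with constant $1$ (the Fubini weight without the indicator equals exactly $1$ for $u\ge\varepsilon$) and is precisely the domination needed for the expectation statement. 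Two of your refinements go beyond the paper's one-line \textquotedblleft follows in the same manner\textquotedblright: the affine-minorant/Pratt packaging covering $\varphi$ unbounded below, and the explicit observation that for sign-changing $\varphi$ the Fatou side of the expectation claim requires either $\varphi\ge0$ (the paper's standing assumption $(\mathrm{A}_{4})$) or an additional moment hypothesis — a gap the paper's own proof silently shares.
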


\begin{proof}
We have%
\begin{align*}
\int_{\alpha}^{\beta}\varphi\left(  G_{r}^{\varepsilon}\right)  dQ_{r}  &
\leq\int_{\alpha}^{\beta}\left(  \frac{1}{Q_{\varepsilon}}\,%
{\displaystyle\int_{r\vee\varepsilon}^{\infty}}
e^{-\frac{Q_{u}-Q_{r\vee\varepsilon}}{Q_{\varepsilon}}}\varphi\left(
G_{u}\right)  dQ_{u}\right)  dQ_{r}\\
&  =\int_{0}^{\infty}\varphi\left(  G_{u}\right)  \left(  \int_{0}^{\infty
}\mathbf{1}_{\left[  \alpha,\beta\right]  }\left(  r\right)  \,\mathbf{1}%
_{[r\vee\varepsilon,\infty)}\left(  u\right)  \,\frac{1}{Q_{\varepsilon}%
}\,e^{-\frac{Q_{u}-Q_{r\vee\varepsilon}}{Q_{\varepsilon}}}dQ_{r}\right)
dQ_{u}\\
&  =\int_{0}^{\infty}\varphi\left(  G_{u}\right)  \,\mathbf{1}_{[\varepsilon
,\infty)}\left(  u\right)  \left(  \frac{1}{Q_{\varepsilon}}\,\int_{0}%
^{u}\mathbf{1}_{\left[  \alpha,\beta\right]  }\left(  r\right)  \,e^{-\frac
{Q_{u}-Q_{r\vee\varepsilon}}{Q_{\varepsilon}}}dQ_{r}\right)  dQ_{u}\,,
\end{align*}
since $\mathbf{1}_{[r\vee\varepsilon,\infty)}\left(  u\right)  =\mathbf{1}%
_{[0,u]}\left(  r\right)  $ $\mathbf{1}_{[\varepsilon,\infty)}\left(
u\right)  .$

We obtain%
\begin{equation}%
\begin{array}
[c]{l}%
\displaystyle\frac{1}{Q_{\varepsilon}}\,%
{\displaystyle\int_{0}^{u}}
\mathbf{1}_{\left[  \alpha,\beta\right]  }\left(  r\right)  \,e^{-\frac
{Q_{u}-Q_{r}}{Q_{\varepsilon}}}dQ_{r}\leq\frac{1}{Q_{\varepsilon}}\,%
{\displaystyle\int_{0}^{u}}
\mathbf{1}_{\left[  \alpha,\beta\right]  }\left(  r\right)  \,e^{-\frac
{Q_{u}-Q_{r\vee\varepsilon}}{Q_{\varepsilon}}}dQ_{r}\medskip\\
\displaystyle=\frac{1}{Q_{\varepsilon}}\,%
{\displaystyle\int_{0}^{u}}
\mathbf{1}_{\left[  \alpha,\beta\right]  }\left(  r\right)  \left[
\mathbf{1}_{[0,\varepsilon)}\left(  r\right)  \,e^{-\frac{Q_{u}-Q_{\varepsilon
}}{Q_{\varepsilon}}}+\mathbf{1}_{[\varepsilon,\infty)}\left(  r\right)
\,e^{-\frac{Q_{u}-Q_{r}}{Q_{\varepsilon}}}\right]  dQ_{r}\medskip\\
\displaystyle\leq\frac{Q_{u\wedge\varepsilon}}{Q_{\varepsilon}}\,e^{1-\frac
{Q_{u}}{Q_{\varepsilon}}}+\frac{1}{Q_{\varepsilon}}\,%
{\displaystyle\int_{0}^{u}}
\mathbf{1}_{\left[  \alpha,\beta\right]  }\left(  r\right)  \,e^{-\frac
{Q_{u}-Q_{r}}{Q_{\varepsilon}}}dQ_{r}\,,
\end{array}
\label{a3}%
\end{equation}
since%
\[%
{\displaystyle\int_{0}^{u}}
\mathbf{1}_{\left[  \alpha,\beta\right]  }\left(  r\right)  \,\mathbf{1}%
_{[0,\varepsilon)}\left(  r\right)  \,dQ_{r}\leq Q_{u\wedge\varepsilon}\,.
\]
By Remark \ref{R1-approx} (with the extension $Q_{r}=r,$ for $r<0$),%
\[
\lim\nolimits_{\varepsilon\rightarrow0}\frac{1}{Q_{\varepsilon}}\,%
{\displaystyle\int_{0}^{u}}
\mathbf{1}_{\left[  \alpha,\beta\right]  }\left(  r\right)  \,e^{-\frac
{Q_{u}-Q_{r}}{Q_{\varepsilon}}}dQ_{r}=\mathbf{1}_{\left[  \alpha,\beta\right]
}\left(  u\right)  ,\quad\text{a.e. }u\geq0,
\]
hence, by (\ref{a3}),%
\[
\lim\nolimits_{\varepsilon\rightarrow0}\frac{1}{Q_{\varepsilon}}\,%
{\displaystyle\int_{0}^{u}}
\mathbf{1}_{\left[  \alpha,\beta\right]  }\left(  r\right)  \,e^{-\frac
{Q_{u}-Q_{r\vee\varepsilon}}{Q_{\varepsilon}}}dQ_{r}=\mathbf{1}_{\left[
\alpha,\beta\right]  }\left(  u\right)  ,\quad\text{a.e. }u\geq0.
\]
On the other hand, since%
\[
0\leq\frac{1}{Q_{\varepsilon}}\,\int_{0}^{u}\mathbf{1}_{\left[  \alpha
,\beta\right]  }\left(  r\right)  \,e^{-\frac{Q_{u}-Q_{r\vee\varepsilon}%
}{Q_{\varepsilon}}}dQ_{r}\leq e+1,
\]
by Fatou's Lemma and by the Lebesgue dominated convergence theorem, we infer%
\[
\int_{\alpha}^{\beta}\varphi\left(  G_{r}\right)  dQ_{r}\leq\liminf
\nolimits_{\varepsilon\rightarrow0}\int_{\alpha}^{\beta}\varphi\left(
G_{r}^{\varepsilon}\right)  dQ_{r}\leq\limsup\nolimits_{\varepsilon
\rightarrow0}\int_{\alpha}^{\beta}\varphi\left(  G_{r}^{\varepsilon}\right)
dQ_{r}\leq\int_{\alpha}^{\beta}\varphi\left(  G_{r}\right)  dQ_{r}\,.
\]
The second assertion of this corollary follows in the same manner.\hfill
\end{proof}

\begin{proposition}
\label{p1-approx}Let $Q:\Omega\times\left[  0,T\right]  \rightarrow
\mathbb{R}_{+}$ be a strictly increasing continuous stochastic process such
that $Q_{0}=0.$

Let $\tau:\Omega\rightarrow\left[  0,\infty\right]  $\ be a stopping time and
$\eta:\Omega\rightarrow\mathbb{R}^{m}$\ a $\mathcal{F}_{\tau}$--measurable
random variable such that $\mathbb{E}\left\vert \eta\right\vert ^{p}<\infty,$
if $p>1,$\ and $\left(  \xi,\zeta\right)  \in S_{m}^{p}\times\Lambda_{m\times
k}^{p}\left(  0,\mathbb{\infty}\right)  $\ the unique pair associated to
$\eta$\ given by the martingale representation formula (see
\cite[\textit{Corollary 2.44}]{pa-ra/14}):%
\[
\left\{
\begin{array}
[c]{l}%
\xi_{t}=\eta-\displaystyle{\int_{t}^{\infty}}\zeta_{s}dB_{s},\;t\geq
0,\quad\mathbb{P}\text{--\textit{a.s.,}}\medskip\\
\xi_{t}=\mathbb{E}^{\mathcal{F}_{t}}\eta=\mathbb{E}^{\mathcal{F}_{t\wedge\tau
}}\eta\quad\text{\textit{and}}\quad\zeta_{t}=\mathbf{1}_{\left[
0,\tau\right]  }\left(  t\right)  \zeta_{t}%
\end{array}
\right.
\]
(or equivalently, $\xi_{t}=\eta-\int_{t\wedge\tau}^{\tau}\zeta_{s}%
dB_{s},\;t\geq0,\;\mathbb{P}$--a.s.).$\smallskip$

Let $U\in S_{m}^{p}\,,$ with $p>1,$ be such that
\[%
\begin{array}
[c]{ll}%
\left(  a\right)  & \mathbb{E}\sup\nolimits_{t\geq0}\left\vert U_{t}%
\right\vert ^{p}<\infty,\medskip\\
\left(  b\right)  & \lim_{t\rightarrow\infty}\mathbb{E}\left\vert U_{t}%
-\xi_{t}\right\vert ^{p}=0.
\end{array}
\]
Define%
\begin{equation}
U_{t}^{\varepsilon}=\frac{1}{Q_{\varepsilon}}\,%
{\displaystyle\int_{t\vee\varepsilon}^{\infty}}
e^{-\frac{Q_{r}-Q_{t\vee\varepsilon}}{Q_{\varepsilon}}}\,U_{r}\,dQ_{r}%
\quad\text{and}\quad M_{t}^{\varepsilon}=\mathbb{E}^{\mathcal{F}_{t}%
}\big(U_{t}^{\varepsilon}\big)\,,\quad t\geq0. \label{a5}%
\end{equation}
Then:$\medskip$

\noindent\textbf{I.}%
\begin{equation}%
\begin{array}
[c]{rl}%
\left(  j\right)  & \left\vert M_{t}^{\varepsilon}\right\vert \leq
\mathbb{E}^{\mathcal{F}_{t}}\sup\nolimits_{r\geq0}\left\vert U_{r}\right\vert
,\quad\mathbb{P}\text{--a.s., for all }t\geq0,\medskip\\
\left(  jj\right)  & \mathbb{E}\sup\nolimits_{t\geq0}\left\vert M_{t}%
^{\varepsilon}\right\vert ^{p}\leq C_{p}\,\mathbb{E}\sup\nolimits_{r\geq
0}\left\vert U_{r}\right\vert ^{p}.
\end{array}
\label{a6}%
\end{equation}
Also, for any $t\geq0,$%
\begin{align}
\left\vert M_{t}^{\varepsilon}-U_{t}\right\vert  &  \leq\mathbb{E}%
^{\mathcal{F}_{t}}\Big[\big(2-1/\sqrt{Q_{\varepsilon}}\big)\,\sup
\nolimits_{r\geq0}\left\vert U_{r}\right\vert \label{a6-1}\\
&  \quad+\sup\nolimits_{r\geq0}\left\{  \left\vert U_{r}-U_{t}\right\vert
:0\leq Q_{r}-Q_{t}\leq\sqrt{Q_{\varepsilon}}\vee Q_{\varepsilon}\right\}
\Big]\nonumber
\end{align}
which yields%
\begin{equation}%
\begin{array}
[c]{rl}%
\left(  jjj\right)  & \lim\nolimits_{\varepsilon\rightarrow0}M_{t}%
^{\varepsilon}=U_{t}\,,\quad\mathbb{P}\text{--a.s.,}\quad\text{for all }%
t\geq0;\medskip\\
\left(  jv\right)  & \lim\nolimits_{\varepsilon\rightarrow0}\mathbb{E}%
\sup\nolimits_{t\in\left[  0,T\right]  }\left\vert M_{t}^{\varepsilon}%
-U_{t}\right\vert ^{p}=0,\quad\text{for all }T>0.
\end{array}
\label{a6-2}%
\end{equation}
\noindent\textbf{II. }$M^{\varepsilon}$ is the unique solution of the BSDE:
\begin{equation}
\left\{
\begin{array}
[c]{l}%
M_{t}^{\varepsilon}=M_{T}^{\varepsilon}+\dfrac{1}{Q_{\varepsilon}}\,{%
{\displaystyle\int_{t}^{T}}
}1_{[\varepsilon,\infty)}\left(  r\right)  \left(  U_{r}-M_{r}^{\varepsilon
}\right)  dQ_{r}-{%
{\displaystyle\int_{t}^{T}}
}R_{r}^{\varepsilon}\,dB_{r}\,,\quad\text{for all }T>0,\;t\in\left[
0,T\right]  ,\medskip\\
\lim\nolimits_{t\rightarrow\infty}\mathbb{E}\left\vert M_{t}^{\varepsilon}%
-\xi_{t}\right\vert ^{p}=0.
\end{array}
\right.  \label{a4}%
\end{equation}
Moreover,%
\begin{equation}
\lim\nolimits_{t\rightarrow\infty}\mathbb{E}\sup\nolimits_{s\geq t}\left\vert
U_{t}-\xi_{t}\right\vert ^{p}=0\quad\Longrightarrow\quad\lim
\nolimits_{t\rightarrow\infty}\mathbb{E}\Big(\sup\nolimits_{s\geq t}\left\vert
M_{s}^{\varepsilon}-\xi_{s}\right\vert ^{p}\Big)=0. \label{a8}%
\end{equation}
\noindent\textbf{III. }Let $\varphi:\mathbb{R}^{m}\rightarrow(-\infty
,+\infty]$ be a proper convex lower semicontinuous function such that
\[
\mathbb{E}%
{\displaystyle\int_{0}^{\infty}}
\left\vert \varphi\left(  U_{r}\right)  \right\vert dQ_{r}<\infty.
\]
Let $0\leq s\leq t$ and the stopping times $s^{\ast}=Q_{s}^{-1},$ $t^{\ast
}=Q_{t}^{-1}\,,$ where $Q_{\cdot}^{-1}$ is the inverse of the function
$r\mapsto Q_{r}:[0,\infty)\rightarrow\lbrack0,\infty).$

Then%
\[
\lim\nolimits_{\varepsilon\rightarrow0}\mathbb{E}\int_{s^{\ast}}^{t^{\ast}%
}\varphi\left(  M_{r}^{\varepsilon}\right)  dQ_{r}=\mathbb{E}\int_{s^{\ast}%
}^{t^{\ast}}\varphi\left(  U_{r}\right)  dQ_{r}\,.
\]
Moreover, if $g:\mathbb{R}^{m}\times\mathbb{R}^{n}\rightarrow\mathbb{R}_{+}$
is a continuous function, $D:\Omega\times\mathbb{R}_{+}\rightarrow
\mathbb{R}^{n}$ is a continuous stochastic process such that for all $R>0$
\[
\mathbb{E}%
{\displaystyle\int_{0}^{R}}
\left\vert \varphi\left(  U_{r}\right)  \right\vert \sup\nolimits_{\theta
\in\left[  0,r\right]  }g\left(  U_{\theta},D_{\theta}\right)  dQ_{r}%
+\mathbb{E}%
{\displaystyle\int_{0}^{R}}
\left\vert \varphi\left(  U_{r}\right)  \right\vert \sup\nolimits_{\theta
\in\left[  0,r\right]  }\sup\nolimits_{0<\varepsilon\leq1}g\left(  M_{\theta
}^{\varepsilon},D_{\theta}\right)  dQ_{r}<\infty,
\]
then, for all $0\leq T\leq\infty,$%
\begin{equation}%
\begin{array}
[c]{rl}%
\left(  v\right)  & \mathbb{E}%
{\displaystyle\int_{s^{\ast}\wedge T}^{t^{\ast}\wedge T}}
g\left(  M_{r}^{\varepsilon},D_{r}\right)  \varphi\left(  M_{r}^{\varepsilon
}\right)  dQ_{r}\leq\mathbb{E}%
{\displaystyle\int_{s^{\ast}\wedge T}^{t^{\ast}\wedge T}}
g\left(  M_{r}^{\varepsilon},D_{r}\right)  \varphi\left(  U_{r}^{\varepsilon
}\right)  dQ_{r}\,,\medskip\\
\left(  vj\right)  & \lim\nolimits_{\varepsilon\rightarrow0}\mathbb{E}%
{\displaystyle\int_{s^{\ast}\wedge T}^{t^{\ast}\wedge T}}
g\left(  M_{r}^{\varepsilon},D_{r}\right)  \varphi\left(  M_{r}^{\varepsilon
}\right)  dQ_{r}=\mathbb{E}%
{\displaystyle\int_{s^{\ast}\wedge T}^{t^{\ast}\wedge T}}
g\left(  U_{r},D_{r}\right)  \varphi\left(  U_{r}\right)  dQ_{r}\,.
\end{array}
\label{a9}%
\end{equation}

\end{proposition}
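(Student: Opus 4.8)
The plan is to establish the three parts in turn, reducing each to the pathwise smoothing estimates of Lemma \ref{L1-approx} and Corollary \ref{C1-approx}, combined with Jensen's inequality for conditional expectations, Doob's maximal inequality, and the lower semicontinuity of $\varphi$.

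For Part \textbf{I}, I would first record that, after the change of variable $s=(Q_r-Q_{t\vee\varepsilon})/Q_\varepsilon$, the kernel $\tfrac{1}{Q_\varepsilon}e^{-(Q_r-Q_{t\vee\varepsilon})/Q_\varepsilon}\,dQ_r$ is a probability measure on $[t\vee\varepsilon,\infty)$, so $U^\varepsilon_t$ is a genuine average of the values $\{U_r\}_{r\ge t\vee\varepsilon}$; thus $U^\varepsilon$ is the smoothing of $U$ in the sense of Lemma \ref{L1-approx}. Since $U_t$ is $\mathcal{F}_t$--measurable, $M^\varepsilon_t-U_t=\mathbb{E}^{\mathcal{F}_t}(U^\varepsilon_t-U_t)$, whence $(\ref{a6-1})$ follows by carrying the pathwise bound $(\ref{a1}$-$c)$ through the conditional expectation, and $(\ref{a6}$-$j)$ is immediate from $|U^\varepsilon_t|\le\sup_r|U_r|$. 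The maximal bound $(\ref{a6}$-$jj)$ then comes from $(\ref{a6}$-$j)$ by Doob's $L^p$ inequality applied to the martingale $t\mapsto\mathbb{E}^{\mathcal{F}_t}\sup_r|U_r|$. For the convergences, the bracket on the right of $(\ref{a6-1})$ tends to $0$ pathwise and is dominated by a multiple of $\sup_r|U_r|\in L^p$, so conditional dominated convergence yields $(\ref{a6-2}$-$jjj)$; for $(\ref{a6-2}$-$jv)$ I would dominate $\sup_{t\in[0,T]}|M^\varepsilon_t-U_t|$ by $\sup_{t\in[0,T]}\mathbb{E}^{\mathcal{F}_t}\Phi_\varepsilon$ with $\Phi_\varepsilon=\sup_{s\in[0,T]}|U^\varepsilon_s-U_s|$, apply Doob once more, and conclude from $(\ref{a2})$ and dominated convergence with dominating variable $(2\sup_r|U_r|)^p$.

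For Part \textbf{II} the key is to extract the dynamics of $M^\varepsilon$. Writing $U^\varepsilon_t=\tfrac{e^{Q_t/Q_\varepsilon}}{Q_\varepsilon}\int_t^\infty e^{-Q_r/Q_\varepsilon}U_r\,dQ_r$ for $t\ge\varepsilon$, I would set $X_t=e^{-Q_t/Q_\varepsilon}M^\varepsilon_t$ and observe that $X_t=\mathbb{E}^{\mathcal{F}_t}\Psi_\infty-\Psi_t$, where $\Psi_t=\tfrac{1}{Q_\varepsilon}\int_0^t e^{-Q_r/Q_\varepsilon}U_r\,dQ_r$; martingale representation in the Brownian filtration gives $X_t=X_0+\int_0^t\tilde R_r\,dB_r-\Psi_t$, and the product rule applied to $M^\varepsilon_t=e^{Q_t/Q_\varepsilon}X_t$ (no bracket term, as $Q$ has bounded variation) produces exactly $dM^\varepsilon_t=\tfrac{1}{Q_\varepsilon}(M^\varepsilon_t-U_t)\,dQ_t+R^\varepsilon_t\,dB_t$ with $R^\varepsilon=e^{Q_\cdot/Q_\varepsilon}\tilde R$. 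On $[0,\varepsilon]$ the process $M^\varepsilon$ is a martingale, which is encoded by the factor $\mathbf{1}_{[\varepsilon,\infty)}$, so $(\ref{a4})$ holds. The terminal condition follows from $\mathbb{E}|M^\varepsilon_t-\xi_t|^p\le\mathbb{E}|U^\varepsilon_t-\xi_t|^p$ (Jensen) and the splitting $U^\varepsilon_t-\xi_t=\int_0^\infty e^{-s}(U_{r(s)}-\xi_{r(s)})\,ds+\int_0^\infty e^{-s}(\xi_{r(s)}-\xi_t)\,ds$ with $r(s)\ge t$, whose two terms vanish as $t\to\infty$ by the hypothesis $\lim_t\mathbb{E}|U_t-\xi_t|^p=0$ and by the $L^p$--convergence $\xi_t\to\eta$; the implication $(\ref{a8})$ is the same computation performed with suprema and Doob's inequality. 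Uniqueness is that of a linear BSDE with Lipschitz (indeed monotone) driver $m\mapsto-\tfrac{1}{Q_\varepsilon}\mathbf{1}_{[\varepsilon,\infty)}\,m$, which I would deduce from the a priori estimates of \cite{pa-ra/14}.

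For Part \textbf{III} I would exploit convexity. Since $M^\varepsilon_r=\mathbb{E}^{\mathcal{F}_r}U^\varepsilon_r$ is $\mathcal{F}_r$--measurable and $g\ge0$, Jensen's inequality gives $\varphi(M^\varepsilon_r)\le\mathbb{E}^{\mathcal{F}_r}\varphi(U^\varepsilon_r)$ and $g(M^\varepsilon_r,D_r)\varphi(M^\varepsilon_r)\le\mathbb{E}^{\mathcal{F}_r}\big[g(M^\varepsilon_r,D_r)\varphi(U^\varepsilon_r)\big]$. Integrating against $dQ_r$ between the stopping times $s^\ast=Q^{-1}_s$ and $t^\ast=Q^{-1}_t$, whose indicator $\mathbf{1}_{[s^\ast,t^\ast]}(r)=\mathbf{1}_{\{Q_r\ge s\}}\mathbf{1}_{\{Q_r\le t\}}$ is $\mathcal{F}_r$--measurable, and removing the conditioning through the optional--projection identity $\mathbb{E}\int h_r\,dQ_r=\mathbb{E}\int\mathbb{E}^{\mathcal{F}_r}[h_r]\,dQ_r$ for the adapted increasing process $Q$, I obtain the one-sided bound $(\ref{a9}$-$v)$ and, in the limit, the upper estimate from Corollary \ref{C1-approx} applied to $U^\varepsilon$. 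The matching lower bound comes from the lower semicontinuity of $\varphi$ together with the convergence $M^\varepsilon\to U$ of Part \textbf{I}: along a subsequence $M^\varepsilon_r\to U_r$ a.s., and after subtracting an affine minorant of $\varphi$ to restore positivity, Fatou's lemma gives $\liminf_\varepsilon\mathbb{E}\int\varphi(M^\varepsilon_r)\,dQ_r\ge\mathbb{E}\int\varphi(U_r)\,dQ_r$. I expect the hard part to be precisely this last passage in Part \textbf{III}: one must run the one-sided Jensen bound, the optional--projection interchange, and the l.s.c.\ Fatou argument simultaneously, controlling the $g$--weight evaluated at $M^\varepsilon$ and $\varphi$ evaluated at $U^\varepsilon$ by the prescribed integrability of $\varphi(U)$ and of $\varphi(U)\sup_\theta\sup_\varepsilon g(M^\varepsilon_\theta,D_\theta)$ so that dominated and Fatou convergence apply together and pin the limit to $\mathbb{E}\int g(U_r,D_r)\varphi(U_r)\,dQ_r$.
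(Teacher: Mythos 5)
Your Parts I and II track the paper's own proof almost step for step: the pathwise estimates (\ref{a1}) of Lemma \ref{L1-approx} carried through $\mathbb{E}^{\mathcal{F}_t}$ together with Doob's $L^p$ inequality give (\ref{a6})--(\ref{a6-2}), and for (\ref{a4}) the paper likewise applies the martingale representation theorem to $\frac{1}{Q_{\varepsilon}}\int_{\varepsilon}^{\infty}e^{-Q_{r}/Q_{\varepsilon}}U_{r}\,dQ_{r}$ and then It\^{o}'s product rule with the factor $e^{Q_{r\vee\varepsilon}/Q_{\varepsilon}}$ --- your process $X_{t}=e^{-Q_{t}/Q_{\varepsilon}}M_{t}^{\varepsilon}$ is exactly the paper's intermediate semimartingale, and your treatment of the terminal condition and of (\ref{a8}) (splitting $U_{t}^{\varepsilon}-\xi_{t}$, Jensen, Burkholder--Davis--Gundy and Doob) is the same. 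The genuine divergence is in Part III, at the step where the conditional expectation must be removed from under the random measure $dQ_{r}$. You appeal to the identity $\mathbb{E}\int h_{r}\,dQ_{r}=\mathbb{E}\int\mathbb{E}^{\mathcal{F}_{r}}[h_{r}]\,dQ_{r}$; this is true, but it is the duality property of the dual optional projection --- a theorem of the general theory of processes requiring a jointly measurable optional version of $r\mapsto\mathbb{E}^{\mathcal{F}_{r}}[h_{r}]$ --- and cannot simply be asserted; it needs a citation or proof. The paper avoids that machinery: because $Q$ is strictly increasing and continuous and the endpoints in (\ref{a9}) are precisely $s^{\ast}=Q_{s}^{-1}$ and $t^{\ast}=Q_{t}^{-1}$, the substitution $r\mapsto r^{\ast}=Q_{r}^{-1}$ converts $\int_{s^{\ast}}^{t^{\ast}}\cdot\,dQ_{r}$ into the Lebesgue integral $\int_{s}^{t}\cdot\,dr$ over a deterministic interval, after which ordinary Fubini and the tower property at the stopping times $r^{\ast}$ yield (\ref{a9}$-v$). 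This is the reason the statement is formulated with time-changed endpoints: your route is more general (it would tolerate arbitrary stopping-time limits) but leans on heavier theory; the paper's is elementary but tied to the special form of $s^{\ast},t^{\ast}$.

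One point to close in your limit (\ref{a9}$-vj$): for the Fatou lower bound you subtract an affine minorant $\ell(x)=\langle a,x\rangle+b$ of $\varphi$, which indeed makes $g(M_{r}^{\varepsilon},D_{r})\,(\varphi-\ell)(M_{r}^{\varepsilon})\ge0$; but you must then pass to the limit in the remainder $\mathbb{E}\int g(M_{r}^{\varepsilon},D_{r})\,\ell(M_{r}^{\varepsilon})\,dQ_{r}$, and the stated hypotheses only control $|\varphi(U_{r})|\sup_{\theta}\sup_{\varepsilon}g(M_{\theta}^{\varepsilon},D_{\theta})$, not $(1+|M_{r}^{\varepsilon}|)\sup_{\theta}\sup_{\varepsilon}g(M_{\theta}^{\varepsilon},D_{\theta})$, so dominated convergence for this term is not immediate. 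The paper's proof is equally terse here (it applies Fatou directly to $g\,\varphi(M^{\varepsilon})$), which is harmless in its applications since by $(\mathrm{A}_{4})$ one has $\varphi\ge0$ there; since your write-up explicitly confronts signed $\varphi$, you should either add the missing integrability (for instance on $g\cdot(1+|U|)$, exploiting that $Q_{t^{\ast}}-Q_{s^{\ast}}=t-s$ is a deterministic constant together with the bound (\ref{a6}$-jj$)), or restrict to $\varphi\ge0$ as the paper effectively does.
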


\begin{proof}
By Doob's inequality (see \cite[Theorem 1.60]{pa-ra/14}) and (\ref{a6}$-j$) we
get estimate (\ref{a6}$-jj$).$\medskip$

Clearly%
\[
\left\vert M_{t}^{\varepsilon}-U_{t}\right\vert \leq\mathbb{E}^{\mathcal{F}%
_{t}}\left\vert U_{t}^{\varepsilon}-U_{t}\right\vert \leq\mathbb{E}%
^{\mathcal{F}_{t}}\sup\nolimits_{r\in\left[  0,T\right]  }\left\vert
U_{t}^{\varepsilon}-U_{t}\right\vert
\]
and conclusions (\ref{a6-1}) and (\ref{a6-2}) hold by Lemma \ref{L1-approx}
and Doob's inequality.$\medskip$

Let us to prove (\ref{a4}). By the martingale representation theorem we have%
\begin{align*}
\dfrac{1}{Q_{\varepsilon}}\,%
{\displaystyle\int_{\varepsilon}^{\infty}}
e^{-\frac{Q_{r}}{Q_{\varepsilon}}}\,U_{r}\,dQ_{r}  &  =\mathbb{E}%
^{\mathcal{F}_{t}}\dfrac{1}{Q_{\varepsilon}}%
{\displaystyle\int_{\varepsilon}^{\infty}}
e^{-\frac{Q_{r}}{Q_{\varepsilon}}}\,U_{r}\,dQ_{r}+%
{\displaystyle\int_{t}^{\infty}}
\tilde{R}_{r}^{\varepsilon}\,dB_{r}\\
&  =e^{-\frac{Q_{t\vee\varepsilon}}{Q_{\varepsilon}}}M_{t}^{\varepsilon
}+\mathbb{E}^{\mathcal{F}_{t}}\dfrac{1}{Q_{\varepsilon}}%
{\displaystyle\int_{\varepsilon}^{t\vee\varepsilon}}
e^{-\frac{Q_{r}}{Q_{\varepsilon}}}\,U_{r}\,dQ_{r}+%
{\displaystyle\int_{t}^{\infty}}
\tilde{R}_{r}^{\varepsilon}\,dB_{r}\,,
\end{align*}
which yields%
\begin{equation}
e^{-\frac{Q_{t\vee\varepsilon}}{Q_{\varepsilon}}}M_{t}^{\varepsilon}=\dfrac
{1}{Q_{\varepsilon}}\,%
{\displaystyle\int_{t}^{\infty}}
1_{[\varepsilon,\infty)}\left(  r\right)  \,e^{-\frac{Q_{r}}{Q_{\varepsilon}}%
}\,U_{r}\,dQ_{r}-%
{\displaystyle\int_{t}^{\infty}}
\tilde{R}_{r}^{\varepsilon}\,dB_{r}\,. \label{a7-a}%
\end{equation}
Now by It\^{o}'s formula%
\[%
\begin{array}
[c]{l}%
\displaystyle M_{t}^{\varepsilon}=M_{T}^{\varepsilon}-\int_{t}^{T}d\left[
e^{\frac{Q_{r\vee\varepsilon}}{Q_{\varepsilon}}}\left(  e^{-\frac
{Q_{r\vee\varepsilon}}{Q_{\varepsilon}}}M_{r}^{\varepsilon}\right)  \right]
\medskip\\
\displaystyle=M_{T}^{\varepsilon}-\dfrac{1}{Q_{\varepsilon}}\,\int_{t}%
^{T}1_{[\varepsilon,\infty)}\left(  r\right)  e^{\frac{Q_{r\vee\varepsilon}%
}{Q_{\varepsilon}}}\left(  e^{-\frac{Q_{r\vee\varepsilon}}{Q_{\varepsilon}}%
}M_{r}^{\varepsilon}\right)  dQ_{r}-\int_{t}^{T}e^{\frac{Q_{r\vee\varepsilon}%
}{Q_{\varepsilon}}}d\left(  e^{-\frac{Q_{r\vee\varepsilon}}{Q_{\varepsilon}}%
}M_{r}^{\varepsilon}\right)  \medskip\\
\displaystyle=M_{T}^{\varepsilon}-\dfrac{1}{Q_{\varepsilon}}\,\int_{t}%
^{T}1_{[\varepsilon,\infty)}\left(  r\right)  \,M_{r}^{\varepsilon}%
\,dQ_{r}+\dfrac{1}{Q_{\varepsilon}}\,\int_{t}^{T}e^{\frac{Q_{r\vee\varepsilon
}}{Q_{\varepsilon}}}1_{[\varepsilon,\infty)}\left(  r\right)  \,e^{-\frac
{Q_{r\vee\varepsilon}}{Q_{\varepsilon}}}U_{r}\,dQ_{r}-%
{\displaystyle\int_{t}^{T}}
e^{\frac{Q_{r\vee\varepsilon}}{Q_{\varepsilon}}}\tilde{R}_{r}^{\varepsilon
}\,dB_{r}\medskip\\
\displaystyle=M_{T}^{\varepsilon}+\dfrac{1}{Q_{\varepsilon}}\,\int_{t}%
^{T}1_{[\varepsilon,\infty)}\left(  r\right)  \,\left(  U_{r}-M_{r}%
^{\varepsilon}\right)  dQ_{r}-%
{\displaystyle\int_{t}^{T}}
R_{r}^{\varepsilon}\,dB_{r}\,,
\end{array}
\]
where $R_{r}^{\varepsilon}%
\xlongequal{\hspace{-4pt}{\rm def}\hspace{-4pt}}e^{\frac{Q_{r\vee\varepsilon}%
}{Q_{\varepsilon}}}\tilde{R}_{r}^{\varepsilon}\,.\medskip$

The convergence result from (\ref{a4}) is obtained as follows:%
\begin{align*}
\left\vert M_{t}^{\varepsilon}-\xi_{t}\right\vert  &  =\Big|\mathbb{E}%
^{\mathcal{F}_{t}}\dfrac{1}{Q_{\varepsilon}}\,%
{\displaystyle\int_{t\vee\varepsilon}^{\infty}}
e^{-\frac{Q_{r}-Q_{t\vee\varepsilon}}{Q_{\varepsilon}}}\left(  U_{r}-\xi
_{r}\right)  dQ_{r}+\mathbb{E}^{\mathcal{F}_{t}}\dfrac{1}{Q_{\varepsilon}}\,%
{\displaystyle\int_{t\vee\varepsilon}^{\infty}}
e^{-\frac{Q_{r}-Q_{t\vee\varepsilon}}{Q_{\varepsilon}}}\,\left(  \xi_{r}%
-\xi_{t}\right)  \,dQ_{r}\Big|\\
&  \leq\Big|\mathbb{E}^{\mathcal{F}_{t}}%
{\displaystyle\int_{0}^{\infty}}
e^{-s}\left(  U_{Q^{-1}\left(  sQ_{\varepsilon}+Q_{t\vee\varepsilon}\right)
}-\xi_{Q^{-1}\left(  sQ_{\varepsilon}+Q_{t\vee\varepsilon}\right)  }\right)
ds\Big|+\mathbb{E}^{\mathcal{F}_{t}}\sup\nolimits_{r\geq t}\left\vert \xi
_{r}-\xi_{t}\right\vert
\end{align*}
By Jensen's inequality and, after that, by Burkholder--Davis--Gundy inequality
(see \cite[Corollary 2.9]{pa-ra/14}) we have%
\[
\left(  \mathbb{E}^{\mathcal{F}_{t}}\sup\nolimits_{r\geq t}\left\vert \xi
_{r}-\xi_{t}\right\vert \right)  ^{p}\leq\left(  \mathbb{E}^{\mathcal{F}_{t}%
}\sup\nolimits_{r\geq t}\left\vert {\int_{t}^{r}}\zeta_{s}\,dB_{s}\right\vert
\right)  ^{p}\leq C_{p}\,\mathbb{E}^{\mathcal{F}_{t}}\left(  \int_{t}^{\infty
}\left\vert \zeta_{s}\right\vert ^{2}ds\right)  ^{p/2}.
\]
Hence (by Jensen's and Holder's inequalities)%
\[%
\begin{array}
[c]{l}%
\displaystyle\mathbb{E}\left\vert M_{t}^{\varepsilon}-\xi_{t}\right\vert
^{p}\medskip\\
\displaystyle\leq2^{p-1}\mathbb{E}\left(  \mathbb{E}^{\mathcal{F}_{t}}%
{\displaystyle\int_{0}^{\infty}}
e^{-s}\left\vert U_{Q^{-1}\left(  sQ_{\varepsilon}+Q_{t\vee\varepsilon
}\right)  }-\xi_{Q^{-1}\left(  sQ_{\varepsilon}+Q_{t\vee\varepsilon}\right)
}\right\vert ds\right)  ^{p}+2^{p-1}\,\mathbb{E}\left(  \mathbb{E}%
^{\mathcal{F}_{t}}\sup\nolimits_{r\geq t}\left\vert \xi_{r}-\xi_{t}\right\vert
\right)  ^{p}\medskip\\
\displaystyle\leq2^{p-1}%
{\displaystyle\int_{0}^{\infty}}
e^{-s}\,\mathbb{E}\left\vert U_{Q^{-1}\left(  sQ_{\varepsilon}+Q_{t\vee
\varepsilon}\right)  }-\xi_{Q^{-1}\left(  sQ_{\varepsilon}+Q_{t\vee
\varepsilon}\right)  }\right\vert ^{p}ds+C_{p}\,\mathbb{E}\left(  \int
_{t}^{\infty}\left\vert \zeta_{s}\right\vert ^{2}ds\right)  ^{p/2}%
\end{array}
\]
and, using the Lebesgue dominated convergence theorem, we get%
\[
\lim\nolimits_{t\rightarrow\infty}\mathbb{E}\left\vert M_{t}^{\varepsilon}%
-\xi_{t}\right\vert ^{p}=0.
\]
In order to prove (\ref{a8}), we see that, for $\varepsilon<T\leq t$ and
$1<q<p,$%
\[
\left\vert M_{t}^{\varepsilon}-\xi_{t}\right\vert ^{p}\leq2^{p-1}%
\,\mathbb{E}^{\mathcal{F}_{t}}\sup\nolimits_{r\geq T}\left\vert U_{r}-\xi
_{r}\right\vert ^{p}+2^{p-1}\left(  \mathbb{E}^{\mathcal{F}_{t}}%
\sup\nolimits_{r\geq t}\left\vert \xi_{r}-\xi_{t}\right\vert ^{q}\right)
^{p/q}%
\]
and consequently (by Burkholder--Davis--Gundy and Doob's inequality)%
\begin{align*}
\mathbb{E}\sup\nolimits_{t\geq T}\left\vert M_{t}^{\varepsilon}-\xi
_{t}\right\vert ^{p}  &  \leq2^{p-1}\,\mathbb{E}\sup\nolimits_{r\geq
T}\left\vert U_{r}-\xi_{r}\right\vert ^{p}+2^{p-1}\,\mathbb{E}\sup
\nolimits_{t\geq T}\left[  \mathbb{E}^{\mathcal{F}_{t}}\sup\nolimits_{r\geq
t}\left\vert {\int_{t}^{r}}\zeta_{s}\,dB_{s}\right\vert ^{q}\right]  ^{p/q}\\
&  \leq2^{p-1}\,\mathbb{E}\sup\nolimits_{r\geq T}\left\vert U_{r}-\xi
_{r}\right\vert ^{p}+C_{p,q}\,\mathbb{E}\sup\nolimits_{t\geq T}\left[
\mathbb{E}^{\mathcal{F}_{t}}\left(  \int_{T}^{\infty}\left\vert \zeta
_{s}\right\vert ^{2}ds\right)  ^{q/2}\right]  ^{p/q}\\
&  \leq C_{p}\,\mathbb{E}\sup\nolimits_{r\geq T}\left\vert U_{r}-\xi
_{r}\right\vert ^{p}+C_{p,q}^{\prime}\,\mathbb{E}\left(  \int_{T}^{\infty
}\left\vert \zeta_{s}\right\vert ^{2}ds\right)  ^{p/2}%
\end{align*}
and (\ref{a8}) follows.$\medskip$

Inequality (\ref{a9}$-v$) follows since, using the notation $r^{\ast}%
=Q_{r}^{-1}\,,$%
\[%
\begin{array}
[c]{l}%
\displaystyle\mathbb{E}%
{\displaystyle\int_{s^{\ast}\wedge T}^{t^{\ast}\wedge T}}
g\left(  M_{r}^{\varepsilon},D_{r}\right)  \varphi\left(  M_{r}^{\varepsilon
}\right)  dQ_{r}=\mathbb{E}\int_{s^{\ast}}^{t^{\ast}}\mathbf{1}_{\left[
0,T\right]  }\left(  r\right)  g\left(  M_{r}^{\varepsilon},D_{r}\right)
\varphi\left(  \mathbb{E}^{\mathcal{F}_{r}}\left(  U_{r}^{\varepsilon}\right)
\right)  dQ_{r}\medskip\\
\displaystyle\leq\mathbb{E}\int_{s^{\ast}}^{t^{\ast}}\mathbb{E}^{\mathcal{F}%
_{r}}\left[  \mathbf{1}_{\left[  0,T\right]  }\left(  r\right)  g\left(
M_{r}^{\varepsilon},D_{r}\right)  \varphi\left(  U_{r}^{\varepsilon}\right)
\right]  dQ_{r}=\mathbb{E}\int_{s}^{t}\mathbb{E}^{\mathcal{F}_{r^{\ast}}%
}\left[  \mathbf{1}_{\left[  0,T\right]  }\left(  r^{\ast}\right)  g\left(
M_{r^{\ast}}^{\varepsilon},D_{r^{\ast}}\right)  \varphi\left(  U_{r^{\ast}%
}^{\varepsilon}\right)  \right]  dr\medskip\\
\displaystyle=\int_{s}^{t}\mathbb{E}\left[  \mathbf{1}_{\left[  0,T\right]
}\left(  r^{\ast}\right)  g\left(  M_{r^{\ast}}^{\varepsilon},D_{r^{\ast}%
}\right)  \varphi\left(  U_{r^{\ast}}^{\varepsilon}\right)  \right]
dr=\mathbb{E}\int_{s}^{t}\mathbf{1}_{\left[  0,T\right]  }\left(  r^{\ast
}\right)  g\left(  M_{r^{\ast}}^{\varepsilon},D_{r^{\ast}}\right)
\varphi\left(  U_{r^{\ast}}^{\varepsilon}\right)  dr\medskip\\
\displaystyle=\mathbb{E}\int_{s^{\ast}}^{t^{\ast}}\mathbf{1}_{\left[
0,T\right]  }\left(  r\right)  g\left(  M_{r}^{\varepsilon},D_{r}\right)
\varphi\left(  U_{r}^{\varepsilon}\right)  dQ_{r}=\mathbb{E}%
{\displaystyle\int_{s^{\ast}\wedge T}^{t^{\ast}\wedge T}}
g\left(  M_{r}^{\varepsilon},D_{r}\right)  \varphi\left(  U_{r}^{\varepsilon
}\right)  dQ_{r}\,.
\end{array}
\]
As in the proof of Corollary \ref{C1-approx} we have%
\[%
\begin{array}
[c]{l}%
\displaystyle\mathbb{E}%
{\displaystyle\int_{s^{\ast}\wedge T}^{t^{\ast}\wedge T}}
g\left(  M_{r}^{\varepsilon},D_{r}\right)  \varphi\left(  U_{r}^{\varepsilon
}\right)  dQ_{r}\medskip\\
\displaystyle\leq\mathbb{E}\int_{0}^{\infty}\varphi\left(  U_{\theta}\right)
\,\mathbf{1}_{[\varepsilon,\infty)}\left(  \theta\right)  \left(  \frac
{1}{Q_{\varepsilon}}\,\int_{0}^{\theta}\mathbf{1}_{\left[  s^{\ast}\wedge
T,t^{\ast}\wedge T\right]  }\left(  r\right)  \,g\left(  M_{r}^{\varepsilon
},D_{r}\right)  \,e^{-\frac{Q_{\theta}-Q_{r\vee\varepsilon}}{Q_{\varepsilon}}%
}\,dQ_{r}\right)  dQ_{\theta}\medskip\\
\displaystyle\leq\mathbb{E}\int_{0}^{\infty}\varphi\left(  U_{\theta}\right)
\,\mathbf{1}_{[\varepsilon,\infty)}\left(  \theta\right)  \cdot\sup
\nolimits_{r\in\left[  0,\theta\right]  }\left\vert g\left(  M_{r}%
^{\varepsilon},D_{r}\right)  -g\left(  U_{r},D_{r}\right)  \right\vert
dQ_{\theta}\medskip\\
\displaystyle\quad+\mathbb{E}\int_{0}^{\infty}\varphi\left(  U_{\theta
}\right)  \,\mathbf{1}_{[\varepsilon,\infty)}\left(  \theta\right)  \left(
\frac{1}{Q_{\varepsilon}}\,\int_{0}^{\theta}\mathbf{1}_{\left[  s^{\ast}\wedge
T,t^{\ast}\wedge T\right]  }\left(  r\right)  \,g\left(  U_{r},D_{r}\right)
\,e^{-\frac{Q_{\theta}-Q_{r\vee\varepsilon}}{Q_{\varepsilon}}}\,dQ_{r}\right)
dQ_{\theta}\,.
\end{array}
\]
Now, by Fatou's Lemma and Remark \ref{R1-approx} (with the extension
$Q_{r}=r,$ for $r<0$), we have%
\[%
\begin{array}
[c]{l}%
\displaystyle\mathbb{E}%
{\displaystyle\int_{s^{\ast}\wedge T}^{t^{\ast}\wedge T}}
g\left(  U_{r},D_{r}\right)  \varphi\left(  U_{r}\right)  dQ_{r}\leq
\liminf_{\varepsilon\rightarrow0_{+}}\mathbb{E}%
{\displaystyle\int_{s^{\ast}\wedge T}^{t^{\ast}\wedge T}}
g\left(  M_{r}^{\varepsilon},D_{r}\right)  \varphi\left(  M_{r}^{\varepsilon
}\right)  dQ_{r}\medskip\\
\displaystyle\leq\liminf\nolimits_{\varepsilon\rightarrow0_{+}}\mathbb{E}%
{\displaystyle\int_{s^{\ast}\wedge T}^{t^{\ast}\wedge T}}
g\left(  M_{r}^{\varepsilon},D_{r}\right)  \varphi\left(  U_{r}^{\varepsilon
}\right)  dQ_{r}\leq\limsup\nolimits_{\varepsilon\rightarrow0_{+}}\mathbb{E}%
{\displaystyle\int_{s^{\ast}\wedge T}^{t^{\ast}\wedge T}}
g\left(  M_{r}^{\varepsilon},D_{r}\right)  \varphi\left(  U_{r}^{\varepsilon
}\right)  dQ_{r}\medskip\\
\displaystyle\leq\limsup\nolimits_{\varepsilon\rightarrow0_{+}}\mathbb{E}%
\int_{0}^{\infty}\varphi\left(  U_{\theta}\right)  \Big(\mathbf{1}%
_{[\varepsilon,\infty)}\left(  \theta\right)  \,\frac{1}{Q_{\varepsilon}%
}\,\int_{0}^{\theta}\mathbf{1}_{\left[  s^{\ast}\wedge T,t^{\ast}\wedge
T\right]  }\left(  r\right)  \,g\left(  U_{r},D_{r}\right)  \,e^{-\frac
{Q_{\theta}-Q_{r\vee\varepsilon}}{Q_{\varepsilon}}}dQ_{r}\Big)dQ_{\theta
}\medskip\\
\displaystyle=\mathbb{E}%
{\displaystyle\int_{s^{\ast}\wedge T}^{t^{\ast}\wedge T}}
g\left(  U_{\theta},D_{\theta}\right)  \varphi\left(  U_{\theta}\right)
dQ_{\theta}%
\end{array}
\]
and convergence (\ref{a9}$-vj$) follows.\hfill
\end{proof}

\subsection{Mollifier approximation\label{Annex-MA}}

\hspace{\parindent}Let $F:\Omega\times\mathbb{R}_{+}\times\mathbb{R}^{m}%
\times\mathbb{R}^{m\times k}\rightarrow\mathbb{R}^{m}$\ and $G:\Omega
\times\mathbb{R}_{+}\times\mathbb{R}^{m}\rightarrow\mathbb{R}^{m}$ be such
that assumptions $\left(  \mathrm{A}_{5}\right)  $ and $\left(  \mathrm{A}%
_{6}\right)  $ are satisfied.$\medskip$

\noindent Let $\rho\in C_{0}^{\infty}\left(  \mathbb{R}^{m};\mathbb{R}%
_{+}\right)  $ such that $\rho\left(  y\right)  =0$ if $\left\vert
y\right\vert \geq1$ and $\int_{\mathbb{R}^{m}}\rho\left(  y\right)
dy=1.\medskip$

Let $\kappa>0$ be such that%
\[
\int_{\overline{B\left(  0,1\right)  }}\left\vert \nabla\rho\left(  v\right)
\right\vert dv\leq\kappa\quad\text{and}\quad\left\vert \nabla_{y}\rho\left(
y\right)  \right\vert \leq\kappa\,\mathbf{1}_{\overline{B\left(  0,1\right)
}}\left(  y\right)  ,\quad\text{for all }y\in\mathbb{R}^{m}.
\]
Define, for $0<\varepsilon\leq1,$%
\begin{equation}%
\begin{array}
[c]{l}%
\displaystyle F_{\varepsilon}\left(  t,y,z\right)  =\int_{\overline{B\left(
0,1\right)  }}F\left(  t,y-\varepsilon u,\beta_{\varepsilon}\left(  z\right)
\right)  \,\mathbf{1}_{\left[  0,1\right]  }\left(  \varepsilon\left\vert
F\left(  t,y-\varepsilon u,0\right)  \right\vert \right)  \,\rho\left(
u\right)  du\medskip\\
\displaystyle=\int_{\mathbb{R}^{m}}F\left(  t,y-\varepsilon u,\beta
_{\varepsilon}\left(  z\right)  \right)  \,\mathbf{1}_{\left[  0,1\right]
}\left(  \varepsilon\left\vert F\left(  t,y-\varepsilon u,0\right)
\right\vert \right)  \,\rho\left(  u\right)  du\medskip\\
\displaystyle=\frac{1}{\varepsilon^{m}}\,\int_{\mathbb{R}^{m}}F\left(
t,u,\beta_{\varepsilon}\left(  z\right)  \right)  \,\mathbf{1}_{\left[
0,1\right]  }\left(  \varepsilon\left\vert F\left(  t,u,0\right)  \right\vert
\right)  \,\rho\left(  \dfrac{y-u}{\varepsilon}\right)  du\,,
\end{array}
\label{sde9c}%
\end{equation}
where%
\[
\beta_{\varepsilon}\left(  z\right)  =\frac{z}{1\vee\left(  \varepsilon
\left\vert z\right\vert \right)  }=\Pr\nolimits_{\overline{B\left(
0,1/\varepsilon\right)  }}\left(  z\right)  .
\]
For all $z,\hat{z}\in\mathbb{R}^{m\times k}$ and all $\varepsilon,\delta>0$ we
have%
\[%
\begin{array}
[c]{l}%
\displaystyle\left\vert \beta_{\varepsilon}\left(  z\right)  \right\vert
\leq\left\vert z\right\vert \wedge\frac{1}{\varepsilon}\,,\medskip\\
\displaystyle\left\vert \beta_{\varepsilon}\left(  z\right)  -\beta
_{\varepsilon}\left(  \hat{z}\right)  \right\vert \leq\left\vert z-\hat
{z}\right\vert ,\medskip\\
\displaystyle\left\vert \beta_{\varepsilon}\left(  z\right)  -\beta_{\delta
}\left(  z\right)  \right\vert \leq\mathbf{1}_{[\frac{1}{\varepsilon}%
\wedge\frac{1}{\delta},\infty)}\left(  \left\vert z\right\vert \right)
\,\mathbf{1}_{\varepsilon\neq\delta}\cdot\left\vert z\right\vert .
\end{array}
\]
Clearly, from the assumptions satisfied by $F,$ we have, for all
$y,u\in\mathbb{R}^{m}$ with $\left\vert u\right\vert \leq1$ and for all
$z\in\mathbb{R}^{m\times k},$%
\[
\left\vert F\left(  t,y-\varepsilon u,\beta_{\varepsilon}\left(  z\right)
\right)  \right\vert \leq\ell_{t}\,\left\vert z\right\vert +F_{\left\vert
y\right\vert +1}^{\#}\left(  t\right)
\]
and consequently%
\begin{equation}
\left\vert F_{\varepsilon}\left(  t,y,z\right)  \right\vert \leq\ell
_{t}\,\left\vert z\right\vert +F_{\left\vert y\right\vert +1}^{\#}\left(
t\right)  \quad\text{and}\quad\left\vert F_{\varepsilon}\left(  t,0,0\right)
\right\vert \leq F_{1}^{\#}\left(  t\right)  . \label{ma-4}%
\end{equation}
The mollifier approximation $F_{\varepsilon}$ of $F$ satisfies the following
properties:%
\begin{equation}%
\begin{array}
[c]{ll}%
\left(  a\right)  & \left\vert F_{\varepsilon}\left(  t,y,z\right)
\right\vert \leq\ell_{t}\,\beta_{\varepsilon}\left(  z\right)  +\dfrac
{1}{\varepsilon}\leq\dfrac{1}{\varepsilon}\,\left(  1+\ell_{t}\right)
,\medskip\\
\left(  b\right)  & \left\vert F_{\varepsilon}\left(  t,y,z\right)
-F_{\varepsilon}\left(  t,y,\hat{z}\right)  \right\vert \leq\ell
_{t}\,\left\vert z-\hat{z}\right\vert \medskip\\
\left(  c\right)  & \left\vert F_{\varepsilon}\left(  t,y,z\right)
-F_{\varepsilon}\left(  t,\hat{y},z\right)  \right\vert \leq\dfrac{\kappa
}{\varepsilon}\,\Big[\ell_{t}\,\left\vert \beta_{\varepsilon}\left(  z\right)
\right\vert +\dfrac{1}{\varepsilon}\Big]\,\left\vert y-\hat{y}\right\vert
\leq\dfrac{\kappa\left(  1+\ell_{t}\right)  }{\varepsilon^{2}}\,\left\vert
y-\hat{y}\right\vert .
\end{array}
\label{ma-1}%
\end{equation}
Indeed,%
\[%
\begin{array}
[c]{l}%
\displaystyle\left\vert F_{\varepsilon}\left(  t,y,z\right)  -F_{\varepsilon
}\left(  t,\hat{y},z\right)  \right\vert \medskip\\
\displaystyle\leq\frac{1}{\varepsilon^{m}}\,\int_{\mathbb{R}^{m}}\left\vert
F\left(  t,u,\beta_{\varepsilon}\left(  z\right)  \right)  \right\vert
\,\mathbf{1}_{\left[  0,1\right]  }\left(  \varepsilon\left\vert F\left(
t,u,0\right)  \right\vert \right)  \,\Big|\rho\Big(\dfrac{y-u}{\varepsilon
}\Big)-\rho\Big(\dfrac{\hat{y}-u}{\varepsilon}\Big)\Big|du\medskip\\
\displaystyle\leq\frac{1}{\varepsilon^{m+1}}\,\left\vert y-\hat{y}\right\vert
\int_{\mathbb{R}^{m}}\left[  \ell_{t}\,\left\vert \beta_{\varepsilon}\left(
z\right)  \right\vert +\left\vert F\left(  t,u,0\right)  \right\vert \right]
\mathbf{1}_{\left[  0,1\right]  }\left(  \varepsilon\left\vert F\left(
t,u,0\right)  \right\vert \right)  \Big(\int_{0}^{1}\Big|\nabla\rho
\Big(\dfrac{y-u}{\varepsilon}+\theta\,\frac{\hat{y}-y}{\varepsilon
}\Big)\Big|d\theta\Big)du\medskip\\
\displaystyle=\frac{1}{\varepsilon}\,\left\vert y-\hat{y}\right\vert \int
_{0}^{1}\Big(\int_{\mathbb{R}^{m}}\left[  \ell_{t}\,\left\vert \beta
_{\varepsilon}\left(  z\right)  \right\vert +\left\vert F\left(
t,y+\theta\left(  \hat{y}-y\right)  -\varepsilon v,0\right)  \right\vert
\right]  \,\mathbf{1}_{\left[  0,1\right]  }\left(  \varepsilon\left\vert
F\left(  t,y+\theta\left(  \hat{y}-y\right)  -\varepsilon v,0\right)
\right\vert \right)  \medskip\\
\displaystyle\hfill\cdot\left\vert \nabla\rho\left(  v\right)  \right\vert
dv\Big)d\theta\medskip\\
\displaystyle\leq\frac{1}{\varepsilon}\,\left\vert y-\hat{y}\right\vert
\Big[\ell_{t}\,\left\vert \beta_{\varepsilon}\left(  z\right)  \right\vert
+\frac{1}{\varepsilon}\Big]\int_{0}^{1}\Big(\int_{\overline{B\left(
0,1\right)  }}\left\vert \nabla\rho\left(  v\right)  \right\vert
dv\Big)d\theta\leq\frac{\kappa}{\varepsilon}\,\Big[\ell_{t}\,\left\vert
\beta_{\varepsilon}\left(  z\right)  \right\vert +\frac{1}{\varepsilon
}\Big]\left\vert y-\hat{y}\right\vert
\end{array}
\]
since $0\leq\left(  a+b\right)  \,\mathbf{1}_{\left[  0,1\right]  }\left(
\varepsilon b\right)  \leq a+\dfrac{1}{\varepsilon}\,,$ for all $a,b\geq
0.\bigskip$

We also have, for all $y,\hat{y}\in\mathbb{R}^{m}$ with $\left\vert \hat
{y}\right\vert \leq\rho$ and for all $z\in\mathbb{R}^{m\times k},$%
\begin{equation}%
\begin{array}
[c]{l}%
\left\langle y-\hat{y},F_{\varepsilon}\left(  t,y,z\right)  \right\rangle
\leq\mu_{t}\,\left\vert y-\hat{y}\right\vert ^{2}+\left\vert y-\hat
{y}\right\vert \left[  F_{\rho+1}^{\#}\left(  t\right)  +\ell_{t}\,\left\vert
z\right\vert \right]  \medskip\\
\leq\left\vert y-\hat{y}\right\vert \,F_{\rho+1}^{\#}\left(  t\right)
+\Big(\mu_{t}+\dfrac{1}{2n_{p}\lambda}\,\ell_{t}^{2}\,\mathbf{1}_{z\neq
0}\Big)^{+}\left\vert y-\hat{y}\right\vert ^{2}+\dfrac{n_{p}\lambda}%
{2}\,\left\vert z\right\vert ^{2},\quad\text{for all }\lambda>0,
\end{array}
\label{ma-2}%
\end{equation}
where $p>1,$ $n_{p}=\left(  p-1\right)  \wedge1.\medskip$

Indeed, by taking%
\[
\alpha_{\varepsilon}\left(  t,y\right)  =%
{\displaystyle\int_{\overline{B\left(  0,1\right)  }}}
\mathbf{1}_{\left[  0,1\right]  }\left(  \varepsilon\left\vert F\left(
t,y-\varepsilon u,0\right)  \right\vert \right)  \,\rho\left(  u\right)  du,
\]
we have $0\leq\alpha_{\varepsilon}\left(  t,y\right)  \leq1$ and%
\begin{align*}
&  \left\langle y-\hat{y},F_{\varepsilon}\left(  t,y,z\right)  \right\rangle
\\
&  =%
{\displaystyle\int_{\overline{B\left(  0,1\right)  }}}
\left\langle y-\hat{y},F\left(  t,y-\varepsilon u,\beta_{\varepsilon}\left(
z\right)  \right)  -F\left(  t,\hat{y}-\varepsilon u,\beta_{\varepsilon
}\left(  z\right)  \right)  \right\rangle \mathbf{1}_{\left[  0,1\right]
}\left(  \varepsilon\left\vert F\left(  t,y-\varepsilon u,0\right)
\right\vert \right)  \rho\left(  u\right)  du\medskip\\
&  \quad+%
{\displaystyle\int_{\overline{B\left(  0,1\right)  }}}
\left\langle y-\hat{y},F\left(  t,\hat{y}-\varepsilon u,\beta_{\varepsilon
}\left(  z\right)  \right)  -F\left(  t,\hat{y}-\varepsilon u,0\right)
\right\rangle \mathbf{1}_{\left[  0,1\right]  }\left(  \varepsilon\left\vert
F\left(  t,y-\varepsilon u,0\right)  \right\vert \right)  \rho\left(
u\right)  du\\
&  \quad+%
{\displaystyle\int_{\overline{B\left(  0,1\right)  }}}
\left\langle y-\hat{y},F\left(  t,\hat{y}-\varepsilon u,0\right)
\right\rangle \mathbf{1}_{\left[  0,1\right]  }\left(  \varepsilon\left\vert
F\left(  t,y-\varepsilon u,0\right)  \right\vert \right)  \rho\left(
u\right)  du\\
&  \leq\left[  \mu_{t}\left\vert y-\hat{y}\right\vert ^{2}+\left\vert
y-\hat{y}\right\vert \ell_{t}\left\vert \beta_{\varepsilon}\left(  z\right)
\right\vert \right]  \alpha_{\varepsilon}\left(  t,y\right)  +\left\vert
y-\hat{y}\right\vert F_{\rho+1}^{\#}\left(  t\right)  .
\end{align*}
Moreover, for all $y,\hat{y}\in\mathbb{R}^{m},\lambda\in\mathbb{R}_{+}^{\ast}$
such that $\left\vert y\right\vert \leq\rho,\left\vert \hat{y}\right\vert
\leq\rho:$%
\begin{equation}%
\begin{array}
[c]{ll}%
\left(  a\right)  & \left\langle y-\hat{y},F_{\varepsilon}\left(
t,y,z\right)  -F_{\varepsilon}\left(  t,\hat{y},z\right)  \right\rangle
\medskip\\
& \leq\mu_{t}^{+}\left\vert y-\hat{y}\right\vert ^{2}+\left\vert y-\hat
{y}\right\vert \left[  F_{\rho+1}^{\#}\left(  t\right)  +\ell_{t}\,\left\vert
z\right\vert \right]  \,\mathbf{1}_{[\frac{1}{\varepsilon},\infty)}(F_{\rho
+1}^{\#}\left(  t\right)  )\medskip\\
\left(  b\right)  & \left\langle y-\hat{y},F_{\varepsilon}\left(
t,y,z\right)  -F_{\varepsilon}\left(  t,\hat{y},\hat{z}\right)  \right\rangle
\medskip\\
& \leq\left\vert y-\hat{y}\right\vert \left[  F_{\rho+1}^{\#}\left(  t\right)
+\ell_{t}\,\left\vert \hat{z}\right\vert \right]  \,\mathbf{1}_{[\frac
{1}{\varepsilon},\infty)}(F_{\rho+1}^{\#}\left(  t\right)  )\medskip\\
& \quad+\Big(\mu_{t}^{+}+\dfrac{1}{2n_{p}\lambda}\,\ell_{t}^{2}\,\mathbf{1}%
_{z\neq\hat{z}}\Big)\left\vert y-\hat{y}\right\vert ^{2}+\dfrac{n_{p}\lambda
}{2}\,\left\vert z-\hat{z}\right\vert ^{2}\medskip\\
\left(  c\right)  & \left\langle y-\hat{y},F_{\varepsilon}\left(
t,y,z\right)  -F_{\delta}\left(  t,\hat{y},\hat{z}\right)  \right\rangle
\medskip\\
& \leq\left\vert \varepsilon-\delta\right\vert \left[  \mu_{t}^{+}\left\vert
\varepsilon-\delta\right\vert +2F_{\rho+1}^{\#}\left(  t\right)  +2\ell
_{t}\left\vert z\right\vert \right]  \medskip\\
& \quad+\left\vert y-\hat{y}\right\vert \Big[2\mu_{t}^{+}\,\left\vert
\varepsilon-\delta\right\vert +\ell_{t}\,\left\vert \hat{z}\right\vert
\,\mathbf{1}_{[\frac{1}{\varepsilon}\wedge\frac{1}{\delta},\infty)}\left(
\left\vert \hat{z}\right\vert \right)  \,\mathbf{1}_{\varepsilon\neq\delta
}\medskip\\
& \quad\quad\quad\quad\quad\quad+(F_{\rho+1}^{\#}\left(  t\right)  +\ell
_{t}\,\left\vert \hat{z}\right\vert )\,\mathbf{1}_{[\frac{1}{\varepsilon
}\wedge\frac{1}{\delta},\infty)}(F_{\rho+1}^{\#}\left(  t\right)
)\Big]\medskip\\
& \quad+\Big(\mu_{t}^{+}+\dfrac{1}{2n_{p}\lambda}\,\ell_{t}^{2}\,\mathbf{1}%
_{z\neq\hat{z}}\Big)\left\vert y-\hat{y}\right\vert ^{2}+\dfrac{n_{p}\lambda
}{2}\,\left\vert z-\hat{z}\right\vert ^{2}.
\end{array}
\label{ma-3}%
\end{equation}
Obviously, it is sufficient to prove (\ref{ma-3}$-c$).

We have%
\[%
\begin{array}
[c]{l}%
\displaystyle\left\langle y-\hat{y},F_{\varepsilon}\left(  t,y,z\right)
-F_{\delta}\left(  t,\hat{y},\hat{z}\right)  \right\rangle \medskip\\
\displaystyle\leq%
{\displaystyle\int_{\overline{B\left(  0,1\right)  }}}
\left\langle y-\varepsilon u-\left(  \hat{y}-\delta u\right)  +\left(
\varepsilon-\delta\right)  u,F\left(  t,y-\varepsilon u,\beta_{\varepsilon
}\left(  z\right)  \right)  -F\left(  t,\hat{y}-\delta u,\beta_{\varepsilon
}\left(  z\right)  \right)  \right\rangle \medskip\\
\displaystyle\hfill\cdot\mathbf{1}_{\left[  0,1\right]  }\left(
\varepsilon\left\vert F\left(  t,y-\varepsilon u,0\right)  \right\vert
\right)  \rho\left(  u\right)  du\medskip\\
\displaystyle\quad+%
{\displaystyle\int_{\overline{B\left(  0,1\right)  }}}
\left\langle y-\hat{y},F\left(  t,\hat{y}-\delta u,\beta_{\varepsilon}\left(
z\right)  \right)  -F\left(  t,\hat{y}-\delta u,\beta_{\delta}\left(  \hat
{z}\right)  \right)  \right\rangle \,\mathbf{1}_{\left[  0,1\right]  }\left(
\varepsilon\left\vert F\left(  t,y-\varepsilon u,0\right)  \right\vert
\right)  \rho\left(  u\right)  du\medskip\\
\displaystyle\quad+%
{\displaystyle\int_{\overline{B\left(  0,1\right)  }}}
\left\langle y-\hat{y},F\left(  t,\hat{y}-\delta u,\beta_{\delta}\left(
\hat{z}\right)  \right)  \right\rangle \medskip\\
\displaystyle\hfill\cdot\left[  \mathbf{1}_{\left[  0,1\right]  }\left(
\varepsilon\left\vert F\left(  t,y-\varepsilon u,0\right)  \right\vert
\right)  -\mathbf{1}_{\left[  0,1\right]  }\left(  \delta\left\vert F\left(
t,\hat{y}-\delta u,0\right)  \right\vert \right)  \right]  \rho\left(
u\right)  du\medskip\medskip\\
\displaystyle\leq\mu_{t}%
{\displaystyle\int_{\overline{B\left(  0,1\right)  }}}
\left\vert y-\varepsilon u-\left(  \hat{y}-\delta u\right)  \right\vert
^{2}\,\mathbf{1}_{\left[  0,1\right]  }\left(  \varepsilon\left\vert F\left(
t,y-\varepsilon u,0\right)  \right\vert \right)  \rho\left(  u\right)
du+2\left\vert \varepsilon-\delta\right\vert \left[  F_{\rho+1}^{\#}\left(
t\right)  +\ell_{t}\,\left\vert \beta_{\varepsilon}\left(  z\right)
\right\vert \right]  \medskip\\
\displaystyle\quad+\left\vert y-\hat{y}\right\vert \,\ell_{t}\,\left\vert
\beta_{\varepsilon}\left(  z\right)  -\beta_{\delta}\left(  \hat{z}\right)
\right\vert \,\alpha_{\varepsilon}\left(  t,y\right)  +\left\vert y-\hat
{y}\right\vert \left[  F_{\rho+1}^{\#}\left(  t\right)  +\ell_{t}\,\left\vert
\beta_{\delta}\left(  \hat{z}\right)  \right\vert \right]  \,\mathbf{1}%
_{[\frac{1}{\varepsilon}\wedge\frac{1}{\delta},\infty)}(F_{\rho+1}^{\#}\left(
t\right)  ).
\end{array}
\]
But%
\begin{align*}
\mu_{t}\left\vert y-\varepsilon u-\left(  \hat{y}-\delta u\right)  \right\vert
^{2}  &  \leq\mu_{t}^{+}\,\left\vert y-\varepsilon u-\left(  \hat{y}-\delta
u\right)  \right\vert ^{2}\\[2pt]
&  \leq\mu_{t}^{+}\,\left\vert y-\hat{y}\right\vert ^{2}+2\mu_{t}%
^{+}\,\left\vert y-\hat{y}\right\vert \left\vert \varepsilon-\delta\right\vert
+\mu_{t}^{+}\,\left\vert \varepsilon-\delta\right\vert ^{2}%
\end{align*}
and, using the properties of $\beta_{\varepsilon}\,,$ inequality
(\ref{ma-3}$-c$) follows.

\begin{remark}
The function $G$ will be approximate in the same manner. For $0<\varepsilon
\leq1:$%
\begin{equation}
G_{\varepsilon}\left(  t,y\right)  =%
{\displaystyle\int_{\overline{B\left(  0,1\right)  }}}
G\left(  t,y-\varepsilon u\right)  \,\mathbf{1}_{\left[  0,1\right]  }\left(
\varepsilon\left\vert G\left(  t,y-\varepsilon u\right)  \right\vert \right)
\,\rho\left(  u\right)  du. \label{ma-5}%
\end{equation}
Inequalities (\ref{ma-1})--(\ref{ma-3}) are similarly obtained for $G,$ with
$z=\hat{z}=0$ and $\ell=0.$
\end{remark}

\newpage

\addcontentsline{toc}{section}{References}

\end{document}